%
%

\documentclass{memo-l}


 \usepackage{graphicx,amssymb}
 \usepackage{amsmath}
\usepackage{url}

\setcounter{errorcontextlines}{999}

\newcommand\tri{\triangle}

\newcommand\rep{\mathrm{rep}}

\newcommand\Def{\mathrm{Def}}
\newcommand\CDef{\mathrm{CDef}}
\newcommand\SDef{\mathrm{SDef}}
\newcommand\hol{\mathrm{hol}}
\newcommand\torb{\widetilde{\mathcal{O}}}
\newcommand\orb{\mathcal{O}}
\newcommand\Ag{{\mathrm{Ag}}}
\newcommand\Pgl{{\mathrm{PGL}}(n+1, \bR)}
\newcommand\SLpm{{{\mathrm{SL}}_{\pm}(n+1, \bR)}}

\newcommand\DefSO{\Def^s_{\SI^{n}, \, E, \, {\mathcal{U}''},\, s'_{\mathcal{U}''} }(\mathcal{O})}  
\newcommand\DefEO{\Def^s_{E, \, {\mathcal{U}}, \, s_{\mathcal{U}}}(\mathcal{O})}

\newcommand\cR{{\mathcal{R}}}
\newcommand\cT{{\mathcal{T}}}
\newcommand\rrp{{\rm )}} 
\newcommand\rlp{{\rm (}}
\newcommand\Imm{{\mathsf{Im}}}
\newcommand\bN{{\mathbb N}}

\newtheorem{theorem}{Theorem}[chapter]
\newtheorem{lemma}[theorem]{Lemma}
\newtheorem{proposition}[theorem]{Proposition}
\newtheorem{corollary}[theorem]{Corollary}

\theoremstyle{definition}
\newtheorem{definition}[theorem]{Definition}
\newtheorem{example}[theorem]{Example}

\theoremstyle{remark}
\newtheorem{remark}[theorem]{Remark}

\numberwithin{section}{chapter}
\numberwithin{equation}{chapter}
\numberwithin{figure}{chapter}

\makeindex

\newcommand\bR{{\mathbb{R}}}

\newcommand\bZ{{\mathbb Z}}
\newcommand\bQ{{\mathbb Q}}

\newcommand\rp{\bR P}
\newcommand\GL{{\rm GL}}
\newcommand\SL{{\rm SL}}
\newcommand\PGL{{\rm PGL}}

\newcommand\PO{{\rm PO}}

\newcommand\Hom{{\rm Hom}}

\newcommand\dev{{\bf dev}}
\newcommand\SI{{\bf S}}

\newcommand\Bd{{\rm bd}}
\newcommand\clo{{\rm Cl}}
\newcommand\bdd{{\bf d}}

\newcommand\ra{\rightarrow}

\newcommand\emp{\emptyset}
\newcommand\eps{\epsilon}

\newcommand\Aff{{\rm Aff}}
\newcommand\ovl{\overline}

\newcommand\Idd{{\rm I}}


\setcounter{tocdepth}{3} 

\begin{document}

\frontmatter

\title[The convex real projective manifolds and orbifolds with ends]{The convex real projective orbifolds with
radial or totally geodesic ends: The closedness and openness of deformations}


\author{Suhyoung Choi} 

\address{Department of Mathematical Sciences \\ KAIST \\
Daejeon 305-701, South Korea
         }
         
\email{schoi@math.kaist.ac.kr}         

\thanks{This work was supported by the National Research Foundation
of Korea (NRF) grant funded by the Korea government (MEST)  (No. 2010-0000139) need to change later.} 


\date{\today}



\subjclass[2010]{Primary 57M50; Secondary 53A20, 53C10, 20C99, 20F65}
\keywords{geometric structures, real projective structures, relatively hyperbolic groups, $\PGL(n, \bR)$, $\SL(n, \bR)$, character of groups}



\begin{abstract}
A real projective orbifold is an $n$-dimensional orbifold modeled on $\rp^n$ with the group $\Pgl$. 
We concentrate on an orbifold that contains a compact codimension $0$ 
submanifold whose complement is a union of neighborhoods of ends, diffeomorphic to closed $(n-1)$-dimensional orbifolds times intervals. 
A real projective orbifold has a {\em radial end} if a neighborhood of the end is foliated by projective geodesics that develop into
geodesics ending at a common point.
It has a {\em totally geodesic end} if the end can be completed to have  the totally geodesic boundary. 
The orbifold is said to be {\em convex} if any path can be homotopied to a projective geodesic with 
endpoints fixed. 

A real projective structure sometimes admits deformations to parameters of real projective structures. 
We will prove the local homeomorphism 
between the deformation space of convex real projective structures on such an orbifold with radial or totally geodesic ends 
with various conditions with the $\Pgl$-character space of the 
fundamental group with corresponding conditions. We will use a Hessian argument to show that under a small deformation, 
a properly (resp. strictly) convex real projective orbifold with generalized admissible ends will remain properly 
and properly (resp. strictly) convex with generalized admissible ends. 


Lastly, we will prove the openness and closedness of the properly (resp. strictly) convex real projective structures on a class of orbifold with
generalized admissible ends, where we need the theory of Crampon-Marquis
and Cooper, Long and Tillmann on the Margulis lemma for 
convex real projective manifolds. 
The theory here partly generalizes that of Benoist on closed real projective orbifolds. 

\end{abstract}

\maketitle

\tableofcontents
\listoffigures






\mainmatter

\chapter{Introduction} 




\section{History and motivations} 
Recently, there were many research papers on convex real projective structures on manifolds and orbifolds.
(See the work of Goldman \cite{Gconv}, Choi \cite{cdcr1}, \cite{cdcr2}, Benoist \cite{Ben0}, 
Kim \cite{ikim}, Cooper, Long, Thistlethwaite \cite{CLT1}, \cite{CLT2} and so on.)
One can see them as projectively flat torsion-free connections on manifolds. Topologists will view each of these as
a manifold with a structure given by 
 a maximal atlas of charts to $\bR P^n$ where transition maps are projective. 
Hyperbolic and many other geometric structures will induce canonical real projective structures. 
(See the numerous and beautiful examples in Sullivan-Thurston \cite{ST}.)
Sometimes, these can be deformed to real projective structures not arising from such obvious constructions. 
In general, the theory of the discrete group representations and their deformations form very much mysterious subjects still. 
We can use the results in studying linear representations of discrete groups.


\begin{figure}
\centerline{\includegraphics[height=4cm]{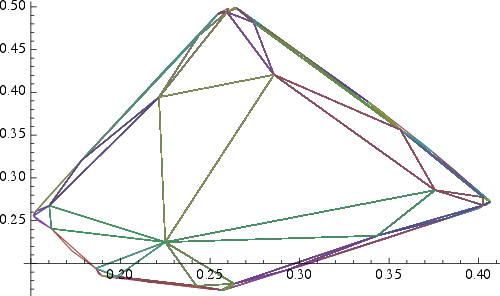}}
\caption{The developing images of convex $\bR P^n$-structures on $2$-orbifolds deformed from hyperbolic ones: $S^2(3,3,5)$.}
\label{fig:good2}
\end{figure}

\begin{figure}
\centerline{\includegraphics[height=4cm]{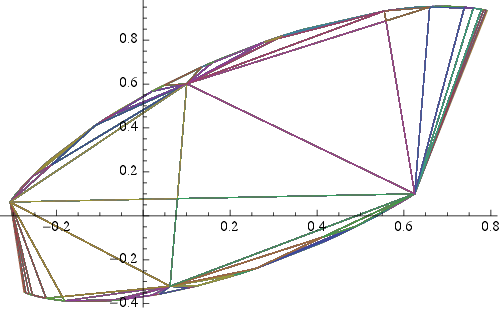}}
\caption{The developing images of convex $\bR P^n$-structures on $2$-orbifolds deformed from hyperbolic ones: 
$D^2(2, 7)$.}
\label{fig:good3}
\end{figure}

Since the examples are more easy to construct, we will be studying orbifolds, natural generalization of 
manifolds. 
The deforming a real projective structure on an orbifold to an unbounded situation results in the actions of 
the fundamental group on affine buildings which hopefully will lead us to some understanding of 
orbifolds and manifolds in particular in dimension three as indicated by
Cooper, Long, and Thistlethwaite. 

However, the manifolds studied are usually closed ones so far. 
(See \cite{Ben4}, \cite{poly}, \cite{CLT1}, \cite{CLT2}, \cite{CHL}, \cite{CL}.)
We hope to generalize these theories to noncompact orbifolds with conditions on ends. 
In fact, we are trying the generalize the class of complete hyperbolic manifolds with finite volumes. 
These are $n$-orbifolds with compact suborbifolds 
whose complements are diffeomorphic to intervals times closed $(n-1)$-dimensional orbifolds. 
Such orbifolds are said to be {\em strongly tame} orbifolds. 
An {\em end neighborhood} is a component of a complement of a compact subset not contained in any compact subset of the orbifold. 
An {\em end} $E$ is an equivalence class of compatible sequences of end neighborhoods. 
Because of this, we can associate an $(n-1)$-orbifold at each end and we define 
the {\em end fundamental group} $\pi_1(E)$ as the fundamental group of the orbifold, a subgroup of the fundamental group $\pi_1(\orb)$. 
We also put the condition on end neighborhoods being foliated by radial lines or to have totally geodesic ideal boundary. 
Of course, this is not the only natural conditions, and we plan to explore the other conditions in some other occasions.
(We note that a strongly tame orbifold may have nonempty boundary that is compact.) 

We studied some such orbifolds of Coxeter type with ends  in \cite{poly}.
These have convex fundamental polytopes and are easier to understand. This paper generalizes the results there. 

\begin{figure}
\begin{center}$
\begin{array}{cccc}
\includegraphics[width=3.5in]{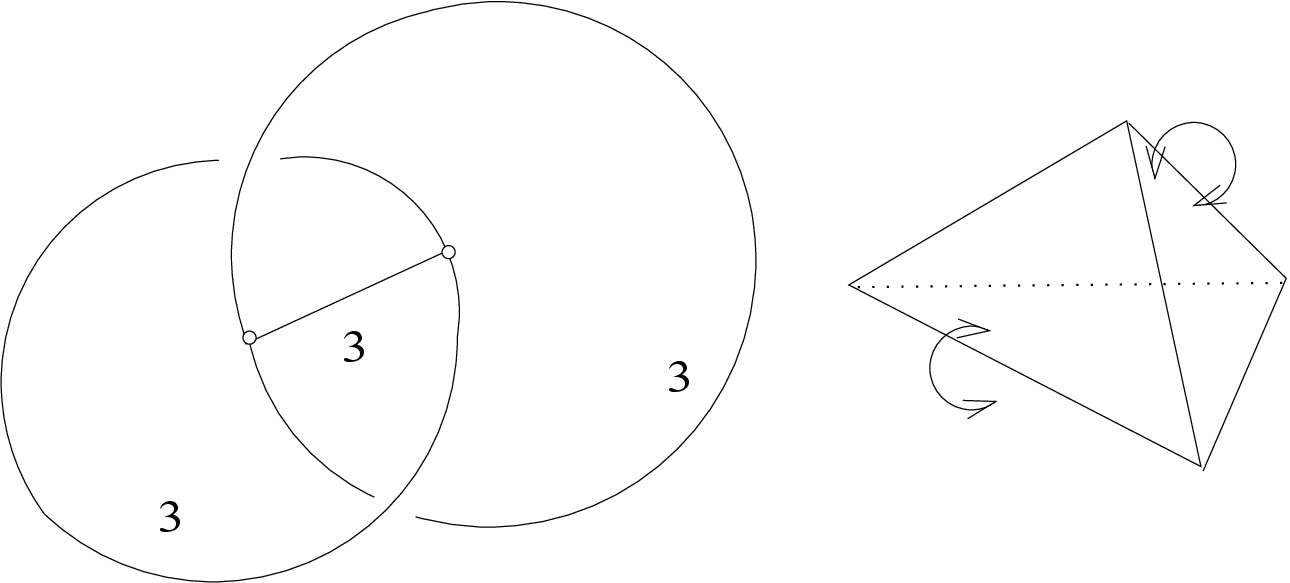} 
\end{array}$

\end{center} 

\caption{The handcuff graph and the construction of $3$-orbifold of Tillmann by pasting faces of a hyperbolic tetrahedron.}
\label{fig:cubs}
\end{figure}

S. Tillmann studied a complete hyperbolic $3$-orbifold obtained from gluing a complete hyperbolic 
tetrahedron. The one parameter family of deformations exists and can be solved explicitly. 
Later, Gye-Seon Lee and I computed more examples 
starting from hyperbolic Coxeter orbifolds (These are not published results. 
See Cooper-Long-Tillmann \cite{CLT3}, Heusener-Porti \cite{HP}, and  Ballas \cite{Ballas, Ballas2} for some computed $3$-manifold examples). 
However, the convexity of the results was the main 
question that arose. We will try to answer this. 

Also, D. Cooper, D. Long, and S. Tillmann \cite{CLT3} 
and M. Crampon and L. Marquis \cite{CM} are studying these types of orbifolds as quotients of convex domains 
without deforming and hence generalizing the Kleinian group theory for complete hyperbolic manifolds. 
However, they only study the orbifolds with horospherical types of ends or equivalently finite volume orbifolds. 
Their work is in a sense dual to this work since
we start from orbifolds with real projective structures and deform. 

In general, the theory of geometric structures on manifolds with ends is not studied very well. 
We should try to obtain more results here and find what are the appropriate conditions. 
This question seems to be also related to how to make sense of the topological structures of ends 
in many other geometric structures such as ones on modelled on symmetric spaces and so on. 

\section{Some definitions}

Given a vector space $V$, we let $P(V)$ denote the space obtained by taking the quotient space of \index{$P(\cdot)$}
$\bR^{n+1} -\{O\}$ under the equivalence relation
\[v\sim w \hbox{ for } v, w \in \bR^{n+1} -\{O\}  \hbox{ iff } v = s w, \hbox{ for } s \in \bR -\{0\}.\] 
We let $[v]$ denote the equivalence class of $v \in \bR^{n+1} -\{O\}$. 
For a subspace $W$ of $V$, we denote by $P(W)$ the image of $W-\{O\}$ under the quotient map, also said to be a {\em subspace}. \index{subspace} 
Recall that the projective linear group $\PGL(n+1, \bR)$ acts on $\rp^n$, i.e.,
$P(\bR^{n+1})$, in a standard manner. 
Let $\mathcal{O}$ be a noncompact strongly tame $n$-orbifold where the orbifold boundary is not necessarily empty. 
\begin{itemize} 
\item A {\em real projective orbifold} is an orbifold with a geometric structure modelled on $(\bR P^n, \PGL(n+1, \bR))$. \index{real projective orbifold} 
(See \cite{dgorb} and Chapter 6 of \cite{msj}.)
\item A real projective orbifold also has the notion of projective geodesics as given by local charts
and has a universal cover $\tilde{\mathcal{O}}$ where a deck transformation group $\pi_1({\mathcal{O}})$ acting on. 
\item The underlying space of $\mathcal{O}$ is homeomorphic to the quotient space $\tilde{\mathcal{O}}/\pi_1({\mathcal{O}})$.
\item A real projective structure on $\mathcal{O}$ gives us a so-called development pair  \index{real projective structure} 
$(\dev, h)$ where 
\begin{itemize} 
\item $\dev:\tilde{\mathcal{O}} \ra \bR P^n$ is an immersion, called the {\em developing map}, 
\item and $h:\pi_1(\mathcal{O}) \ra \PGL(n+1, \bR)$ is a homomorphism, called a {\em holonomy homomorphism}, 
satisfying 
\[\dev\circ \gamma= h(\gamma)\circ \dev \hbox{ for } \gamma \in \pi_1(\mathcal{O}).\] 
\item The pair $(\dev, h)$ is determined only up to the action 
\[g(\dev, h(\cdot)) = (g \circ \dev, g h(\cdot) g^{-1}) \hbox{ for } g \in \PGL(n+1, \bR)\]  
and any chart in the atlas extends to a developing map. (See Section 3.4 of \cite{Thbook}.) \index{developing map} \index{holonomy homorphism}
\end{itemize} 
\end{itemize} 
Let $\bR^{n+1 \star}$ denote the dual of $\bR^{n+1}$.  \index{$\bR^{n+1 \star}$}
Let $\bR P^{n \star}$ denote the dual projective space $P(\bR^{n+1 \star})$.  \index{$\bR P^{n \star}$}
$\PGL(n+1, \bR)$ acts on $\bR P^{n \star}$ by taking the inverse of the dual transformation. 
Then $h$ has a dual representation $h^* $ sending 
elements of $\pi_1(\orb)$ to the inverse of the dual transformation of $\bR^{n+1 \star}$.

For an element $g \in \PGL(n+1, \bR)$, we denote
\begin{align} 
g \cdot [w] &:= [\hat g(w)] & \hbox{ for } [w] \in \bR P^n \hbox{ or } \nonumber \\ 
  & := [(\hat g^{T})^{-1}(w)] & \hbox{ for } [w] \in \bR P^{n \star}
\end{align} 
where $\hat g$ is any element of $\SLpm$ mapping to $g$ and $\hat g^T$ the transpose of $\hat g$. \index{$\PGL(n+1, \bR)$}

The complement of a codimension-one subspace of $\bR P^n$ can be identified with an affine space 
$\bR^n$ where the geodesics are preserved. The group of affine transformations of $\bR^n$ are
is the restriction to $\bR^n$ of the group of projective transformations of $\bR P^n$ fixing the subspace. 
We call the complement an {\em affine subspace}. It has a geodesic structure of a standard affine space.  \index{affine subspace}
A {\em convex domain} in $\bR P^n$ is a convex subset of an affine subspace.  \index{convex domain}
A {\em properly convex domain} in $\bR P^n$ is a convex domain contained in a precompact subset of an affine subspace.  \index{convex domain! properly convex} 
 
The important class of real projective structures are so-called convex ones
where any arc in $\mathcal{O}$ can be homotopied with endpoints fixed to a straight geodesic where $\dev$ is injective to $\bR P^n$ except possibly at the end points. 
If the open orbifold has a convex structure, it is covered by a convex domain $\Omega$ in $\bR P^n$. 
Equivalently, this means that the image of the developing map $\dev(\tilde{\mathcal{O}})$ for the universal cover $\tilde{\mathcal{O}}$ of $\mathcal{O}$ 
is a convex domain. 
$\mathcal{O}$ is projectively diffeomorphic 
to $\dev(\tilde{\mathcal{O}})/h(\pi_1(\mathcal{O}))$. In our discussions, since $\dev$ often is an imbedding, 
$\tilde{\mathcal{O}}$ will be regarded as an open domain in $\bR P^n$ and $\pi_1(\mathcal{O})$ a subgroup of
$\PGL(n+1, \bR)$ in such cases. 
This simplifies our discussions. (See Chapter \ref{sec:prel}.)

We will have the following boundary deformability hypothesis for manageability. Otherwise the paper might become to large to handle. 
Let $\orb$ be a strongly tame real projective orbifold.
We assume that $\partial \orb$ is {\em strictly convex}; i.e.,  
each point of $\partial \orb$ has a neighborhood mapping to a convex ball with smooth strictly convex boundary under $\dev$.
Then each component of $\partial \orb$ can be deformed inward to a strictly convex boundary components by arbitrarily small amount
since one can find a smooth inward variation of $\partial \orb$. The hypersurface remains strictly convex for a short time. 
Moreover, we observe that 
 if the universal cover $\torb$ is a properly convex domain, the deformed $\torb$ is one also. 
(However, we will assume mostly that $\partial \orb=\emp$ in this paper for convenience and simplicity.)





\subsection{Restricting the ends}

In this case, each end has a neighborhood diffeomorphic to a closed orbifold times an interval. 
This orbifold is independent of the choice of such a neighborhood, 
and it said to be the {\em end orbifold} associated with the end. 
The fundamental group of an end is isomorphic to the fundamental group of an end neighborhood. \index{end orbifold} \index{end fundamental group}

A {\em lens} is a properly convex domain that is bounded by two smooth strictly convex open disks. \index{lens}
A {\em lens-orbifold}  is a compact quotient of a lens by a properly discontinuous action of a projective group.  \index{lens!orbifold}

An end of a real projective orbifold $\orb$ is {\em totally geodesic} or {\em of type T} if  \index{end!totally geodesic} \index{end!type-T}
\begin{itemize}
\item the end has an end neighborhood that completes to a compact 
one homeomorphic to a closed $(n-1)$-dimensional orbifold times a half-open interval
and 
\item each point of the boundary has a neighborhood projectively diffeomorphic to 
an open set in an affine half-space. 
\end{itemize} 
The boundary component is called the totally geodesic {\em ideal boundary} ({\em component}) of \index{boundary!ideal}
the end. Such an ideal boundary may not be unique as there are two projectively inequivalent ways to add boundary components of 
elementary annuli (see Section 1.4 of \cite{cdcr2}). \index{elementary annulus} 
Two compactified end neighborhoods of an end are {\em equivalent} if they contain a common compactified end neighborhood.
We also require that 
\begin{description} 
\item[(Lens condition)] the ideal boundary be realized as a totally geodesic suborbifold in the interior of a lens-orbifold 
in a cover of a some ambient real projective orbifold of $\orb$ corresponding to the end fundamental group. \index{end!lens condition} 
\end{description}
We also define as follows: 
\begin{itemize}
\item The equivalence class of the chosen compactified end neighborhood is called a {\em marking } of the totally geodesic end. 
\item We will also call the ideal boundary the {\em end orbifold} of the end.
\end{itemize} 
(The reason for the lens condition here is to allow these ends to change to horospherical types.) 
We will call the totally geodesic ends with the above properties the ends of lens-type. 

An end of a real projective orbifold is {\em radial} or {\em of type R} if 
\begin{itemize}
\item the end has an end neighborhood foliated by properly imbedded projective geodesics and \index{end!radial} \index{end!type-R}
\item where nearby leaf-geodesics map under a developing map to open geodesics in $\bR P^n$ 
ending at the common point of concurrency. 
\end{itemize} 
Two such radial foliations of radial end neighborhoods of an end are {\em compatible} if they agree outside some compact subset of the orbifold. 
\begin{itemize} 
\item A {\em radial foliation marking} is a compatibility class of radial foliations. \index{end!radial foliation marking}
\item A {\em real projective orbifold with radial end marks} is a strongly tame 
orbifold with real projective structures and end neighborhoods with radial foliation markings. 
\end{itemize} 
The end orbifold has a unique induced real projective structure of one dimension lower
since the concurrent lines to a point form $\bR P^{n-1}$ and the real projective transformations fixing 
a point of $\bR P^n$ correspond to real projective transformations of $\bR P^{n-1}$. 
To summarize, an end orbifold is a well-defined closed real projective $(n-1)$-dimensional orbifold, 
which may depends on the choice of radial foliations. 
(Note that a real projective orbifold could have the same real projective structures and different radial foliation markings 
as Cooper pointed out.
Actually, the totally geodesic ends of lens-type are dual to radial ends and conversely. See Section \ref{subsec:totdual}.) 

The radial foliations and the ideal boundary components compactify $\orb$ to a compact orbifold $\bar \orb$, and the universal cover $\torb$ 
is contained in a completion $\torb'$ where the developing maps extend, probably not locally injectively. Note that $\bar \orb$ has the unique topology 
of a tame orbifold by attaching end orbifolds to $\orb$ to each ends. 


For example, a finite volume hyperbolic $n$-orbifold with cusps and
totally geodesic boundary components removed is an example. 
Let $\bR^{n+1}$ have standard coordinates $x_0, x_1, \dots, x_n$, and 
let $B$ be the subset in $\bR P^n$ corresponding to the cone given by 
\[x_0 > \sqrt{x_1^2 + \cdots + x_n^2}.\] 
The Klein model gives a hyperbolic space as $B \subset \bR P^n$ with 
the isometry group $\PO(1, n)$ a subgroup of $\Pgl$ acting on $B$. 
Thus, an above-type hyperbolic orbifold is projectively diffeomorphic to an open submanifold of $B/\Gamma$ for 
$\Gamma$ in $\PO(1, n)$. 
(Also, we could allow hyperideal ends by attaching radial ends.
See Section 3.1.1 in \cite{endclass}.)

An {\em ellipsoid} in $\bR P^n=P(\bR^{n+1})$ (resp. in $\SI^n=S(\bR^{n+1})$)  \index{ellipsoid}
is the projection $C -\{O\}$ of the null cone $C:=\{x \in \bR^{n+1}| b(x, x)=0\}$ for a nondegenerate 
bilinear form $b: \bR^{n+1} \times \bR^{n+1} \ra \bR$. Ellipsoids are always equivalent by projective automorphisms.
An {\em ellipsoid ball} is the closed contractible domain of in $\bR P^n$ (resp. $\SI^n$)  bounded by an ellipsoid. 
A {\em horoball} is an ellipsoid ball with a point $p$ of the boundary removed. \index{horoball}
An ellipsoid with a point $p$ on it removed is called a {\em horosphere}. The {\em vertex} of \index{horosphere}
the horosphere or the horoball is defined as $p$. 

Define $\Bd A$ for a subset $A$ of $\bR P^n$ (resp. in $\SI^n$) to be the {\em topological boundary} in $\bR P^n$ (resp. in $\SI^n$) \index{boundary!topological}
and define $\partial A$ for a manifold or orbifold $A$ to be the {\em manifold or orbifold boundary} \index{boundary!manifold}
and $A^o$ denote the manifold interior. \index{interior!manifold}
The closure $\clo(A)$ of a subset $A$ of $\bR P^n$ (resp. of $\SI^n$) is the topological closure in $\bR P^n$ (resp. in $\SI^n$). \index{$\clo(\cdot)$}

A real projective orbifold that is real projectively diffeomorphic to an orbifold
$U/\Gamma_p$ for a discrete subgroup $\Gamma_p \subset \PO(1, n)$ fixing 
a point $p \in \Bd B$ and a horoball $U$ with vertex $p$ is called a {\em horoball orbifold}. \index{horoball!orbifold}
A {\em horospherical end} is an end with an end neighborhood that is such an orbifold. 
(In our case, by strong tameness, the group contains an abelian group of maximal rank $n-1$ of finite index by Proposition \ref{prop:affinehoro}.) 

Given a real projective orbifold $\orb$, we add the restriction of the end to be a radial or a totally geodesic type. 
The end will be either assigned {\em $\cR$-type} or {\em $\cT$-type}. \index{$\cR$-type} \index{$\cT$-type}
\begin{itemize} 
\item An $\cR$-type end is required to be radial. 
\item A $\cT$-type end is required to have totally geodesic properly convex ideal boundary 
components or be horospherical. 
\end{itemize} 
A strongly-tame orbifold will always have such an assignment in this paper, 
and finite-covering maps will always respect the types. 


\subsection{Definition of the deformation spaces with end marks} 
An {\em isotopy} $i: \orb \ra \orb$ is a self-diffeomorphism so that  \index{isotopy}
there exists a smooth parameter of self-diffeomorphism $i_t: \orb \ra \orb$, $t\in [0, 1]$, 
so that $i=i_1, i_0 = \Idd_{\orb}$. 
\begin{itemize}
\item Two real projective structures $\mu_0$ and $\mu_1$ are {\em isotopic} 
if there is an isotopy $i$ on $\mathcal{O}$ so that $i^*(\mu_1)=\mu_0$ where $i^*(\mu_1)$ is the induced structure from $\mu_1$ by $i$. 
\begin{itemize}
\item $i$ sends the radial end foliation for $\mu_0$ from an end neighborhood to the radial end foliation for $\mu_1$ in 
the another end neighborhood, and 
\item $i$ extends to the union of totally geodesic ideal boundary components as a diffeomorphism. 
\end{itemize}
\end{itemize} 
We define 
$\Def_E(\mathcal{O})$ as the deformation space of real projective structures on $\mathcal{O}$ with end marks; more precisely, 
this is the quotient space of the real projective structures $\mu$ on $\mathcal{O}$ satisfying the above conditions for
ends of type $\cR$ and $\cT$ 
under the isotopy equivalence relations.
The topology of such a space is  defined by the compact open $C^r$-topology for the space of developing maps $\dev$, $r \geq 2$.
We will discuss this more later on.  
For noncompact orbifolds, these spaces can be very complicated especially if there are no end markings. 
(see \cite{dgorb}, \cite{Canary} and \cite{Goldman3} for more details. )
%


\begin{remark} \label{rem:Davis}
As suggested by Mike Davis, one can look at ends with holonomy groups of end fundamental groups acting on properly convex 
domains in totally geodesic subspaces of codimension between $2$ and $n-1$. While they are perfectly reasonable to occur, 
in particular for Coxeter type orbifolds, 
we shall avoid these types as they are not understandable yet
and we will hopefully study these in other papers. We will only be thinking of ends with holonomy groups of the end fundamental groups  acting on 
codimension $n$ or codimension $1$ subspaces. However, we think that the other types of the ends do not change the theory present here 
in an essential way. 
\end{remark} 

Let $\{g_1, \dots, g_m\}$ be the generators of $\pi_1(\orb)$. 
As usual $\Hom(\pi_1(\mathcal{O}), G)$ for a Lie group $G$ has an {\em algebraic topology} as a subspace 
of $G^m$. This topology is given by the notion of {\em algebraic convergence}
\[\{h_i\} \ra h \hbox{ if } h_i(g_j) \ra h(g_j) \in G \hbox{ for each } j, j=1, \dots, m.\] 

The {\em character space} $\rep(\pi_1(\mathcal{O}), \PGL(n+1,\bR))$ is the quotient space of \index{character space}
the homomorphism space \[\Hom(\pi_1(\mathcal{O}), \PGL(n+1,\bR))\] where $\PGL(n+1,\bR)$ acts by conjugation
\[h(\cdot) \mapsto g h(\cdot) g^{-1} \hbox{ for } g \in \PGL(n+1,\bR).\]
Each element is called a {\em character} in this paper. A {\em representation} is an element in the equivalence class of a character. 
A representation or a character is {\em stable} if the orbit of it or its representative is closed and the stabilizer is finite under 
the conjugation action in  $\Hom(\pi_1(\mathcal{O}), \PGL(n+1,\bR))$. (See \cite{JM} for more details.)
By Theorem 1.1 of \cite{JM}, a representation $\rho$ is stable if and only if any proper parabolic subgroup \index{character!stable} \index{representation!stable}
does not contain the image of $\rho$. 
The stability and the irreducibility are open conditions in the Zariski topology. 
Also, if the image of $\rho$ is Zariski dense, then $\rho$ is stable by Lemma 3.5 of \cite{CL}.

A representation of a group $G$ into $\PGL(n+1, \bR)$ or $\SLpm$ is {\em strongly irreducible} \index{representation!strongly irreducible}
if the image of every finite index subgroup of $G$ (resp. every finite index subgroup of $G$) is irreducible. 
Actually, many of the orbifolds have strongly irreducible and stable holonomy homomorphisms by Theorem \ref{thm:sSPC}.

An {\em eigen-$1$-form} of a linear transformation $\gamma$ is a linear functional $\alpha$ in $\bR^{n+1}$ so that \index{eigen-$1$-form}
$\alpha \circ \gamma = \lambda \alpha$ for some $\lambda \in \bR$. 

We define
\begin{itemize} 
\item  \[\rep_E(\pi_1(\mathcal{O}), \PGL(n+1,\bR))\] to be the subspace of characters where 
\begin{description}
\item[The vertex condition] the restricted representation to each $\cR$-type p-end fundamental group has a nonzero common eigenvector \index{vertex condition}
and
\item[The lens condition] the restricted representation to each $\cT$-type p-end fundamental group acts properly \index{lens condition} 
discontinuously and cocompactly on a lens $L$, a properly convex domain with $L^o\cap P = L \cap P\ne \emp$ for a hyperplane $P$.
meeting $P$ or a horoball tangent to $P$. 
\end{description} 
\item We denote by 
 \[\rep_E^s(\pi_1(\mathcal{O}), \PGL(n+1,\bR))\]
 the subspace of stable and irreducible characters, and 
 \item \[\rep^s_{E, u}(\pi_1(\mathcal{O}), \PGL(n+1,\bR))\] the subspace of stable and irreducible characters where
 \begin{itemize}
\item  the restricted representation to each radial p-end fundamental group has a unique common eigenspace of dimension $1$ and 
\item each totally-geodesic end fundamental group has  a unique common space of eigen-$1$-forms of dimension $1$
meeting a lens with above properties. 
\end{itemize} 
\[\rep^s_{E, u}(\pi_1(\mathcal{O}), \PGL(n+1,\bR))  \subset \rep_{E}(\pi_1(\mathcal{O}), \PGL(n+1,\bR)).\] 
\item We define 
\begin{align} 
&  \rep_{E, u}^s(\pi_1(\mathcal{O}), \PGL(n+1,\bR)) \nonumber \\ 
 & := \rep^s(\pi_1(\mathcal{O}), \PGL(n+1,\bR)) \cap \rep_{E, u}(\pi_1(\mathcal{O}), \PGL(n+1,\bR)).
  \end{align} 
\end{itemize} 


Note that elements of $\Def_E(\orb)$ have holonomy characters in \[\rep_{E}(\pi_1(\mathcal{O}), \PGL(n+1,\bR)).\]
Denote by $\Def_{E, u}(\mathcal{O})$ the subspace of $\Def_{E}(\mathcal{O})$ of equivalence classes of real projective structures 
with holonomy characters in \[\rep_{E, u}(\pi_1(\mathcal{O}), \PGL(n+1,\bR)).\] 
Also, we denote by $\Def_{E}^s(\mathcal{O}) \subset \Def_{E}(\mathcal{O})$
and $\Def_{E, u}^s(\mathcal{O}) \subset \Def_{E, u}(\mathcal{O})$ the subspaces of equivalence classes of real projective 
structures with stable and irreducible characters. 



\section{The  local homeomorphism and homeomorphism theorems} 
For technical reasons, we will be assuming $\partial \orb = \emp$ in most cases. In fact, a proper way to understand the boundary 
is through understanding the ends as in the hyperbolic manifold theory of Thurston. 
The following map $\hol$ is induced by sending $(\dev, h)$ to the conjugacy class of $h$ as 
isotopies preserve $h$:

\begin{theorem} \label{thm:A} 
Let $\mathcal{O}$ be a noncompact strongly tame real projective $n$-orbifold with radial ends or totally-geodesic ends of lens-type with 
markings and given types $\cR$ or $\cT$. Assume $\partial \orb =\emp$. 
Then the following map is a local homeomorphism\,{\rm :}  
\[\hol:\Def_{E, u}^s(\mathcal{O}) \ra \rep_{E, u}^s(\pi_1(O), \PGL(n+1,\bR)).\]
\end{theorem}
The restrictions of end types are necessary for this theorem to hold. 
This generalizes results for closed manifolds for many geometric structures starting from the classical results of Weil. 
(See Goldman \cite{Goldman3}, Canary-Epstein-Green \cite{Canary}, and \cite{dgorb} for many versions of similar results.)

We will present a more general ``section'' version Theorem \ref{thm:projective} in Chapter \ref{sec:loch}
giving this theorem as a corollary. (Also, see for related work by Cooper and Long \cite{CL2}.)


\begin{definition} 
An {\em SPC-structure} or a {\em stable  irreducible properly-convex real projective structure} on an $n$-orbifold 
( {\em with radial or totally geodesic end of lens-type} ) is 
a real projective structure so that the orbifold is projectively diffeomorphic to a quotient orbifold of 
a properly convex domain in $\bR P^n$ by a discrete group
of projective automorphisms that is stable and  irreducible.
\end{definition}

\begin{definition}
Suppose that $\mathcal{O}$ has an SPC-structure. Let $\tilde U$ be 
the inverse image in $\tilde{\mathcal{O}}$ of the union $U$ of some choice of a collection of disjoint end neighborhoods of $\orb$.  \index{SPC-structure}
If every straight arc in the boundary of the domain $\tilde{\mathcal{O}}$ 
and every non-$C^1$-point  is contained in the closure of a component of $\tilde U$ for some choice of $U$, 
then $\mathcal{O}$ is said to be {\em strictly convex} with respect to the collection of the ends.  \index{convex!strictly}
And $\mathcal{O}$ is also said to have a {\em strict SPC-structure} with respect to the collection of ends. \index{SPC-structure!strict}
\end{definition}
We will drop the respectiveness when it is obvious. 
Also $\mathcal{O}$ with its real projective structure is {\em strictly SPC} if it the structure is a strict SPC-structure. 
Later we will show that the word ``some" can be replaced by ``all"
if we restrict the end types. (See Corollary \ref{cor:strictconv}.)


A {\em segment} is a connected arc in a one-dimensional subspace of $\bR P^n$. 
Given two points or subsets $A, B$ in an affine subspace $\bR^n$ of $\bR P^n$, we define the {\em join} $A \ast B$ \index{join} 
as the union of all segments in $\bR^n$ with end points in $A$ and $B$ 
respectively or its interior. More precisely, 
\[ A \ast B := \{[t v + (1-t) w]| t \in [0, 1], v \in C_A, w \in C_B \] 
where $C_A$ is the connected cone in $\bR^{n+1}$ mapping to $A$ and $C_B$ is one for $B$ 
in the open half space $H$ in $\bR^{n+1}$ corresponding to the affine subspace $\bR^n$. 
Since $A$ and $B$ are usually subsets of a convex domain in an affine space, the join is well-defined subset of the convex domain. 


For a topological manifold $A$, we denote by $\partial A$ the manifold boundary and \index{$\partial A$}
by $A^o$ the manifold interior. 
A {\em ray} is a segment starting from a point $v$ in $\bR P^n$ (resp. $\SI^n$) \index{ray}
that is oriented away from $v$. 
Two rays from $v$ are {\em equivalent} if the rays agree in a neighborhood of $v$. 
A {\em generalized lens} is a properly convex domain bounded by two open disks one of which is smooth and strictly convex \index{lens!generalized}
and the other boundary component allowed to be not strictly convex. 
A {\em generalized lens-cone} is a cone  $\{p\} \ast L -\{p\}$ over \index{lens-cone!generalized} 
a generalized lens $L$ so that every maximal ray from $p$ in $\{p\}\ast L$ meets 
each of the two boundary components of $L$ exactly once and the nonsmooth boundary component must be in the boundary of 
$\{p\} \ast L$. 
A lens or a generalized lens $L$ is a {\em strict lens} or {\em generalized strict lens} if $L - \partial L$ is nowhere dense in $\Bd L$. \index{lens!strict} \index{lens!generalized!strict}
A {\em cone over a lens with vertex $p$}, $p \not\in \clo(L)$, is defined as $\{p\} \ast L -\{p\}$ for a lens $L$ so that every maximal ray from 
$p$ in $\{p\}\ast L$ meets 
each of the two boundary components of $L$ exactly once. 
Any subdomain of $\torb$ projectively diffeomorphic to the interior of the above is called by
the same name as if they are in $\bR P^n$. 
\begin{itemize} 
\item An $\cR$-type end is of {\em lens-type} if $\torb$ contains the lens-cone where the end fundamental group
acts on the lens fixing the vertex. 
An $\cR$-type end is of {\em generalized lens-type} if $\torb$ contains the interior of 
a generalized lens-cone  where the end fundamental group
acts on the generalized lens fixing the vertex. 
\item An $\cR$-type end of a real projective orbifold is {\em admissible} if it is a radial end of lens-type or horospherical type and \index{end!admissible} 
a $\cT$-type end is {\em admissible } if it is totally geodesic (of lens-type) or horospherical type. 
\end{itemize} 
We can allow the $\cR$-type ends to be of generalized lens-type. 
Then we say that the ends are {\em admissible in the generalized sense}. \index{end!generalized admissible}
We require in both cases that the end fundamental group is a virtually a product of hyperbolic groups and abelian groups.
(See Section \ref{sub:admissible} for definitions.)
We define  \[\rep_{E, u, ce}(\pi_1(\mathcal{O}), \PGL(n+1, \bR))\] 
to be the subspace of 
\[\rep_{E, u}(\pi_1(\mathcal{O}), \PGL(n+1,\bR))\] 
where the holonomy group of each end fundamental group  is realized as that of an admissible end.
We define 
\begin{align} 
 & \rep_{E, u, ce}^s(\pi_1(\mathcal{O}), \PGL(n+1, \bR))    \nonumber \\  
 & \quad \quad := \rep_{E, u, ce}(\pi_1(\mathcal{O}), \PGL(n+1, \bR)) \cap \rep_E^s(\pi_1(\mathcal{O}), \PGL(n+1,\bR)).
 \end{align}
We will show that these are semi-algebraic sets or at least open subsets of such sets (See Section \ref{sub:semialg}.)


We are only using ``admissible'' end as this concept is equivalent to the principal boundary condition 
for two-dimensional real projective surfaces \cite{Gconv}. Also, in \cite{endclass}, we show that they are naturally
structurally stable, and definable by natural eigenvalue conditions.
(See Theorems 1.1, 1.2, 8.1, and 8.2 of \cite{endclass}.)
We remark that an example found by Ballas \cite{Ballas2} is not lens-type radial end. However, this end seems 
to be in the classification \cite{endclass}. We will attempt to understand these types of ends at later dates.  





We define 
\begin{itemize}
\item $\Def_{E, u, ce}(\mathcal{O})$ to 
be the subspace  $\Def_{E}(\mathcal{O})$ of classes of real projective structures with generalized admissible ends,
\item $\Def^s_{E, u, ce}(\mathcal{O})$ to be the deformation space of classes of real projective structures with generalized
admissible ends and stable and irreducible holonomy homomorphisms, 
\item $\CDef_{E, u, ce}(\mathcal{O})$ to be the deformation space of 
SPC-structures with generalized admissible ends and stable and irreducible holonomy homomorphisms, and 
\item $\SDef_{E, u, ce}(\mathcal{O})$ to be that of strict SPC-structures with 
admissible ends and stable and irreducible holonomy homomorphisms. 
\end{itemize} 

We will allow for these structures that a radial lens-cone end could change to a horospherical type
and vice versa and a totally geodesic lens end could change to a horospherical one and vice versa.
But we will not allow a radial lens-cone end to change to a totally geodesic lens end.

By an {\em essential} annulus $A$, we mean a map $f: A \ra \mathcal{O}$ so that 
components of $\partial A$ are mapped into end neighborhoods and to a homotopy class of infinite order, and $f$ is not homotopic into 
an end neighborhood relative to $\partial A$. By an {\em essential torus} $T^2$, 
we mean a map $f: T^2 \ra \mathcal{O}$ so that the induced homomorphism $f_*: \pi_1(T^2) \ra \pi_1({\mathcal{O}})$ 
is injective to a free abelian group of rank two and where $f$ is not freely homotopic into an end of $\mathcal{O}$. 

For a strongly tame orbifold $\mathcal{O}$, 
\begin{itemize}
\item[(IE)] $\orb$ or $\pi_1(\orb)$ satisfies the {\em infinite-index end fundamental group condition} (IE) \index{end!condition!IE}
if $[\pi_1(E):\pi_1(\mathcal O)] = \infty$ for the end fundamental group $\pi_1(E)$ of each end $E$. 
\item[(NA)] $\orb$ or $\pi_1(\orb)$ satisfies the {\em property-NA} \index{end!condition!NA} 
if $\orb$ has no essential annulus and 
$\pi_1(E)$ contains every element $g \in \pi_1(\orb)$ normalizing $\langle h \rangle$ for 
an infinite order $h \in \pi_1(E)$ for an end fundamental group $\pi_1(E)$ of an end $E$. 
\end{itemize} 
(NA) implies that $\orb$ contains no essential torus. These conditions are satisfied by complete hyperbolic manifolds with cusps,
the objects that we are trying to generalize. 
These are group theoretical properties with respect to the end groups. 


\begin{theorem}\label{thm:B} 
Let $\mathcal{O}$ be a noncompact strongly tame $n$-orbifold with generalized admissible ends. 
Assume $\partial \orb =\emp$. 
Suppose that $\mathcal{O}$  satisfies {\rm (IE)} and {\rm (NA).}  
Then 
\begin{itemize}
\item the subspace   of SPC-structures \[\CDef_{E, u, ce}(\mathcal{O}) \subset \Def^s_{E, u, ce}(\mathcal{O})\]  is open.
\item Suppose further that every finite-index subgroup of $\pi_1(\mathcal{O})$ contains no nontrivial infinite nilpotent normal subgroup
and $\partial \orb = \emp$. 
$\hol$ maps
$\CDef_{E, u, ce}(\mathcal{O})$  homeomorphically to a union of components of 
 \[\rep_{E, u, ce}(\pi_1(\mathcal{O}), \PGL(n+1, \bR)).\]
\end{itemize}
\end{theorem}
Theorems \ref{thm:conv} and \ref{thm:closed1} and Corollary \ref{cor:closed1} prove this and following theorems.
(The more general ``section versions'' will not be proved.)


\begin{theorem} \label{thm:C} 
Let $\mathcal{O}$ be a strict SPC noncompact strongly tame $n$-dimensional orbifold with admissible ends 
and satisfies {\rm (IE)} and {\rm (NA)}. 
Assume $\partial \orb =\emp$. 
Then
\begin{itemize}
\item $\pi_1(\mathcal{O})$ is relatively hyperbolic with respect to its end fundamental group.
\item The subspace  $\SDef_{E, u, ce}(\mathcal{O}) \subset \Def^s_{E, u, ce}(\mathcal{O})$,
of strict SPC-structures with admissible ends is open. 
\item  Suppose further that every finite-index subgroup of $\pi_1(\mathcal{O})$ contains no nontrivial infinite nilpotent normal subgroup
and $\partial \orb =\emp$. 
Then $\hol$ maps the deformation space $\SDef_{E, u, ce}(\mathcal{O})$ of 
strict SPC-structures on $\mathcal{O}$ with admissible ends homeomorphically to 
a union of components of \[\rep_{E, u, ce}(\pi_1(\mathcal{O}), \PGL(n+1, \bR)).\]
\end{itemize}
\end{theorem}

We will also show that an SPC-orbifold $\mathcal O$ with generalized admissible ends is strictly SPC with admissible ends iff 
$\pi_1(\mathcal{O})$ is relatively hyperbolic with respect to its end fundamental groups. 
(See Theorems \ref{thm:relhyp} and \ref{thm:converse}.) 

Finally we also remark that an example found by S. Ballas \cite{Ballas2} seems to obtain a parameters of radial ends not covered 
in this paper. This type is what we call quasi-joined ends in \cite{endclass} not included here. 
But the general philosophy of the paper is to treat these new types of ends as different strata to be added later. 

\section{Outline} 

The paper is divided into three parts: 
The part I is on the local homeomorphism property, i.e., Theorem \ref{thm:A}. 

In Chapter \ref{sec:prel}, we give elementary definitions of geometric structures, real projective 
structures, radial ends, totally geodesic ends and so on.
We give some well-known reducibility 
theorems for closed real projective orbifolds due to Koszul, Vey, and Benoist.
We discuss the admissible ends and their properties from \cite{endclass}. 
Also, the duality of the domains and the actions will be studied. 
Then we discuss the affine structures. 
We also study an affine suspension, a method to obtain an affine structure from a real projective structure.

In Chapter \ref{sec:loch}, we prove the local homeomorphism theorem, i.e., Theorem \ref{thm:A};
$\hol$ send the deformation space to the character space locally homeomorphically.
We discuss first the semialgebraic nature of the spaces. 
We will prove the theorem for the affine structure and change it to be applicable to real projective structures.
The methods are similar to what is in \cite{dgorb}.
Here, we need to have continuous sections of eigenvectors in the end holonomy groups. 
Finally, we transfer the theorem to the real projective cases using affine suspensions. 

In Part II, we discuss the convexity properties of the orbifolds and related these to the relative hyperbolicity of 
the fundamental groups of the orbifolds. 

In Chapter \ref{sec:conv}, we discuss convexity and define convex real projective structures on orbifolds.
We discuss horospherical ends and lens-shaped 
ends and their properties and various facts concerning their existence, stability, and examples and so on.
We also discuss the duality of $\cR$-type ends and $\cT$-type ends. 
If $\mathcal{O}$ satisfies (NA), then we define the boundary of the convex hulls of p-ends of $\torb$.
These results are mostly from \cite{endclass}.

In Chapter \ref{sec:SPC}, we define stable irreducible properly convex real projective structures or SPC-structures on orbifolds. 
We define strict SPC-structures also.
We show using Bowditch's approach 
 that an SPC-orbifold has a relatively hyperbolic fundamental group with respect to 
its end fundamental groups if and only if the SPC-orbifold is strictly SPC. (Theorems \ref{thm:relhyp} and \ref{thm:converse}.)

In Part III, we discuss the openness and the closedness of the deformation spaces of convex real projective structures on orbifolds
and finally an example where our theory applies. 
A {\em deformation } means changing the real projective structures so that 
the developing maps change continuously in the $C^r$-topology, $ r \geq 1$, on every compact subset of $\torb$.

In Chapter \ref{sec:openness}, 
we prove that if ends of an orbifold are admissible in a generalized sense, 
then the deformations of (resp. strictly) SPC-structures will remain (resp. strictly) SPC-structures under 
irreducibility conditions, i.e., Theorem \ref{thm:B}.
The proof is divided into two: First, we show that there is a Hessian metric and under small perturbations of the real projective structures, 
we can still find a nearby Hessian metric. Basically, we find that the Koszul-Vinberg 
functions of the affine suspensions change by very small amounts.
Second, the Hessian metric and the boundary orbifold convexity assumption imply convexity. 

In Chapter \ref{sec:closed}, we show that the deformation space $\CDef_{E, u, ce}(\orb)$ or $\SDef_{E, u, ce}(\orb)$  
maps homeomorphic to the union of components of 
\[\rep_{E, u, ce}(\pi_1(\mathcal{O}), \PGL(n+1, \bR))\]
under appropriate assumptions.  
We will use the Margulis Lemma of Cooper, Long, and Tillmann for the thin subgroups of the discrete projective automorphism 
groups acting on properly convex domains. (See \cite{CLT3}.)

In Chapter \ref{sec:examples}, we describe some examples where the theory is applicable. 
These include the examples of S. Tillmann and a double of a tetrahedron reflection group of all order 3.



\subsection{Acknowledgment}
We thank Yves Benoist, Yves Carri\`ere, Daryle Cooper, Mika\"el Crampon, 
Kelly Delp, William Goldman, Ludovic Marquis,
Hyam Rubinstein, and Stephan Tillmann for many discussions. 
This paper began from a discussion the author had with  Tillmann on his construction of a parameter of 
real projective structures on small complete hyperbolic 3-orbifold. 
We thank D. Fried for helping with the algebraic nature of the character varieties
in Subsection \ref{sub:semialg}.
Many helpful discussions were carried out with C. Hodgson and we hope to publish the resulting 
examples in another papers. 
We also thank B. Bowditch, C. Drutu, and M. Kapovich for many technical discussions and the valuable help
with the geometric group theory used here. In fact, C. Drutu supplied us with the proof of Theorem \ref{thm:drutu}. 


\part{The local homeomorphisms of the deformation spaces into the spaces of characters}

\chapter{Preliminary}\label{sec:prel}

\section{Orbifolds with ends}
Let $H^n$ be the closed half-space of $\bR^n$. 
An $n$-dimensional {\em orbifold} is a second-countable Hausdorff space with an orbifold structure. 
An orbifold structure is given by a fine covering by open sets of form $\phi(U)$ where 
$(U, G, \phi)$ is a triple of an open subset of $H^n$ and $\phi:U \ra \phi(U)$ is a quotient map 
inducing a homeomorphism $U/G \ra \phi$ for a finite group $G$ acting on $U$. 
An inclusion map of an open set $\phi(U)$ with the model $(U, G, \phi)$ 
to another one $\psi(V)$ with the model $(V, H, \psi)$ induces an inclusion map $U \ra V$ equivariant 
with respect to an injection of the groups $G \ra H$ determined up to conjugations.  (See \cite{ALR} for details.)
An {\em orbifold structure} is a maximal fine covering. \index{orbifold}
We call $(U, G, \phi)$ the {\em model}. 


An orbifold $\mathcal{O}$ often has a simply-connected manifold as a covering space. 
In this case the orbifold is said to be {\em good}. We will assume this always for our orbifolds. \index{orbifold!good} 
(Orbifolds with geometric structures 
are always good by Thurston. See Chapter 13 of \cite{Thnote} and Chapter 6 of \cite{dgorb}.) 
Such a covering $\tilde{\mathcal{O}}$ is unique up to covering equivalences and 
is said to be the universal cover. There is a discrete group $\pi_1(\mathcal{O})$ acting on the universal 
cover so that we recover $\mathcal{O}$ as a quotient orbifold $\tilde{\mathcal{O}}/\pi_1(\mathcal{O})$, where 
$\pi_1(\mathcal{O})$ is said to be the (orbifold) fundamental group of $\mathcal{O}$. \index{orbifold!fundamental group}

The {\em local group} of a point of $\tilde{\mathcal{O}}$ is the inverse limit of the group acting on the model neighborhoods \index{orbifold!local group}
ordered by the lifts of the inclusion maps. It is well-defined. 

An {\em end neighborhood} of an orbifold is a component of the complement of a compact subset of an orbifold. \index{end!end neighborhood} 
The collection of the end neighborhoods is partially ordered by inclusion maps. 
\begin{itemize} 
\item An {\em end} is an equivalence class of sequences of end neighborhoods \index{end} 
\[U_i, i=1,2,..., U_i \supset U_{i+1} 
\hbox{ and } \bigcap_{i=1, 2,..} \clo(U_i)=\emp.\] 
\item Two such sequences $U_i$ and $V_j$ are {\em equivalent} if 
for each $i$, there exist $j, j'$ such that $U_i \supset V_j$ and $V_i \supset U_{j'}$. 
\end{itemize} 

Given an end $E$, we can associate the end fundamental group $\pi_1(E)$ since we can always find 
a sequence of proper end neighborhoods of product type in the end class. 
That is, $\pi_1(E)$ is defined as the inverse limit of $ \{\Imm(\pi_1(U_i)) \subset \pi_1(\orb)\}$ where maps are 
\[\Imm(\pi_1(U_i)) \ra \Imm(\pi_1(U_j)) \subset \pi_1(\orb),  i >  j.\]

Given the universal cover of $\torb$, a {\em proper pseudo-end neighborhood} is a component of an inverse image of  \index{end!p-end neighborhood} 
an end neighborhood. Two proper pseudo-end neighborhoods are {\em equivalent} if they intersect such a one. 
The equivalence class of a system of proper pseudo-end neighborhood is called a {\em pseudo-end} \index{end!p-end}
in $\torb$. If we require that the proper pseudo-end neighborhood is from one end $E$,  
there is a one-to-one correspondence 
\[ \{ \tilde E | \tilde E \hbox{ is a pseudo-end of } \torb \hbox{ corresponding to } E\} \Leftrightarrow \pi_1(\orb)/\pi_1(E).\]
The subgroup of deck transformation groups $\pi_1(\orb)$ preserving the class of a pseudo-end $\tilde E$
is called a {\em pseudo-end fundamental group} and is denoted by $\pi_1(\tilde E)$, obviously isomorphic to $\pi_1(E)$. 
Moreover, the deck transformation group $\pi_1(\orb)$ acts on the set of pseudo-ends and each orbit-equivalence class corresponds to an 
end of $\orb$. Hence, there is a natural map from the set of pseudo-ends of $\torb$ to the set of ends of $\orb$
induced by the covering map.


\section{Geometric structures on orbifolds} \label{sub:gso} 
Let $G$ be a Lie group acting on an $n$-dimensional manifold $X$.
For examples, we can let $X=\bR^n$ and $G=\Aff(\bR^n)$ for the affine group $\Aff(\bR^n)= \GL(n,\bR) \ltimes \bR^n$, \index{$\Aff(\bR^n)$}
i.e., the group of transformations of form $v \mapsto Av + b$ for $A \in \GL(n, \bR)$ and $b \in \bR^n$. 
Or we can let $X=\bR P^n$ and $G=\PGL(n+1, \bR)$, the group of projective transformations of 
$\bR P^n$.

The complement of $\bR P^n$ of a subspace of codimension-one can be identified with 
an affine subspace. 
We realize $\Aff(\bR^n)$ as a subgroup of transformations of $\PGL(n+1, \bR)$ fixing a subspace of codimension-one
as there is an isomorphism 
\[ (A, b) \mapsto \left[ \begin{array}{cc} A & b^T \\ 0  & 1 \end{array} \right], A \in \GL(n, \bR), b \in \bR^n \]
where $b^T$ is the transpose of $b$.

An {\em $(X,G)$-structure} on an orbifold $\mathcal{O}$ is an atlas of charts from open subsets of \index{$(X, G)$-structure} \index{geometric structure} 
$X$ with finite subgroups of $G$ acting on them, and the inclusions always lift to restrictions of 
elements of $G$ in open subsets of $X$. This is equivalent to saying that the orbifold $\mathcal{O}$ has 
a simply connected manifold cover $\tilde{\mathcal{O}}$ with an immersion 
$D:\tilde{\mathcal{O}} \ra X$ and the fundamental group $\pi_1(\mathcal{O})$ acts on $\tilde{\mathcal{O}}$ properly \index{developing map} \index{holonomy homomorphism} 
discontinuously so that $h: \pi_1(\orb) \ra G$ is a homomorphism satisfying $D\circ \gamma = h(\gamma)\circ D$ for 
each $\gamma \in \pi_1(\orb)$.  Here, $\pi_1(\orb)$ is allowed to have fixed points. 
(We shall use this second definition here.)
$(D, h(\cdot))$ is called a {\em development pair} and for a given $(X,G)$-structure, it is determined only up to an action \index{development pair} 
\[(D, h(\cdot)) \mapsto (k\circ D, kh(\cdot)k^{-1}) \hbox{ for } k\in G.\] 
Conversely, a development pair completely 
determines the $(X,G)$-structure. 

Thurston showed that an orbifold with an $(X, G)$-structure is always good, i.e., covered \index{orbifold!good} 
by a manifold with an $(X, G)$-structure. 


An {\em isotopy} of an orbifold $\mathcal{O}$ is a map $f:\mathcal{O} \ra \mathcal{O}$  \index{isotopy}
with a map $F:\mathcal{O}\times I \ra \mathcal{O}$ 
so that 
\begin{itemize}
\item $F_t:\mathcal{O} \ra \mathcal{O}$ for $F_t(x) := F(x, t)$ every fixed $t$ is an orbifold 
diffeomorphism,  
\item $F_0$ is the identity, and 
\item $f=F_1$. 
\end{itemize}
Given an $(X,G)$-structure on another orbifold $\mathcal{O}'$, 
any orbifold diffeomorphism $f:\mathcal{O} \ra \mathcal{O}'$ induces an $(X,G)$-structure 
pulled back from $\mathcal{O}'$ which is given by using 
the local models of $\mathcal{O}'$ for preimages in $\mathcal{O}$.

Suppose that $\mathcal{O}$ is compact. 
We define the {\em isotopy-equivalence space} $\widetilde{\Def}_{X, G}(\orb)$ as  \index{isotopy-equivalence space} 
the space of development pairs $(\dev, h)$ quotient by the isotopies of the universal cover $\torb$ of $\orb$.
The {\em deformation space} $\Def_{X,G}(\mathcal{O})$ is given by the quotient of $\widetilde{\Def}_{X, G}(\orb')$
by the action of $G$: $g (\dev, h(\cdot)) = (g \circ \dev, g h(\cdot) g^{-1})$. (See \cite{dgorb} for details.) 
We can also interpret as follows: 
The deformation space $\Def_{X,G}(\mathcal{O})$ of the $(X,G)$-structures is \index{deformation space} 
the space of all $(X,G)$-structures on $\mathcal{O}$ quotient by the isotopy pullback actions.

This space can be thought of as the space of pairs $(D, h)$ with compact open $C^r$-topology for $r \geq 1$ \index{deformation space!topology} 
and the equivalence relation generated by the isotopy relation 
\begin{itemize}
\item $(D, h) \sim (D', h')$ if $D'=D\circ \iota$ and $h'=h$ for a lift $\iota$ of an isotopy
and 
\item $(D, h) \sim (D', h')$ if $D'=k\circ D$ and $h(\cdot)=kh(\cdot)k^{-1}$ for $k \in G$. 
\end{itemize} 
(See \cite{dgorb} or Chapter 6 of \cite{msj}.) 

For noncompact orbifolds with end structures, similar definitions hold except that we have to modify
the notion of isotopies to preserve the end structures.

\section{Oriented real projective structures} \label{subsec:orp} 

\begin{itemize}
\item Given a vector $v \in \bR^{n+1} -\{O\} $, we denote by $[v] \in \bR P^n$ the equivalence class. 
Let $\Pi: \bR^{n+1} -\{O\} \ra \bR P^n$ denote the projection. 
\item A {\em cone} in $\bR^{n+1}$ is a subset $C$ so that if $v \in C$, then $sv \in C$ for all $s \in \bR_+$. 
\item Given a connected subset $A$ of $\bR P^n$, a cone $C_A \subset \bR^{n+1} $ of $A$ is given 
as a cone in $\bR^{n+1}$ mapping onto $A$ under the projection $\Pi: \bR^{n+1} -\{O\} \ra \bR P^n$.
\item $C_A$ is unique up to the antipodal map $\mathcal{A}: \bR^{n+1} \ra \bR^{n+1}$ given by $v \ra -v$. 
\end{itemize}

We will be using the standard elliptic metric $\bdd$ on $\bR P^n$ (resp. in $\SI^n$) where
the set of geodesics coincides with the set of projective geodesics 
up to parameterizations. 
An open hemisphere of $\SI^n$ is called an {\em affine subspace} in $\SI^n$ \index{affine subspace} 
A {\em great segment} is a geodesic arc in $\SI^n$ with antipodal p-end vertices. A {\em convex segment} is an arc contained in \index{great segment} \index{convex segment} 
a great segment. 
A {\em convex subset} of $\SI^n$ is a subset $A$ where every pair of points of $A$ connected by a convex segment. \index{convex set} \index{$\bdd$} \index{elliptic metric}

Recall that $\SL_\pm,(n+1, \bR)$ is isomorphic to $\GL(n+1, \bR)/\bR^+$. 
Then this group acts on $\SI^n$ to be seen as a quotient space of $\bR^{n+1}-\{O\}$ by the equivalence relation \index{$\SI^n$}
\[v \sim w, v, w \in \bR^{n+1}-\{O\} \hbox{ if } v = s w \hbox{ for } s \in \bR^+.\] 
Wet let $[v]$ denote the equivalence class of $v \in \bR^{n+1} -\{O\}$. 
Given a  $V \in \bR^{n+1}$, we denote by $S(V)$ the image of $V -\{O\}$ under the quotient map. \index{$S(\cdot)$}
The image is called a {\em subspace}. A set of antipodal points is a subspace of dimension $0$.  \index{subspace} 
There is a double covering map $\SI^n \ra \bR P^n$ with the deck transformation group generated by
$\mathcal{A}$. 
This gives a projective structure on $\SI^n$. The group of projective automorphisms is identified with $\SL_\pm(n+1, \bR)$. \index{$\SL_\pm(n+1, \bR)$}
The notion of geodesics are defined as in the projective geometry: they correspond 
to arcs in great circles in $\SI^n$. 

A collection of subspaces $P(V_1), \dots, P(V_n)$ 
(resp. $S(V_1), \dots, S(V_n)$) are {\em independent} if the subspaces $V_1, \dots, V_n$ are independent. \index{independent subspaces}

An $(\SI^n, \SL_\pm(n+1, \bR))$-structure on $\mathcal{O}$ is said to be an {\em oriented real projective structure} on $\mathcal{O}$. \index{real projective structure!oriented} 
We define $\Def_{\SI^n}(\mathcal{O})$ as the deformation space of $(\SI^n, \SL_\pm(n+1, \bR))$-structures on $\mathcal{O}$.


The group $\SL_{\pm}(n+1, \bR)$ of linear transformations of determinant $\pm 1$ 
maps to the projective group $\PGL(n+1, \bR)$ by a double covering homomorphism $\hat q$, 
and $\SL_{\pm}(n+1, \bR)$ acts on $\SI^n$ lifting the projective transformations. 
The elements are also {\em projective transformations}. 

We now discuss the standard lifting: 
 Given a real projective structure on $\mathcal{O}$, there is a development pair $(\dev, h)$ where 
$\dev: \tilde{\mathcal{O}} \ra \bR P^n$ is an immersion and $h: \pi_1(\mathcal{O}) \ra \PGL(n+1, \bR)$ is a homomorphism. \index{developing map!lifting} 
Since $\SI^n \ra \bR P^n$ is a covering map and $\tilde{\mathcal{O}}$ is a simply connected manifold,  \index{holonomy homomorphism!lifting} 
$\mathcal{O}$ being a good orbifold, 
there exists a lift $\dev': \tilde{\mathcal{O}} \ra \SI^n$ unique up to the action of $\{\Idd, -\Idd\}$.
This induces an oriented projective structure on $\tilde{\mathcal{O}}$ and $\dev'$ is a developing map for this geometric structure.
Given a deck transformation $\gamma:\tilde{\mathcal{O}} \ra \tilde{\mathcal{O}}$, the composition
$\dev' \circ \gamma$ is again a developing map for the geometric structure
and hence equals $h'(\gamma) \circ \dev'$ for $h'(\gamma) \in \SL_\pm(n+1, \bR)$. 
We verify that $h':\pi_1(\mathcal{O}) \ra \SL_\pm(n+1, \bR)$ is a homomorphism. 
Hence, $(\dev', h')$ gives us an oriented real projective structure, which 
induces the original real projective structure. 

Again, we can define the {\em radial end structures} and {\em totally geodesic ideal boundary} for \index{end!radial end structure} 
oriented real projective structures and also horospherical end neighborhoods in obvious ways. \index{end!totally geodesic!ideal boundary} 
They correspond in the direct way in the following theorem also. 

\begin{theorem}\label{thm:doubledef} 
There is a one-to-one correspondence between the space of real projective structures on an orbifold $\orb$ 
with the space of oriented real projective structures on $\orb$. 
Moreover, a real projective diffeomorphism
of real projective orbifolds is an $(\SI^n, \SL_\pm(n+1, \bR))$-diffeomorphism of oriented real projective orbifolds
and vice versa. 
\end{theorem} 
\begin{proof} 
Straightforward. See p. 143 of Thurston \cite{Thbook}.
\end{proof}

\begin{theorem}\label{thm:vgood} 
A real projective orbifold $S$ is covered finitely by a real projective manifold $M$
and $S$ is real projectively diffeomorphic to $M/G_1$ for a finite group $G_1$ of real projective automorphisms of $M$. 
An affine orbifold $S$ is covered finitely by an affine manifold $N$, and $S$ is affinely 
diffeomorphic to $N/G_2$ for a finite group $G_2$ of affine automorphisms of $N$. 
\end{theorem} 
\begin{proof} 
Since $\Aff(\bR^n)$ is a subgroup of a general linear group, Selberg's Lemma shows that there exists 
a torsion-free subgroup of the deck transformation group.  We can choose the group to be a normal subgroup
and the second item follows. 

A real projective structure induces an $(\SI^n, \SL_\pm(n+1, \bR))$-structure and vice versa. 
Also a real projective diffeomorphism of orbifolds is an $(\SI^n, \SL_\pm(n+1, \bR))$-diffeomorphism and vice versa. 
We regard the real projective structures on $S$ and $M$ as $(\SI^n, \SL_\pm(n+1, \bR))$-structures. 
We are done by Selberg's lemma that a finitely generated subgroup of a general linear group has a torsion-free subgroup of \index{Selberg's lemma} 
finite-index. 
\end{proof}


\section{Metrics} \label{subsec:metrics}
\subsection{The Hausdorff metric}
Recall the standard elliptic metric $\bdd$ on $\bR P^n$ (resp. in $\SI^n$) where \index{Hausdorff metric} \index{elliptic metric} \index{$\bdd$}
Given two sets $A$ and $B$ of $\bR P^n$ (resp of $\SI^n$), 
\[\bdd(A, B):=\inf \{\bdd(x, y)| x \in A, y \in B\}.\]
We can let $A$ or $B$ be points as well obviously. 


The {\em Hausdorff distance} between two convex subsets $K_1, K_2$ of $\bR P^n$ (resp. of $\SI^n$)
is defined by 
\[ \bdd^H(K_1, K_2) = \inf\{\eps\geq 0|  \clo(K_1) \subset N_\eps(\clo(K_2)), \clo(K_2) \subset N_\eps(\clo(K_1))\}\]
where $N_\eps(A)$ is the $\eps$-$\bdd$-neighborhood of $A$ under the standard metric $\bdd$ of $\bR P^n$ (of $\SI^n$) for $\eps >0$. 
$\bdd^H$ gives a compact Hausdorff topology on the set of all compact subsets of $\bR P^n$ (of $\SI^n$).
(See p. 281 of \cite{Munkres}.)

We say that a sequence of sets $\{K_i\}$ geometrically converges to a set $K$ if $\bdd^H(K_i, K) \ra 0$. \index{$\bdd^H$}
If $K$ is assumed to be closed, then the geometric limit is unique. 

Suppose that a sequence $\{K_i\}$  of compact convex domains geometrically converges to 
a compact convex domain $K$ in $\bR P^n$ (resp. in $\SI^n$),  i.e., $\bdd^H(K_i , K) \ra 0$. 
In this case we claim that
\begin{equation}\label{eqn:partialKi}
 \bdd^H(\partial K_i, \partial K) \ra 0:
 \end{equation}   
For every point $x$ of $\partial K$, and an $\eps$-$\bdd$-ball $B_x$, $B_x \cap \partial K_i \ne \emp$ 
for sufficiently large $i$ since $B_x$ must meet $K_i$ and be not contained in $K_i$ for sufficiently large $i$. 
There exists some sequence $x_i \in \partial K_i$ so that $x_i \ra x$.
Conversely, every convergence sequence $\{x_i\}$, $x_i \in \partial K_i$, must converge to $x \in \partial K$.

Given two sets $A$ and $B$ of $\torb$ or $\orb$ with a metric $d$, we define
\[d(A, B) := \inf \{ d(x, y)| x \in A, y \in B\}.\] 
The definition obviously extends to the cases when $A$ or $B$ are points. 


\subsection{The Hilbert metric}
Let $\Omega$ be a properly convex open domain. A line or a subspace of dimension-one 
in $\bR P^n$ has a two-dimensional homogenous coordinate system. 
Let $[o, s, q, p]$ denote the cross ratio of four points on a line as defined by 
\[ \frac{\bar o - \bar q}{\bar s - \bar q} \frac{\bar s - \bar p}{\bar o - \bar p} \] 
where $\bar o, \bar p, \bar q, \bar s$ denote respectively the first coordinates of the homogeneous coordinates \index{Hilbert metric} 
of $o, p, q , s$ so that the second coordinates equal $1$. 
Define a metric for $p, q \in \Omega$, 
$d_\Omega(p, q)= \log|[o,s,q,p]|$ where $o$ and $s$ are 
endpoints of the maximal segment in $\Omega$ containing $p, q$
where $o, q$ separates $p, s$. 
The metric is one given by a Finsler metric. (See \cite{Kobpaper}.)

Given an SPC-structure on ${\mathcal{O}}$, there is a Hilbert metric which we denote by $d_{\torb}$ \index{$d_{\torb}$}
on $\tilde{\mathcal{O}}$ and hence on $\tilde {\mathcal{O}}$. 
Actually, we will make $\orb$ slightly small by inward perturbations of $\partial \orb$ preserving the strict convexity of $\partial \orb$. 
The Hilbert metric will be defined on original $\torb$. (We call this metric the {\em perturbed Hilbert metric}.)
This induces a metric on ${\mathcal{O}}$, including the boundary now. 
We will denote the metric by $d_{\orb}$. 
More precisely, 

Assume that $K_i \ra K$ geometrically for a sequence of properly convex domains $K_i$ and
a properly convex domain $K$.  
Suppose that two sequences of points $\{x_i| x_i \in K_i^o\}$ and $\{y_i| y_i \in K_i^o\}$ 
converge to $x, y \in K^o$ respectively. Since the end of a maximal segments always are in $\partial K_i$
and $\partial K_i \ra \partial K$, the above shows that 
\begin{equation} \label{eqn:HiHa}
d_{K_i^o}(x_i, y_i) \ra d_{K^o}(x, y)
\end{equation} 
holds.  We omit the details of the elementary proof. 

\section{Convexity and convex domains}\label{subsec:conv}

An affine manifold is {\em convex} if every path can be homotopied to an affine geodesic with endpoints fixed. \index{affine manifold} 
A {\em complete real line} in $\bR P^n$ is a $1$-dimensional affine subspace of $\bR^n$ with denote it by $\bR$. 
In $\SI^n$, a complete real line is defined as the interior of a great segment. 
An affine manifold is {\em properly convex} if there is no affine map from $\bR$ into it; i.e., there is no
complete affine line in its universal cover. 

\begin{proposition} 
\label{prop:affconv} 
An affine manifold is convex if and only if a developing map sends 
the universal cover to a convex open domain in $\bR^n$. 
An affine manifold is properly convex if and only if the developing map sends 
the universal cover to a properly convex open domain in $\bR^n$. 
\end{proposition}
\begin{proof} 
The first part is Theorem 8.1 of Shima \cite{Shbook} or Theorem A.2 of \cite{psconv}.
The second part is Theorem 8.3 of \cite{Shbook} since the hyperbolicity of Kobayashi
is equivalent to the proper convexity. (See Kobayashi \cite{Kobpaper}.)

\end{proof} 




A complete real line in $\bR P^n$ is a $1$-dimensional subspace of $\bR P^n$ with one point removed.
That is, it is the intersection of a $1$-dimensional subspace by an affine subspace. 
A {\em convex} projective geodesic is a projective geodesic in a real projective manifold which lifts to \index{convex}
a convex segment in $\SI^n$. 
A real projective manifold is {\em convex} if every path can be homotopied to a convex projective geodesic with endpoints fixed. 
It is {\em properly convex} if there is no projective map from the complete real line $\bR$. (See Chapters 2 and 3 of \cite{psconv} for more details.) \index{convex!properly convex}

\begin{proposition}[Vey]\label{prop:projconv}
\begin{itemize}
\item A strongly tame real projective orbifold is properly convex if and only if the developing map sends 
the universal cover to a properly convex open domain bounded in an affine subspace of $\bR P^n$. 
\item If a convex real projective orbifold is not properly convex, then
its holonomy homomorphism is virtually reducible.
\end{itemize}
\end{proposition}
\begin{proof} 
For the first part, the affine suspension has a developing image to a properly convex subset of 
an affine subspace. (See Section \ref{sub:asusp}.)
For the final item, see \cite{ChCh}. (See also \cite{GV}.)
\end{proof} 

\begin{lemma} \label{lem:locconv}
Let $K$ be a compact domain with boundary $\partial K\ne \emp$ with a local homeomorphism ${\mathcal J}$ to $\SI^n$ so that 
each point has a neighborhood in $J$ imbedding onto a convex domain. Then ${\mathcal J}$ maps $K$ homeomorphically to a convex domain 
in $\SI^n$ and give $K$ a convex real projective structure with possibly nonsmooth boundary. 
\end{lemma}
\begin{proof}
${\mathcal J}$ induces a real projective structure on $K$. We can show that $K$ is now $1$-convex. 
Now, Theorem A.2 of \cite{psconv} proves this. 
\end{proof}

\begin{proposition}[Corollary 2.13 of Benoist \cite{Ben3}]\label{prop:Benoist}  
Suppose that a discrete subgroup $\Gamma$ of $\PGL(n, \bR)$ \rlp resp. $\SL_{\pm}(n, \bR)$\rrp \,
 acts on a properly convex $(n-1)$-dimensional open domain $\Omega$ in $\bR P^{n-1}$ \rlp resp. $\SI^{n-1}$\rrp \, so 
that $\Omega/\Gamma$ is compact. Then the following statements are equivalent. 
\begin{itemize} 
\item Every subgroup of finite index of $\Gamma$ has a finite center. 
 \item Every subgroup of finite index of $\Gamma$ has a trivial center. 
\item Every subgroup of finite index of $\Gamma$ is irreducible in $\PGL(n, \bR)$ \rlp resp. $\SL_{\pm}(n, \bR)$\rrp. 
That is, $\Gamma$ is strongly irreducible. 
\item The Zariski closure of $\Gamma$ is semisimple. 
\item $\Gamma$ does not contain a normal infinite nilpotent subgroup. 
\item $\Gamma$ does not contain a normal infinite abelian subgroup.
\end{itemize}
\end{proposition}
The group with properties above is said to be the group with a {\em virtually center free group} or a {\em vcf-group}. \index{vertually center free group}

\begin{theorem}[Theorem 1.1 of Benoist \cite{Ben3}] \label{thm:Benoist} 
Let $\Gamma$ be a discrete subgroup of $\PGL(n, \bR)$ {\rm (}resp. $\SL_{\pm}(n, \bR)${\rm )} with a trivial virtual center. 
Suppose that a discrete subgroup $\Gamma$ of $\PGL(n, \bR)$ {\rm (}resp. $\SL_{\pm}(n, \bR)${\rm )} acts on 
a properly convex $(n-1)$-dimensional open domain $\Omega$ so 
that $\Omega/\Gamma$ is a compact orbifold.
Then every representation of a component of $\Hom(\Gamma, \PGL(n, \bR))$ {\rm (}resp. $\Hom(\Gamma, \SL_{\pm}(n, \bR)) ${\rm )} containing the inclusion 
representation also acts on a properly convex $(n-1)$-dimensional open domain cocompactly. 
\end{theorem}

Given subspaces $V_1, \dots, V_m \subset \bR P^n$ (resp.  $\subset \SI^n$) where 
any two are mutually disjoint, 
and a subset $C_i \subset V_i$ for each $i$, 
we define a {\em strict join} of $n$ sets $C_1, \dots, C_m$
\[ C_1 * \cdots * C_m := \left\{ [ \sum_{i=1}^m t_i v_i ] | \sum_{i=1}^m t_i = 1, t_i \in [0, 1], v_i \in C_{C_i} \right\},\]
where $C_{C_i}$ is a cone in $\bR^{n+1}$ corresponding to $C_i$.  (Of course, this depends on the choices of $C_{C_i}$
up to $\mathcal{A}$.) 

A point $x$ of a strict join $C_1 \ast \cdots \ast C_j$ for convex sets $C_i$ has 
{\em join-coordinates} $[\lambda_1, \dots, \lambda_j]$ if $x =[ \sum_{i=1}^k \lambda_i \vec{v}_i]$ for  \index{join!strict} \index{join!coordinates} 
$\vec{v}_i $ a vector in the cone corresponding to $C_i$. 

A {\em cone-over} a strictly joined domain is one containing a strictly joined domain $A$ and 
is a union of segments from a cone-point $\not\in A$  to points of $A$.

\begin{proposition}[Theorem 1.1 of Benoist \cite{Ben3}] \label{prop:Ben2} Assume $n \geq 2$. 
Let $\Sigma$ be a closed $(n-1)$-dimensional properly convex projective orbifold 
and let $\Omega$ denote its universal cover in $\bR P^{n-1}$ \rlp resp. in $\SI^{n-1}$\rrp.
Then 
\begin{itemize}
\item[(i)] $\Omega$ is projectively diffeomorphic to the interior of 
a strict join $K_1 * \cdots * K_{l_0}$ where $K_i$ is a properly convex open domain of dimension $n_i \geq 0$
corresponding to a convex cone $C_i \subset \bR^{n_i+1}$. 
\item[(ii)] $\Omega$ is the image of the interior of $C_1 \oplus \cdots \oplus C_r$. 
\item[(iii)] The fundamental group 
$\pi_1(\Sigma)$ is virtually isomorphic to $\bZ^{l_0-1} \times \Gamma_1 \times \cdots \times \Gamma_{l_0}$ for 
$l_0 + \sum n_i = n$. Each $\Gamma_i$ has the property that each finite index subgroup has a trivial center. 
\item[(iv)] Each $\Gamma_j$ acts on $K_j$ cocompactly and the Zariski closure is 
a semi-simple Lie group in $\PGL(n_j+1, \bR)$ \rlp resp. in $\SL_\pm(n_j+1, \bR)$\rrp, 
and acts trivially on $K_m$ for $m \ne j$. 
\item[(v)] The subgroup corresponding to $\bZ^{l_0-1}$ acts trivially on each $K_j$
\end{itemize} 
\end{proposition} 
Supposing that $\pi_1(\Sigma)$ is admissible, the Zariski closure of $\Gamma_j$
is one of $O(n_j+1, 1)$, $\PGL(n_j+1, \bR)$, $\SL_\pm(n_j+1, \bR)$, or a union of their components. 

%
%
%

A {\em convex hull} of a compact subset $A$ of $\bR P^n$ is defined 
as the smallest closed convex subset containing $A$  if $A$ is a bounded subset of an affine subspace of $\bR P^n$. \index{convex hull} 
This definition is independent of the choice of the affine subspace. 
A {\em convex hull} of a compact subset $A$ of $\SI^n$ is defined 
as the smallest closed convex subset containing $A$ in $\SI^n$. (A pair of antipodal points does not have a convex hull.)
By the compactness of $\bR P^n$ (resp. $\SI^n$), a convex hull of a compact set $A$ 
is a union of the set $S_1$ of $1$-simplices with endpoints in the closure of $A$ and 
the set $S_2$ of $2$-simplices with boundary edges in $S_1$ and $S_i$ of $i$-simplices with 
boundary sides in $S_{i-1}$ for $i=3, 4, \dots, n$. 
We denote it by $CH(A)$.

\section{Geometric convergence of convex real projective orbifolds} 

We say that a set $A$ {\em span} a subspace $S$ in $\bR P^n$ (resp, $\SI^n$) if $S$ is the smallest subspace containing $A$. \index{subspace!span} 
Now Proposition \ref{prop:permconv} covers the case of Corollary \ref{cor:smvar} when $\Gamma$ is virtually center-free or Benoist
since $K_t$ are always properly convex. 

\begin{corollary}\label{cor:smvar} 
Suppose that the fundamental group $\Gamma$ of a closed $(n-1)$-orbifold $\Sigma$ is 
a virtual product of hyperbolic groups and abelian groups. 
We are given a path $\mu_t$, $t \in [0, 1]$, 
of convex $\bR P^{n-1}$-structures on $\Sigma$ equipped with $C^r$-topology, $r\geq 2$. 
Suppose that $\mu_0$ is properly convex or complete affine with abelian holonomy.
\begin{itemize} 
\item We can find a  family of developing maps $D_t$ to $\bR P^{n-1}$  \rlp resp. in $\SI^{n-1}$\rrp\, continuous in the $C^r$-topology
and a continuous family of holonomy homomorphisms $h_t: \Gamma \ra \Gamma_t$ 
so that $K_t := \clo(D_t(\tilde \Sigma))$ is a uniformly continuous family of convex domains in $\bR P^{n-1}$ \rlp resp. in $\SI^{n-1}$\rrp\, under 
the Hausdorff metric topology of the space of closed subsets of $\bR P^{n-1}$ \rlp resp. $\SI^n$\rrp.
\item In other words, 
given $0< \eps < 1/2$ and $t_0, t_1 \in [0, 1]$, we can find $\delta > 0$ such that if $|t_0 -t _1| < \delta$, then 
$K_{t_1} \subset N_\eps(K_{t_0})$ and $K_{t_0} \subset N_\eps(K_{t_1})$.
\item Also, given $0 < \eps < 1/2$ and $t_0, t_1 \in [0,1]$, we can find $\delta >0$
such that if $|t_0 -t _1| < \delta$, then 
$\partial K_{t_1} \subset N_\eps(\partial K_{t_0})$ and $\partial K_{t_0} \subset N_\eps(\partial K_{t_1})$.
\item Finally, $\mu_t$ is always virtually immediately deformable to a properly convex structure. 
\end{itemize} 
Alternatively, these are true whenever we choose $(D_t, h_t)$ so that $h_t$ is chosen to be a continuous path 
\[h_t: [0, 1] \ra \Hom(\Gamma, \PGL(n, \bR))\,  (\hbox{resp. } h_t: [0, 1] \ra \Hom(\Gamma, \SL_{\pm}(n, \bR))).\]
\end{corollary}
\begin{proof} 
We will prove for $\SI^{n-1}$. The $\bR P^{n-1}$-version follows from this. 

We assume first that $\Gamma$ is a virtually-center-free group. 
First, for any sequence $t_i$, we can choose a subsequence $t_{i_j}$ so 
that $\{K_{t_{i_j}}\}$ converges to a compact set $K_\infty$ in the Hausdorff metric.
$h_{t_0}(\Gamma_0)$ acts on $K_\infty$ as in Choi-Goldman \cite{CG}.
By Benoist \cite{Ben3}, $K_\infty$ is a properly convex domain. 

Let us fix $K_\infty$.
Now, for any sequence $t_i$, suppose that a convergent subsequence $K_{t_{i_j}}$ 
converges to $K'_\infty$. Then we claim $K_\infty = K'_\infty$:
This follows since the set of attracting and repelling fixed points of elements of 
$h_{t_0}(\gamma)$ for every $\gamma \ne \Idd$ exist and is in $\partial K'_\infty$
and $\partial K_\infty$ by the $\Gamma$-invariance.
They are also dense by Theorem 1.1 of \cite{Ben} and the density of periodic orbits of Anosov flows and hence 
$\partial K'_\infty = \partial K_\infty$.
This implies $t \mapsto K_t$ is a continuous function by a well-known result in metric topology. 
This complete the proof in this case. 

The join $K_1^t \ast \cdots \ast K_k^t$ is properly convex always by Proposition \ref{prop:permconv}(i), 
The subspace $V_i^t$ spanned by $K_i^t$ depends continuously on $t$ since the holonomy of generators of $\Gamma_i$ determines $K_i^t$ uniquely. 
This completes the proof for the joined cases of this type. 



Suppose now that $\Gamma$ is virtually abelian. 
Then $\Omega_t$ is determined by the generators of the free abelian subgroup $\Gamma'$ of 
finite index with positive eigenvalues only by Lemma \ref{lem:realeign}. 
$\Gamma'$ determines the connected abelian Lie group $\Delta_t$ containing $h_t(\Gamma')$ 
and $\Omega_t$ is an orbit of $\Delta_t$ by Lemma \ref{lem:realeign}.  Now Lemma \ref{lem:gconv} implies the first item. 
Now to finish the proof, we suppose that $\Gamma$ has hyperbolic factors. 
By Proposition \ref{prop:permconv}(iii), $\Gamma$ acts on 
$d(\Delta_t) \ast K_1^t \ast \cdots \ast K_k^t$ for $t \in [0, 1]$
where $\Delta_0$ is a subgroup of the Zariski closure of the center of $h_0(\Gamma')$.


The result for $K_1^t \ast \cdots \ast K_k^t$ is done above. 
Our proof reduces to the case of $d(\Delta_t)$ and $\Delta_t$ only. This was already accomplished above. 


The second item follows from the first one. The third one can be deduced by equation \eqref{eqn:partialKi}. 
The final item follows by Propositions \ref{prop:permconv}(i),(iii) and \ref{prop:vip}.

The final alternative formulation follows: Proposition \ref{prop:permconv}(iii)  determines the image of $D_t$ by the convexity of $\mu_t$. 
Each of the join part of the image of $D_t$ depends continuously on $h_t$ by Lemma \ref{lem:realeign} and by the result proved above in the proof
for properly convex domains denoted by as  $K_1^t \ast \cdots \ast K_k^t$. 
\end{proof}






\section{ p-ends, p-end neighborhoods, and p-end groups } \label{sub:pend}


Let $\orb$ be a real projective orbifold with the universal cover $\torb$. 
We fix a developing map $\dev$ in this subsection. 
Given a radial end of $\orb$ and an end neighborhood $U$ of a product form $E \times [0, 1)$ with a radial foliation, \index{end!p-end} 
we take a component $U_1$ of $p^{-1}(U)$ and the lift of the radial foliation with leaves whose 
developing image end at a common point $x$ in $\bR P^n$. 
\begin{itemize}
\item We call $x$ the {\em pseudo-end vertex} of $\torb$. 
$x$ will be denoted by $v_{\tilde E}$ if its neighborhoods corresponds to a p-end $\tilde E$. \index{end!p-end vertex} 
\item Generalizing further, we call an open subset $U$ of $\torb$ containing a proper pseudo-end neighborhood
of $\tilde E$, where $\pi_1(\tilde E)$ acts on a {\em pseudo-end neighborhood.} \index{end!p-end neighborhood} 
A proper pseudo-end neighborhood is an example. 
(In the following``pseudo" will be shorted to ``p".)
\item Let $\SI^{n-1}_{v_{\tilde E}}$ denote the space of equivalence classes of rays from $v_{\tilde E}$ diffeomorphic to an $(n-1)$-sphere \index{$\SI^{n-1}_{v_{\tilde E}}$}
where $\pi_1({\tilde E})$ acts as a group of projective automorphisms.  
Here, $\pi_1({\tilde E})$ acts on $v_{\tilde E}$ and sends leaves to leaves in $U_1$. 
\item Given a p-end $\tilde E$ corresponding to $v_{\tilde E}$, 
we denote by $R_{v_{\tilde E}}(\torb)= S_{\tilde E}$ the space of directions of developed leaves under $\dev$  oriented away from $v_{\tilde E}$ in $\tilde{\mathcal{O}}$.
The space develops to $\SI^{n-1}_{\tilde E}$ by $\dev$ as an immersion. \index{$R_{v_{\tilde E}}(\torb)$} \index{$S_{\tilde E}$}
\item  Also, for a subset $K$ of $\orb$, we denote by $R_{v_{\tilde E}}(K)$ the space of directions of 
developed images of leaves in $\torb$ under $\dev$ mapping to rays oriented away from $v_{\tilde E}$ ending at $K$.  
 We have $R_{v_{\tilde E}}(\torb), R_{v_{\tilde E}}(K) \subset \SI^{n-1}_{v_{\tilde E}}$ if 
 $\torb$ is a convex domain in $\bR P^n$. 
\item Recall that $S_{\tilde E}/\Gamma_{\tilde E}$ is projectively diffeomorphic to the {\em end orbifold} 
to be denoted by $\Sigma_E$. 
\end{itemize}

Given a totally geodesic end of $\orb$ and an end neighborhood $U$ of the product from $E \times [0, 1)$
with a compactification by a totally geodesic orbifold $E'$, 
we take a component $U_1$ of $p^{-1}(U)$ and a convex domain $S_{\tilde E}$, the ideal boundary component, 
developing to totally geodesic hypersurface under $\dev$.
Here $\tilde E$ is the p-end corresponding to $E$ and $U_1$.
There exists a subgroup $\Gamma_{\tilde E}$ acting on $S_{\tilde E}$. 
Again ${S_{\tilde E}}/\Gamma_{\tilde E}$ is projectively diffeomorphic to the {\em end orbifold} to be denote by $S_E$ again. 

\begin{itemize} 
\item We call $S_{\tilde E}$ the {\em p-ideal boundary component} of $\torb$. \index{end!p-ideal boundary component} 
\item The group $\Gamma_{\tilde E}$ is said to be a {\em p-end-fundamental group associated with $\tilde E$}. \index{end!p-end fundamental group}
\item We call $U_1$ a {\em proper p-end neighborhood of $\tilde E$}. 
\end{itemize} 
Generalizing further an open subset $U$ of $\torb$ containing a proper p-end-neighborhood of $\tilde E$, 
where $\pi_1(\tilde E)$ acts on, is said to be a {\em p-end neighborhood }. \index{end!p-end neighborhood}

\section{The admissible groups} \label{sub:gpadmissible} 

If every subgroup of finite index of a group $\Gamma$ has a finite center, $\Gamma$ is said to be a {\em virtual center-free group} or 
a {\em vcf-group}.
An {\em admissible group} is a finite extension of a finite product of  \index{admissible group}
$\bZ^{l} \times \Gamma_1 \times \cdots \times \Gamma_k$ 
for infinite hyperbolic groups $\Gamma_i$  where $l \geq k-1$ or $k=1$ holds and $l+ k \leq n$ holds. 
(See Section \ref{sub:ends} for details.
$l \geq k-1$ follows from the result of Benoist discussed there. We have $k=1$ and $l=0$ if and only if the end fundamental group is hyperbolic. 
For example, if our orbifold has a complete hyperbolic structure, then end fundamental groups are virtually free abelian.) 
 We will also say that an end is 
\begin{itemize} 
\item{\em hyperbolic} if the end fundamental group is hyperbolic, i.e., $k=1, l=0$ and 
\item is {\em Benoist} if $l+1=k \geq 1$ or $l= k  \geq 1$.
Benoist ends are said to be {\em permanently properly convex}. (See Proposition \ref{prop:permconv}(i).)  \index{convex!permanently properly convex} \index{Benoist group}
\end{itemize}
Hyperbolic ends are Benoist. 
(Of course, these definitions also apply to p-ends.)


\section{The admissible ends} \label{sub:admissible} 
Let $\orb$ a convex real projective orbifold with the universal cover 
$\torb$. 
\begin{itemize} 
\item A subdomain $K$ of $\bR P^n$ (resp. in $\SI^n$) is said to be {\em horospherical} if $\clo(K)$ is bounded by an ellipsoid \index{horosphere} 
and the boundary $\partial K$ is diffeomorphic to $\bR^{n-1}$ and $\Bd K - \partial K$ is a single point. 
\item $K$ is {\em lens-shaped} if it is a convex domain and the manifold-boundary $\partial K$ is a disjoint union of two smoothly embedded  \index{lens-shaped} 
$(n-1)$-cells $A_1$ and $A_2$ not containing any straight segment in them. 
$K$ is a {\em generalized lens} if we allow a component of $\partial K$ to be not necessarily smooth. 
\item A {\em cone} over a point $x$ and a set $A$ in an affine subspace of $\bR P^n$ (resp. in $\SI^n$), $x \not\in A$ is the set \index{cone} 
given by $x\ast A$ in $\bR P^n$ (resp. in $\SI^n$).
(Here no vector in $C_A$ is same or antipodal to $v_a$.
Of course, this depends on the choice of $v_x$ and $C_A$ determined up to the inversion $v \ra -v$. )
\item A {\em lens-cone} is a cone $C:= \{x\} \ast L - \{x\}$ over a lens-domain $L$ and a point $x$, $x\not\in \clo(L)$, 
so that every maximal segment from $x$ in $C$ ends in one component $\partial_1 L$ of $\partial L$. 
A {\em generalized lens-cone} is a cone over a generalized lens-domain with the same properties
where a nonsmooth component has to be in the boundary of the cone. 
For two components $A_1$ and $A_2$ of $\partial L$, 
$A_1$ is called a {\em top hypersurface} if it faces the exterior of the join and $A_2$ is then called a {\em bottom hypersurface}.
\item A {\em lens} of a lens-cone $C$ is is the lens-shaped domain $A$ so that $C = \{ x \} \ast A -\{x\}$ for a point $x \not\in \clo(A)$. \index{lens} 
\item A {\em totally-geodesic subdomain} is a convex domain in a hyperspace. 
\item A {\em cone-over} a totally-geodesic domain $A$ is a cone over a point $x$ not in the hyperspace. \index{cone} 
\end{itemize} 
Any subset of $\torb$ developing diffeomorphic to the above sets under $\dev$ with $x$ being an end vertex 
is named by the same name. 
We will also call a real projective orbifold with boundary to be 
\begin{itemize}
\item a {\em horospherical} or 
\item a {\em lens-cone} or \index{lens-cone}
\item a {\em lens}, provided it is compact, \index{lens} 
\end{itemize} if it is covered by such domains and is homeomorphic to a closed $(n-1)$-orbifold times an interval.



We introduce some relevant adjectives: 
Let $\Sigma_E$ be an $(n-1)$-dimensional end orbifold corresponding to a p-end $\tilde E$, and
let $\mu$ be a holonomy homomorphism 
\[\pi_1(\tilde E) \ra \PGL(n+1, \bR) \hbox{ (resp. } \SL_{\pm}(n_1, \bR) \hbox{)}  \] 
restricted from that of $\orb$. 
\begin{itemize} 
\item Suppose that $\mu(\pi_1(\tilde E))$
acts on a (generalized) lens-shaped domain $K$ in $\bR P^n$ (resp. in $\SI^n$) with boundary
a union of two open $(n-1)$-cells $A_1$ and $A_2$ and $\pi_1(\tilde E)$ acts properly on $A_1$ and $A_2$. 
Then $\mu$ is said to be a ({\em generalized}) {\em lens-shaped} representation for $\tilde E$ with respect to $x$.
\item $\mu$ is a {\em totally-geodesic} representation if 
$\mu(\pi_1(\tilde E))$ acts on a totally-geodesic subdomain. 
\item If $\mu(\pi_1(\tilde E))$ acts on a horoball $K$, then 
$\mu$ is said to be a {\em horospherical} representation. 
In this case, $\Bd K - \partial K=\{v_{\tilde E} \}$ for the p-end vertex $v_{\tilde E}$ of $\tilde E$. 
\item If $\mu(\pi_1(\tilde E))$ acts on a strict joined domain,
then $\mu$ is said to be a {\em strict joined} representation. 
\end{itemize} 



A {\em concave p-end-neighborhood} is an imbedded p-end neighborhood of form
 $L - C'$ contained in a radial p-end neighborhood $L$ in $\tilde{\mathcal{O}}$ with end vertex $v_{\tilde E}$ \index{end!p-end neighborhood!concave} 
 where $\dev(L)$ is a generalized lens-cone $v_{\tilde E} \ast \dev(C')$ for a generalized lens $\dev(C')$. \index{end!generalized lens-cone} 

We redefine the definition given in the introduction. The equivalence is obvious by taking 
the closures in $\torb$ of lens p-end neighborhoods or concave p-end neighborhoods where 
the end-fundamental groups act on. 
\begin{definition}{(Admissible ends)} \label{defn:admissible} 
Let $\orb$ be a real projective orbifold with the universal cover $\torb$. 
Let $E$ be an end of $\orb$ and $\tilde E$ be the corresponding p-end with the admissible p-end fundamental group 
$\pi_1(\tilde E)$. 
\begin{itemize} 
\item We say that the radial p-end $E$ of $\orb$ is {\em horospherical} if $\tilde E$ has a p-end neighborhood that 
is a horoball in $\torb$.
\item We say that the radial end $E$ of $\orb$ is of {\em lens-type} if $\tilde E$ has a p-end neighborhood that 
is a lens-cone. $E$ is of {\em generalized lens-type} if $\tilde E$ has a concave p-end neighborhood. 
Equivalently, $\torb$ has the interior of a generalized lens giving us a p-end neighborhood of $\tilde E$. 
 in a generalized lens-cone
where $\pi_1(\tilde E)$ acts on. 
(We require that the cone-point has to be the end point of the radial lines 
for the given radial p-end for the two cases above.) 
\end{itemize} 
An end is {\em admissible} (resp. {\em admissible in a generalized sense}) 
if it is a radial horospherical or radial lens-type (resp. generalized lens-type) or totally geodesic lens p-end. \index{end!admissible} \index{end!generalized admissible} 
\end{definition} \index{end!radial lens-type} \index{end!horospherical} \index{end!totally geodesic!lens} 
Recall that a totally geodesic p-end $E$ of lens-type has the lens-condition that the ideal boundary end orbifold $S_{\tilde E}$
has a lens-neighborhood $L$  in an ambient orbifold containing $\torb$. 
For a component $C_1$ of $L - S_{\tilde E}$ inside $\torb$, 
$C_1 \cup S_{\tilde E}$ is said to be the {\em one-sided neighborhood} of $S_{\tilde E}$. \index{one-sided neighborhood} 

(Note that the notion of a totally geodesic radial end can be of lens-type but it is a different concept from that of a 
totally geodesic lens-type end. However, one can be converted to the other using some geometric operations of cutting and pasting.)


\begin{figure}
\centerline{\includegraphics[height=7cm]{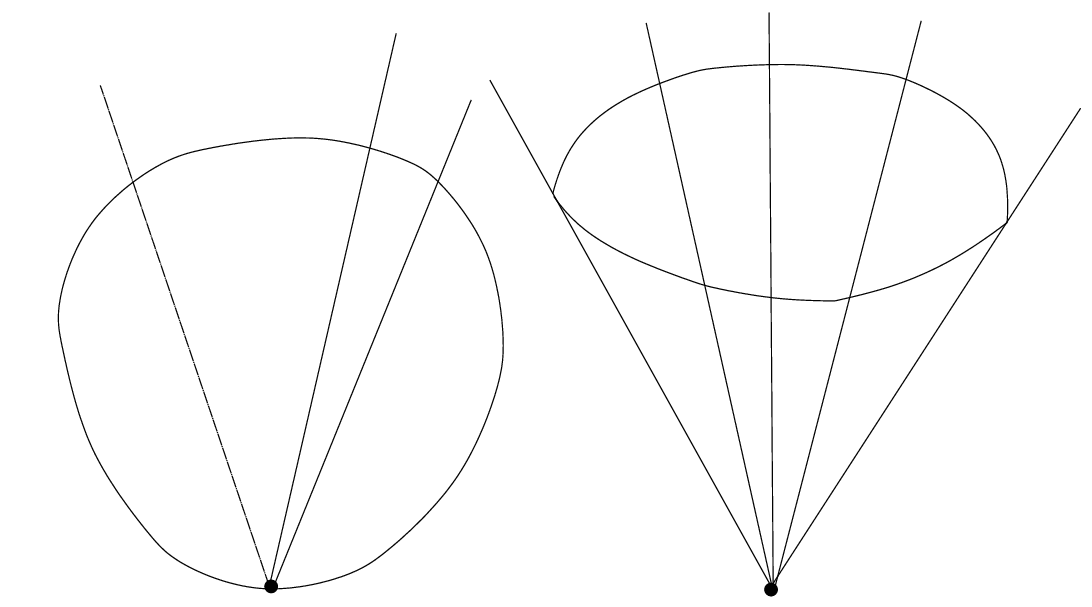}}
\caption{The universal covers of horospherical and lens shaped ends. The radial lines form cone-structures.}
\label{fig:lens}
\end{figure}

\section{The duality} \label{sub:duality} 
A {\em dilatation} is an affine automorphism $\bR^{n+1} \ra \bR^{n+1}$ of the affine space 
given by $v \ra s (v - w_0) + w_0$ for $s > 0, s \ne 1$ and an arbitrary point $w_0 \in \bR^{n+1}$. 
Here $s$ is the {\em expansion factor} of the dilatation and is uniquely determined by \index{dilatation} 
the dilatation, and $w_0$ is the fixed point.

We starts from linear duality. Let us choose the origin $O$ in $\bR^{n+1}$. 
Let $\Gamma$ be a group of linear transformations $\GL(n+1, \bR)$. 
Let $\Gamma^*$ be the {\em affine dual group} defined by $\{g^{\ast -1}| g \in \Gamma \}$ acting on 
the dual space $\bR^{n+1, \ast}$. 
Suppose that $\Gamma$ acts on a properly convex cone $C$ in $\bR^{n+1}$ with the vertex $O$.

An open convex cone  $C^*$ in $\bR^{n+1, *}$  is {\em dual} to an open convex cone $C $ in $\bR^{n+1}$  if 
$C^* \subset \bR^{n+1 \ast}$ is the set of linear transformations taking positive values on $\clo(C) -\{O\}$.
$C^*$ is a cone with vertex as the origin again. Note $(C^*)^* = C$. 

Now $\Gamma^*$ will acts on $C^*$. Also, if $\Gamma$ acts cocompactly on $C$ if and only if $\Gamma^*$ acts on $C^*$ cocompactly. 
A {\em central dilatation extension} $\Gamma'$ of $\Gamma$ by $\bZ$ is given by adding a dilatation by a scalar $s \in \bR_+ -\{1\}$ \index{central dilatation extension} 
with the fixed $O$.
The dual of $\Gamma'$ is a central dilatation extension of $\Gamma^*$. 

 Given a subgroup $\Gamma$ in $\Pgl$, an {\em affine lift} in $\GL(n+1, \bR)$ is any subgroup that maps to $\Gamma$ isomorphically
 under the projection. 
 Given a subgroup $\Gamma$ in $\Pgl$, the dual group $\Gamma^*$ is the image in $\Pgl$ of the dual of 
 any affine lift of $\Gamma$. For $\SLpm$, we define the dual groups as above. \index{dual group} 

A properly convex open domain $\Omega$ in $P(\bR^{n+1})$ (resp. in $S(\bR^{n+1})$) is {\em dual} to a properly convex open domain
$\Omega^*$ in $P(\bR^{n+1, \ast})$ (resp. in $S(\bR^{n+1, \ast})$)  if $\Omega$ corresponds to an open convex cone $C$ 
and $\Omega^*$ to its dual $C^*$. We say that $\Omega^*$ is dual to $\Omega$. 
We also have $(\Omega^*)^* = \Omega$ and $\Omega$ is properly convex if and only if so is $\Omega^*$.  \index{dual!domain}

We call $\Gamma$ a {\em divisible group} if a central dilatational extension acts cocompactly on $C$. \index{divisible group}
$\Gamma$ is divisible if and only if so is $\Gamma^*$. (See \cite{Ben1}).


Note that a hyperspace is an element of $\bR P^{n \ast}$ since it is represented as a $1$-form. 
And an element of $\bR P^n$ can be considered as a hyperspace in $\bR P^{n \ast}$. 
The following definition applies to $\Omega \subset \bR P^{n}$ (resp. $S(\bR^{n+1, \ast})$)  
and $\Omega^* \subset \bR P^{n \ast}$ (resp. $S(\bR^{n+1, \ast})$).
Given a properly convex domain $\Omega$, we define the {\em augmented boundary} of $\Omega$ \index{augmented boundary} 
\[\Bd^{\Ag} \Omega  := \{ (x, h)| x \in \Bd \Omega, h \hbox{ is a supporting hyperplane of } \Omega, 
h \ni x \} .\] Note that for each $x \in \Bd \Omega$, there exists at least one supporting hyperspace. 

\begin{remark} 
For open properly convex domains $\Omega_1$ and $\Omega_2$, 
we have 
\begin{equation}\label{eqn:dualinc}
\Omega_1 \subset \Omega_2 \hbox{ if and only if } \Omega_2^* \subset \Omega_1^*. 
\end{equation}
\end{remark}

We will call the homeomorphism below as the {\em duality map}. \index{duality map}
\begin{proposition} \label{prop:duality}
Suppose that $\Omega \subset \bR P^{n}$ \rlp resp. $S(\bR^{n+1, \ast})$\rrp and 
its dual $\Omega^* \subset \bR P^{n \ast}$ \rlp resp. $S(\bR^{n+1, \ast})$\rrp
are properly convex domains. 
\begin{itemize}  
\item[(i)] There is a proper quotient map $\Pi_{\Ag}: \Bd^{\Ag} \Omega \ra \Bd \Omega$
given by sending $(x, h)$ to $x$. 
\item[(ii)] Let $\Gamma$ act on properly discontinuously $\Omega$ if and only if so acts
$\Gamma^*$ on $\Omega^*$.
\item[(iii)] There exists a homeomorphism 
\[ {\mathcal{D}}: \Bd^{\Ag} \Omega \leftrightarrow \Bd^{\Ag} \Omega^* \] 
given by sending $(x, h)$ to $(h, x)$. 
\item[(iv)] Let $A \subset \Bd^{\Ag} \Omega$ be a subspace and $A^*\subset \Bd^{\Ag} \Omega^*$
be the corresponding dual subspace $\mathcal{D}(A)$. If a group $\Gamma$ acts properly discontinuously on $A$ 
if and only if $\Gamma^*$ so acts on $A^*$. 
\end{itemize} 
\end{proposition} 
\begin{proof} 
We will prove for $\bR P^n$. (The $\SI^n$-version has a similar proof.)
(i) Each fiber is a closed set of hyperplanes at a point forming a compact set.
The set of supporting hyperplanes at a compact subset of $\Bd \Omega$ is closed. 
The closed set of hyperplanes passing a compact subset of $\bR P^{n}$ is compact. 
Thus, $\Pi_{\Ag}$ is proper.  Clearly, $\Pi_{\Ag}$ is continuous since it is induced by a projection. 

(ii) Straightforward.

(iii) An element $(x, h)$ is $\Bd^{\Ag} \Omega$ if and only if $x \in \Bd \Omega$ and $h$ is represented 
by a $1$-form $\alpha_h$ so that $\alpha_h(y) > 0$ for all $y$ in the open cone $C$ corresponding to $\Omega$ and 
$\alpha_h(v_x) =0$ for a vector $v_x$ representing $x$. 

Since the dual cone $C^*$ consists of all nonzero $1$-form $\alpha$ so that $\alpha(y) > 0$ for all $y \in \clo(C) - \{O\}$. 
Thus, $\alpha(v_x) > 0$ for all $\alpha \in C^*$ and $\alpha_h(v_x) = 0$. 
$\alpha_h \not\in C^*$ since $v_x \in \clo( C)-\{O\}$.
But $h \in \Bd C^*$ as we can perturb $\alpha_h$ so that it is in $C^*$. 
Thus, $x$ is a supporting hyperspace at $h \in \Bd \Omega^*$.
Hence we obtain a continuous map ${\mathcal{D}}: \Bd^{\Ag} \Omega \ra \Bd^{\Ag} \Omega^*$. 
The inverse map is obtained in a similar way. 

(iv) The item follows from (ii) and (iii).  
\end{proof} 



\begin{lemma}\label{lem:example}
Let $\Omega^*$ be the dual of a properly convex domain $\Omega$.
Then 
\begin{itemize}
\item[(i)] $\Bd \Omega$ is $C^1$ and strictly convex if and only $\Bd \Omega^*$  is $C^1$ and strictly convex.
\item[(ii)] $\Omega$ is a horospherical orbifold if and only if so is $\Omega^*$. 
\item[(iii)] Let $p \in \Bd \Omega$. Then
$\mathcal{D}$ sends  in a one-to-one and onto manner
\[\{(p, h)| h \hbox{ is a supporting hyperplane of $\Omega$ at $p$}\}\]
to $\{(h^*, p^*)| h^* \in D = p^* \cap \Bd \Omega^* \}$ where $D$ is a properly convex domain disjoint from $\Omega^{\ast o}$.   
\item[(iv)] $\Bd \Omega^*$ contains a properly convex domain $D = P \cap \Bd \Omega^*$ open in a totally geodesic hyperplane $P$ 
if and only if $\Bd \Omega$ contains 
a vertex $p$ with $R_p(\Omega)$ a properly convex domain. Moreover, $D$ and $R_p(\Omega)$ are properly
convex and are projectively diffeomorphic to dual domains. 
\end{itemize}
\end{lemma}
\begin{proof} 
These are straightforward. 
\end{proof} 

\subsection{Affine orbifolds}


An {\em affine orbifold} is an orbifold with a geometric structure modelled on $(\bR^n, \Aff(\bR^n))$. \index{affine orbifold} 
An affine orbifold has a notion of affine geodesics as given by local charts. Recall that a geodesic is {\em complete} in a direction
if the affine geodesic parameter is infinite in the direction. 
\begin{itemize} 
\item An affine orbifold has a {\em parallel end} if the corresponding end has end neighborhood foliated by properly imbedded affine geodesics \index{end!parallel}
parallel to each other in charts and each leaf is complete in the end direction. 
We assume that the affine geodesics are leaves assigned as above. 
\begin{itemize}
\item We obtain a smooth complete vector field $X_E$ in a neighborhood of $E$ for each end following the affine geodesics, \index{end!end vector field} 
which is affinely parallel in the flow; i.e., leaves have parallel tangent vectors. We call this 
an {\em end vector field}. 
\item We denote by $X_\mathcal{O}$ the vector field partially defined on $\mathcal{O}$ by taking a sum of 
vector fields defined on some mutually disjoint neighborhoods of the ends using the partition of unity. 
\item The oriented direction of 
the parallel end is uniquely determined in the developing image of each p-end neighborhood of the 
universal cover of $\mathcal{O}$.
\item Finally, we put a fixed complete Riemannian metric on $\mathcal{O}$ so that for each end there is an open 
neighborhood where the metric is invariant under the flow generated by $X_\mathcal{O}$.
Note that such a Riemannian metric always exists. 
\end{itemize} 
\item An affine orbifold has a {\em totally geodesic end} $E$ if each end can be completed by a totally geodesic affine hypersurface. \index{end!totally geodesic} 
That is, there exists a neighborhood of the end $E$ diffeomorphic to $E \times [0, 1)$ that compactifies to an orbifold
diffeomorphic to $E \times [0, 1]$, and each point of $E \times \{1\}$ has a neighborhood affinely diffeomorphic 
to a neighborhood of a point $p$ in $\partial H$ for a half-space $H$ of an affine space.  
This implies the fact that the corresponding p-end fundamental group $\pi_1(\tilde E)$ 
for a p-end $\tilde E$ going to $E$, 
$h(\pi_1(\tilde E))$ acts on a totally geodesic hyperplane $P$ corresponding to $E \times \{1\}$. 
\end{itemize} 

Recall that an orbifold is a topological space stratified by open manifolds. 
An affine or projective orbifold is {\em triangulated} if there is a smoothly imbedded $n$-cycle consisting of geodesic $n$-simplices 
on the compactified orbifold relative to ends by adding an ideal point to a radial end and an ideal boundary to each totally geodesic ends.
where the interiors of $i$-simplices in the cycle are mutually disjoint and are imbedded in
strata of the same or higher dimension. 


\subsection{Affine suspension constructions} \label{sub:asusp}
The affine space $\bR^{n+1}$ is a dense open subset of $\bR P^{n+1}$ which is the complement of 
$(n+1)$-dimensional projective space $\bR P^{n+1}$.
Thus, an affine transformation is a restriction of a unique projective automorphism acting on $\bR^{n+1}$. 
The group of affine transformations $\Aff(\bR^{n+1})$ is isomorphic to the group of projective automorphisms 
acting on $\bR^{n+1}$ identified this way by the restriction homomorphism. 

An affine orbifold $\orb'$ is {\em radiant} if $h(\pi_1(\orb'))$ fixes a point in $\bR^{n}$ for the holonomy homomorphism $h: \pi_1(\orb) \ra \Aff(\bR^{n+1})$. 
A real projective orbifold $\mathcal{O}$ of dimension $n$ 
has a developing map $\dev': \torb \ra \SI^n$ and the holonomy homomorphism 
$h': \pi_1(\orb) \ra \SL_\pm(n+1, \bR)$. Here, $\SI^n$ is imbedded as a unit sphere in $\bR^{n+1}$. 
We obtain 
a radiant affine $(n+1)$-orbifold by taking $\tilde{\mathcal{O}}$ and $\dev'$ and $h'$: 
Define $D'':\tilde{\mathcal{O}} \times \bR^+ \ra \bR^{n+1}$ by sending $(x, t)$ to $t\dev'(x)$. 
For each element of $\gamma \in \pi_1(\mathcal{O})$, 
we define the transformation $\gamma'$ on $\tilde{\mathcal{O}} \times \bR^+$ by 
\begin{align} 
\gamma'(x,t) = & ( \gamma(x), \theta(\gamma)||h'(\gamma)(t\dev'(x))||) \nonumber \\ 
                         &\hbox{ for a homomorphism } \theta: \pi_1(\orb) \ra \bR_+.
\end{align}
 Also, there is a transformation $S_s:\tilde{\mathcal{O}} \times \bR^+ \ra \tilde{\mathcal{O}} \times \bR^+$ 
sending $(x, t)$ to $(x, st)$ for $s \in \bR^+$. 
Thus, \[\tilde{\mathcal{O}}\times \bR^+/\langle S_\rho,\pi_1(\mathcal{O}) \rangle, \rho\in \bR_+, \rho \ne 1\] 
is an affine orbifold with the fundamental group 
isomorphic to $\pi_1(\mathcal{O}) \times \bZ$ where the developing map is given by $D''$
the holonomy homomorphism is given by $h'$ and sending the generator of $\bZ$ to $S_\rho$. 
We call the result the {\em affine} suspension of $\mathcal{O}$, which of course is radiant. 
The representation of $\pi_1(\orb) \times \bZ$ with the center $\bZ$ mapped to a dilatation is called an {\em affine suspension} \index{affine suspension} 
of $h$. 
A {\em special affine suspension} is an affine suspension with $\theta \equiv 1$ identically. 
(See Sullivan-Thurston \cite{ST}, Barbot \cite{Bar} and Choi \cite{rdh} also.) 

\begin{definition}\label{den:affs}
We denote by $C(\torb)$ the manifold $\torb \times \bR$ with the structure 
given by $D''$, and say that $C(\torb)$ is the {\em affine suspension} of $\torb$. \index{$C(\cdot)$}
\end{definition} 

Let $S_t:\bR^{n+1} \ra \bR^{n+1}$, given by $\vec{v} \ra t\vec{v}$, 
$t \in \bR_+$, be a one-parameter family of dilations fixing a common point. 
A family of self-diffeomorphisms $\Psi_t$ on an affine orbifold $M $ \index{radiant flow diffeomorphism}
lifting $\hat \Psi_t: \widetilde M  \ra \widetilde M $
so that $D \circ \hat \Psi_t = S_t \circ D$ for $t \in \bR_+$
is called a group of {\em radiant flow diffeomorphisms}. 


\begin{lemma}\label{lem:affsus}
Let $M$ be a strongly tame $n$-orbifold. 
\begin{itemize}
\item An affine suspension $\mathcal{O}'$ of a real projective orbifold $\orb$ always 
admits a group of radiant flow diffeomorphisms.
Here, $\{\Phi_t\}$ is a circle and all flow lines are closed. 
\item Conversely, if there exists a group of radiant flow diffeomorphisms with closed orbits
on $M\times \SI^1$ with an affine structure, then  $M \times \SI^1$ is 
affinely diffeomorphic to one obtained by an affine suspension construction
from a real projective structure on $M$. 
\end{itemize} 
\end{lemma} 
\begin{proof} 
The only the second item is not shown. 
The generator of $\pi_1(S)$ factor goes to a dilatation. Thus, each closed curve along $\star \times \SI^1$ 
gives us a nontrivial homology. 
The homology direction of the flow equals $[[\ast \times \SI^1]] \in S(H_1(M \times \SI^1))$. 
By Theorem D of \cite{Friedflow}, there exists a connected cross-section homologous 
to $[M \times \ast] \in H_n(M\times \SI^1, V \times \SI^1))\cong H^1(M\times \SI^1)$
where $V$ is the union of the disjoint end neighborhoods of product form. 
By Theorem C of \cite{Friedflow}, any cross-section is isotopic to $M \times \ast$.  
The radial flow is transversal to the cross-section isotopic to 
$M\times \ast$ and hence $M$ admits a real projective 
structure. It follows easily now that $M \times \SI^1$ is an affine suspension. (See \cite{Bar} for examples.)
\end{proof} 

An affine suspension of a horospherical orbifold is called a {\em suspended horospherical orbifold}. \index{suspended horospherical orbifold} 
An end of an affine orbifold with an end neighborhood affinely diffeomorphic to this is said 
to be of {\em suspended horospherical type}. This has also a parallel end since the parallel direction is given by the fixed point in 
the boundary of $\bR^n$. 

Under the cone-construction, a real projective $n$-orbifold has radial, totally geodesic, or horospherical ends 
if and only if the affine $(n+1)$-orbifold affinely suspended from it 
has parallel,  totally geodesic, or suspended horospherical ends. 



\chapter{The local homeomorphism theorems}\label{sec:loch}

\section{ The semi-algebraic properties of $\rep^s(\pi_1(\mathcal{O}), \PGL(n+1, \bR))$ and related spaces }\label{sub:semialg}

Since $\mathcal{O}$ is the interior of a compact orbifold, 
there exists a finite set of generators $g_1, \dots, g_m$ with finitely many relators. 
First, $\Hom(\pi_1(\mathcal{O}), \PGL(n+1,\bR))$ can be identified with an algebraic subset of 
$\PGL(n+1,\bR)^m$ corresponding to the relators. 

Let $\Hom_E(\pi_1(\mathcal{O}), \PGL(n+1,\bR))$ denote the subspace of 
\[\Hom(\pi_1(\mathcal{O}), \PGL(n+1,\bR))\]
where the holonomy of each p-end fundamental group fixes a point of $\bR P^n$ or 
acts on a subspace $P$ of codimension-one and on a lens meeting $P$ satisfying the lens-condition or a horoball tangent to $P$.
Each end of $\orb$ is assigned to be an $\cR$-type end 
or a $\cT$-type end. 

Since a set of finitely many elements generates each end fundamental group, 
and the end fundamental groups are finite up to conjugacy, 
the conditions of having a common $1$-dimensional eigenspace 
for each of a finite collection of finitely generated subgroups is a semi-algebraic condition. 

Let $E$ denote an end of type $\cT$. 
Let \[\Hom_{E, TL}(\pi(E),  \PGL(n+1,\bR))\] denote 
the space of totally geodesic representations of $\pi_1(E)$ satisfying the lens-condition, again an open subset of the algebraic set. 
(This follows by the proof of Theorem 8.1 of \cite{endclass}.) 
If $\rho$ is of horospherical type, then $\pi_1(E)$ is virtually abelian by Theorem 1.1 of \cite{endclass}. 
Define  $\Hom_{E, p} (\pi_1(E), \PGL(n+1, \bR))$ to be the space of representations where an abelian group
of finite index goes into a parabolic subgroup in a copy of $\PO(n, 1)$. 
By Lemma \ref{lem:parab}, $\Hom_{E, p} (\pi_1(E), \PGL(n+1, \bR))$ is a closed algebraic set. 

\begin{lemma} \label{lem:parab}
Let $G$ be a finite extension of a finitely generated free abelian group $\bZ^m$.
Then $\Hom_{E, p} (G, \PGL(n+1, \bR))$ is a closed algebraic set. 
\end{lemma}
\begin{proof}
Let $P$ be a maximal parabolic subgroup of a copy of $\PO(n+1, \bR)$ that fixes a point $x$. 
Then $\Hom(\bZ^m, P)$ is a closed algebraic set.
\[\Hom_{E, p}(\bZ^m, \PGL(n+1, \bR))\] equals a union 
\[\bigcup_{g\in  \PGL(n+1, \bR)} \Hom_p(\bZ^m, gPg^{-1}),\]
another closed algebraic set.  
Now $\Hom_{E, p}(G, \PGL(n+1, \bR))$ is a closed algebraic subset of 
\[\Hom_{E, p}(\bZ^m, \PGL(n+1, \bR)).\]
 \end{proof} 

Therefore we conclude that 
$\Hom_E(\pi_1(\mathcal{O}), \PGL(n+1,\bR))$ is an open subset of an algebraic set. 

A {\em parabolic subalgebra} $\mathfrak{p}$ is an algebra in a semi-simple Lie algebra $\mathfrak{g}$
whose complexification contains a maximal solvable subalgebra of $\mathfrak{g}$  (p. 279--288 of \cite{Var}).
A {\em parabolic subgroup} $P$ of a semi-simple Lie group $G$ is the full normalizer of a parabolic subalgebra. \index{parabolic subgroup} 

Let $\Hom^s_E(\pi_1(\mathcal{O}), \PGL(n+1,\bR))$ denote the subspace of stable irreducible representations.
We first note:
\begin{itemize}
\item the subset of $\Hom_E(\pi_1(\mathcal{O}), \PGL(n+1,\bR))$ that holonomy groups acting on proper subspaces is a closed algebraic set,
the space of proper subspaces in $\bR^{n+1}$ is an algebraic set, and 
\item by Theorem 1.1 of \cite{JM}, the set of representations not in proper parabolic subgroups is 
open in $\Hom(\pi_1(\mathcal{O}), \PGL(n+1,\bR))$.
\end{itemize}

  
  
  Let $E$ be an end orbifold of $\orb$. 
 Given \[\rho \in \Hom_E(\pi_1(E), \PGL(n+1, \bR)),\] we have: 
\begin{itemize} 
\item If $\rho$ is of radial of lens-type, then each element of an open neighborhood is also radial
of  lens-type by Theorem 3.14 of \cite{endclass}. 
Let \[\Hom_{E, RL}(\pi(E),  \PGL(n+1,\bR))\] denote 
the space of radial lens-type representations of $\pi_1(E)$. Thus, it is an open subspace of the above algebraic set.
\item If $\rho$ is of totally geodesic of lens-type, then each element of an open neighborhood is also totally geodesic 
of  lens-type by Theorem 3.14 of \cite{endclass}. 
Let \[\Hom_{E, TL}(\pi(E),  \PGL(n+1,\bR))\] denote 
the space of totally geodesic lens-type representations of $\pi_1(E)$. Thus, it is an open subspace of the above algebraic set.
\end{itemize} 

 Let 
\[ R_{E_i}: \Hom(\pi_1(\mathcal{O}), \PGL(n+1, \bR)) \ra \Hom(\pi_1(E_i), \PGL(n+1, \bR)) \] 
be the restriction map to the p-end fundamental group $\pi_1(E_i)$ corresponding to the end $E_i$ of $\mathcal{O}$. 
 
 Let $\mathcal{R}_{\orb}$ denote the set of radial ends of $\orb$,  
 and let $\mathcal{T}_{\orb}$ denote the set of totally geodesic ends of $\orb$. 
  We can identify \[\Hom_{E}^s(\pi_1(\mathcal{O}), \PGL(n+1,\bR))\] with the subset 
 {\small
 \begin{align*} 
  & \Hom^s(\pi_1(\mathcal{O}), \PGL(n+1,\bR)) \cap   \\
 &  (\bigcap_{E_i \in {\mathcal{R}_{\orb}}} R_{E_i}^{-1}( \Hom_{E_i}(\pi(E_i),  \PGL(n+1,\bR)))) \cap \\
  &  (\bigcap_{E_i \in {\mathcal{T}_{\orb}}} R_{E_i}^{-1}(\Hom_{E_i, p}(\pi(E_i),  \PGL(n+1,\bR)) \cup \Hom_{E_i, TL}(\pi(E_i),  \PGL(n+1,\bR)))).
\end{align*} 
}
Hence, this is an open subset of a semi-algebraic set. 

Let $\Hom^s_{E, u}(\pi_1(\mathcal{O}), \PGL(n+1,\bR))$ denote the subspace of 
\[\Hom^s_E(\pi_1(\mathcal{O}), \PGL(n+1,\bR))\]
where each end of $\cR$-type fixes a unique point of $\bR P^n$
and each end of $\cT$-type acts on a unique subspace of codimension-one satisfying the lens-condition or 
a horosphere tangent to it. 
We obtain an open subset of a semi-algebraic set since we need to consider finitely many generators of 
the fundamental groups of the ends as pointed out by D. Fried. 


 We can identify \[\Hom_{E, u, ce}^s(\pi_1(\mathcal{O}), \PGL(n+1,\bR))\] to be the subset 
 {\small
 \begin{align*} 
  & \Hom_{E, u}^s(\pi_1(\mathcal{O}), \PGL(n+1,\bR)) \cap   \\
  &  (\bigcap_{E_i \in {\mathcal{R}_{\orb}}} R_{E_i}^{-1}(\Hom_{E_i, p}(\pi(E_i),  \PGL(n+1,\bR)) \cup \Hom_{E_i, RL}(\pi(E_i),  \PGL(n+1,\bR)))) \\
  &\cap  (\bigcap_{E_i \in {\mathcal{T}_{\orb}}} R_{E_i}^{-1}(\Hom_{E_i, p}(\pi(E_i),  \PGL(n+1,\bR)) \cup \Hom_{E_i, TL}(\pi(E_i),  \PGL(n+1,\bR)))).
\end{align*} 
}

 Since  \[\rep_{E, u}^s(\pi_1(\mathcal{O}), \PGL(n+1,\bR))\]
is the Hausdorff quotient of the above set with the conjugation $\PGL(n+1, \bR)$-action, 
this is a topological open subset of a semi-algebraic set.
by Proposition 1.1 of \cite{JM}. 
Similarly, so is \[\rep_{E, u, ce}^s(\pi_1(\mathcal{O}), \PGL(n+1,\bR)).\]

 
\section{The end condition of affine structures}\label{subsec:endf}
A suspension of a legion in $\SI^n$ in $\bR^{n+1}$ is the inverse image of the region under 
the projection $\bR^{n+1} -\{O\} \ra \SI^n$. A {\em suspended lens} is a quotient affine manifold 
of a suspension of a lens. A {\em suspended horoball} is a quotient affine manifold of a suspension of 
a horoball. 

Given an affine orbifold $\mathcal{O}$ satisfying our end conditions,
and each end is given a parallel end type or a totally geodesic lens end type.  
Each end fundamental group of $\pi_1(\orb)$ will have a distinguished infinite cyclic group in the center. 
Each end of our orbifold $\orb$ is given an $\cR$-type or a $\cT$-type. 
\begin{itemize} 
\item An $\cR$-type end is allowed to be parallel always, and 
\item a $\cT$-type end is allowed to be totally geodesic with 
a suspended lens neighborhood in some cover of an ambient affine manifold corresponding 
to the end fundamental group or be parallel with a suspended horoball neighborhood. 
\end{itemize} 
Here the distinguished cyclic central subgroups are required to go to the groups of dilatations preserving the cones corresponding. 

Let us make a choice of conjugacy classes of 
the fundamental group $\pi_1(E)$ as a subgroup of $\pi_1(\mathcal{O})$ for every radial end 
$E$ as a subgroup of $\pi_1(\mathcal{O})$. 

We define a subspace $\Hom_E(\pi_1(\mathcal{O}), \Aff(\bR^{n+1}))$ of 
\[\Hom(\pi_1(\mathcal{O}), \Aff(\bR^{n+1}))\] to 
be the subspace where $h(\pi_1(E_i))$ for each end $E_i$  
consists of affine transformations 
\begin{itemize}
\item with linear parts with at least one common eigenvector if $E_i$ is of $\cR$-type or 
\item acting on an affine hyperspace $P$ and properly discontinuously and cocompactly 
\begin{itemize}
\item on a suspension $L$ of a lens meeting $P$ in its interior
\item or on a suspension of a horoball tangent to $P$ if $E_i$ is of $\cT$-type.
\end{itemize}
(Here we need to fix the generator of the boundary going to a dilatation for each $\cT$-ends.)
\end{itemize} 


Let $e_1$ be the number of $\cR$-ends of $\mathcal O$. 
Let $\mathcal U$ be an open subspace of \[\Hom_E(\pi_1(\mathcal{O}), \Aff(\bR^{n+1}))\] invariant under 
the conjugation action so that one can choose a continuous section 
$s_{\mathcal U}^{(1)}: \mathcal U \ra (\bR^n - \{O\})^{e_1}$
sending a holonomy homomorphism $h$ to a common nonzero eigenvector of $h(\pi_1(E))$ for each R-type end $E$. 
Here $s_{\mathcal U}^{(1)}$ satisfies 
 \[s_{\mathcal U}^{(1)}(g h(\cdot) g^{-1}) = g s_{\mathcal U}^{(1)}(h(\cdot)) \hbox{ for } g \in \Aff(\bR^n), h \in {\mathcal{U}}.\]
(The choice of the sections might not be canonical here.)  
We say that $s_{\mathcal U}$ is the {\em eigenvector-section} of $\mathcal U$. 

Let $AS(\bR^{n+1})$ denote the space of oriented affine hyperplanes in $\bR^{n+1}$. 
There is a standard action of $\Aff(\bR^{n+1})$ on $AS(\bR^{n+1})$. 
One can choose a continuous section also
$s_{\mathcal U}^{(2)}: \mathcal U \ra AS(\bR^n)^{e_2}$
sending a holonomy homomorphism $h$ to an invariant hyperplane in $\bR^{n+1}$ of 
 $h(\pi_1(E))$ for each totally geodesic end $E$ of lens-type. Here
 $s_{\mathcal U}^{(2)} $ satisfies 
 \[s_{\mathcal U}^{(2)}(g h(\cdot) g^{-1}) = g (s_{\mathcal U}^{(2)}(h(\cdot))) \hbox{  for } g \in \Aff(\bR^{n+1}), h \in {\mathcal{U}}.\]
(The choice of the sections might not be canonical here.)  
We say that $s_{\mathcal U}^{(2)}$ is the {\em eigen-$1$-form section} of $\mathcal U$. 
We form the {\em eigensection} 
\[s_{\mathcal U}:= s_{\mathcal U}^{(1)}\times s_{\mathcal U}^{(2)}: 
\mathcal U \ra (\bR^{n+1} - \{O\})^{e_1}\times (AS(\bR^{n+1}))^{e_2}.\]


We note that the affine structure with parallel and totally geodesic ends  also will determine 
a point of $(\bR^{n+1} - \{O\})^{e_1}\times (AS(\bR^{n+1}))^{e_2}$. 


One can identify $AS(\bR^{n+1})$ with an open subspace of $\SI^{n+1 \star}$ by 
sending the affine hyperspace to a hyperspace of $\SI^{n+1}$ and hence to a point of $\SI^{n+1 \star}$
by duality. In fact the open subspace is $\SI^{n+1 \star} -\{[\alpha], [-\alpha]\}$ where $\alpha$ is a $1$-form determining $\bR^{n+1}$. 


\begin{remark}[End fundamental group conditions] \index{end!end fundamental group condition}
There is also an important {\em end fundamental group condition}:
Let $P$ be some unspecified condition restricting the holonomy homomorphisms of ends. 
We say that $\mathcal U$ and $\pi_1(\mathcal{O})$ have the {\em unique fixed direction property with respect to $P$} for 
holonomy homomorphisms from the p-end fundamental group $\pi_1(\tilde E) \ra \Aff(\bR^n)$ arising from $\mathcal U$ 
\begin{itemize}
\item if for each parallel end $\tilde E$, the linear parts of holonomy elements of $\pi_1(\tilde E)$ have a nonzero eigenvector, then it is 
the nonzero common eigenvector unique up to scalar multiplications for $\mathcal U$ under the condition $P$, 
\item  if for each totally geodesic end $\tilde E$, the holonomy elements of $\pi_1(\tilde E)$ have  a common invariant affine hyperplane $H$, then 
$H$ is a unique invariant affine hyperplane under the condition $P$.
\end{itemize}
Of course, $P$ could be an empty condition.


More precisely, it is not a purely group condition but a geometric condition. In fact, it might be possible that such a condition holds 
for a component of character space but not for some other subsets of $\Hom^s_E(\pi_1(\mathcal{O}), \Aff(\bR^n))$.
In such cases, our results are valid for the components where the conditions hold. 


Finally, we say that the orbifold $\mathcal{O}$ will have the {\em convex end fundamental group condition} 
if the holonomy group of each of its radial ends has the unique fixed direction
and that of each of its totally geodesic end of lens-type has the unique fixed affine hyperplane
 for every holonomy homomorphism of  a convex affine structure on $\mathcal{O}$ with radial ends or totally geodesic ends in 
$\Hom^s_E(\pi_1(\mathcal{O}), \Aff(\bR^n)).$ 
\end{remark} 

\begin{example}\label{exmp:singul} 
For example, if each $\cR$-type end of $\mathcal{O}$ has singularity of dimension $1$ and
there are no $\cT$-type ends, 
then the end fundamental group condition holds:
If $\mathcal{O}$ is affine with parallel end, then the singularity line in the universal cover of 
$\mathcal{O}$ is in the parallel direction and determines the eigendirection.

\end{example}


\section{The end condition for real projective structures}\label{subsec:endreal}
Now, we go over to real projective orbifolds:
We are given a real projective orbifold $\mathcal{O}$ with ends $E_1, \dots, E_{e_1}$ of $\cR$-type and $E_{e_1+1}, \dots, E_{e_1 + e_2}$ 
of $\cT$-type. Let us choose representative p-ends $\tilde E_1, \dots, \tilde E_{e_1}$ and $\tilde E_{e_1+1}, \dots, \tilde E_{e_1 + e_2}$.
Again, $e_1$ is the number of $\cR$-type ends,  
$e_2$ the number of $\cT$-type ends of $\orb$.

We define a subspace of 
$\Hom_E(\pi_1(\mathcal{O}), \PGL(n+1, \bR))$ to be as in Section \ref{sub:semialg}.

Let $\mathcal V$ be an open subset of 
\[\Hom^s_E(\pi_1(\mathcal{O}), \PGL(n+1,\bR))\] invariant under the conjugation action
so that one can choose a continuous section $s_{\mathcal V}^{(1)}: \mathcal V \ra (\bR P^{n})^{e_1}$
sending a holonomy homomorphism to a common fixed point of $h(\pi_1(\tilde E_i))$ for $i = 1, \dots, e_1$ and 
 $s_{\mathcal V}^{(1)}$ satisfies 
 \[s_{\mathcal V}^{(1)}(g h(\cdot) g^{-1})  = g \cdot s_{\mathcal V}^{(1)}(h(\cdot)) \hbox{ for } g \in \PGL(n+1, \bR).\] \index{section} 
There might be more than one choice of a section and the domain of definition. 
 $s_{\mathcal V}^{(1)}$ is said to be a {\em fixed-point section}.

Again we assume that for the open subset $\mathcal V$ of
\[\Hom^s_E(\pi_1(\mathcal{O}), \PGL(n+1,\bR))\] invariant under the conjugation action
suppose that one can choose a continuous section $s_{\mathcal V}^{(2)}: \mathcal V \ra (\bR P^{n\star})^{e_2}$
sending a holonomy homomorphism to a common dual fixed point of $\pi_1(\tilde E_i)$ for $i = e_1+1, \dots, e_2$, and 
 $s_{\mathcal V}^{(2)}$ satisfies $s_{\mathcal V}^{(2)}(g h(\cdot) g^{-1}) 
 = (g^*)^{-1}\circ s_{\mathcal V}^{(2)}(h(\cdot))$ for $g \in \PGL(n+1, \bR))$.
There might be more than one choice of section in certain cases. 
 $s_{\mathcal V}^{(2)}$ is said to be a {\em dual fixed-point} section.
 
 We define $s_{\mathcal V}: \mathcal V \ra (\bR P^n)^{e_1} \times (\bR P^{n\star})^{e_2}$
 as $ s_{\mathcal V}^{(1)} \times s_{\mathcal V}^{(2)}$ and call it a {\em fixed-section}
provided the holonomy group of 
each $\cT$-type p-end fundamental group $\tilde E_i$ acts on a horosphere tangent to $P$ determined by 
$s_{\mathcal V}^{(2)}$.


\begin{remark} 
Let $P$ be some condition on holonomies of end fundamental groups. 
We say that $\mathcal V$ and an end fundamental group $\pi_1(E)$ 
 have the {\em unique fixed point and dual fixed point property with respect to $P$} if  the holonomy homomorphism of 
an $\cR$-type p-end $\tilde E$ has a common fixed point, then it is the unique fixed point for $\mathcal V$ under the condition $P$
and if the holonomy homomorphism of the $\cT$-type end $\tilde E$ has a common dual fixed point, 
then it is the unique dual fixed point for $\mathcal V$ under $P$.

Finally we say that the orbifold $\mathcal{O}$ will have the {\em end fundamental group condition} \index{end!end fundamental condition} 
if the fundamental group of each of its p-end $\tilde E$ has the 
uniquely fixed point and dual fixed point property for all representations in $\Hom^s_E(\pi_1(\mathcal{O}), \PGL(n+1,\bR))$ 
according to the type of $\tilde E$
that arise as holonomy homomorphisms of real projective structures on $\mathcal{O}$. 
We say that the orbifold $\mathcal{O}$ will have the {\em convex end fundamental group condition} \index{end!convex end fundamental condition} 
if the fundamental group of each of its p-end has the 
unique fixed point and the dual fixed point property for all representations in \[\Hom^s_{E}(\pi_1(\mathcal{O}), \PGL(n+1,\bR))\] 
according to its type 
that arise as holonomy homomorphisms 
of convex real projective structures on $\mathcal{O}$ with radial ends or totally geodesic ends of lens-type.

\end{remark} 

\begin{example}
If $\mathcal{O}$ is real projective and has some singularity of dimension one in each end-neighborhood of an $\cR$-type end, 
then the universal cover of $\mathcal{O}$ has more than two  lines corresponding 
to singular loci. The developing image of the lines must meet at a point in $\bR P^n$, 
which is a fixed point of the holonomy group of an end. 
If $\mathcal{O}$ has dimension $3$, this is equivalent to requiring that the end orbifold has corner-reflectors or cone-points. 
\end{example} 



\begin{example} \label{example:Benoist} 
If $\mathcal{O}$ is a real projective $n$-orbifolds with $\cR$-type ends and virtually center-free end fundamental groups,
then the convex end fundamental group condition holds: Let $E$ be a radial end. 
The end fundamental group must be hyperbolic and irreducible. As in the above argument
since the end orbifold has a strictly convex real projective structure
with irreducible holonomy homomorphism by Benoist \cite{Ben1} and hence cannot preserve a foliation of
totally geodesic leaves of any dimension between $1$ and $n-1$. 



\end{example} 


\begin{example}[Cooper]
We do caution the readers that these assumptions are not trivial and exclude some important representations.
For example, these spaces exclude some incomplete hyperbolic structures arising in Thurston's Dehn surgery constructions 
as they have at least two fixed points for the holonomy homomorphism of the fundamental group of a toroidal end as was pointed out by Cooper. 

\end{example} 

\section{Perturbing horospherical ends}  

The following concerns the deformations of $\bZ^n\ra \PGL(n+1, \bR)$ near 
horospherical representations. As long as we restrict to deformed representations
satisfying the lens-condition, there exist  $n$-dimensional properly convex domains
where the groups act on. (This answers a question of Tillman near 2006.
J. Porti also discussed with me on the parabolic representations in 2011.) 

Let $P$ be an oriented hyperspace of $\SI^n$ with a dual point $P^* \in \SI^{n\ast}$ represented by a $1$-form $w_P$. 
Let $\SI^{n-1 \ast}_{P^*}$ be the space of rays from $P^*$ corresponding to 
hyperspaces in $P$. Then the subspace $P$ is dual to $\SI^{n-1 \ast}_{P^*}$: 
each ray in $\SI^{n\ast}$ from $P^*$ define an oriented hyperspace $S'$ of $P$ as the set of common zeros of the $1$-forms in the ray.
The orientation of $S'$ is given by the open half-space where the $1$-forms near $w_P$ are positive.  
Conversely, a oriented pencil of hyperplanes determined by a hyperspace of $P$ is a ray in $\SI^{n-1 \ast}_{P^*}$ from $P^*$. 
(The obvious $\bR P^n$-version is omitted.)


\begin{lemma}[Horospherical end perturbation] \label{lem:horob} 
Let $B$ be a horoball in $\bR P^n$ {\rm (}resp. in $\SI^n${\rm )} and $\Gamma_p$ be a group of projective automorphisms fixing $p$, 
$p \in \Bd B$, {\rm (}resp. $p \in \SI^n${\rm )}
so that $B/\Gamma_p$ is a horospherical orbifold.  
Let $P$ be a hyperplane in $\bR P^n$ {\rm (}resp. in $p \in \SI^n${\rm ).}
\begin{itemize}
\item Let $\Hom_{E, p, ce}(\Gamma_p, \Pgl)$ denote the space of representations $h$ fixing a common fixed point $p$. 
Then there exists a sufficiently small neighborhood $K$ 
of the inclusion homomorphism of $\Gamma_p$ in $\Hom_{E, p, ce}(\Gamma_p, \Pgl)$ where for each $h \in K$, 
$h(\Gamma_p)$ acts on a properly convex domain $B_h$ so that $B_h/h(\Gamma_p)$ 
is homeomorphic to $B/\Gamma_p$ forming a radial end and fixes $p$.
\item Let $\Hom_{E, TL}(\Gamma_P, \Pgl)$ denote the space of representations 
where $h(\Gamma_P)$ for each element $h$ acts on $P$ satisfying the lens-condition. 
Then there exists a sufficiently small neighborhood $K$ 
of the inclusion homomorphism of $\Gamma_p$ in $\Hom_{E, TL}(\Gamma_p, \Pgl)$ where for each $h \in P$
$h(\Gamma_p)$ acts on a properly convex domain $B_h$ so that $B_h/h(\Gamma_p)$ 
is homeomorphic to $B/\Gamma_p$ and has a totally geodesic end of lens-type or horospherical end. 
\end{itemize} 
\end{lemma} 
\begin{proof} 
We will prove for the $\SI^n$-version, which implies the $\bR P^n$-version. 
 Let us choose a smaller horoball $B'$ in $B$. 
 $B'/\Gamma_p$ has a boundary component $S'_{\tilde E}$ so 
$B'/\Gamma$ is diffeomorphic to $S_{\tilde E} \times [1, 0)$. 
$S'_{\tilde E}$ is strictly convex and transversal to the radial foliation. 
There exists a neighborhood $K$ in $\Hom_{E, p, ce}(\Gamma_p, \Pgl)$ corresponding to 
the connection on a fixed compact neighborhood $N$ of $S'_{\tilde E}$ changes only by $\eps$ in $C^2$-topology. 
(See the deformation theorem in \cite{Goldman3} which generalize to the compact orbifolds with boundary.) 
The universal cover $\tilde S'_{\tilde E}$ is a strictly convex codimension-one manifold
and it deforms to $\tilde S'_{\tilde E, h}$ that is still convex for sufficiently small $\eps$. 
Here, $\tilde S'_{\tilde E, h}$ may not be imbedded in $\bR P^n$ but is a submanifold of the deformed $n$-manifold $N_h$ from $N$
by the change of connections. 
Every ray from $p$ meets $\tilde S'_{\tilde E, h}$ transversally also by the $C^2$-condition.

Let $v_x$ be a vector in direction of $x$ for $x \in \clo(B')$. 
We form a cone 
\[c(\tilde S'_{\tilde E, h}) := \{ [t v_p + (1-t) v_x]| t\in [0, 1], x \in \tilde S'_{\tilde E} \}. \]
Let $\tilde S_{\tilde E, h}$ denote the space of rays from $p$ ending at $\tilde S'_{\tilde E, h}$ in $c(\tilde S'_{\tilde E, h})$. 
Here $S_h := \tilde S_{\tilde E, h}/h(\Gamma_p)$ is a compact real projective orbifold of $(n-1)$-dimension.  

A map $D_h: \tilde S_{\tilde E, h} \ra \SI^{n-1}_p$ sends the point $x$ to the image ray in $\SI^{n-1}_p$. 
By the assumption on $\Hom_{E, c}(\Gamma_p, \Pgl)$, the image is in


It follows that $D_h$ is an imbedding to a domain $\Omega_h$ in $\SI^{n-1}_p$ where $h(\Gamma_p)$ acts properly discontinuously
and cocompactly. 

There is a one-to-one correspondence from $\tilde S'_{\tilde E, h}$ to $\tilde S_{\tilde E, h}:=\Omega_h$.
By convexity of $\tilde S_{\tilde E, h}$ and the strict convexity of $\tilde S'_{\tilde E, h}$, we obtain 
that $B_h$ is convex by Lemma \ref{lem:locconv}. 
The proper convexity of $B_h$ follows since $\tilde S'_{\tilde E, h}$ is strictly convex,
and hence $\clo(B_h)$ cannot contain a pair of antipodal points.

The second item is the dual of the first one.  If $h(\Gamma_P)$ acts on 
a horosphere tangent to $P$ with the vertex in $P$ properly discontinuously, 
then the dual group $h(\Gamma_P)^*$ acts on a horosphere with a vertex the point $P^*$ dual to $P$. 
Suppose that $h(\Gamma_P)$ acts on a convex domain $\Omega_P$ in $P$.
Then it acts on a convex domain in $\SI^{n-1 \ast}_{P^*}$ the space of rays from $P^*$ corresponding to 
hyperspaces in $P$ by Proposition \ref{prop:duality}. 
Therefore, we are reduced to the first item. 
\end{proof} 

An affine space $\bR^{n+1}$ has a great sphere $\SI^{n}_\infty$ as a boundary. 
We define $\SI^{n\ast}_\infty$ as the dual sphere where the dual group $G^*$ acts on
provided an affine group $G$ act on $\SI^{n}_\infty$.
Suppose that $G$ is an extension of an affine group by a cyclic group of dilatations that are in the center. 
We denote by $\Hom^S(G, \Aff(\bR^{n+1})$ the representation where the central cyclic group go to the group of dilatations. 

\begin{lemma}[Affine horospherical end perturbation] \label{lem:affhorob} 
Let $B$ be  a horoball and $\Gamma_p$ be a group of projective automorphisms fixing $p$ 
so that $B/\Gamma_p$ is a horospherical end orbifold.  
Let $C_B$ an affine cone corresponding to $B$ and $\Gamma'_p$ denote the affine transformation corresponding 
to $\Gamma_p$ centrally extended by an infinite cyclic dilatation group acting on $C_B$. 
\begin{itemize}
\item Let $\Hom_{E, p, ce}^S(\Gamma'_p, \Aff(\bR^{n+1}))$ 
denote the space of representations $h$ 
with linear parts with a common eigenvector $v_p$ 
so that $[v_p] = p$ where the restriction group of $h(\Gamma'_p)$ acts on a lens-cone or a horoball-cone
with the end parallel along $v_p$. 
Then there exists a sufficiently small neighborhood $P$ 
of the inclusion homomorphism of 
$\Gamma'_p$ in $\Hom_{E, p, ce}^S(\Gamma'_p, \Aff(\bR^{n+1}))$ where for each $h(\Gamma'_p)$ with $h \in P$
so that $h(\Gamma'_p)$ acts on a properly convex cone $C_{B_h}$ so that $C_{B_h}/h(\Gamma'_p)$ 
is homeomorphic to $C_B/\Gamma_p'$ and has a parallel end. 
\item Let $\Hom_{E, P, TL}^S(\Gamma'_p, \Aff(\bR^{n+1}))$ denote the space of representations 
$h$ acting on a hyperspace $P$  in $\bR^{n+1}$ where $h(\Gamma'_p)$ acts on a lens-cone $L$ properly discontinuously and cocompactly  
with $L^o \cap P = L\cap P \ne \emp$ or a horoball-cone 
tangent to $P$.
Then there exists a sufficiently small neighborhood $K$ 
of the inclusion homomorphism of $\Gamma'_p$ in $\Hom_{E, P, ce}^S(\Gamma'_p, \Aff(\bR^{n+1}))$
 where for each $h(\Gamma'_p)$ with $h \in K$
so that $h(\Gamma'_p)$ acts on a properly convex cone $C_{B_h}$ so that $C_{B_h}/h(\Gamma'_p)$ 
is homeomorphic to $C_B/\Gamma_p'$ and has a totally geodesic or affine horospherical end. 
\end{itemize} 
\end{lemma} 
\begin{proof} 
This follows from Lemma \ref{lem:horob}. 
\end{proof}


\section{Local homeomorphism theorems}

Let $\orb$ be a noncompact $(n+1)$-orbifold of strongly tame type, and ends are assigned 
to be of $\cR$-type or $\cT$-type as is the convention in this paper. 
An affine manifold affinely diffeomorphic to the affine suspension of horospherical end neighborhood is said 
to be the {\em affinely suspended horoball neighborhood}. If an end has such a neighborhood, 
then we call the end {\em  affine horospherical type}. Since the projective automorphism group of 
a horosphere fixes a point, the fundamental group of the affine horospherical end 
preserves a direction. Thus, the end of an affine horospherical type is of parallel type. 

Again there is a parallel foliation marking for each parallel end of $\orb$ and 
the ideal boundary components of totally geodesic ends of $\orb$ analogously defined. 

We define the end restricted deformation space $\Def_{A, E}(\mathcal{O})$ 
on $\mathcal{O}$ to be the quotient space of affine 
structures on $\mathcal{O}$ where 
\begin{itemize} 
\item each end is parallel if the end is of $\cR$-type or
\item is totally geodesic of suspended lens-type or suspended horospherical type if the end is of $\cT$-type.
\end{itemize}
under the  action of group of isotopies preserving the markings; i.e., 
preserves the radial foliation if the end is radial or horospherical or extends to a smooth diffeomorphism 
if the end is totally geodesic. (As above, each end has a distinguished infinite cyclic group in the center with holonomies in
dilatations in $\bR^{n+1}$.) 

We also define
\[\Hom_{E}(\pi_1(\mathcal{O}), \Aff(\bR^{n+1}))\] 
as the subspace of \[\Hom(\pi_1(\mathcal{O}), \Aff(\bR^{n+1}))\] of elements $h$ 
where 
\begin{itemize} 
\item $h(\pi_1(E))$ for each end $E$ has dilatations as images of the distinguished infinite cyclic groups. 
\item the elements of the representations $h(\pi_1(E))$ of the fundamental group of  each end $E$ has a common eigenvector if the end is of $\cR$-type or
\item $h(\pi_1(E))$ acts on a totally geodesic hyperspace $P$ with $C_L \cap P = C_L^o \cap P \ne \emp$ for a lens-cone $C_L$ 
or tangent to a horoball-cone $C_H$ where $C_L/h(\pi_1(E))$ or 
$C_H/h(\pi_1(E))$ is a compact orbifold. 
 if the end is of $\cT$-type.
\end{itemize}

We define $\Def_{A, E, \mathcal U, s_{\mathcal U}}(\mathcal{O})$ 
to be the subspace of $\Def_{A, E}(\orb)$ with the corresponding holonomy homomorphism in 
the open subset $\mathcal U$ of \[\Hom_{E}(\pi_1(\mathcal{O}), \Aff(\bR^{n+1}))\] 
invariant under the conjugation action 
and with affine structures so that the end direction is given by 
\[s_{\mathcal U}: \mathcal U \ra (\bR^{n+1}-\{O\})^{e_1} \times ({AS(\bR^{n+1})})^{e_2}\] 
where $\mathcal U$ is a conjugation-invariant subset of 
$\Hom(\pi_1(\mathcal{O}, \Aff(\bR^{n+1}))$ and
 \[s_{\mathcal U}(g h(\cdot) g^{-1}) = g \cdot s_{\mathcal U}(h(\cdot)) \hbox{ for } g \in \Aff(\bR^{n+1}).\]
(See Section \ref{subsec:endf}.)

We define the isotopy-equivalence space 
$\widetilde{\Def}_{A, E, \mathcal U, s_{\mathcal U}}(\mathcal{O})$ as the quotient space 
of all development pairs $\dev: \torb \ra \bR^{n+1}$ equivariant with holonomy homomorphisms 
$h:\pi_1(\orb) \ra \Aff(\bR^{n+1})$ corresponding to the elements of $\Def_{A, E, \mathcal U, s_{\mathcal U}}(\mathcal{O})$
under the isotopies of form $\iota: \torb \ra \torb$ preserving the parallel structures 
and the totally geodesic ideal boundary. The space has the compact open $C^1$-topology. 
Here $\Def_{A, E, \mathcal U, s_{\mathcal U}}(\mathcal{O})$ is the quotient space of $\widetilde{\Def}_{A, E, \mathcal U, s_{\mathcal U}}(\mathcal{O})$ 
under $\Aff(\bR^{n+1})$ acting by 
\[g (\dev, h(\cdot)) = (g\circ \dev, g h(\cdot) g^{-1}), g \in \Aff(\bR^{n+1}).\]
(See \cite{dgorb} for details.)

Similarly, we define $\widetilde{\Def}_{A, E, u}(\mathcal{O})$ as the quotient space 
of development pairs corresponding to the elements of $\Def_{A, E, u}(\orb)$ under the isotopies of $\torb$ preserving the end structures. 
We also note that 
 \[ \Def_{A, E, u}(\mathcal{O})= \widetilde{\Def}_{A, E, u}(\mathcal{O})/ \Aff(\bR^{n+1}).\]

The rest of the proof of the first part of Theorem \ref{thm:affine} is similar to \cite{dgorb}. 
We cover $\mathcal{O}$ by open sets covering a codimension-$0$ compact orbifold $\mathcal{O}'$ and open sets which are end-parallel. 

\begin{theorem}\label{thm:affine} 
Let $\mathcal{O}$ be a noncompact strongly tame affine $(n+1)$-orbifold with parallel ends and totally geodesic ends of lens-type
where the types of ends are assigned. Assume $\partial \orb =\emp$. 
Let $\mathcal U$ be a conjugation-invariant open subset of $\Hom_E(\pi_1(\mathcal{O}), \Aff(\bR^{n+1}))$
with an eigensection $s_{\mathcal U}$. 
The map \[\hol: \widetilde{\Def}_{A, E, \mathcal U, s_{\mathcal U}}(\mathcal{O}) \ra \Hom_{E}(\pi_1(\mathcal{O}), \Aff(\bR^{n+1}))\]
sending affine structures determined by the eigen-section $s_{\mathcal U}$ to the conjugacy classes of holonomy homomorphisms
is a local homeomorphism on an open subset of $\mathcal U'$.
\end{theorem}


Again $\Def^s_{E, \mathcal U, s_{\mathcal U}}(\mathcal{O})$ is defined to be the subspace of $\Def_E(\orb)$ 
with the stable irreducible holonomy homomorphisms in $\mathcal U$ 
and the end determined by $s_{\mathcal U}$, i.e., 
\begin{itemize} 
\item each $\cR$-type p-end has a p-end neighborhood 
foliated by geodesic leaves that go to rays from the fixed points as given by $s_{\mathcal U}$ under the developing map, or 
\item each $\cT$-type p-end is totally geodesic of lens-type satisfying the lens-condition or horospherical with hyperspace determined 
by $s_{\mathcal U}$. (See Section \ref{subsec:endreal}.)
\end{itemize}

\begin{theorem}\label{thm:projective} 
Let $\mathcal{O}$ be a noncompact strongly tame real projective $n$-orbifold with radial ends or totally geodesic ends of lens-type with types assigned and 
$\mathcal V$ a conjugation-invariant open subset of \[\Hom^s_E(\pi_1(\mathcal{O}), \PGL(n+1, \bR)),\] 
and $\mathcal V'$ the image in \[\rep^s_E(\pi_1(\mathcal{O}),  \PGL(n+1, \bR)).\]
Assume $\partial \orb =\emp$. 
Let $s_{\mathcal V}$ be the fixed-point section defined on $\mathcal V$ with images in $(\bR P^n)^{e_1} \times (\bR P^{n \star})^{e_2}$. 
Then the map \[hol:\Def^s_{E, \mathcal V, s_{\mathcal V}}(\mathcal{O}) \ra \rep^s_{E}(\pi_1(\mathcal{O}), \PGL(n+1, \bR))\]
sending the real projective structures with ends compatible with $s_{\mathcal V}$ to their conjugacy classes of holonomy homomorphisms
is a local homeomorphism  to an open subset of $\mathcal V'$.
\end{theorem}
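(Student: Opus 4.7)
The plan is to deduce Theorem \ref{thm:projective} from the affine version, Theorem \ref{thm:affine}, by transporting everything through the affine suspension construction of Section \ref{sec:prel}. Given a real projective $n$-orbifold $\mathcal{O}$ with radial ends, the affine suspension produces a radiant affine $(n+1)$-orbifold $\hat{\mathcal{O}}$ with parallel ends whose fundamental group is $\pi_1(\mathcal{O}) \times \bZ$. Under this operation, radial end foliations on $\mathcal{O}$ correspond to parallel end foliations on $\hat{\mathcal{O}}$, and common fixed points in $\bR P^n$ of end holonomies lift to common eigenvector directions in $\bR^{n+1}$ of the linear parts of the suspended holonomies. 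Thus the fixed-point section $s_{\mathcal V}$ on the projective side transports to an eigenvector-section $\hat s_{\mathcal U}$ for a conjugation-invariant open subset $\mathcal U$ of $\Hom_E(\pi_1(\hat{\mathcal{O}}), \Aff(\bR^{n+1}))$ obtained by suspending the representations in $\mathcal V$.

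First, I would construct a continuous map
\[
\Sigma : \Def_{E, \mathcal V, s_{\mathcal V}}(\mathcal{O}) \longra \Def_{A, E, \mathcal U, \hat s_{\mathcal U}}(\hat{\mathcal{O}})
\]
by sending an isotopy class of development pairs $[(\dev, h)]$ to the class $[(\dev'', h'')]$, where $\dev''(x, t) = t\, \dev'(x)$ for a lift $\dev'$ of $\dev$ to $\SI^n$ and $h''$ is the induced homomorphism into $\Aff(\bR^{n+1})$ with the $\bZ$-generator acting as $S_2$. On the representation side, I would build an analogous suspension map
\[
\hat\Sigma : \rep_E(\pi_1(\mathcal{O}), \PGL(n+1, \bR)) \longra \rep_E(\pi_1(\hat{\mathcal{O}}), \Aff(\bR^{n+1})),
\]
and verify commutativity of
\begin{CD}
\Def_{E, \mathcal V, s_{\mathcal V}}(\mathcal{O}) @>{\hol}>> \rep_E(\pi_1(\mathcal{O}), \PGL(n+1, \bR)) \\
@V{\Sigma}VV  @VV{\hat\Sigma}V \\
\Def_{A, E, \mathcal U, \hat s_{\mathcal U}}(\hat{\mathcal{O}}) @>{\hol}>> \rep_E(\pi_1(\hat{\mathcal{O}}), \Aff(\bR^{n+1})).
\end{CD}

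Second, I would show that $\Sigma$ and $\hat\Sigma$ are injective local homeomorphisms onto their images by exhibiting explicit inverses on the slice carved out by radiance and $S_2$-invariance. The key observation is that a radiant affine structure on $\hat{\mathcal{O}}$ invariant under the cyclic $\bZ$-action descends canonically to a real projective structure on $\mathcal{O}$: quotienting $\bR^{n+1} \setminus \{O\}$ by the radial flow recovers $\bR P^n$ (through the $\SI^n$ double cover), and parallel ends in the $(n+1)$-dimensional picture push down to radial ends in the $n$-dimensional picture. The lift $\dev'$ exists because $\tilde{\mathcal{O}}$ is simply connected and $\SI^n \ra \bR P^n$ is a covering, and likewise $h$ lifts to $\SL_{\pm}(n+1, \bR)$; both lifts are continuous in $(\dev, h)$. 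Once these are known, applying Theorem \ref{thm:affine} to the bottom row and chasing the diagram forces $\hol$ in the top row to be a local homeomorphism on the open set $\mathcal V' \subset \rep_E(\pi_1(\mathcal{O}), \PGL(n+1, \bR))$.

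The main obstacle I expect is the bookkeeping of the extra $\bZ$-factor and of the scalar indeterminacies. One must identify precisely which slice of $\Def_{A, E, \mathcal U, \hat s_{\mathcal U}}(\hat{\mathcal{O}})$ is parametrized by $\Def_{E, \mathcal V, s_{\mathcal V}}(\mathcal{O})$: it is cut out by demanding that the holonomy of the $\bZ$-generator be exactly a homothety $S_2$ commuting with $h''$, and that the developing image be a radiant cone. Verifying that the fixed-point section $s_{\mathcal V}$ is compatible on the nose with the eigenvector-section of the suspension (including continuity and equivariance under the conjugation actions of $\PGL(n+1, \bR)$ and $\Aff(\bR^{n+1})$ respectively) is straightforward but must be done with care to avoid losing openness when passing between $\bR P^n$ and $\SI^n$ and between $\PGL(n+1, \bR)$ and $\GL(n+1, \bR)/\bR^+$. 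Once this identification is secured, the projective statement follows directly from the affine one.
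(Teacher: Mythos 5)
Your overall route (transfer Theorem \ref{thm:affine} to the projective case via the affine suspension) is the same as the paper's, but there is a genuine gap at the final step, where you ``apply Theorem \ref{thm:affine} to the bottom row and chase the diagram.'' The image of your suspension functor is \emph{not open}: inside $\Def_{A,E,\mathcal U,\hat s_{\mathcal U}}(\hat{\mathcal{O}})$ the suspended structures form a proper slice, and inside $\rep_E(\pi_1(\hat{\mathcal{O}}),\Aff(\bR^{n+1}))$ the representations sending the central $\bZ$-generator to a dilatation form a proper, non-open subset (one can perturb the image of the generator away from being a homothety). A local homeomorphism between the ambient spaces does not restrict to a local homeomorphism between such slices: given a representation $\hat\Sigma(\rho')$ near $\hat\Sigma(\rho)$, Theorem \ref{thm:affine} produces a nearby affine structure with that holonomy, but nothing in the statement guarantees that this structure is radiant and $S_2$-periodic, i.e.\ lies in the image of $\Sigma$, so you cannot descend it to a projective structure on $\mathcal{O}$. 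The missing ingredient is exactly what the paper supplies: it reruns the proof of Theorem \ref{thm:affine} inside the radiant category, modifying Lemmas \ref{lem:3}, \ref{lem:4} and \ref{lem:5} so that the covering open sets are radial and the equivariant isotopies preserve the radial vector field, which forces the locally constructed inverse to land in the suspended slice $\Def^s_{A,E,\mathcal U,s_{\mathcal U}}(\mathcal{O}')$ over $\rep^s_E(\pi_1(\mathcal{O}'),\Aff(\bR^{n+1}))$. Alternatively you would have to prove directly that any affine structure close to a suspension whose holonomy is of suspension type is isotopic to a suspension; either way this needs an argument, not a diagram chase.

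A secondary point: your map $\hat\Sigma$ cannot be defined on all of $\rep_E(\pi_1(\mathcal{O}),\PGL(n+1,\bR))$, since for $n$ odd a $\PGL(n+1,\bR)$-representation need not lift to $\GL(n+1,\bR)$; lifts exist for holonomies of structures (this is Theorem \ref{thm:lifting}, via lifting the developing map to $\SI^n$), and the paper therefore works with the covering $\hat q:\rep_E(\pi_1(\mathcal{O}),\SL_\pm(n+1,\bR))\ra\rep_E(\pi_1(\mathcal{O}),\PGL(n+1,\bR))$ onto its image components together with the homeomorphism $\tilde q$ of Theorem \ref{thm:same}, taking local sections of $\hat q$ rather than a globally defined suspension of representations. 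Your acknowledged ``bookkeeping'' of scalar indeterminacies also hides the $H^1(\mathcal{O},\bR)\times\bR^+$ worth of suspensions of a fixed projective structure; your normalization (generator mapping exactly to $S_2$, special suspension into $\SL_\pm(n+1,\bR)$) does cut out a slice, but the continuity and equivariance of that normalization must be checked, which is what the paper's product decomposition of $\rep_E(\pi_1(\mathcal{O}),\GL(n+1,\bR))$ accomplishes.
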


Define
\[ \rep^s_{E, u}(\pi_1(\mathcal{O}), \Aff(\bR^n))\] as the subspace of stable irreducible holonomy 
where each $\cR$-type end holonomy group has a unique eigenvector 
and each $\cT$-type end holonomy group has a unique eigen-$1$-form.
Define $\Def^s_{A, E, u}(\mathcal{O})$ as the subspace of $\Def_{A, E}(\mathcal{O})$ mapping to 
the subspace under $\hol$.

The last part is proved by using affine suspension. This will prove Theorem \ref{thm:A} 
since the uniqueness of the fixed points of the end holonomy groups gives 
us the section $s_{\mathcal V}$ for $\mathcal V$ equal to the space of representations corresponding to 
\[\rep^s_{E, u}(\pi_1(\mathcal{O}),  \PGL(n+1, \bR)).\]



\begin{corollary}\label{cor:affine} 
Suppose that $\mathcal{O}$ is a noncompact strongly tame $n$-orbifold. 
Assume $\partial \orb =\emp$. 
Then the map \[\hol: \widetilde{\Def}_{A, E, u}(\mathcal{O}) \ra \Hom_{E, u}(\pi_1(\mathcal{O}), \Aff(\bR^n))\]
sending affine structures to the conjugacy classes of their holonomy homomorphisms
is a local homeomorphism.
So is the map \[\hol:\Def^s_{E, u}(\mathcal{O}) \ra \rep^s_{E, u}(\pi_1(\mathcal{O}), \PGL(n+1,\bR)).\]
\end{corollary}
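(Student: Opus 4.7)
The plan is to deduce the corollary directly from Theorems \ref{thm:affine} and \ref{thm:projective} by using the end fundamental group conditions to upgrade the locally defined, non-canonical sections $s_{\mathcal U}$ and $s_{\mathcal V}$ to canonical, globally defined sections on the full spaces $\Hom_E(\pi_1(\mathcal{O}), \Aff(\bR^n))$ and $\Hom_E(\pi_1(\mathcal{O}), \PGL(n+1,\bR))$.

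First I would set $\mathcal U = \Hom_E(\pi_1(\mathcal{O}), \Aff(\bR^n))$ and construct a section $s_{\mathcal U}: \mathcal U \ra \bR^{ne}$ as follows. For each end $E$, fix a conjugacy class of inclusion $\pi_1(E) \hookrightarrow \pi_1(\mathcal{O})$. Given $h \in \mathcal U$, the restriction $h|\pi_1(E)$ has linear parts sharing a nonzero common eigenvector by the definition of $\Hom_E$, and by the end fundamental group condition this eigenvector is unique up to scalar. Normalize using any continuously varying choice (for example, intersect with the unit sphere and break the sign ambiguity by a local continuous rule near each point of $\mathcal U$). Uniqueness forces the resulting assignment to be well-defined, equivariant under conjugation in the prescribed sense, and continuous: if $h_n \ra h$ then every accumulation point of $s_{\mathcal U}(h_n)$ is a common eigenvector of $h|\pi_1(E)$, hence equal to $s_{\mathcal U}(h)$ by uniqueness. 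Similarly, I would set $\mathcal V = \Hom_E(\pi_1(\mathcal{O}), \PGL(n+1, \bR))$ and define the fixed-point section $s_{\mathcal V}$ by picking the unique common fixed point in $\bR P^n$ provided by the convex end fundamental group condition; continuity again follows from uniqueness.

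Next I would observe that under these canonical sections, the deformation spaces $\Def_{A,E,\mathcal U, s_{\mathcal U}}(\mathcal{O})$ and $\Def_{E, \mathcal V, s_{\mathcal V}}(\mathcal{O})$ coincide with $\Def_{A,E}(\mathcal{O})$ and $\Def_E(\mathcal{O})$ respectively. This is because the only additional data demanded by the former spaces is the assignment of the parallel direction or cone point at each end, which by the uniqueness condition is forced on us and already encoded in the underlying structure. Hence the holonomy maps in the corollary are identified with the holonomy maps of Theorems \ref{thm:affine} and \ref{thm:projective}, and the local homeomorphism property is inherited.

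The main (and essentially only) obstacle is the continuity of the canonical sections, since the local homeomorphism statements of Theorems \ref{thm:affine} and \ref{thm:projective} explicitly require continuity of the chosen section. I expect this to reduce to a standard perturbation argument: the eigenvector (respectively fixed point) varies continuously as a set-valued map in the Hausdorff topology, and uniqueness collapses the set-valued map to a genuine continuous map. Equivariance under conjugation is automatic from the canonical nature of the choice. Once continuity and equivariance are in hand, the corollary follows immediately by applying Theorem \ref{thm:affine} to deduce the affine statement and Theorem \ref{thm:projective} to deduce the real projective statement.
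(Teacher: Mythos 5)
Your proposal is correct and follows essentially the same route as the paper: the paper simply takes $\mathcal U$ and $\mathcal V$ to be the entire spaces $\Hom_E(\pi_1(\mathcal{O}), \Aff(\bR^n))$ and $\Hom_E(\pi_1(\mathcal{O}), \PGL(n+1,\bR))$, notes that the end fundamental group conditions make the eigenvector-section and fixed-point section canonically defined (uniqueness giving equivariance and continuity), and then invokes Theorems \ref{thm:affine} and \ref{thm:projective}. Your write-up just makes explicit the continuity-from-uniqueness argument and the identification of the restricted deformation spaces with $\Def_{A,E}(\mathcal{O})$ and $\Def_E(\mathcal{O})$, which the paper leaves implicit.
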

\begin{proof}
For each representation in \[ \Hom_{E, u}(\pi_1(\mathcal{O}), \Aff(\bR^n)),\] 
we find a unique set of vectors and $1$-forms corresponding to the ends. 
The continuity follows by considering sequences. 
We use the above paragraph and Theorems \ref{thm:affine} and \ref{thm:projective}. 
\end{proof} 



\section{The proof of Theorem \ref{thm:affine}.} 

We wish to now prove Theorem \ref{thm:affine} following the proof of Theorem 1 in Section 5 of \cite{dgorb}.
Then we will prove Theorem \ref{thm:projective} in Section \ref{sub:rps} using this chapter. 

Let $\mathcal{O}$ be an affine orbifold with the universal covering orbifold $\tilde{\mathcal{O}}$ 
with the covering map $p_\orb:\tilde{\mathcal{O}} \ra \mathcal{O}$
and let the fundamental group $\pi_1(\mathcal{O})$ act on it as an automorphism group.

Let $\mathcal U$ and $s_{\mathcal U}$ be as above. 
We will now define a map 
\[\hol:\widetilde{\Def}_{A, E , \mathcal U, s_{\mathcal U}}(\mathcal{O}) \ra \Hom_E(\pi_1(\mathcal{O}), \Aff(\bR^n))\] 
by sending the affine structure to the pair $(\dev, h)$ and to the conjugacy class of $h$ finally.
There is a codimension-$0$ compact submanifold $\mathcal{O}'$ of $\mathcal{O}$ 
so that $\pi_1(\mathcal{O}') \ra \pi_1(\mathcal{O})$ is an isomorphism.
The holonomy homomorphism is determined on $\mathcal{O}'$. 
Since the deformation space has $C^r$-topology, $r \geq 1$,  induced by 
$\dev$, it follows that small changes of $\dev$ on compact domains in the $C^r$-topology imply sufficiently small changes in $h(g'_i)$ for 
generators $g'_i$ of $\pi_1(\mathcal{O}')$ and 
hence sufficiently small change of $h(g_i)$ for generators $g_i$ of $\pi_1(\mathcal{O})$. 
Therefore, $\hol$ is continuous.
(Actually for the continuity, we do not need any condition on ends.)

For the purpose of this paper, we use $r \geq 2$. We will use this fact a number of times.

A {\em $v$-parallel set} is a subset of $\bR^n$ which is invariant under the translation 
along positive multiples of a fixed nonzero vector $v$. 
That is, it should be a union of the images under translations along positive multiples of a nonzero vector.

An {\em end-parallel} subset of $\tilde{\mathcal{O}}$ or $\bR^n$ is a $v$-parallel set where 
$v$ is the eigenvector of the linear parts of the corresponding p-end.


To show the local homeomorphism property, we take an affine structure $(\dev, h)$ on $\mathcal{O}$ and the associated 
holonomy map $h$. We cover $\tilde{\mathcal{O}}'$ by small precompact open sets as in Section 5 of \cite{dgorb}. 
We cover $\mathcal{O}-\mathcal{O}^{\prime o}$ by end-parallel open sets. 
Consider Lemmas 3, 4, and 5 in \cite{dgorb}. We can generalize these to include the $v$-parallel sets
for an invariant direction of $v$ of the finite group $G_B$ where $v$ is an eigenvector of eigenvalue $1$ since $G_B$ is finite. 
We repeat them below. The proofs are very similar and use the commutativity of translation by eigenvectors with the action of $G_B$.

\begin{lemma}\label{lem:3} 
Let $G_B$ be a finite subgroup of $\Aff(\bR^n)$ acting on a $v_0$-parallel open subset $B$ of $\bR^n$ for 
an eigenvector $v_0$ of the linear part of $G_B$. Let
$h_t : G_B \ra \Aff(\bR^n),  t \in [0, \eps), \eps > 0$, be an analytic parameter of representations of $G_B$ so that
$h_0$ is the inclusion map. Let $v_t$ is a nonzero eigenvector of $h_t(G_B)$ for each $t$ and 
we assume that $t \mapsto v_t$ is continuous. 
Then for $0\leq t \leq \eps$, there exists a continuous family of diffeomorphisms
$f_t : B \ra B_t$ to a $v_t$-parallel open set $B_t$ in X so that $f_t$ conjugates the $h(G_B)$-action to
the $h_t(G_B)$-action; i.e., $ f_t h_t(g)f^{-1}_t= h_0(g)$ for each $g \in G_B$ and $t \in [0, \eps]$.
\end{lemma}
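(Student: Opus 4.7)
\emph{Proof plan.} The plan is to mimic the proof of Lemma 3 of \cite{dgorb} — rigidity of finite-group representations via averaging — and then to track the parallel direction. Throughout, $v_t$ should be treated as a joint $1$-eigenvector of the linear parts of $h_t(G_B)$, as commutativity of translation by $v_t$ with the $G_B$-action (needed to give $v_t$-parallelism a $G_B$-equivariant meaning) forces this, in keeping with the paragraph preceding the lemma.

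First, since $G_B$ is finite, averaging over $G_B$ shows that every cocycle in $Z^1(G_B,\mathrm{Lie}\,\Aff(\bR^n))$ is a coboundary, so the conjugation orbit of $h_0$ in $\Hom(G_B,\Aff(\bR^n))$ is open at $h_0$. After shrinking $\eps$ if necessary, the implicit function theorem then produces an analytic family $a_t\in\Aff(\bR^n)$ with $a_0=\mathrm{id}$ and
\[a_t\,h_0(g)\,a_t^{-1}=h_t(g),\qquad g\in G_B,\ t\in[0,\eps].\]
Write $L_t$ for the linear part of $a_t$ and $V^t\subset\bR^n$ for the common $1$-eigenspace of the linear parts of $h_t(G_B)$. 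Conjugation by $L_t$ carries $V^0$ onto $V^t$, so $L_t(v_0)\in V^t$ is a continuous family with value $v_0$ at $t=0$.

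Second, I would match $L_t(v_0)$ with the prescribed $v_t\in V^t$. If $\dim V^0=1$, then $L_t(v_0)$ and $v_t$ are automatically colinear and, after possibly flipping the sign of $v_t$ (compatible with its continuity), we have $L_t(v_0)=\mu(t)v_t$ for a positive continuous $\mu$ with $\mu(0)=1$. When $\dim V^0>1$, I would pick a continuously varying $h_t(G_B)$-invariant complement $W_t$ to $V^t$ (obtained by averaging any continuous family of projections to $V^t$) and a continuous family $M_t\in\GL(n,\bR)$ that equals the identity on $W_t$ and acts on $V^t$ by some invertible linear map sending $L_t(v_0)$ to $v_t$, with $M_0=\mathrm{id}$. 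Since $h_t(G_B)$ acts trivially on $V^t$ and preserves $W_t$, $M_t$ commutes with $h_t(G_B)$, so replacing $a_t$ by $M_t\circ a_t$ preserves the intertwining identity of step one while arranging $L_t(v_0)=v_t$.

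Third, set $B_t:=a_t(B)$; this set is $h_t(G_B)$-invariant because $h_t(g)\circ a_t=a_t\circ h_0(g)$. Since $a_t$ is affine with linear part $L_t$ and $B$ is invariant under translations by positive multiples of $v_0$, the image $B_t$ is invariant under translations by positive multiples of $L_t(v_0)=v_t$, hence is $v_t$-parallel. Taking $f_t$ to be the affine identification between $B$ and $B_t$ in the direction dictated by the lemma's convention — so that the conjugation formula reads $f_t\,h_t(g)\,f_t^{-1}=h_0(g)$, i.e., $f_t=a_t^{-1}|_{B_t}$ — the intertwining identity, the diffeomorphism property, smoothness, and continuous dependence on $t$ all follow from those of $a_t$, with $f_0=\mathrm{id}$. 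The main obstacle is the matching in step two: the prescribed $v_t$ need not a priori be colinear with the $L_t(v_0)$ produced by the averaged conjugator, so a representation-theoretic correction is required. Finiteness of $G_B$ is essential — it underlies both the averaging/rigidity argument of step one and the existence of the $h_t(G_B)$-invariant complement $W_t$ and the equivariant correction $M_t$ on $V^t$ used to align the two directions.
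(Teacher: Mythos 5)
Your proposal is correct in substance but proves the lemma by a genuinely different route from the paper. The paper's own proof is a one-line reduction: it constructs the conjugating diffeomorphism on a transversal section of $B$ meeting every line parallel to $v_0$ (invoking the compact-set argument of \cite{dgorb}) and then extends it along the parallel foliation, the equivariance of the extension coming from the commutativity of translations along the common eigenvector with the $G_B$-action; the same template is then reused for Lemmas \ref{lem:4} and \ref{lem:5} and for the radial modifications in the projective case. You instead produce a single affine conjugator: vanishing of the relevant $H^1$ for the finite group $G_B$ (averaging) gives an analytic family $a_t$ with $a_t h_0(\cdot)a_t^{-1}=h_t(\cdot)$, and since an affine map carries parallel rays to parallel rays, $a_t(B)$ is automatically parallel in the direction $L_t(v_0)$; the only remaining issue, which you correctly isolate, is aligning $L_t(v_0)$ with the prescribed $v_t$ --- trivial when the common fixed space $V^t$ of the linear parts is a line (the ratio is automatically positive by continuity, so no sign flip is needed), and handled by your equivariant correction $M_t$ when $\dim V^t>1$. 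Your route buys an explicit affine conjugacy with no section-and-extension bookkeeping; the paper's route is the one that iterates well in Lemmas \ref{lem:4}--\ref{lem:5}, where only equivariant isotopies of pieces of charts are available and the conjugating maps cannot be taken affine.

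Two points need patching. First, your justification that $M_t$ commutes with $h_t(G_B)$ treats the elements as linear: writing $h_t(g)\colon x\mapsto \Lambda_t(g)x+b_t(g)$, the affine maps neither preserve $V^t$ nor act trivially on it, and commutation additionally requires $M_t b_t(g)=b_t(g)$. This is true but needs an argument: since $h_t(g)$ has finite order, $b_t(g)\in \mathrm{im}(\Lambda_t(g)-\Idd)$, which is annihilated by the averaging projector onto $V^t$ and hence lies in the (unique) invariant complement $W_t$, on which $M_t$ is the identity; alternatively, move the origin to the continuously varying fixed point of the finite affine group $h_t(G_B)$ and argue linearly. Second, you shrink $\eps$, whereas the statement asks for the family on all of $[0,\eps]$; a standard open-closed continuation along the compact interval (patching local conjugators) removes this restriction, though for the way the lemma is used in the local homeomorphism argument the local-in-$t$ version would suffice.
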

\begin{proof} 
We find the diffeomorphism $f_t$ in a transversal section of $B$ meeting every lines in $B$ parallel to $v_t$
and extend $f_t$ using the lines. 
\end{proof} 

Here $v_t$, $v_{h'}$ and $v_{h', t}$ below are of course determined by $s_{\mathcal U}$. 

Since $\Hom(G_B, \Aff(\bR^n))$ is a semialgebraic set, we obtain that each point has a cone-neighborhood, i.e., 
a topological neighborhood parameterized by $I \times S/\sim$ where $S$ is a semialgebraic set and $\sim$ is given by 
$(0, x) \sim (0, y), x, y \in S$. 

\begin{lemma}\label{lem:4} 
Let $G_B$ be a finite subgroup of $\Aff(\bR^n)$ acting on a $v_0$-parallel open subset $B$ of $\bR^n$ for 
an eigenvector $v_0$ of the linear part of $G_B$.  Suppose that $h$ is a
point of an algebraic set $V \subset \Hom(G_B, \Aff(\bR^n))$ for a finite group, and let $C$ be a cone neighborhood
of $h$. Suppose that $h$ is an inclusion map.
Suppose that $v_{h'}$ is the eigenvector of the linear part of $h'(G_B)$ for each $h'\in C$ and 
$h' \mapsto v_{h'}$ forms a continuous function $C \ra \bR^n$. 
Then for each $h' \in C$, there is a corresponding diffeomorphism
\[f_{h'} : B \ra B_{h'},  B_{h'} = f_{h'}(B)\]
so that $f_{h'}$ conjugates the $h({G_B})$-action on $B$ to the $h'(G_B)$-action on $B_{h'}$ {\rm ;} i.e.,
$f_{h'}^{-1} h'(g) f_{h'} = h(g)$ for each $g \in G_B$ where $B_{h'}$ is a $v_{h'}$-parallel open set. 
Moreover, the map $h' \mapsto f_{h'}$ is continuous from $C$
to the space $C^\infty(B, X)$ of smooth functions from $B$ to $X$.
\end{lemma}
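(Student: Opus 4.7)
The plan is to adapt the transversal-slice argument of Lemma \ref{lem:3} to the multi-parameter setting by combining it with a Bochner-type averaging construction over the finite group $G_B$. Since $v_0$ is a common eigenvector of the linear parts of $G_B$, the quotient $\bR^n/\bR v_0$ inherits an affine $G_B$-action, and $B$ has a product structure along the $v_0$-direction. The construction of $f_{h'}$ thus reduces to producing, for each $h'\in C$ near $h$, an affine isomorphism of $\bR^n$ that (i) intertwines the $h'(G_B)$- and $h(G_B)$-actions and (ii) carries the $v_0$-direction to the $v_{h'}$-direction.

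For (i), I would use the averaging operator: for each $h'\in C$, define the affine map $\phi_{h'}:\bR^n\ra\bR^n$ by
\[
\phi_{h'}(x) \;:=\; \frac{1}{|G_B|}\sum_{g\in G_B} h(g)^{-1}\circ h'(g)(x).
\]
A direct reindexing computation gives $\phi_{h'}\circ h'(g_0) = h(g_0)\circ\phi_{h'}$ for every $g_0\in G_B$. At $h'=h$ this is the identity map, so by continuity in the affine coefficients of $\{h'(g)\}_{g\in G_B}$, the linear part of $\phi_{h'}$ remains close to the identity on a possibly shrunken cone neighborhood of $h$, and $\phi_{h'}$ is an affine isomorphism of $\bR^n$. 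Setting $f_{h'}:=\phi_{h'}^{-1}$ and $B_{h'}:=f_{h'}(B)$, the intertwining relation gives $f_{h'}^{-1}h'(g)f_{h'}=h(g)$ as required.

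For (ii), I would verify the direction matching from the same intertwining. Let $L$ denote the linear part of $\phi_{h'}$ and let $A_g,A'_g$ denote the linear parts of $h(g),h'(g)$; then $LA'_g=A_gL$. If $A'_g v_{h'}=\lambda'_g v_{h'}$ for each $g$, then $A_g(Lv_{h'})=\lambda'_g(Lv_{h'})$, so $Lv_{h'}$ is a common eigenvector of the $A_g$. At $h'=h$ one has $Lv_0=v_0$, and the hypothesis that $h'\mapsto v_{h'}$ is continuous forces $Lv_{h'}$ to remain parallel to $v_0$ after shrinking $C$ if necessary. Hence $f_{h'}$ sends lines in direction $v_0$ to lines in direction $v_{h'}$, so $f_{h'}(B)=B_{h'}$ is a $v_{h'}$-parallel open subset of $\bR^n$.

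Continuity of $h'\mapsto f_{h'}$ in the $C^\infty(B,X)$-topology is then immediate: the affine coefficients of $\phi_{h'}$ depend polynomially on those of the $h'(g)$, and affine inversion is smooth near the identity, so $f_{h'}$ varies smoothly with $h'$. The main technical obstacle I anticipate is the direction-matching step, since $G_B$ may in principle admit several common eigendirections and $Lv_{h'}$ could jump among them; this is exactly what the continuity of the section $h'\mapsto v_{h'}$ rules out, provided $C$ is shrunk so that $Lv_{h'}$ stays inside a narrow cone about $v_0$. Once these pieces are in place, the conclusion of the lemma follows without any further extension argument, because $\phi_{h'}$, being affine, is already globally defined on all of $\bR^n$ and automatically respects the $v_0$-parallel product structure of $B$.
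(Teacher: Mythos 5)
Your averaging map $\phi_{h'}$ does settle the equivariance part: affine maps commute with barycenters, so the reindexing computation is valid, $\phi_h=\mathrm{id}$, $\phi_{h'}$ is an affine isomorphism for $h'$ near $h$, and $f_{h'}=\phi_{h'}^{-1}$ intertwines $h$ and $h'$ and varies smoothly with $h'$. The gap is in your step (ii), and the patch you propose does not close it. From $LA'_g=A_gL$ you only learn that $Lv_{h'}$ is a common eigenvector of the linear parts of $h(G_B)$ with the same character as $v_0$ (the eigenvalues lie in $\{\pm1\}$, so the character is locally constant); you need $Lv_{h'}$ to lie on the line $\bR v_0$, and nothing forces this. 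Your argument tacitly assumes that the common eigendirections of $h(G_B)$ form a discrete set of lines, so that a common eigendirection inside a narrow cone about $v_0$ must be $\bR v_0$ itself. That fails precisely in the situations where this lemma is applied: $G_B$ is the finite local group of an orbifold chart covering part of an end, and the common eigenspace containing $v_0$ is typically of dimension at least two; for a chart with trivial local group it is all of $\bR^n$. In the trivial-group case your construction gives $\phi_{h'}=\mathrm{id}$ and $B_{h'}=B$, which is $v_0$-parallel but not $v_{h'}$-parallel once $v_{h'}$ leaves the line $\bR v_0$ --- and $v_{h'}$ is dictated by the global eigenvector section $s_{\mathcal U}$ of the end holonomy, not by $G_B$, so it does move. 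Hence the conclusion that $B_{h'}$ is a $v_{h'}$-parallel open set is not achieved by $f_{h'}=\phi_{h'}^{-1}$, no matter how much $C$ is shrunk.

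The repair, and how it compares with the paper: you can post-compose $\phi_{h'}^{-1}$ with an affine map $c_{h'}$, chosen continuously in $h'$ with $c_h=\mathrm{id}$, that commutes with $h'(G_B)$ and carries the image direction of $v_0$ to $v_{h'}$. Such maps exist because both directions lie in the same character-isotypic subspace for the linear parts of $h'(G_B)$, on which the group acts by the scalars $\lambda_g$; any linear automorphism of that subspace, extended by the identity on an invariant complement, centralizes the action, and the relevant data can be chosen continuously since representations of a finite group are locally rigid. The paper instead follows the route of Lemma \ref{lem:3} and \cite{dgorb}: construct $f_{h'}$ equivariantly on a transversal section of $B$ meeting every line parallel to $v_0$ (finiteness of $G_B$ is used there), then extend along the foliation by sending the line through $x$ in direction $v_0$ to the line through $f_{h'}(x)$ in direction $v_{h'}$; the commutativity of translations along the eigenvector with the $G_B$-action keeps the extension equivariant, and the $v_{h'}$-parallelism of $B_{h'}$ holds by construction. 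Either route works, but as written your proof is missing the centralizer correction that controls the parallel direction.
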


Continuing to use the notation of Lemma \ref{lem:4}, we define a parameterization
$l : S\times [0, \eps] \ra C$ for a cone-neighborhood which is injective except at $ S\times \{0\}$ mapping  to $h$. 
(We fix $l$ although $C$ may become smaller and smaller). For $h' \in  S$, we denote by
$l(h'): [0, \eps] \ra C$ be a ray in $C$ so that $l(h')(0) =h$ and $l(h')(\eps) = h'$. Let the finite group
$G_B$ act on a $v_h$-parallel relatively compact 
submanifold $F$ of a $v_h$-parallel open set $B$ for an eigenvector $v_h$ of $h(G_B)$. 
Let $v_{h', t}$ be a nonzero eigenvector of $l(h')(t)$ for $h' \in S$ and $t \in [0, \eps]$ and 
we suppose that $S\times [0, \eps] \ra \bR^n$ given by $(h', t) \ra v_{h', t}$ is continuous.  

A $G_B$-equivariant isotopy $H: F \times [0, \eps] \ra \bR^n$ is a
map so that $H_t$ is an imbedding for each $t \in [0, \eps']$, with $0 < \eps' \leq \eps$, conjugating
the $G_B$-action on $F$ to the $l(h')(t)(G_B)$-action on $\bR^n$. 
Here $H_0$ is an inclusion map
$F \ra \bR^n$ where the image $H(F, t)$ is a $v_{h', t}$-parallel set for each $t$. 
Lemma \ref{lem:4} above says that for each $h'\in C$, there exists a $G_B$-equivariant
isotopy $H: B \times [0, \eps] \ra \bR^n$ so that the image $H(B, t)$ is a $v_{h', t}$-parallel open set for each $t$. 
We will denote by $H_{h', \eps'} : B \ra \bR^n$ the map obtained from
$H$ for $h'$ and $t = \eps'$. Note also by the similar proof, for each $h' \in S$, there exists a
$G_B$-equivariant isotopy $H : F \times [0, \eps''] \ra \bR^n$.

\begin{lemma}\label{lem:5}
Let $F$ be a $v_h$-parallel relatively compact 
submanifold of a $v_h$-parallel open set $B$ for an eigenvector $v_h$ of $h(G_B)$. 
Let $H: F \times [0, \eps] \times S \ra \bR^n$ be a map so that $H(h'): F \times [0, \eps'] \times S \ra \bR^n$ is a
$G_B$-equivariant isotopy of $F$ for each $h' \in S$ where $0 < \eps' \leq \eps$ for some $\eps > 0$. Then 
for a neighborhood $B'$ of $F$ in $B$, it follows that $H$
can be extended to $\hat H: B' \times [0, \eps'']\times S \ra \bR^n$ so that 
\[\hat H(h'): B' \times [0, \eps''] \ra \bR^n, 0 < \eps'' \leq \eps\] is a
$G_B$-equivariant isotopy of $B'$ for each $h' \in S$. 
The image $\hat H(h')(t)(B')$ is a $v_{h', t}$-parallel open set for each $h', t$. 
\end{lemma}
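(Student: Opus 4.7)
The plan is to reduce the statement to the classical $G_B$-equivariant isotopy extension theorem (à la Palais, as invoked in Lemma 5 of \cite{dgorb}) by decoupling the finite-group equivariance from the continuously varying parallel direction. First, since $G_B$ is finite and acts affinely on $\bR^n$, an averaging argument gives a global fixed point; after translation we may assume $G_B$ acts linearly. Because $v_h$ is an eigenvector of the linear part of $G_B$, because real eigenvalues of finite-order linear maps are $\pm 1$, and because the $v_h$-parallel definition requires invariance under \emph{positive} translates along $v_h$, the eigenvalue must in fact be $+1$; that is, $G_B$ fixes $v_h$. Choose a $G_B$-invariant linear complement $T$ to $\bR v_h$ (possible by finite-group averaging), and decompose $B = B_0 + \bR_{\geq 0} v_h$, $F = F_0 + \bR_{\geq 0} v_h$, where $B_0 := B \cap T$ and $F_0 := F \cap T$. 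Now $F_0$ really is a relatively compact submanifold of $B_0$.

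Second, by continuity of $(h',t) \mapsto v_{h',t}$ and compactness of $S$, we may shrink $\eps$ to some $\eps'' > 0$ so that $v_{h',t}$ stays transverse to $T$ for all $(h',t) \in S \times [0,\eps'']$. The projection $\pi_{h',t} : \bR^n \to T$ along $v_{h',t}$ then depends smoothly on $(h',t)$. Composing $H$ with $\pi_{h',t}$ gives a family of $G_B$-equivariant isotopies of $F_0$ inside $B_0$ parameterized by $(h',t)$. To this restricted family I would apply the parameterized $G_B$-equivariant isotopy extension theorem (equivalently, use a $G_B$-invariant Riemannian metric on $B_0$, extend the time-dependent vector field on $F_0$ to a $G_B$-invariant one on a neighborhood, and integrate) to obtain an extension $\tilde H_0 : B_0' \times [0,\eps''] \times S \to T$ on a neighborhood $B_0'$ of $F_0$ in $B_0$.

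Third, define $B' := B_0' + \bR_{\geq 0} v_h$, a neighborhood of $F$ in $B$, and set
\begin{equation*}
\hat H(h')(t)(x + s v_h) \; := \; \tilde H_0(h')(t)(x) \, + \, s \, v_{h',t}, \qquad x \in B_0', \; s \geq 0.
\end{equation*}
The image is $v_{h',t}$-parallel by construction. Since $v_h$ and $v_{h',t}$ are each fixed by the corresponding linear parts, and $\tilde H_0$ intertwines the $G_B$-action via $l(h')(t)$, one checks that $\hat H(h')(t)$ is $G_B$-equivariant and conjugates the actions. It agrees with the given $H$ on $F$ because $H$ itself was produced by Lemma~\ref{lem:4} from exactly this kind of slice-and-flow construction, and continuity of $\hat H$ in $(h',t)$ is inherited from $\tilde H_0$ and from the section $s_{\mathcal U}$.

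The main obstacle is carrying out the equivariant extension and the parallel extension simultaneously while $v_{h',t}$ moves with $(h',t)$; the trick above is to do the equivariant extension once and for all on the slice $T$ (where only the finite group matters and the classical theorem applies), and only afterwards flow out along the moving direction $v_{h',t}$. What makes the two commute is the fact that $v_h$ lies in the $+1$-eigenspace of $G_B$, so the decomposition $\bR^n = T \oplus \bR v_h$ is $G_B$-invariant and the flow preserves equivariance under continuous deformation.
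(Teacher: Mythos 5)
Your overall strategy (do the equivariant work on a transversal slice, then propagate along the parallel direction using that $v_h$ lies in the fixed direction of the finite group) is in the same spirit as what the paper intends, which is only sketched there by reference to \cite{dgorb} and the remark that translation along the eigenvector commutes with the $G_B$-action. However, there are two concrete problems in your execution. First, the decomposition $B = B_0 + \bR_{\geq 0} v_h$ and $F = F_0 + \bR_{\geq 0} v_h$ with $B_0 := B \cap T$, $F_0 := F \cap T$ is false in general: being $v_h$-parallel only means invariance under translation by \emph{positive} multiples of $v_h$ (``upward closure''), so $B$ need not be the union of rays emanating from the linear hyperplane $T$ through the fixed point of $G_B$, and $F \cap T$ can perfectly well be empty, in which case your slice construction produces nothing. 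What is needed is a $G_B$-invariant transversal section of the ray foliation of $B$ (a hypersurface meeting each parallel line once, obtainable by averaging a height function over the finite group), which is exactly what the paper's proof of Lemma \ref{lem:3} uses; a linear slice does not do the job.

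Second, and more seriously, your final formula defines $\hat H$ on all of $B'$, including on $F$, by the slice-and-flow recipe, and the requirement that $\hat H$ \emph{extend} the given $H$ is discharged only by the claim that $H$ ``was produced by Lemma \ref{lem:4} from exactly this kind of construction.'' That is not a hypothesis of the lemma: $H$ is an arbitrary $G_B$-equivariant isotopy of $F$ whose images $H(F,t)$ are $v_{h',t}$-parallel sets, and nothing forces $H$ to intertwine the translation flows, i.e.\ to satisfy $H(h')(t)(x+s\,v_h)=H(h')(t)(x)+s\,v_{h',t}$. Without that compatibility your $\hat H$ need not agree with $H$ on $F$, so you have not proved an extension statement at all. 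The standard way around this (and the one implicit in the reference to \cite{dgorb}) is to differentiate $H$ to a time-dependent vector field along $H(F,t)$, extend that field $G_B$-equivariantly and compatibly with the parallel structure to a neighborhood (extension of the vector field, not re-derivation of $H$ from a slice, is what guarantees $\hat H|_{F}=H$), and then integrate on a possibly smaller time interval $[0,\eps'']$.
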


\begin{proof}[Proof of Theorem \ref{thm:affine}]


To finish the proof, we define the local inverse map from a neighborhood in $\mathcal U$ of the image point.
Let $h$ be a representation corresponding to an element $h$ of it coming from an affine orbifold $\orb$ with radial or totally geodesic boundary. 
The task is to reassemble $\orb$ with new holonomy homomorphisms as we vary $h$ as in \cite{dgorb} following Thurston's approaches. 
\begin{itemize} 
\item For an $\cR$-type end, 
this is accomplished as in \cite{dgorb} for precompact open covering sets and 
for end-parallel open covering sets we use the above lemmas since we are working with finitely many open sets.
\item For a $\cT$-type end that has the totally geodesic ideal boundary, we first complete it with an open subset of 
a totally geodesic hyperspace. 
There exists an open subset where the corresponding p-end has totally geodesic hyperspace invariant 
under each holonomy group of the pseudo-end and is not horospherical. 
For a sufficiently small open set in $\mathcal U$, we can change 
each open neighborhood in the manner described in \cite{dgorb}. The totally geodesic ideal boundary 
does not present any difficulty here. 
\item For a $\cT$-type end $\tilde E$ that is a suspended horospherical end, 
we take an affinely suspended horospherical neighborhood projectively isomorphic
to $C_B/\Gamma_p$ where $\Gamma_p$ is the affine suspension group extended by a central infinite cyclic 
group generated by a dilatation. 
Lemma \ref{lem:affhorob} shows us how to obtain a totally geodesic end under small deformations 
of holonomy homomorphisms.  

\end{itemize}

Since we can construct the end neighborhoods as above, we obtain the affine structures for points of $\mathcal U$ 
by using partition of unity and pasting the results as in \cite{dgorb}. To show that the local inverse is a continuous map 
for the compact open $C^r$-topology we only need to consider compact suborbifolds in $\orb$, and 
we use the fact that the conjugating maps of above Lemmas \ref{lem:3}, \ref{lem:4}, and \ref{lem:5}  
depend continuously on $\mathcal U$. 

Also, finally, we need to prove the local injectivity of $\hol$ as in the last step of the proof of Theorem \ref{thm:affine}. 
Given two structures $\mu_0$ and $\mu_1$ in a neighborhood of the deformation space, we show that if their 
holonomy homomorphisms are the same, then we can isotopy one in the neighborhood to the other using vector fields as in \cite{dgorb}.

Because of the secton $s_{\mathcal U}$ defined on $\mathcal U$, given a holonomy $h: \pi_1(\orb) \ra \Aff(\bR^n)$, 
we have a direction of the parallel end that is unique for the holonomy homomorphism. 

First assume that $\orb$ has only $\cR$-type ends. 
Recall the compact suborbifold $\orb'$ so that $\orb - \orb'$ is homeomorphic to 
$E_i \times (0, 1)$ for each end orbifold $E_i$. 
Now, $\orb$ has a Riemannian metric that is invariant under the flows generated by the end vector fields 
in the union of its end neighborhoods. 
On each compact suborbifold $\orb'$ of $\orb$ with $\partial \orb'$ transversal to the vector fields in the end neighborhood. 
these end vector fields will be uniformly $C^r$-bounded by a small uniform constant depending on how close 
the two structures $\mu_0$ and $\mu_1$ are in the $C^r$-topology in $\orb'$ of the universal cover. 

Let $\dev_i$ be the developing map of $\mu_i$ for $i=1, 2$. 
Then the $C^r$-norm distance of $\dev_0$ and $\dev_1$ is bounded on each compact set $K \subset \torb$. 
Hence, we can isotopy $\mu_0$ to $\mu_1$ on a neighborhood of $K$ with some $C^r$-bounds $\eps > 0$. 
We can do this for some $\eps$ and $K$ mapping onto a suborbifold $\orb'$ where $\orb -\orb'$ is a product of 
intervals with closed orbifolds. We extend the isotopies using the parallel line extension parametrized by the Riemannian metric. 
Since the end-orbifolds are determined by their boundary orbifolds in $\orb$, 
we obtain an isotopy from $\mu_0$ to $\mu_1$ in 
an open neighborhood of the identity map. 
(Here, we need to only check for compact suborbifolds since we define neighborhoods of the functions using 
the compact open $C^r$-topology. )

Suppose now that $\orb$ has some $\cT$-type ends. 
Suppose that $\mu_0$ and $\mu_1$ have totally geodesic ideal boundary corresponding to an end of $\orb$.
We attach the totally geodesic ideal boundary component for each end, 
and then we can argue as in \cite{dgorb}. 

Suppose that $\mu_0$ and $\mu_1$ have horospherical end neighborhoods corresponding to an end of $\orb$.
Then these are radial ends and the same argument as the above one for $\cR$-type ends will apply to show the injectivity. 
Finally, we cannot have the situation that $\mu_0$ have totally geodesic ideal boundary corresponding 
to an end while $\mu_1$ have a horoball end neighborhood for the same end. 
This follows since the end holonomy group acts on a properly convex domain in a totally geodesic hyperspace
and as such the end holonomy group elements have some norms of eigenvalues $> 1$. (See Proposition 1.1 of \cite{Ben5} for example.) 
\end{proof}



\section{The proof of Theorem \ref{thm:projective}.} \label{sub:rps}



Suppose now that $\mathcal{O}$ is a real projective orbifold of dimension $n$. 
We assume that $\mathcal{O}$ have end that are assigned to be $\cT$-type or $\cR$-type ones. 
Let $\mathcal{O}' = \orb \times \SI^1$ be the affine suspension. $\pi_1(\orb')$ is isomorphic to $\pi_1(\orb) \times \bZ$. 
Each end has distinguished infinite cyclic group in the center given by the factor $\bZ$. 
 $\mathcal{O}'$ has a parallel end with the end direction determined 
by the radial ends of $\mathcal{O}$ and totally geodesic ends of lens type determined by that of $\orb$. 
Define $\Hom^{sS}_E(\pi_1(\mathcal{O}'), \Aff(\bR^{{n+1}}))$
to be the subspaces of the representation space $\Hom_E(\pi_1(\mathcal{O}'), \Aff(\bR^{{n+1}}))$ 
where 
\begin{itemize}
\item the $\bZ$-factor of $\pi_1(\mathcal{O}')= \pi_1(\mathcal{O}) \times \bZ$ always maps to a group of dilatations
and 
\item each of whose element $h$ has the stable irreducible linear part $\mathcal{L}(h)|\pi_1(\orb)$.
\end{itemize} 
We define as 
$\rep^{sS}_E(\pi_1(\mathcal{O}'), \Aff(\bR^{{n+1}}))$ the corresponding subspace of 
the character variety $\rep_E(\pi_1(\mathcal{O}'), \Aff(\bR^{{n+1}}))$.
Let $\mathcal U$ be the conjugation invariant subspace of 
$\Hom^{sS}_E(\pi_1(\mathcal{O}'), \Aff(\bR^{{n+1}}))$ and we are given the fixed section
\[s_{\mathcal U}: \mathcal U \ra (\bR^{n+1} - \{O\})^{e_1} \times (AS(\bR^{n+1}))^{e_2}.\]

For any element $\mu$ of $\Def_{A, E, \mathcal U, s_{\mathcal U}}(\mathcal{O}')$, $\orb'$ with $\mu$ 
a developing map pulls back a radiant vector field $\sum_{i=1}^{n+1} x_i \frac{\partial}{\partial x_i}$ on $\bR^{n+1}$. 
This gives us a radial flow on $\orb'$ with $\mu$. 
Each point $p$ of $\orb'$ has a neighborhood foliated by radial lines. Furthermore, the radial lines are always 
closed since a dilation from the central elements acts on each radial line giving us a closed orbit always. 
By Lemma \ref{lem:affsus}, $\orb' $ with $\mu$ is an affine suspension from $\orb$. 
Since $\orb$ can be imbedded transversal to the radial flow, it follows that $\orb'$ with $\mu$ 
gives us an $(\SI^{n}, \SL_\pm(n+1, \bR))$-structure on $\mathcal O$.

%





We define $\rep_E(\pi_1(\mathcal{O}), \GL({n+1}, \bR)) $ and $\rep_E(\pi_1(\mathcal{O}), \SL_\pm({n+1}, \bR))$ 
as the respective subsets where 
the the holonomy groups of the end fundamental groups of $\mathcal{O}$ have common eigenvectors.
By sending dilatations to the expansion factors, 
we obtain that \[\rep^{sS}_E(\pi_1(\mathcal{O}'), \Aff(\bR^{{n+1}}))\] is identical with 
\[\rep^s_E(\pi_1(\mathcal{O}), \GL({n+1}, \bR)) \times \bR_+\] which 
is the subspace of \[\rep^s(\pi_1(\mathcal{O}), \GL({n+1}, \bR)) \times \bR_+\] where the holonomy group of each p-end 
has an eigendirection or an eigen-$1$-form.
\[\rep^s_E(\pi_1(\mathcal{O}), \GL({n+1}, \bR))\times \bR_+\] can be identified with 
\[\rep^s_E(\pi_1(\mathcal{O}) , \SL_\pm({n+1}, \bR)) \times H^1(\pi_1(\mathcal{O}), \bR) \times \bR\] by using the isomorphism
\[\GL({n+1},\bR) \ra \SL_\pm({n+1},\bR) \times \bR\]
which is given by sending a matrix $L$ to $(L/|\det(L)|, \log(|\det(L)|))$. Let
 \[q_S: \rep^{sS}_E(\pi_1(\mathcal{O}'), \Aff(\bR^{{n+1}})) \ra \rep^s_E(\pi_1(\mathcal{O}) , \SL_\pm({n+1}, \bR))  \] 
 denote the obvious projection.

Let $\mathcal U$ denote a conjugation invariant subset of $\Hom^{sS}_E(\pi_1(\mathcal{O}'), \Aff(\bR^{{n+1}}))$
with a section \[s_{\mathcal U}: \mathcal U  \ra (\bR^{n+1} -\{O\})^{e_1} \times (AS(\bR^{n+1}))^{e_2}.\]
By Theorem \ref{thm:affine}, and taking the Hausdorff quotients 
\begin{equation}\label{eqn:sS} 
\hol: \Def^{sS}_{A, E, \mathcal U,\, s_{\mathcal U}}(\mathcal{O}') \ra \rep^{sS}_E(\pi_1(\mathcal{O}'), \Aff(\bR^{{n+1}}))
\end{equation} 
is a local homeomorphism to its image since the former space is simply the inverse image of the second space. 

We define $\Def_{\SI^{n}}(\mathcal{O})$ as the deformation space of $(\SI^{n}, \SL_\pm(n+1, \bR))$-structures on $\mathcal O$
and $\Def_{\SI^{n}, E}(\mathcal{O})$ as the quotient space of the space of 
$(\SI^{n}, \SL_\pm(n+1, \bR))$-structures on $\mathcal O$ with radial ends with radial marks and  
totally geodesic ends of lens-type with ideal boundary marks.  
Here the equivalence relation $\sim$ as before 
is given by the action of the group of isotopies preserving the radial end structures for radial ends 
and extending to totally geodesic ideal boundary for totally geodesic ends. 
Let $\mathcal U'$ denote a conjugation invariant subset of \[\Hom^s_E(\pi_1(\mathcal{O}), \SL_\pm(n+1, \bR)))\]
with a section \[s'_{\mathcal{U}'}:  \mathcal{U}' \ra (\SI^n)^{e_1} \times (\SI^{n \ast})^{e_2}.\] 
We define $\Def^s_{\SI^{n}, E, \, {\mathcal{U}'}, s'_{\mathcal{U}'}}(\mathcal{O})$  as the subspace of structures 
whose holonomy characters are in $\mathcal U'$ and the ends are compatible with $s'_{\mathcal U'}$, i.e., an end neighborhood of 
each structure is foliated by concurrent geodesics or by the totally geodesic hyperspace 
determined by $s'_{\mathcal U'}$.

\begin{proposition} \label{prop:openness1} 
Let $\mathcal{O}$ be a noncompact strongly tame $n$-orbifold where the types of ends are assigned. 
Let $\mathcal{U}'$ be a conjugation-invariant open subset of \[\Hom^s_E(\pi_1(\mathcal{O}), \SL_\pm(n+1, \bR))\]
with the section \[s'_{\mathcal{U}'}:  \mathcal{U}' \ra (\SI^n)^{e_1} \times (\SI^{n \ast})^{e_2}.\] 
The map \[\hol: \Def^s_{\SI^{n}, E, \mathcal{U}', s'_{\mathcal{U}'}}(\mathcal{O}) \ra \rep^s_{E}(\pi_1(\mathcal{O}), \SL_\pm(n+1, \bR))\]
sending  $(\SI^{n}, \SL_\pm(n+1, \bR))$-structures determined by the eigensection $s'_{\mathcal{U}'}$ to the conjugacy classes of holonomy homomorphisms
is a local homeomorphism to an open subset of $\mathcal{U}'$.
%
\end{proposition} 
\begin{proof} 
Let $\mathcal O'$ be the product $\orb \times \SI^1$ as above.




Let $\mathcal U$ be the inverse image under $q_S$ of $\mathcal{U'}$ in \[\Hom^{sS}_E(\pi_1(\mathcal{O}'), \Aff(\bR^{{n+1}})).\]
Let \[q: (\bR^{n+1} -\{O\})^{e_1} \times (AS(\bR^{n+1}))^{e_2} \ra (\SI^n)^{e_1} \times (\SI^{n \ast})^{e_2}\]
be the obvious projections. 
Let the section $s_{\mathcal U}: {\mathcal{U}}  \ra (\bR^{n+1} -\{O\})^{e_1} \times (AS(\bR^{n+1}))^{e_2}$ 
be the one lifting $s'_{\mathcal U}$. (Here, each hyperplane in $\SI^n$ lifts to a hyperplane in $\bR^{n+1}$ through 
the fixed points of the holonomy groups of the center.)

By Lemma \ref{lem:affsus}, an element of $\Def^{sS}_{A, E, \mathcal U,\, s_{\mathcal U}}(\mathcal{O}')$
gives us an element of $\Def_{\SI^{n}, E, \mathcal{U}', s'_{\mathcal{U}'}}(\mathcal{O})$: 
We can cover $\mathcal{O}'$ by radial cones with vertex at the origin and project to $\SI^{n}$. 
Each gluing of open radial cones becomes an element of $\SL_{\pm}(n+1, \bR)$ acting on $\SI^{n}$ 
with positive scalar factors forgotten.  
The parallel end structures and totally geodesic ideal boundary components for ends of $\orb'$ go to the radial end structures and 
the totally geodesic ideal boundary components of $\orb$. 
The isotopies in $\orb$ will give rise to isotopies in $\orb'$
suspending the vector fields on cross-sections preserving the parallel vector fields and the totally geodesic ideal boundary components.

Therefore, the following map $\mathcal{P}$ is defined:
\small
\begin{align} 
 \Def^{sS}_{A, \, E, \, \mathcal U, s_{\mathcal U}}(\mathcal{O}') & \quad \stackrel{\mathcal{P}}{\longrightarrow} & 
\Def^s_{\SI^{n}, \, E, \, {\mathcal{U}'}, \, q\circ s_{\mathcal U'}}(\mathcal{O}) \times H^1(\mathcal{O}, \bR) \times (\bR^+-\{1\}) \nonumber \\ 
\hol \downarrow  &   &  \downarrow \hol \quad \quad \nonumber \\
 \rep_E^{sS}(\pi_1(\mathcal{O}'), \Aff(\bR^{n+1}))  & \quad \longrightarrow & \rep^s_E(\pi_1(\mathcal{O}), \SL_\pm(n+1, \bR)) 
 \times H^1(\mathcal{O}, \bR) \times (\bR^+-\{1\}).
\end{align}
\normalsize
A section to $\mathcal{P}$ is defined by taking an affine suspension by the data in $H^1(\mathcal{O}, \bR) \times (\bR^+-\{1\})$
and the $(\SI^{n}, \SL_\pm(n+1, \bR))$-structures on $\mathcal O$ using the methods of Section \ref{sub:asusp}.
From this, we deduce that the horizontal maps are local homeomorphisms 
in the commutative diagram.
Since the left downarrow is a local homeomorphism, 
the result is proved. 


\end{proof} 



The homomorphism $q:  \SL_\pm(n+1, \bR)) \ra \PGL(n+1, \bR)$ induces a continuous map
\[\hat q: \rep^s_E(\pi_1(\mathcal{O}), \SL_\pm(n+1, \bR)) \ra \rep^s_E(\pi_1(\mathcal{O}), \PGL(n+1, \bR)).\]
 
Let $\mathcal{U}$ be a conjugation invariant subset of $\Hom^s_E(\pi_1(\mathcal{O}), \PGL(n+1, \bR))$. 
 $s_{\mathcal{U}}$ is an arbitrary fixed section defined on $\mathcal{U}$. 
 Let $\mathcal{U}''$ denote the inverse image of $\mathcal{U}$ under $\hat q$. 
 We define $s'_{{\mathcal{U}''}}: {\mathcal{U}''} \ra (\SI^{n})^{e_1} \times (\SI^{n \ast})^{e_2}$ 
 be a continuous lift of $s_{\mathcal{U}}$. 
 The section is determined up to the action of $\{ \Idd, {\mathcal{A}}\}$ on each of $(e_1 + e_2)$-factors. 
 This gives us a section $\tilde s: \DefEO \ra \DefSO$ up to a choice of $s'_{{\mathcal{U}''}}$ by Theorem \ref{thm:lifting}.  
  The choice here is determined by the lifting of the development pair $(\dev, h)$. 
(For the lifting ideas, see p. 143 of Thurston \cite{Thbook}.)
 
 The map $\tilde q: \DefSO \ra \DefEO$ is induced by the action 
\[(\dev', h') \ra (q\circ \dev', \hat q \circ h').\] 

 It is easy to see that the section $\tilde s$ to $\tilde q$ is well-defined since the lifting $(\dev, h)$ give us development pairs 
 that are equivalent to up to $-\Idd \in \SL_\pm(n+1, \bR)$. 
The map $\tilde s$ is continuous since for a fixed compact subset of $\torb$ 
the $C^r$-closeness of the developing map to $\bR P^n$ means the $C^r$-closeness of the lifts
for $r \geq 1$.

Thus, we showed that
\begin{theorem}\label{thm:same}
Assume as in the above paragraphs. We obtain a homeomorphism 
\[\tilde q: \DefSO \ra \DefEO.\] \qed
\end{theorem}

\begin{corollary}\label{cor:same}
$\tilde q: \Def^s_{\SI^n, E, u}(\orb) \ra \Def^s_{E, u}(\orb)$ is a homeomorphism. 
\end{corollary}
\begin{proof}
In the unique eigenvector or eigen-$1$-form cases, the existence and the continuity of the sections are clear. 
\end{proof}

\begin{proof}[Proof of Theorem \ref{thm:projective}]
We have a commutative diagram: 
\begin{eqnarray} \label{eqn:com1}
\Def^s_{\SI^n, \, E, \, {\mathcal{U}}, \, s_{\mathcal{U}} }(\mathcal{O}) & \stackrel{ \tilde q}{\ra} &\DefEO \nonumber \\
\downarrow \hol \quad \quad &    &\quad \quad \downarrow \hol  \nonumber \\
\rep^s_E(\pi_1(\mathcal{O}), \SL_\pm(n+1, \bR)) & \stackrel{\hat q}{\ra} & \rep^s_E(\pi_1(\mathcal{O}), \PGL(n+1, \bR)).
\end{eqnarray}

First, we remark first that $\hat q$ maps onto the union of components with the associated Stiefel-Whitney number $0$.
Since \[\SL_\pm(n+1, \bR) \ra \PGL(n+1, \bR)\] is a covering map, so is
\[\Hom^s_E(\pi_1(\mathcal{O}), \SL_\pm(n+1, \bR)) \ra \Hom^s_E(\pi_1(\mathcal{O}), \PGL(n+1, \bR))\] 
to the union of its image components (i.e., corresponding to ones with the corresponding 
Stiefel-Whitney classes equal to zero.) Each fiber is in one to one correspondence with $\Hom(\pi_1(\orb), \{\pm \Idd\})$. 
The induced map $\hat q$ is a local homeomorphism since the conjugation by $\SL_\pm(n+1, \bR)$  on the first space
is equivalent to one by $\PGL(n+1, \bR)$ since $\{\pm \Idd\}$ acts trivially. 

Since the left $\hol$ is locally onto and $\hat q$ is locally onto, 
so is the right $\hol$ by Theorem \ref{thm:same}. 

Given a neighborhood $V'$ in $\rep^s_E(\pi_1(\mathcal{O}), \PGL(n+1, \bR))$ that is in the image of the left $\hol$, 
we can find a local section to $\hat q$ as $\hat q$ is a local homeomorphism. Since the left $\hol$ is a local homeomorphism, 
and $\hat q$ is a local homeomorphism, 
there is a local section to the right $\hol$ by Theorem \ref{thm:same}.
\end{proof}

\section{A comment on lifting real projective structures} 

Let $\SL_{-}(n+1, \bR)$ denote the component of $\SL_{\pm}(n+1, \bR)$ not containing $\Idd$. 
A projective automorphism $g$ of $\SI^n$ is orientation preserving if and only if $g$ has a matrix in $\SL(n+1, \bR)$. 
For even $n$, the quotient map $ \SL(n+1, \bR) \ra \PGL(n+1, \bR)$ is an isomorphism and so is
the map $\SL_{-}(n+1, \bR) \ra \PGL(n+1, \bR)$ for the component 
of $\SL_\pm(n+1, \bR)$ with determinants equal to $-1$. For odd $n$, the quotient map 
$ \SL(n+1, \bR) \ra \PGL(n+1, \bR)$ 
is a $2$ to $1$ covering map onto its image component with deck transformations given by $A \ra \pm A$. 
Also, so is the map $\SL_{-}(n+1, \bR) \ra \PGL(n+1, \bR)$.


\begin{theorem}\label{thm:lifting} 
Let $M$ be a strongly tame orbifold.
Suppose that $h: \pi_1(M) \ra \PGL(n+1, \bR)$ is a holonomy homomorphism of real projective structure on $M$ with radial or  \index{real projective structure!lifting}
totally geodesic ends of lens-type. 
Then the image of $\hol$ for $\Def_E(\orb)$ in 
\[\rep^s(\pi_1(M), \PGL(n+1, \bR))\] is homeomorphic to 
that of $\hol$ for $\Def_{\SI^n, E}(\orb)$ in \[\rep^s(\pi_1(M), \SL(n+1, \bR)).\]  $h'$ is unique if $n$ is even. 
\begin{itemize}
\item Suppose that $M$ is orientable. 
We can lift to a homomorphism $h': \pi_1(M) \ra \SL(n+1, \bR)$, which is a holonomy homomorphism of 
the \hfil\break $(\SI^n, \SL_\pm(n+1, \bR))$-structure lifting the real projective structure. 
\item Suppose that $M$ is not orientable. Then we can lift $h$ to a homomorphism $h': \pi_1(M) \ra \SL_\pm(n+1, \bR)$
that is the holonomy homomorphism of the $(\SI^n, \SL_\pm(n+1, \bR))$-structure lifting the real projective structure
so that a deck transformation goes to a negative determinant matrix if and only if it reverses orientations. 
\end{itemize}
\end{theorem} 
\begin{proof}
Recall $\SL(n+1, \bR)$ is the group of orientation-preserving linear automorphisms of $\bR^{n+1}$ and hence is precisely 
the  group of orientation-preserving projective automorphisms of $\SI^n$. 
Since the deck transformations of the universal cover $\tilde M$ of the lifted $(\SI^n, \SL_\pm(n+1, \bR))$-orbifold 
are orientation-preserving, the holonomy of the lift are in $\SL(n+1, \bR)$. 
We use as $h'$ the holonomy homomorphism of the lifted structure. For even $n$, the uniqueness of $h'$ follows from 
the fact that $\SL(n+1,\bR)\ra\PGL(n+1,\bR)$ is a homeomorphism.

For the second part, we can double cover $M$ by an orientable orbifold $M'$ with an orientation-reversing 
$\bZ_2$-action of the projective automorphism group generated by $\phi:M'\ra M'$.
$\phi$ lifts to $\tilde \phi: \tilde M'\ra \tilde M'$ for the universal covering manifold $\tilde M'= \tilde M$  and hence 
$h(\tilde \phi) \circ \dev = \dev \circ \tilde \phi$ for  the developing map $\dev$ 
and the holonomy  \[h(\tilde \phi) \in \SL_-(n+1, \bR).\] 
Then it follows from the first item since $\dev$ preserves orientations for a given orientation of $\tilde M$. 
For even $n$, the uniqueness is 
the consequence of the uniqueness of the lift $h'$ in the orientable case
and the fact that $\SL_{-}(n+1,\bR)\ra\PGL(n+1,\bR)$ is a one-to-one homeomorphism also.
(See p. 143 of Thurston \cite{Thbook}.)
\end{proof}
In general, this proposition is used commonly but not written anywhere.

\part{Convexity of the orbifolds and the relative hyperbolic fundamental groups}

\chapter{Convexity}\label{sec:conv}



\section{Properties of ends}\label{sub:ends}

We will restate the results of \cite{endclass} here for general understanding need for what follows. 

\subsection{Properties of horospherical ends}

In \cite{endclass}, the horospherical ends were defined more generally but the definition was shown to be equivalent to ours. 
We will not repeat the definition. 

\begin{proposition}[Proposition 5.1 \cite{endclass}] \label{prop:affinehoro} 
Let $\mathcal O$ be a properly convex real projective $n$-orbifold with radial ends or totally geodesic ends of lens-type. 
Let $\tilde E$  be a horospherical p-end of its universal cover $\torb$ and $\Gamma_{\tilde E}$ denote the p-end fundamental group. 
Then the following statements hold: 
\begin{itemize}
\item[(i)] The space $S_{\tilde E} := R_{v_{\tilde E}}(\torb)$ \index{end!orbifold} 
of rays from the corresponding p-end point $v_{\tilde E}$ forms a complete affine subspace of dimension $n-1$. \index{$S_{\tilde E}$} \index{$R_{v_{\tilde E}}(\torb)$}
\item[(ii)] The norms of eigenvalues of $g \in \Gamma_{\tilde E}$
are all $1$.
\item[(iii)] A p-end point of a horospherical p-end cannot be on a segment in $\Bd \tilde{\mathcal{O}}$. 
\item[(iv)] For any compact set $K'$ inside a horospherical end-neighborhood, 
there exists a smooth convex smooth horospherical end-neighborhood disjoint from $K'$. 
\item[(v)] $\pi_1({\tilde E})$ is virtually abelian and a finite index subgroup is in 
a conjugate of a parabolic subgroup of $\PO(n, 1)$ of rank $n-1$ in $\PGL(n+1, \bR)$ \rlp resp. $\SLpm$\rrp
that acts on an ellipsoid in $\clo(\torb) \subset \bR P^n$ \rlp resp. $\subset \SI^n$\rrp. 
\end{itemize}
\end{proposition}

The converse result is the following. 

\begin{theorem}[Theorem 5.2 \cite{endclass}]\label{thm:comphoro} 
Let $\orb$ be a properly convex $n$-orbifold with radial ends or totally geodesic ends of lens-type. 
Suppose that $E$ is a radial p-end of its universal cover $\torb$. Let $v_{\tilde E} \in \bR P^n$ 
be the p-end point and $\pi_1({\tilde E})$ be the p-end fundamental group corresponding to $E$. 
Suppose that the space $S_{\tilde E} := R_{v_{\tilde E}}(\torb)$ 
of rays from the corresponding p-end point $v_{\tilde E}$ forms a complete affine subspace of dimension $n-1$.
Then the following statements hold. 
\begin{itemize} 
\item[(i)] The eigenvalues of elements of  $h(\pi_1({\tilde E}))$ have unit norms only. 
\item[(ii)] A finite index subgroup of $h(\pi_1({\tilde E}))$ is contained in a unipotent group fixing $v_{\tilde E}$. 
\item[(iii)] $E$ is horospherical. 
\end{itemize} 
\end{theorem}

\subsection{The properties of lens-shaped ends} \label{subsec:lens}





A {\em trivial one-dimensional cone} is an open half space in $\bR^1$ given by $x > 0$ or $x < 0$.

Recall that if $\pi_1(E)$ is an admissible group, then $\pi_1(E)$ has  a finite index subgroup isomorphic to 
$\bZ^l \times \Gamma_1 \times \cdots \times \Gamma_k$ for some $l$ and $k$ for $l \geq k-1$
where each $\Gamma_i$ is hyperbolic. 
Here, we can identify $\tilde{\mathcal{O}}$ as a convex domain in $\bR P^n$ (resp. in $\SI^n$) for convenience.  

Let us consider $E$ as a real projective $(n-1)$-orbifold and consider $\tilde E$ as a domain 
in $\SI^{n-1}$ and $h(\pi_1(E))$ induces $h': \pi_1(E) \ra \SL_\pm(n, \bR)$ acting on $\tilde E$.

\begin{theorem}[Proposition 6.7 \cite{endclass}]\label{thm:lensclass}
Let $\mathcal{O}$ be a noncompact strongly tame properly convex real projective 
$n$-orbifold with radial ends or totally geodesic ends of lens-type and satisfies {\rm (IE)} and {\rm (NA)}.
Let $\tilde E$ be a generalized lens-shaped radial p-end of $\tilde{\mathcal{O}}$ in $\bR P^n$ \rlp resp. in $\SI^n$\rrp\,
associated with a p-end vertex $v_{\tilde E}$.
Assume that $\pi_1(\tilde E)$ is hyperbolic.  \index{end!radial!generalized lens}
Then the following statements hold\,{\rm :}
\begin{itemize} 
\item[(i)] 
$D$ is strictly generalized lens-shaped. Moreover, each element $g \in \Gamma_{\tilde E}$ has an attracting 
fixed point in $\Bd \clo(D)$ in the ray from $v_{\tilde E}$ in the direction of $\Bd \Omega_{\tilde E}$. 
The set of attracting fixed points is dense in $\Bd \clo(D) - A - B$ for the top and the bottom hypersurfaces $A$ and $B$
forming the boundary of the lens $D$. 
\item[(ii)] 
The closure in in $\bR P^n$ \rlp resp. in $\SI^n$\rrp  of a concave p-end-neighborhood of $v_{\tilde E}$ contains 
every segment $l$ in $\Bd \tilde{\mathcal{O}}$ meeting the closure of a concave p-end neighborhood of $v_{\tilde E}$ in $l^o$.
The set $S(v_{\tilde E})$ of maximal segments from $v_{\tilde E}$ in the closure of a p-end-neighborhood of $v_{\tilde E}$ is independent of 
the p-end-neighborhood, and $\bigcup S(v_{\tilde E})$ equals the closure of any p-end neighborhood of $v_{\tilde E}$
intersected with $\Bd \tilde{\mathcal{O}}$.
\item[(iii)] Any concave p-end neighborhood $U$ of $v_{\tilde E}$ under the covering map $p_{\orb}$
covers the p-end neighborhood of $E$ of form $U/\pi_1(E)$. That is, a concave p-end neighborhood is a proper p-end neighborhood. 
\item[(iv)] $\bigcup S(g(v_{\tilde E}))=g(\bigcup S(v_{\tilde E}))$ for $g \in \pi_1(E)$. Assume that $w$ is 
the p-end vertex of an irreducible hyperbolic p-end.  
$\bigcup S(v_{\tilde E})$ is an $(n-1)$-ball. 
Then $\bigcup S(v_{\tilde E})^o \cap \bigcup S(w) = \emp$ or $v_{\tilde E}=w$ for 
p-end vertices $v_{\tilde E}$ and $w$. 
\end{itemize}
\end{theorem}






Now we go to the cases when admissible $\pi_1(E)$ has more than two nontrivial abelian or hyperbolic factors. 
The following theorem shows that each lens-shaped end is totally geodesic and has well-defined $S(v)$ in this case. 
The author obtained the proof of (i-3) from Benoist.

\begin{theorem}[Theorem 6.9 \cite{endclass}]\label{thm:redtot}
Let $\mathcal{O}$ be a noncompact strongly tame properly convex real projective 
$n$-orbifold with radial ends or totally geodesic ends of lens-type and  \index{end!radial!generalized lens}
satisfies {\rm (IE)} and {\rm (NA)}.  
Suppose that the holonomy of $\mathcal{O}$ is strongly irreducible. 
Let $\tilde E$ be a generalized lens-shaped radial p-end of $\tilde{\mathcal{O}}$ in $\bR P^n$ \rlp resp. in $\SI^n$\rrp 
associated with a p-end vertex $v_{\tilde E}$.
Let $\pi_1(\tilde E)$ be the p-end fundamental group corresponding to $\tilde E$ containing a finite index abelian subgroup 
isomorphic to $\bZ^l \times \Gamma_1 \times \cdots \times \Gamma_k$. 
Assume $l \geq 1$.
Then the following statements hold\,{\rm :}
\begin{itemize} 
\item[(i)] For $\SI^{n-1}_{v_{\tilde E}}$, we obtain 
\begin{itemize}
\item[(i-1)]  Under the action of the induced group $\hat h(\pi_1(E))$ of the holonomy group $h(\pi_1(E))$,  
 $\bR^{n}$ splits into $V_1 \oplus \cdots \oplus V_{l_0}$ and $S_{\tilde E}$ is the quotient of the sum 
$C_1+ \cdots + C_{l_0}$ for properly convex or trivial one-dimensional cones $C_i \subset V_i$ for $i=1, \dots, l_0$
\item[(i-2)] The Zariski closure of a finite index subgroup of $\hat h(\pi_1(E))$ is isomorphic 
to the product $G = G_1 \times \cdots \times G_{l_0} \times \bR^{l_0-1}$ 
where $G_i$ is a reductive subgroup of $\SL_{\pm}(V_i)$.  
\item[(i-3)] Let $D_i$ denote the image of $C_i$ in $\SI^{n-1}_{v_{\tilde E}}$.
The number of hyperbolic group factors of $\pi_1(E)$ is $\leq l_0$ and 
each hyperbolic group factor of $\pi_1(E)$ divides
exactly one $D_i$ and acts on other factors trivially.
\item[(i-4)] A finite-index subgroup of $\pi_1(\tilde E)$ has a rank $l_0-1$ free abelian group center corresponding to $\bZ^{l_0-1}$ in $\bR^{l_0-1}$.
\end{itemize}
\item[(ii)] The p-end is  totally geodesic radial p-end of lens-type. 
$D_i$ corresponds to totally geodesic convex $(n-1)$-ball $D'_i$ disjoint from $v_{\tilde E}$. 
\item[(iii)] $g$ in the center is diagonalizable with positive eigenvalues. 
For a nonidentity element $g$  in the center, the eigenvalue $\lambda_{v_{\tilde E}}$ 
of $g$ at ${v_{\tilde E}}$ is strictly between its largest and smallest eigenvalues. 
\item[(iv)] The p-end is strictly lens-shaped and each $C_i$ corresponds to a cone $C^*_i$ 
over a totally geodesic $(n-1)$-dimensional domain $D'_i$ with $v_{\tilde E}$.  
$C^*_i$ contains a concave open invariant set $U_i$. 
The p-end has a p-end neighborhood that is a strict join of $D'_1,.., D'_{l_0}$ with $v_{\tilde E}$ 
where the strict join $D'$ of $D'_1,.., D'_{l_0}$ forms the boundary. 
They are in a lens part of $\tilde E$ for any lens-type p-end neighborhood, and the top 
and the bottom hypersurfaces of the lens part have the boundary in the boundary of $D'$. 
\item[(v)] $\bigcup S(v_{\tilde E})$ is equal to the union of maximal segments with vertex $v_{\tilde E}$ in 
the union $\bigcup_{i=1}^j v_{\tilde E} *D'_1*\cdots *\check{D'_i}* \cdots * D'_{l_0}$.
\item[(vi)] A concave p-end neighborhood of $\tilde E$ is a proper end neighborhood. 
Also the statement in this case for {\rm (iv)} of Theorem \ref{thm:lensclass} holds.
\end{itemize}
\end{theorem}


\begin{theorem}[Theorem 8.1 \cite{endclass}]\label{thm:qFuch}
Let $\orb$ be a strongly tame properly convex real projective manifold with radial ends or totally geodesic ends of lens-type and satisfies 
 {\rm (IE)} and {\rm (NA)}. Let the holonomy $h(\pi_1(\orb))$ be strongly irreducible. 
Let $\tilde E$ be a properly convex radial p-end of $\tilde{\mathcal{O}}$ in $\bR P^n$ \rlp resp. in $\SI^n$\rrp 
associated with a p-end vertex $v_{\tilde E}$.
Let \[\Hom^s_E(\pi_1(\tilde E), \PGL(n+1, \bR)) \hbox{ {\rm (resp.} } \Hom^s_E(\pi_1(\tilde E), \SL_\pm(n+1, \bR)) )\] be the space of representations of the 
fundamental group of an $(n-1)$-orbifold $\Sigma_{\tilde E}$ with an admissible fundamental group. 
Then 
\begin{itemize}
\item[(i)] the generalized lens-shapedness of an end is equivalent to the strict generalized lens-shapedness of the end, and  \index{end!radial!strict generalized lens}
\item[(ii)] the subspace of generalized lens-shaped  representations in the above space is open. 
\end{itemize}
\end{theorem}


\begin{lemma} \label{lem:concave}
Given two concave p-end neighborhoods $U$ and $V$, either they have the same p-end and $U \cap V$ is another concave 
p-end neighborhood or $U \cap V = \emp$ when they have distinct p-ends. 
\end{lemma}
\begin{proof}
Let $\tilde E_1$ and $\tilde E_2$ be the p-end associated with $U$ and $V$ respectively. 
If $U \cap V \ne \emp$, then $S^o(v_{\tilde E_1})$ intersect $S^o(v_{\tilde E_2})$ since 
the lens for $U$ is supported by a totally geodesic hyperspace and so is $V$. 
Thus, the conclusion follows by Theorems \ref{thm:lensclass}(iv) and \ref{thm:redtot}(vi).
\end{proof}





\section{Expansion and shrinking of admissible p-end neighborhoods} 


\begin{definition}\label{defn:limit} 
Let $\Lambda$ denote $\Bd L - \partial L$ for a generalized lens of a radial p-end or a lens of a totally geodesic ends. \index{end!limit set}
We call this set the {\em limit set} of the p-end. 
\end{definition}
Obviously the limit set of a p-end is independent of the choice of lens by Corollary 8.5 of \cite{endclass}.

\begin{lemma}[Lemma 8.6 \cite{endclass}]\label{lem:expand}  
Let $\mathcal{O}$ have a noncompact strongly tame SPC-structure $\mu$ with admissible ends. 
Let $U_1$ be an admissible p-end neighborhood of a lens-type radial p-end with the vertex $v$ in $\tilde{\mathcal{O}}$
that is foliated by segments from $v$ or a totally geodesic p-end $\tilde E$.
\begin{itemize} 
\item $\torb$ contains a sequence of convex open neighborhoods $U_i$ of $\tilde E$ 
so that $(U_i - U_j)/\Gamma_v$ for a fixed $j$ and $i> j$ is homeomorphic to a product of an open interval with 
the end orbifold. 
\item Given a compact subset $K$ of $\tilde{\mathcal{O}}$, there exists an integer $i_0$ such that 
$U_i$ for $i > i_0$ contains $K$. 
\item We can choose $U_i$ so that $\partial U_i$ is smoothly imbedded and strictly convex with 
$\partial \clo(\partial U_i) \subset \Lambda$ where $\Lambda$ is the limit set contained in $\bigcup S(v)$
if $v$ is the p-end vertex when $\tilde E$ is radial and in $\partial S_{\tilde E}$ if $\tilde E$ is total geodesic. 
\item The Hausdorff distance between $U_i$ and $\tilde{\mathcal{O}}$ can be made as small as possible. 
\end{itemize}
\end{lemma}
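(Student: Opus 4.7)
The plan is to construct the expanding family $\{U_i\}$ by taking sublevel sets of a $\Gamma_v$-invariant strictly convex exhaustion function adapted to the end, using the Koszul--Vinberg machinery on the convex cone naturally associated with the admissible end structure.

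First, I would split into the two cases allowed by admissibility. In the lens-shaped case, identify $\tilde{\mathcal{O}}$ near $v$ with a convex set in an affine chart so that the radial foliation from $v$ corresponds to a cone; by Theorems \ref{thm:lensclass} and \ref{thm:redtot}, this produces a properly convex cone $C_v$ (or a join of such cones with the vertex $v$) on which $\Gamma_v$ acts preserving the radial rays. Apply the Koszul--Vinberg integral as in \eqref{eqn:kv} to the dual cone of $C_v$ to obtain a $\Gamma_v$-invariant strictly convex function $f$ with positive definite Hessian, whose sublevel sets are bounded by smoothly embedded strictly convex hypersurfaces (see Vey's work cited just after \eqref{eqn:kv}). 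In the horospherical case, use $\Gamma_v$-invariant horoballs at $v$ directly: by Proposition \ref{prop:affinehoro}, the space of rays from $v$ is a complete affine space, and Busemann-type functions or the associated nilpotent group structure furnish a $\Gamma_v$-invariant smooth strictly convex family of horosphere-bounded neighborhoods.

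Second, I would define $U_i$ to be (the intersection with $\tilde{\mathcal{O}}$ of) a sublevel set $\{f < c_i\}$ for a strictly increasing sequence $c_i$ chosen so that $U_1 \subset U_2 \subset \cdots$. The boundary $\partial U_i$ is then smoothly embedded and strictly convex by positive definiteness of the Hessian. Invariance of $f$ under $\Gamma_v$, together with the fact that level hypersurfaces are transverse to the radial foliation from $v$ (so each leaf is a graph over the link of $v$), gives a natural diffeomorphism $(U_i \setminus U_j)/\Gamma_v \cong E \times (a_j, a_i)$ by the gradient-like flow of $f$, proving the first bullet.

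Third, for the exhaustion and Hausdorff statements, I would argue that as $c_i \to \sup f$ the sublevel sets sweep across the radial region at $v$ out to the lens part (in the lens-shaped case) or out toward $\Bd \tilde{\mathcal{O}}$ (in the horospherical case), using the compactness of a fundamental domain of $\Gamma_v$ on the level sets together with the exhaustion by radial rays. The Hilbert-metric convexity of $\tilde{\mathcal{O}}$ and the strict convexity of $f$ ensure that any compact set $K \subset \tilde{\mathcal{O}}$ lies in some $U_i$, and hence the Hausdorff distance $d_H(U_i, \tilde{\mathcal{O}}) \to 0$.

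The main obstacle is verifying the last two bullets uniformly: one must guarantee that the Koszul--Vinberg sublevel sets (or horoballs) remain inside $\tilde{\mathcal{O}}$ as $c_i$ grows and simultaneously exhaust it, which requires controlling the behavior near $\Bd \tilde{\mathcal{O}} \setminus \{v\}$. This is handled by using the proper convexity of $\tilde{\mathcal{O}}$ and the finite Hilbert-metric extent of radial rays from $v$ (together with the strictness arguments of Theorem \ref{thm:lensclass}(i) and Theorem \ref{thm:redtot}(iv)), but requires delicate estimates; the smoothness and strict convexity of $\partial U_i$ then follow automatically from the Hessian being bounded below on fundamental domains, as in the Vey/Koszul theory.
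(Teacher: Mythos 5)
Your construction has a genuine gap at the second and fourth bullets. The Koszul--Vinberg function you propose is attached to the $\Gamma_v$-invariant cone at the end (the lens-cone, or in the totally geodesic picture the cone between $v$ and the hyperspace $P$, resp.\ a horoball), and its level sets are asymptotic to the lateral boundary of \emph{that cone}: every sublevel set you can form stays inside this end neighborhood. Consequently your $U_i$ can never contain a compact subset of $\tilde{\mathcal{O}}$ lying beyond the lens (i.e.\ on the far side of $P$, which is most of the orbifold), and the Hausdorff distance from $U_i$ to $\tilde{\mathcal{O}}$ stays bounded away from zero. You flag this as ``the main obstacle'' to be handled by ``delicate estimates,'' but no estimate can fix it within your framework, because the function simply is not defined on (and its sublevel sets cannot leave) the invariant cone; exhausting $\tilde{\mathcal{O}}$ requires importing geometry of $\tilde{\mathcal{O}}$ outside the end cone. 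There is also a secondary problem: the Koszul--Vinberg function transforms by $|\det g|^{-1}$ under a linear automorphism $g$ of the cone, so for a general admissible end (where the linear part of $\Gamma_v$ in the relevant affine chart need not have determinant $\pm 1$, e.g.\ because of the $\bZ^l$ factors) the level sets are not $\Gamma_v$-invariant; the paper only uses this device in Proposition \ref{prop:lensend} under the extra vertex-eigenvalue-$1$ hypothesis.

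The paper's proof of Lemma \ref{lem:expand} avoids both issues by a different mechanism: it extends finitely many radial leaves $L$ of length $t$ outward from $U_1$ (resp.\ from the convex hull $I$ of the end) along the radial foliation --- which sweeps all of $\tilde{\mathcal{O}}$, since every point lies on a segment from $v$ --- and defines $\Omega_t$ as the convex hull of $U_1\cup\Gamma_v(L)$ (resp.\ $I\cup\Gamma_v(L)$), smoothing afterwards. Cocompactness of the shells and the product structure come from showing that limits of the $\Gamma_v$-translates of the leaves converge into $S(v)\subset\Bd\tilde{\mathcal{O}}$ and that each maximal radial segment from $v$ not in $S(v)$ meets $\Bd\Omega_t\cap\tilde{\mathcal{O}}$ exactly once (using the simplicial description $S_1,\dots,S_n$ of the hull and convexity of $d_{\tilde{\mathcal{O}}}$), while exhaustion and the Hausdorff statement follow by taking $L$ sufficiently dense and $t$ sufficiently large. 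Your gradient-flow argument for the first bullet and the smoothness claim for the third would be fine \emph{inside} the end cone, but they do not substitute for this hull construction, which is what actually delivers the exhaustion of $\tilde{\mathcal{O}}$.
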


See the definition of convex hull of an end in Section \ref{subsub:convh}.
\begin{lemma}[Lemma 8.7 \cite{endclass}] \label{lem:shrink} 
Suppose that $\orb$ is a strongly tame properly convex real projective orbifold with radial or totally geodesic ends of lens-type
and let $\torb$ is a properly convex domain in $\bR P^n$ \rlp resp. in $\SI^n$\rrp covering $\orb$. 
Assume that the holonomy group of $\pi_1(\orb)$ is strongly irreducible.
\begin{itemize} 
\item[(i)] If $\tilde E$ is a horospherical p-end, any p-end neighborhood of $\tilde E$ contains a horospherical p-end neighborhood.
\item[(ii)] If $\tilde E$ is a generalized lens-shaped p-end, any p-end neighborhood whose closure covers a compact end neighborhood and containing the convex hull
of the end contains a lens-shaped p-end neighborhood. 
\item[(iii)] If $\tilde E$ is a generalized lens-shaped p-end, 
any p-end neighborhood of $\tilde E$ contains a concave p-end neighborhood. 
\item[(iv)] If $\tilde E$ is totally geodesic p-end of lens type, 
any end neighborhood contains a one-sided lens p-end neighborhood with strictly convex boundary in $\torb$. 
\end{itemize} 
\end{lemma}

Proposition 6.7 and Theorem 6.9 of \cite{endclass} imply the following: 
\begin{corollary}\label{cor:independence} 
Let $\mathcal{O}$ be a noncompact strongly tame $n$-orbifold with radial or totally geodesic ends of lens-type and satisfies {\rm (IE)} and {\rm (NA)}.
Assume that the holonomy group of $\pi_1(\orb)$ is strongly irreducible. 
Let $\torb$ is a properly convex domain in $\bR P^n$ \rlp resp. in $\SI^n$\rrp covering $\orb$, 
and let $\tilde E$ be a generalized lens-shaped radial p-end of $\tilde{\mathcal{O}}$ associated with a p-end vertex $v_{\tilde E}$,
or $\tilde E$ is a totally geodesic p-end of lens-type or a horospherical end. Let $U$ be a p-end neighborhood of $\tilde E$.
Then $\clo(U) \cap \Bd \torb$ is independent of the choice of $U$.
\end{corollary}

\begin{corollary} \label{cor:strictconv} 
Suppose that $\mathcal O$ is a noncompact strongly tame strictly SPC-orbifold with generalized admissible ends. 
Let $\torb$ is a properly convex domain in $\bR P^n$ \rlp resp. in $\SI^n$\rrp covering $\orb$.
Choose any disjoint collection of end neighborhoods in $\orb$. Let $U$ denote their union. \index{$p_{\orb}$}
Let $p_{\orb}: \torb \ra \orb$ denote the universal cover. 
Then any segment or a non-$C^1$-point of $\Bd \torb$ is contained in the closure of a component of $p_{\orb}^{-1}(U)$ for 
any choice of $U$. 
\end{corollary}
\begin{proof}
%
By the definition of a strictly SPC-orbifold, any segment or a non-$C^1$-point has to be in the closure of 
a p-end neighborhood.  Corollary \ref{cor:independence} proves the claim. 
\end{proof}


\section{Duality of ends} 
The totally geodesic ends of a properly convex real projective orbifolds are properly convex necessarily. 

\begin{theorem}[Proposition 6.4 \cite{endclass}]\label{thm:duality} 
Let $\orb$ be a noncompact strongly tame properly convex real projective orbifold 
with horospherical or properly convex radial or totally geodesics ends. 
Let $\torb$ be the convex domain in $\bR P^n$ {\rm (}resp. $\SI^n$\,{\rm )} that covers $\orb$ and $\Gamma$ the projective deck transformation group
in $\PGL(n+1, \bR)$ {\rm (} resp. $\SLpm$\,{\rm ).}
Let $\torb^*$ be the dual convex domain and $\Gamma^*$ the dual group to $\Gamma$. 
Then $\orb^*:= \torb^*/\Gamma^*$ is a noncompact strongly tame properly convex real projective orbifold with radial or totally geodesics ends. 
\begin{itemize} 
\item The set of p-end fundamental groups $\pi_1(\tilde E_i)$ of $\torb$ corresponds to the set of 
p-end fundamental groups $\pi_1(\tilde E_i)^*$ of $\torb^*$. 
\item There is a one-to-one correspondence 
\begin{align} \label{eqn:oneone}
& \{\tilde E| \tilde E \hbox{ is a radial properly convex p-end of } \torb\} \Leftrightarrow \nonumber \\ 
& \{\tilde E| \tilde E \hbox{ is  a totally geodesic p-end of } \torb^*\},
\end{align} 
\item another one 
\begin{align} \label{eqn:oneone2}
& \{\tilde E| \tilde E \hbox{ is a totally geodesic p-end of } \torb\} \Leftrightarrow \nonumber \\ 
& \{\tilde E| \tilde E \hbox{ is  a radial properly convex p-end of } \torb^*\},
\end{align} 
\item and one for the set of horospherical ends of $\torb$ with  and the set of horospherical ends of $\torb^*$. 
\end{itemize}
\end{theorem} 

For correspondences of admissible ends, see Lemma \ref{lem:concavedual}.

\section{Totally geodesic ends and duality} \label{subsec:totdual}

We discuss somewhat more on totally geodesic ends. 
For totally geodesic ends, by the lens condition, we only consider the ones that have lens neighborhoods in some ambient orbifolds, 
i.e., admissible ones. 
First, we discuss the extension to bounded orbifolds.

\begin{theorem}\label{thm:totgeoext} 
Suppose that $\mathcal O$ is a noncompact strongly tame properly convex real projective orbifold with generalized admissible ends. 
Assume that the holonomy group of $\pi_1(\orb)$ is strongly irreducible. 
Let $E$ be a lens-shaped totally geodesic end, and let $\Sigma_E$ be a totally geodesic hypersurface that is the ideal 
boundary corresponding to $E$. 
 Let $L$ be a lens-shaped end neighborhood of $\Sigma_E$ in an ambient real projective orbifold containing $\orb$. 
 Then $L \cup \orb$ is a properly convex real projective orbifold 
 and has a strictly convex boundary component 
 corresponding to $E$. 
 Furthermore if $\orb$ is strictly SPC and $\tilde E$ is a hyperbolic end, then so is $L \cup \orb$
 which now has one more boundary component and one less totally geodesic ends. 
\end{theorem} 
\begin{proof} 
It is sufficient to prove for $\SI^n$ cases here. 
Let $\torb$ be the universal cover of $\orb$ which we can identify with a properly convex bounded domain in 
an affine subspace. Then $\Sigma_E$ corresponds to a p-end $\tilde E$ and 
 to a totally geodesic surface $S= S_{\tilde E}$. The lens $L$ is covered by 
 a lens $\tilde L$ containing $S$. The p-end fundamental group $\pi_1(\tilde E)$ acts on
 $\torb$ and $\tilde L_1$ and $\tilde L_2$ the two components of $\tilde L - S_{\tilde E}$ in $\torb$ and
 outside $\torb$ respectively. 
 



\begin{definition}\label{defn:asymp} 
Let $\bR^n$ denote the affine subspace in $\SI^n$ with boundary $\SI^{n-1}_\infty$. 
Suppose that $\Omega$ is a strictly convex open domain in $\SI^{n-1}_\infty$. 
Given a convex open domain $\Omega_1$ with $\Bd \Omega_1 \supset \clo(\Omega)$ in $\bR^n$, 
the supporting hyperplanes at $p \in \Lambda = \clo(\Omega)-\Omega$ contains the unique hyperplane of codimension-two supporting $\Omega$.
Let \[A_p :=\{ H| H \hbox{ is a supporting hyperspace of $\Omega_1$ at $p$ in $\bR^n$} \} \]
and hence the space $A_p$ of such hyperspaces is homeomorphic to an arc.
An {\em asymptotic supporting hyperplane} at a point $p$ of $\Lambda$ is a supporting hyperplane at $p$
so that there exists no other element of $A_p$ closer to $\Omega_1$ from a point of $\Bd \Omega_1 - \clo(\Omega)$
(using minimal distance between a point and a set).
\end{definition} 

\begin{lemma} \label{lem:commsupp} 
Suppose that $S_{\tilde E}$ is the totally geodesic ideal boundary of 
a lens-type totally geodesic end $\tilde E$ of a strongly tame real projective orbifold $\orb$
and $\pi_1(\tilde E)$ is hyperbolic. 
\begin{itemize} 
\item Given a $\pi_1(\tilde E)$-invariant convex open domain $\Omega_1$ containing $S_{\tilde E}$ in the boundary, 
at each point of $\Lambda$, there exists a unique asymptotic supporting hyperplane. 
\item The hyperspace supporting any $\pi_1(\tilde E)$-invariant convex open set $\Omega$ containing $S_{\tilde E}$ at each point of $\Lambda$ is unique. 
\item Given two $\pi_1(\tilde E)$-invariant convex open domains $\Omega_1$ containing $S_{\tilde E}$ in the boundary
and $\Omega_2$ containing $S$ in the boundary from the other side, $\Omega_1 \cup \Omega_2$ is a convex domain 
and $\clo(\Omega_1) \cap \clo(\Omega_2) = \clo(S_{\tilde E})$ and their asymptotic supporting hyperplanes at each point of $\Lambda$ coincide.
\end{itemize} 
\end{lemma} 
\begin{proof} 
Let $A$ denote the affine subspace that is the complement of $\SI^n$ of the subspace containing $\torb$. 
Because $\pi_1(\tilde E)$ acts on a lens-type domain, the dual group of $h(\pi_1(\tilde E))$ is the holonomy group of lens-type radial p-end. 
By Theorem 7.9 of \cite{endclass}, $h(\pi_1(\tilde E))$ satisfies the uniform middle eigenvalue condition of \cite{endclass}. 

If $\Omega_1$ has an asymptotic supporting half-space $H(x)$ for each $x \in \Lambda$ containing $\Omega_1$. 
$H(x)$ is uniquely determined by $\pi_1(\tilde E)$ and $x$ by Lemmas A.7 and  A.8 of \cite{endclass}. 
The uniqueness is in Lemma A.8 of \cite{endclass}. 


The third item follows since the asymptotically supporting hyperplane at each point of $\clo(S_{\tilde E}) - S_{\tilde E}$ 
to $\Omega_1$ and $\Omega_2$ have to agree by Lemma A.7 of \cite{endclass}. 
The convexity follows easily from this. 


\end{proof}

Suppose that $\pi_1(\tilde E)$ is hyperbolic. 
By Lemma \ref{lem:commsupp}, $\tilde L_2 \cup S \cup \torb$ is a convex domain. 
If $\tilde L_2 \cup \torb$ is not properly convex, then it is a union of 
two cones over $S_{\tilde E}$ from a point $x \in A$ but made by two distinct choices 
of $\pm v_x \in \bR^{n+1}, [v_x] = x$ and the same cone over $\torb$. 
This means that $\torb$ has to be a cone contradicting the irreducibility of $h(\pi_1(\orb))$. 
Hence, it follows that $\tilde L_2 \cup \torb$ is properly convex. 

Suppose that $\orb$ is strictly SPC and $\pi_1(\tilde E)$ is hyperbolic. 
Then every segment in $\Bd \torb$ or a non-$C^1$-point in $\Bd \torb$
is in the closure of one of the p-end neighborhood.
$\Bd \tilde L_2 - \clo(S_{\tilde E})$ does not contain any segment in it or a non-$C^1$-point. 
$\Bd \torb - \clo(S_{\tilde E})$ does not contain any segment or a non-$C^1$-point outside 
the union of the closures of p-end neighborhoods. 
By Corollary \ref{lem:commsupp} , at each point of $\Lambda := \clo(S_{\tilde E}) - S_{\tilde E}$, 
$\torb \cup \tilde L_2 \cup S_{\tilde E} $ is $C^1$ and $\Lambda$ does not contain a segment. 
This follows because $S_{\tilde E}$ is strictly convex for $\pi_1(\tilde E)$ is a hyperbolic group. (See Theorem 1.1 of \cite{Ben1}.)
Therefore, $L_2 \cup \orb$ is strictly convex relative to the ends.

Suppose now that $\pi_1(\tilde E)$ is a product of hyperbolic and abelian groups.
Then the dual of the totally geodesic p-end is a radial p-end. By Theorem 6.9 of \cite{endclass}, 
the dual radial p-end has a neighborhood that is contained in a strict join with a vertex $x$ with a properly convex open domain $K$ in a hyperplane $V$. 
$\clo(K)$ is a strict join $C_1 \ast \cdots \ast C_k$ for properly compact convex domains $C_i$, for $i=1, \dots, k$ by Theorem \ref{thm:redtot}. 
Since $\torb$ contains a one-sided convex p-end neighborhood $D$ of $S_{\tilde E}$. 

By equation \eqref{eqn:dualinc}, the dual $D^*$ of $D$ contains the dual $\torb^*$ of $\torb$.
Since $D^*$ is  the interior of a lens-cone by Lemma  \ref{lem:example}, $D^*$  is contained 
in the union $U$ of two strict joins $x\ast K \cup x_- \ast K$
for the point $x$ dual to the hyperplane containing ideal boundary component $S_{\tilde E}$
and its antipode $x_-$. 
$D^*$ contains $x \ast K$. 
Thus, $\torb^* \subset x\ast K \cup x_- \ast K$. 
The set of supporting hyperspaces at the vertex $x$ is projectively
isomorphic to the dual $K$ of $\clo(S_{\tilde E})$ by Lemma \ref{lem:example}(iii).
Therefore, by Lemma \ref{lem:example}, the dual $\torb$ of $\torb^*$ is contained in the 
the cone $\clo(S_{\tilde E}) \ast a$ for some point $a$ dual to the hyperplane $V$. 

Now, $\tilde L_2$ is a subset of $\clo(S_{\tilde E}) \ast a_-$ sharing boundary $\clo(S_{\tilde E})$ with $\torb$
since we can treat $\tilde L_2$ as $\torb$ in the above arguments. 
Since both share $S_{\tilde E}$ and are in $S_{\tilde E} \ast a \cup S_{\tilde E} \ast a_-$, 
the convexity of the union $\tilde L_2 \cup \torb$ follows. 
The proper convexity follows also as above. 

\end{proof}

\begin{corollary} \label{cor:lenssupp}
Suppose that $\mathcal O$ is a noncompact strongly tame properly convex real projective orbifold with generalized admissible ends
and $\pi_1(\tilde E)$ is hyperbolic. 
Let $\tilde E$ be a lens-type radial p-end. 
Let $L$ be a lens in the p-end neighborhood.
Define $\Lambda := \Bd L - \partial L$.
Then each point of $\Lambda$ has a unique supporting hyperplane of $L$. 
\end{corollary}
\begin{proof} 

This follows  from Lemma \ref{lem:commsupp} and the duality Proposition \ref{prop:duality}. 
$\Lambda$ and the supporting hyperplanes goes to the boundary of a strictly convex domain
and the supporting hyperplane to $L^*$ under the duality map. 
That is, points and the supporting hyperplanes 
change roles here. Then $L^*$ has strictly convex boundary and $\Bd L^* - \partial L^*$ 
is strictly convex by Lemma \ref{lem:commsupp} since $\pi_1(E)$ is hyperbolic. 
Thus, each hyperplane can meet $\clo(L^*)$ at a unique point. 

\end{proof}




We sharpen Proposition \ref{prop:duality}. 

\begin{lemma} \label{lem:concavedual}
Given an end $E$ of a strongly tame orbifold $\orb$ and the corresponding end $E^*$ of the dual orbifold $\orb^*$,
$E$ is a radial end of generalized lens-type  if and only if $E^*$ is a totally geodesic end of lens type 
\end{lemma}
\begin{proof}
This is given as Remark 2 in \cite{endclass}. 
\end{proof}

\subsection{The convex hulls of ends}\label{subsub:convh}

Here we will be working on $\bR P^n$ exclusively from now on. 
One can associate a {\em convex hull} of a p-end $E$ of $\tilde{\mathcal{O}}$ as follows: 
\begin{itemize}
\item For horospherical p-ends, the convex hull of each is defined to be the set of the end vertex actually. 
\item The convex hull of a totally geodesic end $\tilde E$ of lens-type is the closure $\clo(S_{\tilde E})$
the totally geodesic ideal boundary component $S_{\tilde E}$ corresponding to $\tilde E$. 
\end{itemize} 
They equal $\clo(U) \cap \Bd \torb$ for any p-end neighborhood $U$ of $S_{\tilde E}$ by
the following proposition \ref{prop:preI}. 

\begin{proposition} \label{prop:preI} 
Let $\orb$ be a strongly tame properly convex real projective orbifold 
with radial ends or totally geodesic ends of lens-type and satisfies {\rm (IE)} and {\rm (NA)}.
Assume that the holonomy group is strongly irreducible. 
For a horospherical p-end, the closure of a p-end neighborhood meets $\Bd \torb$ at the the p-end only.
For a totally geodesic end $\tilde E$ of lens-type, 
a closed p-end neighborhood $L$ of $\tilde E$ 
contains the closure of a proper p-end neighborhood of lens-type, and 
$\clo(L) \cap \Bd \torb = \clo(S_{\tilde E})$ for the ideal boundary component $S_{\tilde E}$.  
\end{proposition} 
\begin{proof} 
For a horospherical end, there exists a finite index free abelian group $\bZ^{n-1}$ acting on a compact ellipsoid $E$ with 
a unique fixed point $x \in \partial E$. Then $(\partial E-\{x\})/\bZ^{n-1}$ is compact. We take an open neighborhood $N$
of the fundamental domain in $\torb$. Then $\bigcup_{g \in \bZ^{n-1}} g(N)$ is an open neighborhood of 
$\partial E-\{x\}$ in $\torb$. Thus, $\partial E \cap \Bd \torb =\{x\}$ since $x$ is a fixed point. 

Let $\tilde E$ be a lens-type totally geodesic end.
By Corollary 8.7(iv) of \cite{endclass}, $S_{\tilde E}$ has a strict lens p-end-neighborhood $L_1 \subset \torb$ so that 
for its boundary component $A$ we have
$\clo(A) -A \subset \clo(S_{\tilde E}) - S_{\tilde E}$. 
$\Gamma_{\tilde E}$ acts cocompactly in $A$. As above, we can find an invariant open neighborhood of $A$ 
in $\torb$. Since $\clo(A) \cup \clo(S_{\tilde E})$ is homeomorphic to a $(n-1)$-sphere, it follows that 
$\Bd L_1 = \clo(A) \cup \clo(S_{\tilde E})$, and  $\clo(L_1) = L_1 \cup \clo(A) \cup \clo(S_{\tilde E})$.
Since $A \subset \torb$, we obtain $\clo(L_1) \cap \Bd \torb = \clo(S_{\tilde E})$.
Since $S_{\tilde E}$ is the boundary of any p-end neighborhood, the result follows. 
\end{proof}

For a lens-shaped p-end $E$ with a p-end vertex $v$,  the {\em convex hull} $I$ is 
$CH(\bigcup S(v)) \cap \tilde{\mathcal{O}}$. 
We can also characterize it as the intersection of every
$CH(\clo(U_1)) \cap \tilde{\mathcal{O}}$ for a p-end neighborhood $U_1$ of $v$
by (iv) and (v) of Proposition \ref{prop:I}.


Following Propositions \ref{prop:I} and  \ref{prop:preI} imply that the convex hull of an end 
is a well-defined and is independent of neighborhoods. 

\begin{proposition}\label{prop:I} 
Let $\orb$ be a strongly tame properly convex real projective orbifold with radial ends or totally geodesic ends of lens-type and satisfies {\rm (IE)} and {\rm (NA)}.
Assume that the holonomy group of $\pi_1(\orb)$ is strongly irreducible. 
Let $\tilde E$ be a radial lens-shaped p-end and $v$ an associated 
p-end vertex.
\begin{itemize}
\item[(i)] A segment in the boundary of $\tilde{\mathcal{O}}$ is always contained in the closure of 
a convex hull $CH(\clo(U_1)) \cap \tilde{\mathcal{O}}$ for a p-end neighborhood $U_1$ of $v$
and more precisely it is in the union $\bigcup S(v)$ of the maximal segments in 
$\Bd \tilde{\mathcal{O}}$ ending at $v$ for the corresponding $v$.
Thus, the segment is contained in the closure of any p-end neighborhood of $v$.  
\item[(ii)] $I$ is contained in $CH(\clo(U_1)) \cap \tilde{\mathcal{O}}$ for any p-end neighborhood $U_1$ of $v$.
\item[(iii)] Any segment in $\bigcup S(v)$ corresponds one-to-one manner to a point of the boundary of the open convex 
domain $R_v(\torb)= S_{\tilde E}$ in $\SI^{n-1}_v$.
\item[(iv)] $\Bd I \cap \tilde{\mathcal{O}}$ is contained in the union of a lens part of a lens-shaped p-end neighborhood. 
\item[(v)] $I$ contains any concave p-end-neighborhood of $E$ and 
\[I \cap \torb  = CH(\clo(U)) \cap \tilde{\mathcal{O}}\] for a concave p-end neighborhood $U$ of $v$. 
Thus, $I$ has a nonempty interior. 
\item[(vi)] Each segment from $v$ maximal in $\tilde{\mathcal{O}}$ 
meets the set $\Bd I\cap  \tilde{\mathcal{O}}$ exactly once and
$\Bd I \cap  \tilde{\mathcal{O}}/\Gamma_v$ is an orbifold isotopic to $E$
for the end fundamental group $\Gamma_v$ of $v$. 
\item[(vii)] There exists a nonempty-interior of the convex hull $I$ of a neighborhood of 
the p-end vertex $v$ of $E$ of $\tilde{\mathcal{O}}$ and 
where $\Gamma_v$ acts so that $I \cap \tilde{\mathcal{O}}/\Gamma_v$ is diffeomorphic to the end orbifold times an interval. 
\item[(viii)] $I \cap \tilde{\mathcal{O}}$ has a boundary 
restricting to the covering map is an immersed compact orbifold homotopic to 
the associated end orbifold. 
\end{itemize}
\end{proposition}
\begin{proof}

(i) A segment in $\Bd \tilde{\mathcal{O}}$ is contained in a closure of 
a p-end neighborhood by the strictness of the SPC-structure. Since it meets the interior of $\bigcup S(v)$, 
the segment must be in $\bigcup S(v)$ 
as in the proof of Theorem \ref{thm:lensclass}(ii) and Theorem \ref{thm:redtot} in \cite{endclass}.

By Theorems \ref{thm:lensclass} and \ref{thm:redtot}, the set 
$\bigcup S(v)$ is always contained in the closure of any p-end neighborhood of $v$.
Thus (ii) follows. 

(iii) A segment $s$ from $v$ in $\clo(\tilde{\mathcal{O}})$ either ends in a lens-shaped domain or 
is in $\Bd \tilde{\mathcal{O}}$. In the second case, $s$ is in $\Bd R_v(\torb)$ clearly.

(iv) We define $S_1$ as the set of $1$-simplices with endpoints in segments in $\bigcup S(v)$ and we inductively define
$S_i$ to be the set of $i$-simplices with faces in $S_{i-1}$. 
Then $I$ is a union $\bigcup_{\sigma \in S_1 \cup S_2 \cup \cdots \cup S_n} \sigma$. 
Notice that $\Bd I$ is the union 
$\bigcup_{\sigma \in S_1 \cup S_2 \cup \cdots \cup S_n, \sigma \subset \Bd I} \sigma$
since each point of $\Bd I$ is contained in the interior of a simplex which lies in $\Bd I$ by the convexity of $I$. 
If $\sigma \in S_1$ with $\sigma \subset \Bd I$, then its end point must be in an endpoint of a segment in $\bigcup S(v)$.
If an interior point of $\sigma$ is in a segment in $S(v)$, then the vertices of $\sigma$ are in 
$\bigcup S(v)$ by the convexity of $\clo(R_v(\torb))$. 
Hence, if $\sigma^o$ meets $\tilde{\mathcal{O}}$, then 
$\sigma^o$ is the lens-shaped domain.
Now by induction on $S_i$, $i > 1$, we can verify (iv)
since any simplex with boundary in the union of subsimplices in the lens-domain is in the lens-domain
by convexity.

(v) Since $I$ contains the segments in $S(x)$ and is convex, and so does a concave p-end neighborhood $U$, 
we obtain $\Bd U \subset I$: Otherwise, let $x$ be a point of $\Bd U \cap \Bd I$ where some neighborhood 
in $\Bd U$ is not in $I$. Then a supporting hyperspace at $x$ of the convex set $I$, meets a segment in $S(x)$ in its interior. 
This is a contradiction since $I$ contains the segments entirely. 
Thus, $U \subset I$. 

(vi) $\Bd I \cap  \tilde{\mathcal{O}}$ is a subset of a lens part of a p-end neighborhood by (iii). 
Each point of it meets a maximal segment from $v$ in the end but not in $S(x)$ at exactly one point since 
a maximal segment must leave the lens cone eventually.
Thus $\Bd I \cap  \tilde{\mathcal{O}}$ is homeomorphic to an $(n-1)$-cell and the result follows. 

(vii) This follows from (v) since we can use rays from $x$ meeting $\Bd I \cap  \tilde{\mathcal{O}}$ at unique points 
and use them as leaves of a fibration. 

(viii) This again follows from (vi). 

\end{proof}


\begin{figure}
\centerline{\includegraphics[height=7cm]{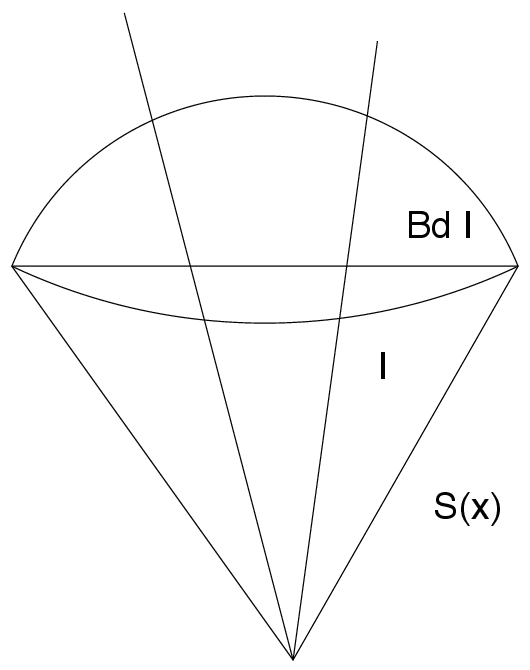}}
\caption{The structure of a lens-shaped p-end.}
\label{fig:lenslem2}
\end{figure}

\begin{definition}\label{defn:lambda}
 Let $\tilde E$ be a lens-type radial end. Let $L$ be the lens-cone neighborhood of ${\tilde E}$, \index{end!mc-p-end neighborhood}
 and let $\Lambda = \clo(D) - D$ for the lens domain $D$ of $L$, i.e., the limit set of $\pi_1(\tilde E)$. 
 Let $CH(\Lambda)$ denote the convex hull.
We define a {\em maximal concave p-end neighborhood} or {\em mc-p-end-neighborhood} $U$ 
 to be the component of $U' - CH(\Lambda)$ containing 
a p-end neighborhood of $\tilde E$ for any choice of a p-end neighborhood $U'$ of $\tilde E$ containing $CH(\Lambda) \cap \torb$. 
The {\em closed maximal concave p-end neighborhood} is 
$\clo(U) \cap \torb$. 
An $\eps$-$d_{\torb}$-neighborhood $U'$ of a maximal concave p-end neighborhood is called 
an {\em $\eps$-mc-p-end-neighborhood} $U'$.  \index{end!$\eps$-mc-p-end neighborhood}
\end{definition} 

\begin{lemma} \label{lem:mcc}
Let $D$ be an $i$-dimensional totally geodesic compact convex domain, $i \geq 1$. 
Let $\tilde E$ be a generalized lens-type p-end with the p-end vertex $v_{\tilde E}$. 
Suppose $\partial D \subset \bigcup S(v_{\tilde E})$. Then $D \subset V$ for a maximal concave p-end neighborhood $V$, and 
for sufficiently small $\eps>0$, an $\eps$-$d_{\torb}$-neighborhood of 
$D^o \subset V'$ for any $\eps$-mc-p-end neighborhood $V'$.
\end{lemma}
\begin{proof} 
Assume that $U$ is a generalized lens-cone of $v_{\tilde E}$. 
Then $\Lambda$ is the set of end points of segments in $S_{v_{\tilde E}}$ with $v_{\tilde E}$ removed. 
Let $P$ be the spanning subspace of $D$ and $v_{\tilde E}$. 
Since $\partial D, \Lambda \cap P  \subset \bigcup S(v_{\tilde E}) \cap P$, 
and $\partial D \cap P$ is closer than $\Lambda \cap P$ from $v_{\tilde E}$, 
it follows that 
$P\cap \clo(U) -D$ has a component $C_1$ containing $v_{\tilde E}$ and a component $C_2$ contains $\Lambda \cap P$.
Hence $\clo(C_2) \supset CH(\Lambda) \cap P$ by the convexity of $\clo(C_2)$. 
This implies that $D$ is disjoint from $CH(\Lambda)^o$
or $D$ contains $CH(\Lambda) \cap P$.
Let $V$ be an mc-p-end neighborhood of $U$. 
Since $\clo(V)$ contains the closure of the component of $U - CH(\Lambda)$ whose closure contains $v_{\tilde E}$, 
it follows that $\clo(V)$ contains $D$. 

Since $D$ is in the mc-p-end neighborhood $V$, the boundary $\Bd \clo(V') \cap \torb$ of the 
$\eps$-mc-p-end neighborhood $V'$ do not meet $D$. Hence $D^o \subset V'$. 
\end{proof} 


\begin{corollary} \label{cor:mcn} 
Let $\orb$ be a properly convex real projective manifold with generalized admissible ends 
and satisfies {\rm (IE)} and {\rm (NA)}.
Assume that the holonomy group of $\pi_1(\orb)$ is strongly irreducible. 
Let $\tilde E$ be a generalized lens-type radial end. 
Then 
\begin{itemize}
\item[(i)] Also, a concave p-end neighborhood of $\tilde E$ is always a subset of an mc-p-end-neighborhood of the same p-end. 
\item[(ii)] An mc-p-end-neighborhood is a union of concave end neighborhoods.
\item[(iii)] Each mc-p-end-neighborhood  of $\tilde E$ is a proper p-end neighborhood, and covers  an end-neighborhood with compact boundary in $\orb$. 
\item[(iv)] An $\eps$-mc-p-end-neighborhood of $\tilde E$ for sufficiently small $\eps > 0$ is a proper p-end neighborhood. 
\item[(v)] We can choose $\eps$-mc-p-end neighborhoods of p-ends so that their image end-neighborhoods in $\orb$ 
are mutually disjoint. 
\end{itemize}
\end{corollary} 
\begin{proof}
(i) Since any generalized lens $L$ in 
a generalized lens-cone p-end neighborhood $U$ of $\tilde E$ contains $CH(\Lambda) \cap \torb$ for the limit set $\Lambda$ of $\tilde E$
by Corollary 8.5 of \cite{endclass}. 
Hence, a concave end neighborhood is contained in an mc-p-end-neighborhood. 



(ii) 
Let $V$ be an mc-p-end neighborhood of $\tilde E$.
Then define $S$ to be the set of end points in $\clo(\torb)$ of maximal segments in $V$ from $v_{\tilde E}$ in directions of $S_{\tilde E}$.
That is $\clo(V) \cap \torb = V \cup S$, and $S$ is homeomorphic to $S_{\tilde E}$. 
Thus, $S/\pi_1(\tilde E)$ is a compact set since $S$ is contractible and $S_{\tilde E}/\pi_1(\tilde E)$ is a $K(\pi_1(\tilde E))$-space. 

We can $d_{\torb}$-approximate $S$ by the smooth boundary component $S_{\eps}$ outwards
of a generalized lens using the proof of Proposition 7.6 of \cite{endclass}.
For sufficiently small $\eps >0$, $S_\eps$ is strictly convex by the continuity of the Hessian matrices.
A component $U-S_{\eps}$ is a concave p-end neighborhood. 

(iii) Since a concave p-end neighborhood is a proper p-end neighborhood, we obtain 
$g(V) \cap V = \emp$ or $g(V) = V$ for $g \in \pi_1(\orb)$ by the first item.

Suppose that $g(\clo(V) \cap \torb) \cap \clo(V) \ne \emp$. Then $g(V) = V$ and $g \in \pi_1(\tilde E)$:
Otherwise, $g(V) \cap V =\emp$, and $g(\clo(V) \cap \torb)$ meets $\clo(V)$ in a totally geodesic hypersurface $S$ equal to $CH(\Lambda)^o$
by the concavity of $V$. Hence for every $g \in \pi_1(\orb)$, $g(S) = S$, 
and $g(V) \cup S \cup V = \torb$ since these are subsets of a properly convex domain $\torb$.
Then $\pi_1(\orb)$ acts on $S$ and $S/G$ is homotopy 
equivalent to $\torb/G$ for a finite index torsion free subgroup $G$ of $\pi_1(\orb)$ by Selberg's lemma. 
This cannot be true since the quotients are manifolds with different dimensions. 


Now suppose that $S \cap \Bd \torb \ne \emp$. 
Let $S'$ be a maximal totally geodesic domain in $\clo(V)$ supporting $S$. 
Then $S' \subset \Bd \torb$ by convexity by Lemma 7.5 of \cite{endclass}, meaning that $S'=S \subset \Bd \torb$.  
In this case, $\torb$ is a cone over $S$ and the end vertex $v_{\tilde E}$ of $\tilde E$.
For each $g \in \pi_1(\orb)$, $g(V) \cap V \ne \emp$ meaning $g(V)=V$ since $g(v_{\tilde E})$ is on $\clo(S)$. 
Thus, $\pi_1(\orb) = \pi_1(\tilde E)$. 
This contradicts the infinite index condition of $\pi_1(\tilde E)$. 

We showed that $\clo(V) \cap \torb = V \cup S$. 
Thus, an mc-p-end-neighborhood $\clo(V) \cap \torb$ is a proper end neighborhood of $\tilde E$
with compact imbedded boundary $S/\pi_1(\tilde E)$. 
Therefore we can choose positive $\eps$ so that an $\eps$-mc-p-end-neighborhood is a proper p-end neighborhood also.
This proves (iv). 

(v) For two mc-p-end neighborhoods $U$ and $V$ for different p-ends, we have $U \cap V =\emp$
by (iii). 

We showed that $\clo(V) \cap \torb$ for an mc-p-end-neighborhood $V$ covers an end neighborhood in $\orb$. 
Suppose that $U$ is another mc-p-end neighborhood different from $V$
and  $\clo(U) \cap \clo(V) \cap \torb \ne \emp$. 
Since $U \cap V = \emp$, we have $\clo(U) \cap \clo(V)$ are in the boundary of $U$ and $V$ in a properly convex domain $\torb$,
and $\Bd U \cap \torb$ and $\Bd V \cap \torb$ equal a tangent maximal hyperspace $CH(\Lambda)^o$ in $\torb$ and 
hence they are equal. As above, this is a contradiction. 
Hence $\clo(U) \cap \clo(V) \cap \torb = \emp$. 

Since the closures of mc-p-end neighborhoods with different p-ends are disjoint, the final item follows. 
\end{proof}

For the following, we need a stronger condition of lens-type ends. 
\begin{corollary} \label{cor:disjclosure} 
Let $\orb$ be a properly convex real projective manifold with admissible ends 
and satisfies {\rm (IE)} and {\rm (NA)}.
Assume that the holonomy group of $\pi_1(\orb)$ is strongly irreducible. 
Let $\mathcal U$ be the collection of the components of the inverse image in $\torb$ 
of the union of disjoint collection of  end neighborhoods of $\orb$ for all radial or totally geodesic ends of lens-type. 
Now replace each of the p-end neighborhoods of radial lens-type 
of collection $\mathcal U$ by a concave p-end neighborhood by Lemma \ref{lem:shrink} {\rm (iii).} 
Then the following statements hold: 
\begin{itemize} 
\item[(i)] Given concave or one-sided lens p-end-neighborhoods $U_1$ and $U_2$ contained in $\bigcup \mathcal U$, 
we have $U_1 \cap U_2 =\emp$ or $U_1= U_2$. 
\item[(ii)] Let $U_1$ and $U_2$ be in $\mathcal U$. Then 
$\clo(U_1) \cap \clo(U_2) \cap  \Bd \tilde{\mathcal{O}} = \emp$ 
or $U_1 = U_2$ holds. 
\end{itemize}
\end{corollary}
\begin{proof} 
%
%
Let $\tilde E$ be a p-end of $\torb$. 
Since $L/\pi_1(\tilde E)$ is compact for a lens of a lens cone of a p-end neighborhood of $\tilde E$, 
each lens-cone p-end neighborhood is a proper p-end neighborhood if we take a finite index subgroup of $\pi_1(\orb)$. 
We assume without loss of generality that the lens-cones are proper p-end neighborhoods from now on.

(i) Suppose that $U_1$ and $U_2$ are p-end neighborhoods of radial ends. 
 Let $U'_1$ be the interior of the associated generalized lens-cone of $U_1$ in $\clo(\torb)$ and $U'_2$ be that of $U_2$. 
Let $U''_i$ be the concave p-end-neighborhood of $U'_i$ for $i=1,2$ that covers an end neighborhood in $\orb$ by Lemma \ref{lem:shrink} (iii).
Since the neighborhoods in $\mathcal U$ are mutually disjoint, 
\begin{itemize}
\item $\clo(U''_1) \cap \clo(U''_2) \cap \torb = \emp$ or
\item $U''_1 = U''_2$
\end{itemize} 
since we can choose these to cover disjoint or identical p-end neighborhoods in $\orb$. 

(ii) Assume that $U''_i \in {\mathcal{U}}$, $i=1, 2$, and $U''_1 \ne U''_2$. 
Suppose that the closures of $U''_1$ and $U''_2$ intersect in $\Bd \torb$.
Suppose that they are both radial p-end neighborhoods. 
Then 
the respective convex hulls $I_1$ and $I_2$ as obtained by Proposition \ref{prop:I} intersect as well. 
Take a point $z \in \clo(U''_1) \cap \clo(U''_2) \cap \Bd \torb$. 
Let $p_1$ and $p_2$ be the respective p-end vertices of $U'_1$ and $U'_2$. 
Then $\ovl{p_1z}\in S(p_1)$ and $\ovl{p_2z} \in S(p_2)$ and these segments
are maximal since otherwise $U''_1 \cap U''_2 \ne \emp$.
The segments intersect transversally at $z$ 
since otherwise we violated the maximality in Theorems \ref{thm:lensclass} and 
\ref{thm:redtot}.
We obtain a triangle $\tri(p_1p_2z)$ in $\clo(\torb)$ with vertices $p_1, p_2, z$. 
We assume that $\ovl{p_1p_2}^o \subset \torb$. 

If this is not, true, we need to perturb $p_1$ and $p_2$ by a small amount. 
We may not have the geodesic triangle but will have a disk bounded by three arcs. However, the 
disk has an angle $< \pi$ at $z$ since $z$ is not a $C^1$-point of $\Bd \torb$. 
We will denote the disk by $\tri(p_1p_2z)$ still. 

We define a convex curve $\alpha_i := \tri(p_1p_2z) \cap \Bd I_i$ with an end point $z$ for each $i$, $i=1,2$. 
Let $\tilde E_i$ denote the p-end corresponding to $p_i$. 
Since $\alpha_i$ maps to a geodesic in $R_{p_i}(\torb)$, 
there exists a foliation $\mathcal{T}$ of $\tri(p_1p_2z)$ 
by maximal segments from the vertex $p_1$.  
There is a natural parametrization of the space of leaves by $\bR$ 
as the space is projectively equivalent to an open interval using the Hilbert metric of
the interval. We parameterize $\alpha_i$ by these parameters 
as $\alpha_i$ intersected with a leaf is a unique point. 
They give the geodesic length parameterizations under the Hilbert metric  of $R_{p_i}(\torb)$
for $i=1, 2$. 

\begin{figure}
\centerline{\includegraphics[height=6cm]{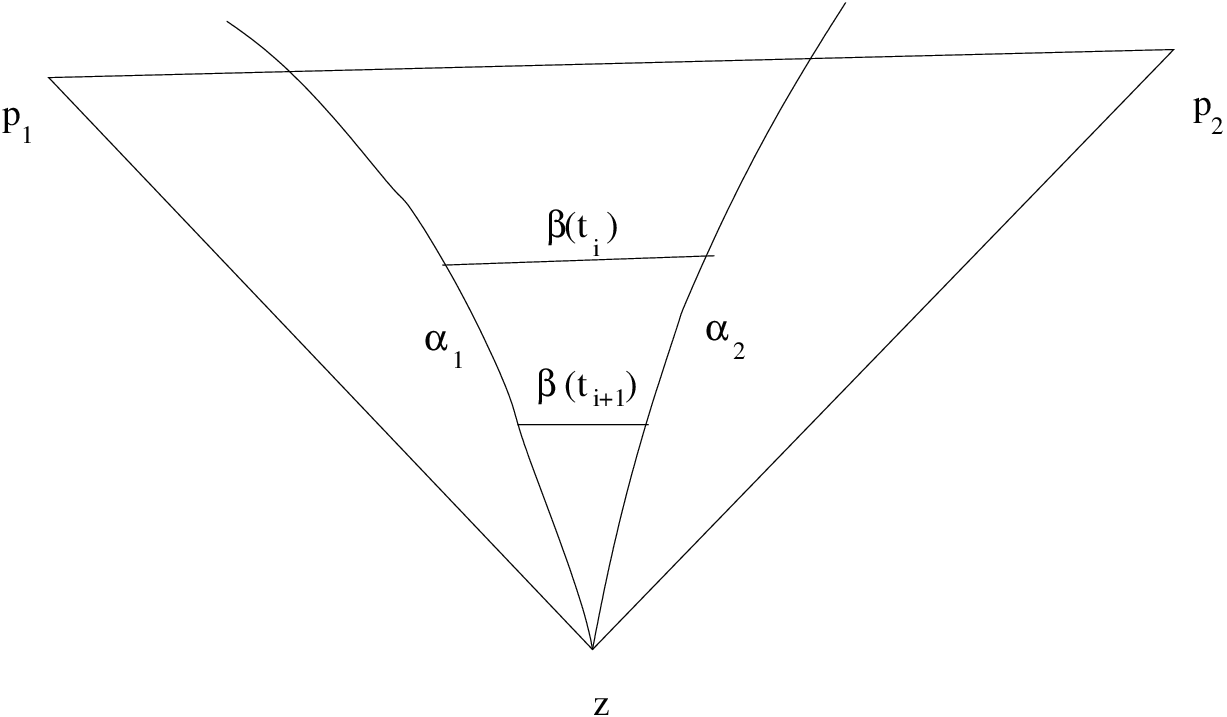}}
\caption{The diagram of the quadrilateral bounded by $\beta(t_i), \beta(t_{i+1}), \alpha_1, \alpha_2$.}
\label{fig:bounded}
\end{figure}

We now show that an infinite-order element of $\pi_1(\tilde E_1)$ is the same as one in $\pi_1(\tilde E_2)$:
By convexity, either $\alpha_2$ goes into $I_1$ and not leave again or $\alpha_2$ is disjoint from $l_1$.
Suppose that $\alpha_2$ goes into $I_1$ and not leave it again. 
Since $I_2/\pi_1(\tilde E_2)$ is compact, there is a sequence $t_i$ so that 
the image of $\alpha_2(t_i)$ converges to a point of $I_1/\pi_1(\tilde E_1)$. 
Hence, by taking a short path between $\alpha_2(t_i)$s, 
there exists an essential closed curve $c_2$ in $I_2/\pi_1(\tilde E_2)$ homotopic to 
an element of $\pi_1(\tilde E_1)$. In fact $c_2$ is in a lens-cone end neighborhood of the end corresponding to $\tilde E_1$. 
This contradicts (NA). 
(The element is of infinite order since we can take a finite cover of $\orb$ so that $\pi_1(\orb)$ is torsion-free 
by Selbert's lemma.)

Suppose now that $\alpha_2$ is disjoint from $l_1$. Then since $\alpha_1$ and $\alpha_2$ have
the same end point $z$ and by the convexity of $\alpha_2$, 
we parameterize $\alpha_i$ so that $\alpha_1(t)$ and $\alpha_2(t)$ is on a line segment in the triangle 
with end points in $\ovl{zp_1}$ and $\ovl{zp_2}$. 
We obtain $d_{\orb}(\alpha_2(t), \alpha_1(t)) \leq C$ for a uniform constant $C$
since one can project to the space of lines through $z$, a one dimensional projective space 
where the end points are fixed and the the image of $\beta(t)$ are so that 
the image of $\beta(t')$ is contained in that of $\beta(t)$ if $t < t'$. 
And the Hilbert-metric length of the segment $\beta(t) := \ovl{\alpha_2(t) \alpha_1(t))}$ is bounded above by the uniform constant.

We have a sequence $t_i \ra \infty$ so that 
\[p_{\orb} \circ \alpha_2(t_i) \ra x, d_{\orb}(p_{\orb}\circ \alpha_2(t_i), p_{\orb}\circ \alpha_2(t_{i+1})) \ra 0, x \in \orb. \]
So we obtain a closed curve $c_{2, i}$ in $\orb$
obtained by taking a short path between the two points. By taking a subsequence, 
the image of $\beta(t_i)$ in $\orb$ geometrically converges to a segment of Hilbert-length $\leq C$.
As $i\ra \infty$, we have $d_{\orb}(p_{\orb} \circ \alpha_1(t_i), p_{\orb} \circ \alpha_1(t_{i+1})) \ra 0$ by extracting a subsequence.  
There exists a closed curve $c_{1, i}$ in $\orb$ again by taking a short path. 
We see that $c_{1, i}$ and $c_{2, i}$ are homotopic in $\orb$ since we can use the image of the disk 
in the quadrilateral bounded by
$\ovl{\alpha_2(t_i) \alpha_2(t_{i+1})}, \ovl{\alpha_1(t_i) \alpha_1(t_{i+1})}, \beta(t_i), \beta(t_{i+1})$ 
and the connecting thin strips between the images of $\beta_{t_i}$ and $\beta_{t_{i+1}}$ in $\orb$. 
This again contradicts (NA). 



Now, consider when $U_1$ is a one-sided lens-neighborhood of a totally geodesic p-end and 
let $U_2$ be a concave p-end neighborhood of a radial p-end of $\torb$.
Let $z$ be the intersection point in $\clo(U_1) \cap \clo(U_2)$. 
We can use the same reasoning as above by choosing any $p_1$ in $S_{\tilde E_1}$ 
so that $\ovl{p_1z}$ passes the interior of $\tilde E_1$. Let $p_2$ be the p-end vertex of $U_2$. 
Now we obtain the triangle with vertices $p_1, p_2$, and $z$ as above. Then the arguments are analogous
and obtain an infinite order elements in $\pi_1(\tilde E_1) \cap \pi_1(\tilde E_2)$. 

Finally, consider when $U_1$ and $U_2$ are one-sided  lens-neighborhoods of totally geodesic p-ends respectively.  
Using the intersection point $z$ of $\clo(U_1) \cap \clo(U_2) \cap \torb$
and we choose $p_i$ in $\Bd \tilde E_i$ so that $\ovl{zp_i}$ passes the interior of $S_{\tilde E_i}$ for $i=1, 2$. 
Again, we obtain a triangle with vertex $p_1, p_2, $ and $z$, and find a contradiction as above. 

\end{proof}

We fully extend the above result.

\begin{corollary}\label{cor:enddisj} 
Let $\orb$ be a properly convex real projective manifold with admissible ends 
and satisfies {\rm (IE)} and {\rm (NA)}.
Assume that the holonomy group of $\pi_1(\orb)$ is strongly irreducible. 
Let $\mathcal{U}$ be the collection of components of the inverse image in $\torb$ of 
the union of disjoint collection of p-end neighborhoods of $\orb$ for all ends. 
Then for every $U_1, U_2 \in \mathcal{U}$, 
$\clo(U_1) \cap \clo(U_2) = \emp$ or $U_1 = U_2$. 
\end{corollary} 
\begin{proof} 
We now consider horospherical p-ends. Since $\clo(U) \cap \Bd \torb$ is a unique point, 
(iii) of Proposition \ref{prop:affinehoro} implies the result.
\end{proof}


\section[The strong irreducibility]{The strong irreducibility and stability of the holonomy group of properly convex strongly tame
orbifolds.} 


First, we modify Theorem 6.9 of \cite{endclass} by replacing some conditions. 
\begin{lemma}\label{lem:Th6-9}
Let $\orb$ be a strongly tame 
properly convex real projective manifold with generalized admissible ends 
and satisfies {\rm (IE)} and {\rm (NA)}.
Let $\tilde E$ be a reducible p-end of $\torb$ of generalized lens-type.
Then there exists a totally geodesic hyperspace $P$ where $h(\pi_1(\tilde E))$ acts on
and $S_{\tilde E}:=P \cap \clo(\torb)$ is a properly convex domain and $S_{\tilde E}^o  \subset \torb$, 
and $S_{\tilde E}/\pi_1(\tilde E)$ is a compact orbifold. 
Also, each element of $g \in \pi_1(\tilde E)$ acts as nonidentity on a subspace properly containing $v$. 
\end{lemma}
\begin{proof}
The proof of Theorem 6.9 of \cite{endclass} shows that $\torb$ is either a join or the conclusion of Theorem 6.9 of \cite{endclass}  holds
and $\pi_1(\tilde E)$ acts on a totally geodesic convex compact domain $D$ of codimension $1$ 
that is the intersection of $P_{\tilde E} \cap \clo(\torb)$ for a $\pi_1(\tilde E)$-invariant subspace $P_{\tilde E}$. 
In the former case, we can show that $\clo(\torb)$ is the join $v_{\tilde E} \ast D$ 
for a compact convex domain $D \subset \Bd \clo(\torb)$ of codimension $1$. 

By (IE), there exists a deck transformation $h$ so that $v_2:= h(v_{\tilde E}) \ne v_{\tilde E}$. 
By geometry of the join, $h(v_{\tilde E})$ is in $D$. This implies that there exists 
another totally geodesic domain $D_1 \subset D$ of lower-codimension so that $\clo(\torb) = v_{\tilde E} \ast v_2 \ast D_2$.
Then by induction, we see that such step must terminate in finite steps. 
This contradicts (IE). 

Let $S_{\tilde E} = D^o$. Then $D^o \subset \torb$. 
The last part follows again from the proof of Theorem 6.9 (ii). 
\end{proof}

Because of the following, we no longer need the assumptions of strong irreducibility of the holonomy 
group of $\pi_1(\orb)$ in this article. 

\begin{theorem}\label{thm:sSPC} 
Let $\orb$ be a noncompact strongly tame properly convex real projective manifold with horospherical, generalized admissible ends 
and satisfies {\rm (IE)} and {\rm (NA)}. Assume $\partial \orb =\emp$. 
Then any finite-index subgroup of the holonomy group is strongly  irreducible and is not 
contained in a proper parabolic subgroup of $\PGL(n+1, \bR)$  {\rm (}resp.  $\SLpm${\rm ).}
\end{theorem} 
\begin{proof} 
We need to prove for $\PGL(n+1, \bR)$ only by Theorem \ref{thm:lifting}.
Let $h:\pi_1(\orb) \ra \PGL(n+1, \bR)$ be the holonomy homomorphism. 
Suppose that $h(\pi_1(\orb))$ is virtually reducible. Then we can choose a finite cover 
$\orb_1$ so that $h(\pi_1(\orb_1))$ is reducible. 

We denote $\orb_1$ by $\orb$. 
Let $S$ denote a proper subspace where $\pi_1(\orb)$ acts on. 
Suppose that $S$ meets $\torb$. 
Then $\pi_1(\tilde E)$ acts on a properly convex open domain $S\cap \torb$ for each p-end 
$\tilde E$. 
Thus, $(S\cap \torb)/\pi_1(\tilde E)$ is a compact orbifold homotopy equivalent to one of the end orbifold. 
However, $S \cap \torb$ is $\pi_1(\tilde E)$-invariant 
for each end neighborhood $\tilde E$. This contradicts (IE). 
Therefore, $S \cap \clo(\torb) \subset \Bd \torb$. 

We show that $\clo(\torb) \cap S \ne \emp$. 
Let $\tilde E$ be a p-end. If $\tilde E$ is horospherical, $\pi(\tilde E)$ acts on a great sphere $\hat S$ tangent 
to an end vertex. $S$ has to be a subspace in $\hat S$ containing the end vertex by Proposition 5.1(iii) of \cite{endclass}. 

Suppose that $\tilde E$ is a radial end of generalized lens-type. Then either $S$ passes the end vertex $v_{\tilde E}$ or 
there exists a subspace $S'$ containing $S$ and $v_{\tilde E}$ where $\pi_1(\tilde E)$-invariant. 
Hence $S'$ correspond to a proper-invariant subspace in $\SI^{n-1}_{v_{\tilde E}}$
or $S$ is a hyperspace of dimension $n-1$ disjoint from $v_{\tilde E}$. 
By considering a hyperbolic factor, 
it follows that there exists some attracting fixed points 
in the limit sets of $\pi_1(\tilde E)$. Considering when $\pi_1(\tilde E)$ has nontrivial diagonalizable 
elements, we obtain $S \cap \clo(L) \ne \emp$ for a lens $L$, $L \subset \torb$. 
The existence of the attracting fixed points of some elements 
of $\pi_1(\tilde E)$ implies that $S \cap \clo(L) \ne \emp$ for a lens $L$, $L \subset \torb$. 
(This follows from 
Theorem 7.9 of \cite{endclass} and Proposition 1.1 of \cite{Ben5} and the uniform middle eigenvalue condition.)

If $\tilde E$ is totally geodesic of lens-type, we can apply a similar argument using the attracting fixed points. 
Therefore, $S \cap \clo(\torb)$ is a subset $K$ of $\Bd \torb$ of $\dim \geq 0$. 
In fact, we showed that the closure of each p-end neighborhood meets $K$.



By taking dual orbifold if necessary, 
we assume without loss of generality that there exists a radial end of generalized lens-type with a radial p-end vertex $v$. 

(I) Suppose that a p-end vertex $v$ is in $K$. 
$v$ cannot be a horospherical p-end vertex by Proposition \ref{prop:affinehoro}(iii). 

Now $v$ is a p-end vertex of a generalized lens-shaped end $\tilde E$.
Let $V_v$ be the concave p-end neighborhood of $\torb$ of $v$ from a system of mutually disjoint end neighborhood of $\orb$.
Then $h(\pi_1(\tilde E)) \cap h(\pi_1(\orb_1))$ is reducible.


We obtain a totally geodesic hypersurface $S_{\tilde E} \subset \torb$ by Lemma \ref{lem:Th6-9}.
Let $P_{\tilde E}$ denote the spanning subspace of $S_{\tilde E}$. 
(We are only using the part of the proof where the strong irreducibility of $h(\orb)$ is not used yet there.)
Choose $x \in \torb$ in a direction of $S_{\tilde E}$ from $v$. 
Since $S_{\tilde E}/\pi_1(\tilde E)$ is compact, we choose a sequence $g_i \in \pi_1(\tilde E)$ of central elements so that 
$g_i(x) $ converges to a point of $\clo(V_v)\cap K$ as $i \ra \infty$. 
 We can choose unbounded $g_i$ so that $\{g_i|P_{\tilde E}\cap K\} \ra \Idd_{P_{\tilde E}\cap K}$ on $P \cap K$
by Lemma \ref{lem:Diag} applied to $\SI^{n-1}_v$. 
Let $\lambda'_i$ denote the $(n_K:= \dim P_{\tilde E} \cap K)$-th root of the norm of the determinant of $g_i|P_{\tilde E} \cap K$, 
and let $\lambda_{v, i}$ denote the eigenvalue of $g_i$ associated with $v$. 
Since there exists a generalized $g_i$-invariant lens whose closure is disjoint from $v$, we can deduce that  
$\left|\frac{\lambda'_i}{\lambda_{v, i}}\right| \ra \infty $ or bounded below by 
a positive number $C> 0$ as $i \ra \infty$. (See Chapter 7 of \cite{endclass}.)

(i) Suppose first $\left|\frac{\lambda'_i}{\lambda_{v, i}}\right| \ra \infty $.
Hence, $\{g_i(K)\}$ for $i \ra \infty$
geometrically converges  for some choice of signs above $g_i$ to a cone
$v \ast (P_{\tilde E} \cap K)$ or alternatively
 $v\ast (P_{\tilde E}\cap K) \cup (P_{\tilde E}\cap K) \ast v_-$,
 a nonproperly convex set in $\clo(\torb)$ as $g_i| P_{\tilde E} \cap K \ra \Idd_{P_{\tilde E}\cap K}$ by 
applying Lemma 6.11 of \cite{endclass} to the closures of the both components of $K - K\cap P_{\tilde E}$. 
This is a contradiction. 
Only the first case, $K = v \ast (P_{\tilde E}\cap K)$ holds. 

By apply $g_i^n$ to $V_v$, we obtain a subset $U_v$ equal to $v \ast (P_{\tilde E} \cap K)^o$ with nonempty interior. 
$U_v$ is in the relative interior of $\clo(V_v)$ in $\clo(\torb)$ considering segments  in $V_v$ in directions of $S_{\tilde E}$
and the action of $g_i^n$. 

Since $\pi_1(\tilde E)$ is of infinite index in $\pi_1(\orb)$, there is an element $h \not\in \pi_1(\tilde E)$ 
with $h(v)$, $h(v) \ne v$. Since $h(v) \in K$ and $h(v)$ has 
an end neighborhood $h(U_v)$ in $K$ that is also a neighborhood of $v$ in $K$ with nonempty interior,  
Since $v \in U_v$, we obtain $h(U_v) \cap U_v \ne \emp$.
Thus, $h(V_v) \cap V_v \ne \emp$ holds and hence $h(V_v) = V_v$. This is a contradiction to $h \not\in \pi_1(\tilde E)$. 

(ii) Suppose that \[C< \left|\frac{\lambda'_i}{\lambda_{v, i}}\right| < C' \hbox{ for some constants } C, C'> 0.\] 
Then there exists then a sequence of mutually distinct elements $g'_i \in \pi_1(\tilde E)$ so that $\{g'_i| K\} \ra \Idd_K$.  

By apply $\{g_i\}$ to $V_v$, we obtain subsets $U_v \subset v \ast (P_{\tilde E} \cap K)^o$ with nonempty interior 
inside the relative interior of $\clo(V)$ in $\clo(\torb)$ as above. 


Since $\pi_1(\tilde E)$ is of infinite index in $\pi_1(\orb)$, it follows that there is an element $h \in \pi_1(\tilde E)$ 
with $h(v)$, $h(v) \ne v$. Since $h(v) \in K$ and $h(v)$ has 
an end neighborhood $h(U_v)$. 

For a sufficiently large $i$, $g_i(h(U_v^o)) \cap h(U_v ^o)\ne \emp$. 
Since a point of $U_v^o$ has a neighborhood $V_v$, 
this implies $g_i(h(V_v)) \cap h(V_v) \ne \emp$, and
we obtain $g_i(h(V_v))= h(V_v)$ and $g_i \in h^{-1} \pi_1(\tilde E) h$. 
Hence, this implies that there exists an essential annulus, a contradiction to (NA).


(II) Suppose that for every radial p-end $\tilde E$, the p-end vertex $v$ is not in $K$. 
Now $v$ cannot be horospherical since the horospherical action fixing $v$ does not preserve a properly convex set except $v$ or horoballs of dimension $n$. 
Hence, there exists a radial p-end $\tilde E$ of generalized lens-type as above. Let $v$ be its vertex. 

Suppose that $K$ is of dimension $n-1$. Then $\torb$ is a strict join $v \ast K$, as we can deduce by the $\pi_1(\tilde E)$-invariance of $K$
and the fact $K \subset \Bd \torb$. By (IE), $h(v) \ne v$ for $h \in \pi_1(\orb)$. We obtain $h(v) \in K$ by the proper convexity 
of $S_{\tilde E}$. This contradicts the premise of (II). 

Suppose that $\dim K \leq n-2$. Then each radial p-end $\tilde E$ is reducible. 

We obtain a totally geodesic hypersurface $S_{\tilde E_1} \subset \torb$ by Lemma \ref{lem:Th6-9}. 
Let $P_{\tilde E_1}$ denote the spanning subspace of $S_{\tilde E_1}$. 
Since $K$ is disjoint from $v_{\tilde E_1}$, let $P_K$ be the subspace containing $K$ and $v_{\tilde E_1}$. 
Choose $x \in S_{\tilde E_1}$.
Since $S_{\tilde E_1}/\pi_1(\tilde E_1)$ is compact, we choose a sequence $g_i \in \pi_1(\tilde E_1)$ of central elements so that 
$g_i(x) $ converges to a point of $C_K \cap P_{\tilde E}$ as $i \ra \infty$. 
 We can choose unbounded $g_i$ so that $\{g_i|C_K\cap P_{\tilde E}\} \ra \Idd_{C_K\cap P_{\tilde E}}$
by Lemma \ref{lem:Diag} applied to $\SI^{n-1}_{v_{\tilde E_1}}$. Let $k$ be the dimension of $C_K \cap P_{\tilde E}$. 
The $(k+1)$-th root $\hat \lambda_i$ of the norm of the determinant of the submatrix of the unit-determinant matrix $g_i$ corresponding to $C_K \cap P_{\tilde E}$. 
Then $\hat \lambda_i > \lambda(g_i)(v_{\tilde E_1})$ for the eigenvalue $\lambda(g_i)(v_{\tilde E_1})$ of $g_i$ at $v_{\tilde E_1}$  by Theorem 7.9 of \cite{endclass}
since there exists a lens whose closure is disjoint from $\{v_{\tilde E_1}\}$.
Since $K$ is $h(g_i)$-invariant, we obtain $K \subset P_{\tilde E}$.

Since $S_{\tilde E_1} \subset \torb$ and $S_{\tilde E_1}/\pi_1(\tilde E_1)$ is a compact surface immersed in $\orb$, 
we can assume without loss of generality that $S_{\tilde E_1}$ covers a totally geodesic closed orbifold $S_1$ by taking a finite cover of $\orb$.
Moreover, $K \subset \clo(S_{\tilde E_1})$ by the existence of the sequence $g_i$ acting on $S_{\tilde E_1}$ as above. 

By condition (IE), choose $h \in \pi_1(\orb)$ so that 
$v_{\tilde E_1} \ne v_{\tilde E_2} = h(v_{\tilde E_1})$. 
Also, by the same argument as above, we have $\clo(h(S_{\tilde E_1}))\supset K$. 

Then as in the proof of Corollary \ref{cor:disjclosure}, we obtain that 
$d_{\torb}(S_{\tilde E_1}, h(S_{\tilde E_1})) \leq C$ for a constant $C>0$, and 
there is an infinite order element of 
$\pi_1(\tilde E_1) \cap h \pi_1(\tilde E_1) h^{-1}$. This is a contradiction to (NA).

Thus, we have shown that $\pi_1(\orb)$ is irreducible. Since the argument works for every finite cover 
of $\orb$, $\pi_1(\orb)$ is strongly irreducible.



Since a parabolic group acts on a nontrivial flag in $\bR^{n+1}$ by definition, a parabolic group is always reducible. 
This shows that $\pi_1(\orb)$ is not parabolic. 

\end{proof} 

For a matrix $g$ in $\GL(n+1, \bR)$, we denote the induced projective automorphism by $S(g):\SI^n \ra \SI^n$.
We state the elementary lemma to clarify. 
\begin{lemma}\label{lem:Diag} 
Let $\bZ^l$ be in the center of the holonomy group of 
a properly convex closed real projective $(n-1)$-orbifold with admissible fundamental group. 
Let $r: \bZ^l \ra D$ be the inclusion homomorphism to a group of diagonal matrices
on $\bR^n = \bigoplus_{i=1}^m V_i$ where $S(r(g))| S(V_i)= \Idd_{S(V_i)}$ for all $g \in \bZ^l$. 
For any given $V':= V_{i_1} \oplus \cdots \oplus V_{i_j}$ for a proper set $\{i_1, \dots, i_j\}$, 
we can find a sequence of elements $g_i \in \bZ^l$ so that 
$\lambda_i/\lambda'_i \ra \infty$ for the largest norm $\lambda_i$ of the eigenvalue of $r(g_i)$ on $V'$ 
and $\lambda'_i$ on the complement of $V'$ and $S(r(g_i))| S(V') \ra \Idd_{S(V')}$. 
\end{lemma}
\begin{proof} 
This follows by Theorem 1.1 of  \cite{Ben3}. 
\end{proof}







\chapter{The strict SPC-structures and relative hyperbolicity} 
\label{sec:SPC}

In this section, we will be working with $\bR P^n$ exclusively. 

From now on, we will assume that properly convex strongly tame real projective orbifolds with generalized admissible 
ends have strongly irreducible holonomy groups by Theorem \ref{thm:sSPC}.

\section{The Hilbert metric on $\mathcal{O}$.} 

A Hilbert metric on an orbifold with an SPC-structure is defined as a distance metric 
given by cross ratios. (We do not assume strictness here.)




Given an open properly convex domain $\Omega$, 
we note that given any two points $x, y$ in $\Omega$, there is a geodesic arc \index{Hilbert metric}
$\ovl{xy}$ with endpoints $x, y$ so that its interior is in $\Omega$. 

\begin{proposition}\label{prop:shortg} 
Let $\Omega$ be a properly convex open domain. 
Let $P$ be a subspace meeting $\Omega$,
and let $x$ be a point of $\Omega  - P$\,{\rm :}
\begin{itemize}
\item[(i)] There exists a shortest path $m$ from $x$ to $P \cap \Omega$ that is a line segment. 
\item[(ii)] The set of shortest paths have end points in a connected compact subset $K$ of $P\cap \Omega$. 
\item[(iii)] For any line $m'$ containing $m$ and $y \in m'$, the segment in $m'$ from $y$ to the point of $P \cap \Omega$ is 
one of the shortest segments.
\item[(iv)] When $P$ is a complete geodesic in $\Omega$, outside the compact set $K$, 
the distance function from $P-K$ to $x$ is strictly increasing or strictly decreasing. 
\end{itemize}
\end{proposition}
\begin{proof} 
The distance function $f: P \cap \Omega \ra \bR$ defined by $f(y) = d(x,y)$ is 
a proper function where $f(x) \ra \infty$ as $x \ra z$ for any boundary point $z$ of $P \cap \Omega$ in $P$. 
Hence, there exists a shortest segment with an endpoint $x_0$ in $P \cap \Omega$. 
(i) follows. 

Let $\gamma$ be any geodesic in $P \cap \Omega$ passing $x_0$. 
We need to consider the $2$-dimensional subspace $Q$ containing $\gamma$ and $x$.
The set of end points of shortest segments of $\Omega$ in $Q$ is a connected compact subset containing $x_0$ by 
Proposition 1.4. of \cite{marg}.
Hence, by considering all geodesics in $P \cap \Omega$ passing $x_0$, (ii) follows. 

(iii) Suppose that there exists $y \in m'$, so that the the shorted geodesic $m''$ to $P \cap \Omega$ is not in $m'$. 
Consider the $2$-dimensional subspace $Q$ containing $m'$ and $m''$. Then this is a contradiction by
Corollary 1.5 of \cite{marg}. 

(iv) Again follows by considering a $2$-dimensional subspace containing $P$ and $m$. 


\end{proof}

An endpoint in $P$ of a shortest segment is called a {\em foot of the perpendicular} from $x$ to $\gamma$. \index{foot} 


\section{Strict SPC-structures and the group actions}




An {\em elliptic element} of $g$ is a nonidentity element of $\pi_1(\mathcal{O})$ \index{elliptic element} \index{elliptic element} 
fixing an interior point of $\tilde{\mathcal{O}}$. 
Since $\pi_1(\mathcal{O})$ acts discretely on the space $\torb$ with a metric, 
an elliptic element has to be of finite order. 


\begin{lemma} \label{lem:hor} 
Let $\orb$ be a strongly tame strict SPC-orbifold with admissible ends. 
Let $\tilde E$ be a p-end of $\torb$.
\begin{itemize}
\item[(i)] Suppose that $\tilde E$ is a horospherical p-end.
Let $B$ be a horoball at a p-end vertex $p$ corresponding to $\tilde E$. 
There exists a homeomorphism $\Phi_{\tilde E}: \Bd B - \{p\} \ra \Bd \torb -\{p\}$ given 
by sending a point $x$ to the end point of maximal convex segment containing $x$ and $p$
in $\clo(\torb)$. 
\item[(ii)] Suppose that $\tilde E$ is a radial p-end of lens-type. 
Let $U$ be a lens-shaped radial p-end neighborhood with the p-end vertex $p$ corresponding to $\tilde E$. 
There exists a homeomorphism $\Phi_{\tilde E}: \Bd U \cap \torb  \ra \Bd \torb - \clo(U)$ given 
by sending a point $x$ to the other end point of the maximal convex segment containing $x$ and $p$
in $\clo(\torb)$.
\end{itemize}
Moreover, each of the maps denoted by $\Phi_{\tilde E}$ commutes with elements of $h(\pi_1(\tilde E))$. 
\end{lemma} 
\begin{proof} 
(i) By Proposition 5.1 (i) of \cite{endclass}, $\Phi_B$ is well-defined.  
The same proposition implies that  $\Bd B $ is smooth at $p$ and $\Bd \torb$ has a unique supporting hyperplane. 
Therefore the map is onto. 

(ii) The second item follows from Theorems \ref{thm:lensclass} and \ref{thm:redtot}
since they imply that the segments in $S(p)$ are maximal ones in $\Bd \torb$ from $p$.

\end{proof}

We now study the fixed points in $\clo(\torb)$ of elements of $\pi_1(\orb)$. 
A {\em great segment} is a geodesic arc in $\SI^n$ with antipodal p-end vertices. \index{great segment} 
It is not properly convex.

\begin{lemma}\label{lem:fix} 
Let $\orb$ be a strict SPC-orbifold with admissible ends. 
Let $g$ be an infinite order element of a p-end fundamental group. 
Then every fixed point of $g$ in $\clo(\torb)$ is in the closure of a p-end-neighborhood.
\end{lemma}
\begin{proof}
Suppose that the radial p-end $\tilde E$ is lens-shaped. 
The direction of each segment in the interior of the lens cone 
with an endpoint $v_{\tilde E}$ is fixed by only the identity element of 
$\pi_1(\tilde E)$ since $\pi_1(\tilde E)$ acts properly discontinuously on $S_{\tilde E}$. 
Thus, the fixed points are on the rays in the direction of the boundary of $\tilde E$. 
They are in one of $S(v_{\tilde E})$ for the p-end vertex $v_{\tilde E}$ corresponding to $\tilde E$.
Hence, the fixed points of 
the holonomy homomorphism of $\pi_1(\tilde E)$ is in the closure of the lens-cone
with end vertex $v_{\tilde E}$ and nowhere else
in $\clo(\tilde{\mathcal O})$. 

If $\tilde E$ is a horospherical, then the p-end vertex $v_{\tilde E}$ is not contained in any segment $s$ in $\Bd \torb$
by Proposition \ref{prop:affinehoro}.  
Hence $v_{\tilde E}$ is the only point $S \cap \Bd \torb$ of any invariant subset $S$ of $\pi_1(\tilde E)$
by Lemma \ref{lem:hor}. 
Thus, the only fixed point of $\pi_1(E)$ in $\Bd \torb$ is $v_{\tilde E}$.

Suppose that $E$ is a totally geodesic p-end of lens-type, 
and a fixed point $s \in \Bd \torb$. 
Since $\tilde E$ is a properly convex real projective orbifold that is closed, 
we obtain an attracting fixed point 
$a$ and a repelling fixed point $r$ of $g|\clo(S_{\tilde E})$ by \cite{Ben2}. 
Then $a$ and $r$ are attracting and repelling fixed points of $g|\clo(\torb)$ by 
the existence of the lens neighborhood and Theorem 7.9 in \cite{endclass}. 

We claim that $\ovl{as}$ and $\ovl{rs}$ are in $\Bd \torb$. 
Let $P$ denote the two-dimensional subspace containing $r, s, a$. 
Suppose that one of the segment intersects $\torb$ in $x$. 
Then we take a open convex ball-neighborhood of $x$ in $P \cap \torb$.
Suppose that $x \in \ovl{rs}^o$.
Then using the sequence $g^n(B)$, we obtain a great segment in $\clo(\torb)$ by choosing $n \ra \infty$
by Theorem 7.11 of \cite{endclass}. 
This is a contradiction. If $x \in \ovl{as}^o$, we can use $g^{-n}$(B) as $n \ra \infty$, again giving us
a contradiction.  

Since $\tilde E$ has a lens type one-sided neighborhood $U$, $\clo(U) \cap \Bd \torb$ is in $\clo(S_{\tilde E})$ by Proposition \ref{prop:preI}. 
By the strict convexity of $\torb$, we see that $a, r,$ and $s$ have to be in $\clo(S_{\tilde E})$.
\end{proof}

See Crampon and Marquis \cite{CM} and Cooper-Long-Tillman \cite{CLT3} for similar work to the following.  

\begin{proposition} \label{prop:loxopara}
Suppose that $\mathcal O$ is a noncompact strongly tame strict SPC-orbifold with  admissible ends. 
Then each nonidentity and infinite-order element $g$ of $\pi_1(\mathcal{O})$ has three mutually exclusive possibilities: 
\begin{itemize}
\item $g|\clo(\torb)$ has exactly two fixed points in $\Bd \tilde{\mathcal O}$ none of which is in the closures of the p-end neighborhoods, 
\item $g$ is in a p-end fundamental group, and $g|\clo(\torb)$ 
\begin{itemize} 
\item has all fixed points in $\Bd \tilde{O}$ in the closure of a concave p-end neighborhood of a lens-shaped radial p-end, 
\item has all fixed points  in $\Bd \tilde{O}$ in $\clo(S_{\tilde E})$ for the ideal boundary component $S_{\tilde E}$ 
of a totally geodesic p-end $\tilde E$ of lens-type, or
\item has a unique fixed point  in $\Bd \tilde{O}$ at the horospherical p-end vertex. 
\end{itemize} 
\end{itemize}
\end{proposition}
\begin{proof}
Let $g \in \pi_1(\mathcal{O})$. 
Suppose that  $g$ has a fixed point at a horospherical p-end vertex $v$ for a p-end $\tilde E$. 
We can choose the horoball $U$ at $v$ that 
maps into an end-neighborhood of $\orb$. 
Since $g(U) \cap U \ne \emp$ by the geometry of a horoball having a smooth boundary at $v$, 
$g$ must act on the horoball since the horoball is either sent to a disjoint one or 
sent to the identical one, and hence $g$ is in the p-end fundamental group: 
A horoball $U$ has a unique hyperspace that also supports $\tilde{\mathcal{O}}$. Thus, $g(U) \cap U \ne \emp$
for any horoball p-end neighborhood $U$. Thus, $g(U)= U$ for a horoball p-end neighborhood.
Since $\Bd U - \partial U$ is a unique point and $\partial U \subset \torb$ where $g$ acts freely, 
the p-end vertex is the unique fixed point of $g$ in $\Bd \torb$ by Lemma \ref{lem:fix}.  

Similarly, suppose that $g \in \pi_1(\orb)$ fixes a point of the closure $U$ of a concave
p-end neighborhood of a p-end vertex $v$ of lens-type. 
$g(\clo(U))$ and $\clo(U)$ meet at a point. 
By Corollary \ref{cor:disjclosure}, $g(\clo(U))$ and $\clo(U)$ share the p-end vertex 
and hence $g(U) = U$ as $g$ is a deck transformation. 
Therefore, $g$ is in the p-end fundamental group of the p-end of $v$. 
Lemma \ref{lem:fix} implies the result. 


Suppose that $g \in \pi_1(\orb)$ fixes a point of $\clo(S_{\tilde E})$ for a totally geodesic ideal boundary $S_{\tilde E}$ 
corresponding to a p-end $\tilde E$. 
Again by Corollary \ref{cor:disjclosure} and Lemma \ref{lem:fix} imply the result for this case. 




Suppose that an element $g$ of $\pi_1(\mathcal{O})$ is not homotopic to 
any element of a p-end fundamental subgroup.
Then by above, $g$ does not fix any of the above types of points. 
We show that $g$ has exactly two fixed points in $\Bd \torb$. 

Suppose that $g  \in \pi_1(\orb)$ fixes a unique point $x$ in the closure of $\Bd \tilde{\mathcal{O}}$ 
and $x$ is not in the closure of p-end neighborhoods as above.
Then $x$ is a $C^1$-point by the strict convexity. 
Suppose that we have two eigenvalues with largest absolute values 
$>1$ and the smallest one with $< 1$. 
If the eigenvalue is not positive real, $\clo(\tilde{\mathcal{O}})$ contains a nonproperly convex subset
as we can see by an action of $g^n$ on a generic point of $\torb$. 
Thus, we obtain attracting and repelling subspaces easily with these and there are at least 
two fixed point. This is a contradiction. 
Therefore, $g$ has only eigenvalues of unit norms. 

Take a line $l(t)$ converging to $x$ as $t \ra 0$ where $l(t)$ is a projective function of $t \in \bR$. 
Then a two-dimensional subset $P$ contains $l(t)$ and $g(l(t))$ for all $t$. 


We may assume that $\torb$ is a precompact subset of an affine subspace $H$ and $x$ is the origin for some affine coordinate system of $H$. 
Find a sequence of dilatation $s_r$ fixing $x$ and acting on $H$, 
i.e., sending vectors $v$ to $r v$ for $r > 0$ in the vector space $H$ with the origin $O$ identified as $x$. 
We now use a projective coordinate function $x_1$ on $H$ so that $x_1(l(t))=t$. 
$g$ acts on a hyperspace $L$ supporting  $\clo(\tilde{\mathcal{O}})$ at $x$. 

We choose an affine coordinate 
on $H$ where $x$ is the origin and $L$ has value $0$ for a coordinate function.
Then $s_r$ acts on $P$,  and two points $s_r(l(1/r))$ and $s_r g(l(1/r))= s_r g s_r^{-1} (s_r(l(1/r)))$ 
is connected by line $l_r$. 
Since $s_r g s_r^{-1}$ converges to a linear map in the compact open topology of $H$, the differential of $g$, 
 with eigenvalues of norm $1$ only,  the slope of $l_r$ converges to $0$ as
 $r \ra \infty$.  (One can use an easy estimation.)
 
 Since $x$ is a $C^1$-point, $s_r(\torb)$ converges to the half space $H$ as $r \ra \infty$. 
Since $P \cap \clo(\tilde{\mathcal{O}})$ is a convex subset, and $s_r(P \cap \Bd(\tilde{\mathcal{O}}))$ 
geometrically converges to $H$, $s_r l(1/r)$ and $s_r g l(1/r))$ are converging to an interior of $H$, 
it follows that the Hilbert distance
\begin{equation}\label{eqn:sr} 
d_{\torb}(l(1/r), g(l(1/r))) = d_{s_r(\torb)}(s_r l(1/r), s_r g l(1/r)) \ra 0 \hbox{ as } r \ra \infty
\end{equation}
by equation \eqref{eqn:HiHa}. 

Then drawing a segment $s(t)$ between $l(t)$ and $g(l(t))$, we obtain a closed circle in $\orb$ in 
the homotopy class corresponding to $g$. That is, $s(t_i)$ maps closed curves $c(t_i)$.
Since the Hilbert length of $c(t_i)$  as $i \ra \infty$ goes to zero, and there is a uniform lower bound on 
the non-nullhomotopic closed curve lengths in the complement of the union of end neighborhoods, 
$c(t_i)$ has to be inside an end neighborhood of $\orb$ for sufficiently large $i$. 
In this case $g$ is in a p-end fundamental group, $x$ is in the closure of a p-end neighborhood.
This is a contradiction. 

We conclude that $g\in \pi_1(\orb)$ fixes at least two points $a$ and $r$ in $\Bd \tilde{\mathcal{O}}$. 
We choose the two fixed points to have the positive real eigenvalues that are largest and smallest absolute values of 
the eigenvalues of $g$. (As above, the largest and smallest norm eigenvalues must be positive for $\torb$ to be properly convex.) 

No fixed point of $g$ in $\Bd \torb$ is in the closures of p-end neighborhoods. 
By strict convexity, the interior of $\torb$ contains an open line segment $l$ connecting $a$ and $r$. 


Let $S$ denote the subspace spanned by $a, r, t$. 
Suppose that there is a third fixed point $t$ in $\Bd \tilde{\mathcal{O}}$.
It is not in the closures of p-end neighborhoods as we assumed that $g$ is not in the p-end fundamental group. 
Then the line segment connecting it to the $a$ or $r$ must be 
in $\Bd \tilde{\mathcal{O}}$: otherwise, we can form a segment $s$ in $\torb \cap S$ transversal to the segment.
Then $\{g^k(s)\}$ geometrically converges to a segment in $\Bd \torb$ containing $a$ or $r$ as $k \ra \infty$ or $k \ra -\infty$
by the properness of the action. 
Thus, the existence of $t$ contradicts the strict SPC-property. 

Hence, there are exactly two 
fixed points of $g$ in $\Bd \tilde{\mathcal{O}}$ of the  positive real eigenvalues that are largest and smallest absolute values of 
the eigenvalues of $g$.

\end{proof}


\begin{proposition} \label{prop:loxopara2}
Suppose that $\mathcal O$ is a noncompact strongly tame strict SPC-orbifold  with generalized admissible ends 
Let $\tilde E$ be an end. 
Then for a p-end $\tilde E$, $(\Bd \torb - K)/\pi_1(\tilde E)$ is compact where
$K= \bigcup S(\tilde E)$ for radial p-end $\tilde E$ of lens-type,  
$K= \clo(S_{\tilde E})$ for totally geodesic p-end $\tilde E$, or
 $K= \{v_{\tilde E}\}$ for horospherical p-end $\tilde E$.
\end{proposition}
\begin{proof} 
Suppose that $\tilde E$ is radial type of lens or horospherical type.
By Lemma \ref{lem:hor}, the homeomorphism $\Phi_{\tilde E}: S_{\tilde E} \ra \Bd \torb - K$
gives us the result.   

Suppose that $\tilde E$ is a totally geodesic p-end of lens-type. Let $\torb^*$ denote the dual domain.
Then there exists a dual radial p-end $\tilde E^*$ corresponding to $\tilde E$.
Hence, $(\Bd \torb^* - K')/\pi_1(\tilde E^*)$ is compact for $K'$ equal to the closure of p-end neighborhoods of $\tilde E^*$ in the radial case or 
the vertex in the horospherical case.

Recall Proposition \ref{prop:duality}. 
Let $\Bd^{\Ag} \torb$ be the augmented boundary with the fibration $\Pi_{\Ag}$, and 
let $\Bd^{\Ag} \torb^*$ be the augmented boundary with the fibration map $\Pi^*_{\Ag}$.
Let $K'':= \Pi_{\Ag}^{-1}(K)$ and $K''':= \Pi_{\Ag}^{\ast -1}(K')$. 
There is a duality homeomorphism by Proposition \ref{prop:duality}
\[ \mathcal{D}: \Bd^{\Ag} \torb - K''\ra \Bd^{\Ag} \torb^* - K'''.\]

Now $(\Bd^{\Ag} \torb^* - K''')/\pi_1(\tilde E^*)$ is compact since $\Bd \torb^*  - K'$ has a compact fundamental domain 
and the space is the inverse image in $\Bd^{\Ag} \torb^*$. 
By (iv) of Proposition \ref{prop:duality}, $(\Bd^{\Ag} \torb - K'')/\pi_1(\tilde E)$ is compact also. 
Since the image of this set under the map induced by a proper map $\Pi_{\Ag}$ is $(\Bd \torb - K)/\pi_1(\tilde E)$, it is is compact.
\end{proof}

We call the following construction {\em shaving the ends}. \index{end:shaving} 

\begin{proposition}\label{prop:remconch} 
Given a noncompact strongly tame SPC-orbifold $\mathcal{O}$ and its universal cover $\tilde{\mathcal O}$, 
there exists a collection of mutually disjoint open concave p-end neighborhoods for p-ends 
of lens-type.
We remove a finite union of concave end-neighborhoods of some radial ends. 
Then 
\begin{itemize}
\item we obtain a convex domain as the universal 
cover of a strongly tame orbifold ${\mathcal{O}}_1$ with additional strictly convex smooth boundary components
that are closed $(n-1)$-dimensional orbifolds.
\item Furthermore, if $\mathcal{O}$ is strictly SPC with respect 
to all of its ends, and we remove only hyperbolic ends, then ${\mathcal{O}}_1$ is strictly SPC with respect to the remaining ends. 
\end{itemize} 
\end{proposition}
\begin{proof} 
If $\mathcal{O}_1$ is not convex, then there is a triangle $T$ in $\tilde{\mathcal{O}}_1$
with three segments $s_0, s_1, s_2$ so that $T-s_0^o \subset \tilde{\mathcal{O}}_1$ 
but $s_0^o - \tilde{\mathcal{O}}_1 \ne \emp$. (See Theorem A.2 of \cite{psconv} for details.)
Since $\tilde{\mathcal{O}}$ is an open manifold, $s_0^o - \tilde{\mathcal{O}}$ is a closed subset of
$s_0^o$. Then a boundary point of $x \in s_0^o - \tilde{\mathcal{O}}_1$ 
is in the boundary of one of the removed concave-open neighborhoods or is in $\Bd \tilde{\mathcal{O}}$ itself. 
The second possibility implies that 
$\mathcal{O}$ is not convex as $\torb_1 \subset \torb$. These are contradictions. 
The first possibility implies that there exists a segment in the interior of a concave p-end neighborhood $U$ with endpoints in $\Bd U \cap \torb$.
This is geometrically not possible. Also, since $\torb_1 \subset \torb$, we have the convexity. Since $\torb$ is properly convex, 
so is $\torb_1$. 

Now we go to the second part. 
We suppose that $\mathcal{O}$ is strictly SPC. 
Let $\mathcal{H}$ denote the set of p-end vertices with hyperbolic p-end fundamental groups 
that were removed in the equivariant manner. For each $p \in \mathcal{H}$, denote by $U_p$ the concave 
neighborhood removed. 

Any segment in the boundary of the developing image of $\mathcal O$ is a subset of 
the closure of a p-end neighborhood of a p-end vertex. 
For the p-end-vertex $p$ of a p-end $\tilde E$, the domain $R_p(\torb) \subset \SI^{n-1}_p$  is strictly convex
if $\pi_1(\tilde E)$ is hyperbolic. 
Since $\Bd R_p(\torb)$ contains no straight segment by hyperbolicity in \cite{Ben1}, 
only straight segments in $\clo(U) \cap \Bd \torb$ for the concave p-end neighborhood  $U$ of $\tilde E$
are in the segment in $S(p)$. 
Thus, their interiors are disjoint from $\Bd \tilde{\mathcal{O}}_1$, and hence 
$\Bd \torb_1$ contains no geodesic segment in $\bigcup_{p \in {\mathcal{H}}} \clo(U_p) \cap \Bd \torb$

Since we removed concave end neighborhoods of the lens-type ends with the hyperbolic end fundamental groups, any straight segment in 
$\Bd \tilde{\mathcal{O}}_1$ lies in the closure of a p-end neighborhood of a remaining p-end vertex. 

A non-$C^1$-point of $\Bd \torb_1$ is not on the boundary of the concave p-end neighborhood $U$ for a hyperbolic 
p-end $\tilde E$ nor in $\Bd \torb - \clo(U)$.
However,  $\clo(U) \cap \Bd \torb_1$ contains the limit set  $\Lambda = L - \partial L$
for the lens part $L$ in a lens-neighborhood. 
$\torb$ has the same set of supporting hyperplanes as $L$ at points of $\Lambda$
since they are both $\pi_1(\tilde E)$-invariant convex domains by Corollary \ref{cor:lenssupp}.
However, the supporting hyperplanes at $\Lambda$ of $L$ are also supporting ones for $\torb_1$ by Corollary \ref{cor:lenssupp}
since we removed the outside component $U$ of $\torb - L$. Thus, $\torb_1$ is $C^1$ at points of $\Lambda$. 
Since these are true for all removed concave p-end neighborhood $U$, 
$\mathcal{O}_1$ is strictly SPC.     

\end{proof}


\subsection{Bowditch's method}

There are results proved by Cooper, Long, and Tillman \cite{CLT3} and Crampon and Marquis \cite{CM} 
similar to below. However, the ends have to be horospherical in their work. 
We will use Bowditch's result \cite{Bowditch} to show 

\begin{theorem}\label{thm:relhyp}
Let $\mathcal{O}$ be a noncompact strongly tame strict SPC-orbifold with generalized admissible ends 
$E_1, \dots, E_k$ and satisfies {\rm (IE)} and {\rm (NA)}. 
Assume $\partial \orb =\emp$. 
Let $\tilde U_i$ be the inverse image $U_i$ in $\torb$ for a mutually disjoint collection of
 neighborhoods $U_i$ of the ends $E_i$ for each $i=1, \dots, k$. 
Then 
\begin{itemize} 
\item $\pi_1(\mathcal{O})$ is relatively hyperbolic with respect to 
the end fundamental groups \[\pi_1(E_1), ..., \pi_1(E_k).\] 
Hence $\orb$ is relatively hyperbolic with 
respect to $U_1 \cup \cdots \cup  U_k$. 
\item If $\pi_1(E_{l+1}), .., \pi_1(E_k)$ are hyperbolic for some $1 \leq  l \leq k$
{\rm (}possibly some of the hyperbolic ones{\rm )},
then $\pi_1(\mathcal{O})$ is relatively hyperbolic with respect to the end fundamental group 
$\pi_1(E_1), \dots, \pi_1(E_{l})$. 
\end{itemize}
\end{theorem}
\begin{proof}
We show that $\pi_1(\mathcal{O})$ is relatively hyperbolic with respect to 
the end fundamental groups $\pi_1(E_1), ..., \pi_1(E_k)$. 
By Proposition \ref{prop:remconch}, we have the second statement.

Choose a collection of $\pi_1(\orb)$-invariant concave p-end neighborhoods of $\torb$ whose union covers $\bigcup_{i=l+1}^k U_i$
for concave end neighborhoods $U_{l+1}, \dots, U_k$. 
We use $\tilde{\mathcal{O}}$ and remove the union of a collection of $\pi_1(\orb)$-invariant concave p-end neighborhoods of $\torb$
by Proposition \ref{prop:remconch}.
Now, $\torb_1$ covers a strict SPC-orbifold $\orb_1$ with admissible end.

Thus, we obtain $\Bd \tilde{\mathcal{O}}_1$. 
\begin{itemize}
\item We now collapse each set of form $\clo(U_i) \cap \Bd \torb_1 = S_{\tilde E}$  for a concave p-end neighborhood $U_i$ 
to a point and 
\item collapse $\clo(S_{\tilde E})$ for each totally geodesic end $\tilde E$ of lens-type to a point.
\end{itemize} 
By Corollary \ref{cor:disjclosure}, these sets are mutually disjoint balls
Let ${\mathcal C}_B$ denote the collection, and let $C_B:= \bigcup {\mathcal C}_B$. 

We claim that for each closed set $J$ in $\Bd \torb_1$, the union of $C_J$ of 
elements of ${\mathcal C}_B$ meeting $J$ is also closed: 
Let us choose a sequence $\{x_i \}$ for $x_i \in C_i$, $C_i \cap J \ne \emp$, $C_i \in {\mathcal C}_B$. 
Suppose that $x_i \ra x$. Let $y_i \in C_i \cap J$. Let $v_i$ be the end vertex of $C_i$ if it is radial. 
Then define $s_i:= \ovl{x_i v_i}\cup \ovl{v_i, y_i} \subset C_i$ if $C_i$ is radial 
or else $s_i:= \ovl{x_iy_i} \subset C_i$. Choose a subsequences so that 
$\{s_i\}$ geometrically converges to a limit containing $x$. 
The limit $s_\infty$ is a singleton, a segment or a union of two segments. 
By the strict convexity of $\torb$, we obtain $s_\infty$ is a subset of an element of ${\mathcal C}_B$
and $s_{\infty}$ meets $J$. Thus, $x \in s_{\infty} \subset C_i$ for $C_i \cap J$, and $C_J$ is closed. 

We denote this quotient space  $\Bd \tilde{\mathcal{O}}_1/\sim$ by $B$.
By Proposition \ref{prop:metriz}, $B$ is a metrizable space. 


We show that $\pi_1(\mathcal{O})$ acts on the metrizable space $B$ 
as a geometrically finite convergence group. 
By Theorem 0.1 of Yaman \cite{Yaman} following Bowditch \cite{Bowditch}, this shows that  \index{relative hyperbolicity} 
$\pi_1(\orb)$ is relatively hyperbolic with respect to $\pi_1(E_1), \dots, \pi_1(E_k)$.
The definition of conical limit points and so on are from the article.

(I) We first show that the group acts properly discontinuously on the set of triples in $B=\partial \tilde{\mathcal{O}}_1/\sim$. 
Suppose not. Then there exists a sequence of nondegenerate triples $\{(p_i, q_i, r_i)\}$ of points in $\Bd \torb_1$ 
converging to a distinct triple $\{(p, q, r)\}$ so that $p_i = \gamma_i(p_0), q_i = \gamma_i(q_0),$ and $r_i= \gamma_i(r_0)$
where $\gamma_i$ is a sequence of mutually distinct elements of $\pi_1(\mathcal{O})$.
We assume here that $p_0$, $q_0$, and $r_0$ are representatives of distinct points of $B$
and so are $p, q,$ and $r$. 
By multiplying by some uniformly bounded element $R_i$ in $\PGL(n+1, \bR)$, we obtain 
that $R_i\circ \gamma_i$ for each $i$ fixes $p_0, q_0, r_0$ and restricts to a diagonal matrix with 
entries $\lambda_i, \delta_i, \mu_i$ on the plane with coordinates so that $p_0= e_1, q_0=e_2, r_0=e_3$.

Then we can assume that \[\lambda_i\delta_i\mu_i =1, \lambda_i \geq \delta_i \geq \mu_i > 0\] by restricting to the plane and 
up to choosing subsequences and renaming. 
Thus $\lambda_i \ra \infty$ and $\mu_i \ra 0$ since otherwise these two sequences are bounded and 
this contradicts the discreteness of the holonomy homomorphism.

Let $P_0$ denote the $2$-dimensional subspace containing $p_0, q_0$, and $r_0$. 
By strictness of convexity, as we collapsed each of the p-end balls, 
the interiors of the segments $\ovl{p_0q_0}$, $\ovl{q_0r_0}$, and $\ovl{r_0p_0}$ 
are in the interior of $\tilde{\mathcal{O}}_1$. 

We claim that one of the sequence  $\lambda_i/\delta_i$ or the sequence $\delta_i/\mu_i$ are bounded:
Suppose not. 
Then $\lambda_i/\delta_i \ra \infty$ and $\delta_i/\mu_i \ra \infty$. 
We choose generic segments $s_0$ and $t_0$ in $\torb_1$
with a common end point $q_0$ and the respective other end point $\hat s_0$ and $\hat t_0$ 
so that \[\bdd(\hat s_0, q_0), \bdd(\hat t_0, q_0) \geq \delta \hbox{ for a uniform } \delta > 0.\] 
We choose $s_0$ and $t_0$ so that their directions from $q_0$ differ from that of $\ovl{p_0q_0}$ and $\ovl{q_0r_0}$
at least by $\delta' > 0$. 
Then the sequence $R_i \circ \gamma_i(s_0\cup t_0)$ converges to the segment with end point $p_0$ passing $q_0$ in the middle geometrically.
The segment is a great segment. 
(See Section \ref{subsec:metrics}.)
Since $R_i$ is bounded, this implies that there exists such a segment in $\clo(\torb_1)$. 
This is a contradiction to the proper convexity of $\torb_1$. 

Suppose now that the sequence $\lambda_i/\delta_i$ is bounded: Now the sequence of segments $\ovl{p_iq_i}$ 
converges to $\ovl{pq}$ whose interior is in $\tilde{\mathcal{O}}_1$. 
Then we see that $\ovl{pq}$ must be in the boundary as these points must 
be the limit points of points of sequence of $\gamma_i(s)$ for some compact subsegments 
$s \subset \ovl{p_0q_0}^o$
by the boundedness of the above ratio and the proper-discontinuity of the action. 
This contradicts the strict convexity as we assumed that 
$p, q,$ and $r$ represent distinct points in $B$. 
If we assume that $\delta_i/\mu_i$ is bounded, then 
we obtain a contradiction similarly.

This proves the proper discontinuity of the action on the space of distinct triples. 


(II) By Propositions \ref{prop:loxopara} and \ref{prop:loxopara2}, 
each group of form $\Gamma_x$ for a point $x$ of $B=\tilde{\mathcal{O}}_1/\sim$ is a bounded parabolic subgroup
in the sense of Bowditch \cite{Yaman}. 


(III) Let $p \in \Bd \torb$ be a point that is not in a horospherical endpoint or a singleton 
corresponding an lens-shaped p-end of radial or totally geodesic type of $B$. 
We show that $p$ is a conical limit point. This will complete our proof by Theorem 0.1 of \cite{Yaman}. 

We find a sequence of holonomy transformations $\gamma_i$ and distinct points $a, b \in \partial B$ 
so that $\gamma_i(p) \ra a$ and $\gamma_i(q) \ra b$ locally uniformly for $q \in \partial B - \{p\}$. 
To do this, we draw a line $l(t) $ in $\torb$ from a point of the fundamental domain to $p$
where as $t \ra \infty$, $l(t) \ra p$ in the compactification. 
Since $l(t)$ is not eventually in a p-end neighborhood, there is a sequence $\{t_i\}$ going to $\infty$ 
so that $l(t_i)$ is not in any of the p-end neighborhoods in $\tilde U_{1} \cup \cdots \cup  \tilde U_{k}$. 
Let $p'$ be the other endpoint of the complete extension of $l(t)$ in $\tilde{\mathcal{O}}$. 
We can assume without generality that 
$p'$ is not in the closure of any p-end neighborhood by choosing the line $l(t)$ differently if necessary. 

Since $(\tilde{\mathcal{O}}_1- \tilde U_1 - \cdots - \tilde U_k)/\Gamma$ is compact, 
we have a compact fundamental domain $F$  of 
$\tilde{\mathcal{O}}_1 - \tilde U_1 - \cdots - \tilde U_k$ with respect to $\Gamma$.
Note that $\bdd(F, \Bd \torb) > C_0$ for some constant $C_0 > 0$. (We remark that this is note not true for $\Bd \torb_1$.) 

Now we will look at the convex open domain $\torb$ and use the Hilbert metric $d_{\torb}$. 
We find points $z_i \in F$ so that $\gamma_i(l(t_i))= z_i$ for a deck transformation $\gamma_i$.
Then by taking subsequences, we assume without loss of generality that $\gamma_i^{-1}(p) \ra a$ for a point $a$ and
$\gamma_i^{-1}(p')$ also converges to another point $b$, $b \ne a$. 
Take a point $q \in X - \{p, p' \}$ and find a geodesic $m$ from $q$ to $l$ 
with the property that every point on $m$ a shortest geodesic from the point to $l$ lies in $m$ by Proposition \ref{prop:shortg}. 
Let $q'$ be the intersection of $m$ to $l$. 
Then $\gamma_i^{-1}(q')$ converges to $b$ by a Hilbert metric consideration. 

The sequence of segments $\gamma^{-1}_i(\ovl{qq'})$ has the property that 
every point on it is a shortest geodesic from the point to $\gamma^{-1}_i(l)$ lies in $\gamma^{-1}_i(\ovl{qq'})$.
(See Figure \ref{fig:figbow}.)
Because of this property, for any sequence of points $x_i \in \gamma^{-1}_i(\ovl{qq'})$, we
have that the shortest geodesic from $x_i$ to $l_i:=\gamma^{-1}_i(l)$ lie in $\gamma^{-1}_i(\ovl{qq'})$. 

We show that the sequence $\gamma^{-1}_i(\ovl{qq'})$ is exiting; that is, 
for every compact subset $K$ of $\torb_1$, there exists $I_1$ such that 
$\gamma^{-1}_i(\ovl{qq'}) \cap K = \emp$ for $i > I_1$.
Suppose not. 
Then we can choose $x_i \in \gamma^{-1}_i(\ovl{qq'})$
and $x_i \ra x \in \tilde{\mathcal{O}}$
so that the corresponding sequence of $d_{\torb}$-distances in $\gamma^{-1}_i(\ovl{qq'})$ is converging to $\infty$.
However, as $x_i \ra x \in \tilde{\mathcal{O}}$, the sequence $d_{\torb}(x_i, l_i)$ is bounded 
since $l_i$ passes $F$ for all $i$.
Since $d_{\torb}(x_i, l_i)$ is the arclength from $x_i$ to the point of $l_i$ in $\gamma^{-1}_i(\ovl{qq'})$, 
we have $d_{\torb}(x_i, l_i) \ra \infty$. This is contradiction.
(By choosing a continuous parameters of shortest geodesics from a small neighborhood of $q$, 
we obtain a local uniform convergence.)

By choosing a subsequence, the sequence $\{\gamma^{-1}_i(\ovl{qq'})\}$ converges to a point or a segment in the boundary 
$\Bd \tilde{\mathcal{O}}_1$. If the limit is a segment, then it is contained in one of the collapsed subsets containing $b$
since $\torb$ is strictly convex relative to p-ends.
This shows that $p$ is a conical limit point. 


\end{proof}

\begin{figure}[h]
\centerline{\includegraphics[height=7cm]{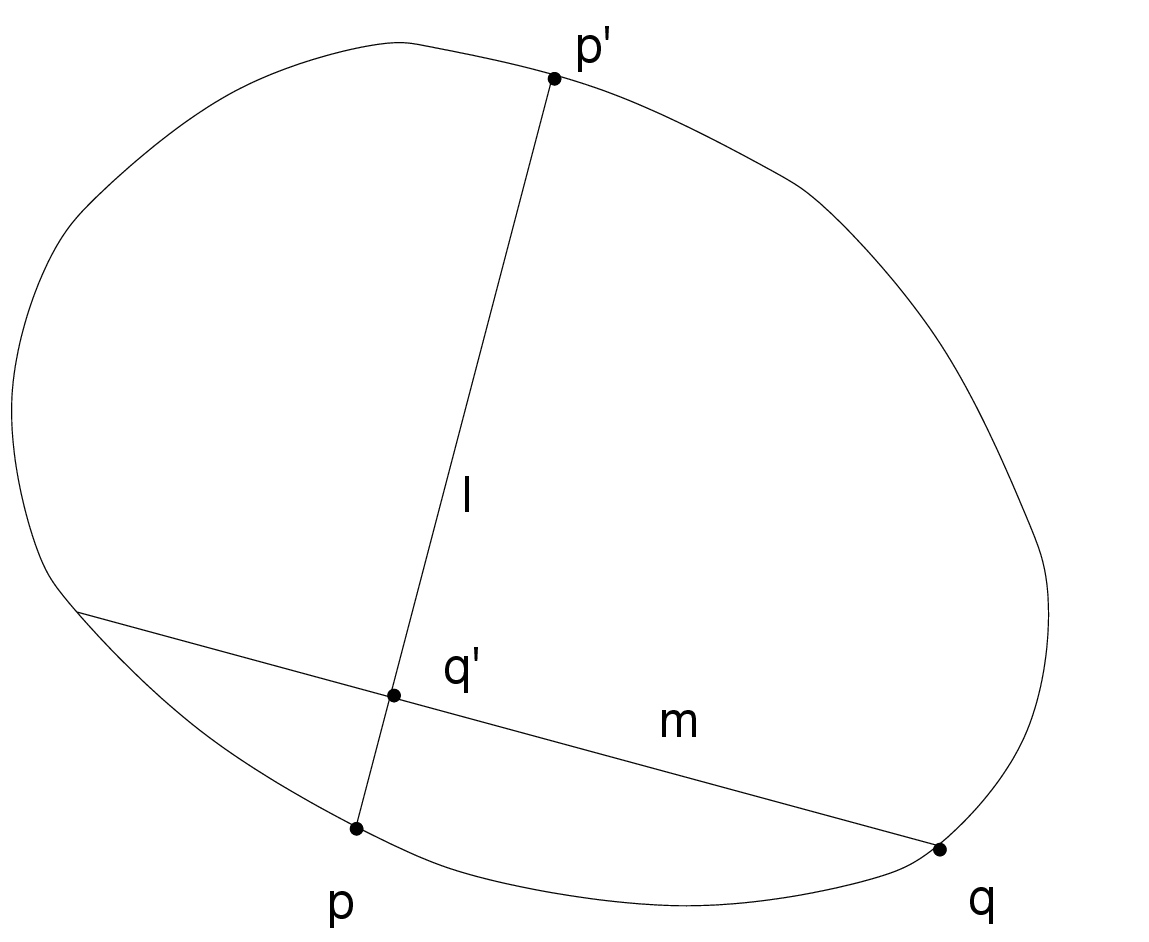}}
\caption{The shortest geodesic $m$ to a geodesic $l$.}
\label{fig:figbow}
\end{figure}


\subsection{The Theorem of Drutu}

The author obtained a proof of the following theorem from Drutu. 
See \cite{Du} for more details.  

\begin{theorem}[Drutu] \label{thm:drutu}
Let $\mathcal{O}$ be a noncompact strongly tame orbifold with admissible and satisfies {\rm (IE)} and {\rm (NA)}.
Let  $\pi_1(E_1), ..., \pi_1(E_m)$ be end fundamental groups where $\pi_1(E_{n+1}), ..., \pi_1(E_m)$ for $n\leq m$ 
are hyperbolic groups.
Then $\pi_1(\mathcal{O})$ is a relatively hyperbolic group with respect to 
$\pi_1(E_1), ..., \pi_1(E_m)$ if and only if 
$\pi_1(\mathcal{O})$ is one with respect to 
 $\pi_1(E_1), ..., \pi_1(E_n)$.
\end{theorem} 
\begin{proof} 
With the terminology in the paper \cite{Du},   $\pi_1(\mathcal{O})$ is a relatively hyperbolic group with respect to 
the admissible end fundamental groups $\pi_1(E_1), ..., \pi_1(E_m)$ if and only if 
 $\pi_1(\mathcal{O})$ with a word metric is asymptotically tree graded (ATG) with respect to 
 all the left cosets $g\pi_1(E_i)$ for $g \in \pi_1(\mathcal{O})$ and $i=1,.., m$. 

We claimed that 
 $\pi_1(\mathcal{O})$ with a word metric is asymptotically tree graded (ATG) with respect to 
 all the left cosets $g\pi_1(E_i)$ for $g \in \pi_1(\mathcal{O})$ and $i=1,.., m$ if and only if
$\pi_1(\mathcal{O})$ with a word metric is asymptotically tree graded with respect to 
 all the left cosets $g\pi_1(E_i)$ for $g \in \pi_1(\mathcal{O})$ and $i=1,.., n$. 
 
 Conditions ($\alpha_1$) and ($\alpha_2$) of Theorem 4.9 in \cite{Du} are satisfied still when we drop end fundamental groups
  $\pi_1(E_{n+1}), ..., \pi_1(E_m)$ or add  them. 
  (See also Theorem 4.22 in \cite{Du}.)
  
  For the condition $(\alpha_3)$ of Theorem 4.9 of \cite{Du}, it is sufficient to consider only hexagons.
 According to Proposition 4.24 of \cite{Du} one can take the fatness constants as large
as one wants, in particular $\theta$ (measuring how fat the hexagon is) much larger than $\chi$ prescribing
how close the fat hexagon is from a left coset.

 If $\theta$ is very large, left cosets containing such hexagons in their
neighborhoods can never be cosets of hyperbolic subgroups 
since hyperbolic groups do not contain fat
hexagons. So the condition $(\alpha_3)$ is satisfied too whether one adds  $\pi_1(E_{n+1}), ..., \pi_1(E_m)$
or drop them.

\end{proof} 


\subsection{Converse}

We will prove the converse to Theorem \ref{thm:relhyp}:

For each totally geodesic end of lens-type, we add the totally geodesic ideal boundary component 
and outside open lens-neighborhood to form $\orb'$ by Theorem \ref{thm:totgeoext}.  
We need this for a technical reasons.

Note here that we do not assume the ``full" admissibility of the ends. 
The group property will imply that the ends have to be admissible as well.
\begin{theorem} \label{thm:converse}
Let $\mathcal{O}$ be a noncompact strongly tame properly convex real projective orbifold with generalized admissible ends 
and satisfies {\rm (IE)} and {\rm (NA)}.  Assume $\partial \orb =\emp$. 
Suppose that $\pi_1(\mathcal{O})$ is a relatively hyperbolic group with respect to 
the admissible end groups $\pi_1(E_1), ..., \pi_1(E_k)$ where $E_i$ are horospherical 
for $i=1, ..., m$ and  generalized lens-shaped for $i=m+1, ..., k$ for $0 \leq m \leq k$.  
Then $\mathcal{O}$ is strictly SPC with respect to the admissible ends $E_1, \dots, E_k$. 
\end{theorem}
\begin{proof} 

Since an $\eps$-mc-p-end-neighborhood is always proper by Corollary \ref{cor:mcn}, for any $i$, 
we can choose the end neighborhood $U_i$ of any  generalized lens-type end $\tilde E_i$
to be the image of an $\eps$-mc-p-end-neighborhood. 
We can choose all such neighborhoods to be mutually disjoint by Corollary \ref{cor:mcn}. 
Let $\tilde U$ denote the union of the inverse images of end neighborhoods $U_1, ..., U_k$. 

Suppose that $\mathcal{O}$ is not strictly convex. 
We divide into two cases: 
First, we assume that there exists a segment in $\Bd \torb$ not contained in 
the closure of a p-end neighborhood. Second, we assume that there exists a non-$C^1$-point in $\Bd \torb$ not 
contained in the closure of a p-end neighborhood. 

(I) We assume the first case now.
We will obtain a triangle with boundary in $\Bd \tilde{\mathcal{O}}$ and not contained in the convex hull of p-ends:
Let $l$ be a nontrivial maximal segment in $\Bd \tilde{\mathcal{O}}$ not contained in the closure of a p-end neighborhood. 
First, $l$ does not meet the closure of a horospherical p-end neighborhood by Proposition \ref{prop:affinehoro}. 
By Theorems \ref{thm:lensclass} and \ref{thm:redtot} if $l^o$ meets the closure of 
a lens-shaped p-end neighborhood, then $l^o$ is in the closure. 
Also, suppose that $l^o$ meets $S_{\tilde E}$ for a totally geodesic p-end $\tilde E$. Then $l^o \cap \Bd S_{\tilde E} \ne \emp$. 
Then applying $\pi_1(\tilde E)$, we obtain a great segment in $\Bd \torb$ since $S_{\tilde E}/\pi_1(\tilde E)$ 
is a compact properly convex orbifold. (See \cite{Ben1}.) 
Therefore, $l$ meets the closures of p-end neighborhoods possibly only at its endpoints. 

Let $P$ be a $2$-dimensional subspace containing $l$ and meeting $\tilde {\mathcal{O}}$ outside $\tilde U$.
By above, $l^o$ is in the boundary of $P \cap \tilde {\mathcal{O}}$. 
Draw two segments $s_1$ and $s_2$ in $P \cap \tilde {\mathcal{O}}$ 
from the end point of $l$ meeting at a vertex $p$ in the interior of $\tilde {\mathcal{O}}$. 
Since $l^o$ is not contained in the closure of a single component of $\tilde U$,
$(\torb - \tilde U) \cap P$ has a sequence of points $x_i$ converging to a point $x$ of $l^o$. 
Then $d_{\torb} (x_i, s_1\cup s_2) \ra \infty$ by consideration in the Hilbert metric by looking 
at all straight segment from $x_i$ to a point of $s_1$ or $s_2$ and the maximality of $l$ in $\Bd \torb$.

Recall that there is a compact fundamental domain $F$ of $\tilde {\mathcal{O}} -\tilde U$ under the action of $\pi_1(E)$.
Now, we can take $x_i$ to the fundamental domain $F$ by $g_i$. 
We choose $g_i$ to be a sequence of mutually distinct elements of 
$\pi_1(\mathcal{O})$. 
We choose a subsequence so that we assume without loss of generality that 
$\{g_i(T)\}$ geometrically converges to a convex set, 
which could be a point or a segment or a nondegenerate 
triangle. Since $g_i(T) \cap F \ne \emp$, and the sequence $\partial g_i(T)$ exits any compact 
subsets of $\tilde {\mathcal{O}}$ always while $d_{\torb}(g_i(x_i), \partial g_i(T)) \ra \infty$
and $g_i(T)$ passes $F$, 
we see that a subsequence of $g_i(T)$ converges to a nondegenerate triangle, say $T_\infty$. 

By following Lemma \ref{lem:geogroup}, $T_\infty$ is so that $\partial T_\infty$ is in $\bigcup S(v_{\tilde E})$ 
for a radial generalized lens-type p-end $\tilde E$.  


Now, $T_\infty$ is so that $\partial T_\infty \subset \clo(U_1)$ for a p-end neighborhood $U_1$ of a generalized 
lens-type end $\tilde E$. Then for sufficiently small $\eps> 0$, the $\eps$-$d_{\torb}$-neighborhood of $T_\infty\cap \torb$ 
is a subset of $U_1$ as $U_1$ was chosen to be an $\eps$-mc-p-end-neighborhood
(see Lemma \ref{lem:mcc}).
However as $g_i(T) \ra T_\infty$ geometrically, 
for any compact subset $K$ of $\torb$, $g_i(T) \cap K$ is a subset of $U_1$ for sufficiently large $i$. 
But $g_i(T) \cap F \ne \emp$ for all $i$ and the compact fundamental domain $F$ of $\torb - \tilde U$, 
disjoint from $U_1$. This is a contradiction.


(II) Now we suppose that $\Bd \torb$ has a non-$C^1$-point $x$ outside the closures of p-end neighborhoods. 
Then we go to the dual $\torb^*$ and the dual group
$\Gamma^*$ where $\torb^*/\Gamma^*$ is a strongly tame properly convex orbifold with horospherical ends, totally geodesic ends of lens-type
or radial ends of generalized lens-type by Theorem \ref{thm:duality} and Lemma \ref{lem:concavedual}. 

 Then we have a one-to-one correspondence of the set of p-ends of $\torb$ to the set of p-ends of $\torb^*$, 
and we obtain that $x$ corresponds to a convex subset of $\dim \geq 1$ in $\Bd \torb$ containing a segment $l$ 
not contained in the closure of p-end neighborhoods using the map $\mathcal{D}$ in Proposition \ref{prop:duality}. 
Thus, the proof reduces to the case (I). 




(III) Finally, we show that the ends are admissible. 
Given a totally geodesic p-end $\tilde E$, it is of lens-type by assumption. 
A horospherical p-end $\tilde E$ is admissible. 

We will now study a generalized lens-type radial end $\tilde E$ and show that it is of lens-type as well: 
Given a radial end $\tilde E$, $h(\pi_1(\tilde E))$ satisfies the uniform middle eigenvalue condition 
by generalized lens type condition on the holonomy of the end by the assumption and Theorem 7.9 of \cite{endclass}.
Since the concave-end neighborhood exists by Corollary 8.7 in \cite{endclass}, 
we know that $\bigcup S(v_{\tilde E}) \subset \Bd \torb$. 
We obtain the compact convex hull $CH(\bigcup S(v_{\tilde E})) \subset \clo(\torb)$.  

If $I:= \partial CH(\bigcup S(v_{\tilde E})) - \bigcup S(v_{\tilde E})$ is a subset of $\torb$, then 
Proposition  7.6 of \cite{endclass} shows that if the lens neighborhood of $I$
is in $\torb$, then we are done. 

Suppose not. Then there exists an $i$-dimensional simplex $\sigma$, $\sigma^o \subset I$ for $i \geq 1$ so that 
$\Bd \torb \cap \sigma^o \ne \emp$. Then it must be that $\sigma \subset \Bd \torb$ by Lemma 7.5 of \cite{endclass}. 
Hence, there exists a segment $k$ in $I \cap \Bd \torb$ and in $\sigma$ with end points in $\bigcup S(v_{\tilde E})$. 
The two segments $s_1, s_2 \in S(v_{\tilde E})$ with endpoints equal to the endpoints of $l$, 
we obtain a triangle $T$ with $\partial T \subset \Bd \torb$ by the convexity of $\clo(\torb)$. 

By Lemma \ref{lem:geogroup}, $\partial T \subset \bigcup S(v_{\tilde E})$. However, by geometry, 
this implies $k, T \subset \bigcup S(v_{\tilde E})$. This contradicts $I$ being disjoint from $\bigcup S(v_{\tilde E})$.

By Theorem \ref{thm:sSPC}, we obtain that our orbifold is strictly SPC. 
\end{proof}


Now, we come to a lemma with a very long proof. 
\begin{lemma}\label{lem:geogroup} 
Assume the premise of Theorem \ref{thm:converse}.
Then for every triangle $T$ in $\tilde{\mathcal{O}}$ with $\partial T \subset \Bd \torb$, we obtain
$\partial T \subset \bigcup S(\tilde E)$ for a p-end $\tilde E$ of radial type.
\end{lemma}
\begin{proof}

We expand $\orb$ to $\orb^e$ by adding lens neighborhoods to totally geodesic ideal boundary components 
by Theorem \ref{thm:totgeoext}.  We will use the Hilbert metric of $\orb^e$ restricted to $\orb_1$. 
From now on, we will denote by $\orb$ the extended orbifold and $\torb$ will denote $\torb^e$ also. 
Now $S_{\tilde E}$ for totally geodesic p-end $\tilde E$ is in $\torb$. 

Suppose that some of the ends $E_i$, $i=1, \dots, k$, are hyperbolic. Then remove the concave end neighborhoods for these ends
to obtain $\orb_1$. The universal cover $\torb_1$ is an open domain in $\torb$. 
We have that $\pi_1(\mathcal{O})$ is still relatively hyperbolic with respect to the rest of the end fundamental groups by
Theorem \ref{thm:drutu}. Let $E_i, i=1, \dots, m$ denote the remaining ends, nonhyperbolic ones.

Let $T'$ be a triangle with $\partial T' \subset \Bd \torb$ and $\partial T'$ is not a subset of 
$\bigcup S(v_{\tilde E})$ of a radial type end or $\clo(\bigcup S_{\tilde E})$ of 
a totally geodesic end $\tilde E$ of lens-type.
Clearly $T'$ is in $\clo(\torb_1)$ and $\partial T' \subset \Bd \torb_1$ since if the interior of a segment meets 
$\bigcup S(v_{\tilde E})$, then the segment must be in $\bigcup S(v_{\tilde E})$ by Theorems \ref{thm:lensclass} and \ref{thm:redtot}. 

Let $U$ be the union of all concave p-end neighborhoods for radial p-ends and lens p-end neighborhoods  
for totally geodesic p-ends and horospherical p-ends mutually disjoint from one another
and covering a union of disjoint end neighborhoods of $\torb$. 
In this case, $S_{\tilde E}$ for the totally geodesic end $\tilde E$ is a subset of $\torb$. 

Now we will consider various possibilities for the triangle: 
By assumption, one of the component $T'-\tilde U'$ is not compact. (Otherwise, we are done.) 
Denote by $L$ a noncompact component of $T'-\tilde U'$.


(I) Suppose that there is at least one $g \ne \Idd$ so that $g(L) = L$. 
Then clearly $g(T')=T'$ as well. 
Let $v$ be a vertex of $T'$.
Then $L/\langle g \rangle$ corresponds to an annulus mapping into $\mathcal{O}_1$. 
Let $l$ be a maximal geodesic in $L^o$ so that  $l$ and $g(l)$ bound a fundamental domain of the annulus. 
Then notice that $T^o$ contains a geodesic $\tilde \alpha_0$ connecting a point $v$ of $l$ to $g(v)$ of $g(l)$ mapping to 
a closed curve $\alpha_0$. By a similarity based at $v$, we form a parameter of closed 
curves $\alpha_t$ for $v(t) \in l$ and $t \in \bR$. The vertex of $\alpha_t$ is the image of $v(t)$.

Then 
the $d_{\torb}$-lengths of $\alpha_t$ are uniformly bounded above since $g$ acts on a triangle with a diagonalizable 
matrix and we can compute the $d_{\torb}$-lengths of $\alpha_t$ by its vertex $v(t)$. Assume that $\alpha_t$ is periodic with 
fundamental interval $I \subset \bR$ always. 
Either 
\begin{itemize}
\item[(i)] $\alpha_t(I) \subset U$ for an end neighborhood $U$ of $\tilde E$ and $t \geq c$ and $t \leq c'$ for some $c$ and $c'$. 
\item[(ii)] $\alpha_t(I) \subset U$ for an end neighborhood  $U$ 
for some $t$ and $\alpha_t(I) \not\subset U$ for $t \geq c'$ or $t \leq c''$ for some $c', c''$.  
\item[(iii)] $\alpha_t(I) \not\subset U$ for an end neighborhood  $U$ for all $t$. 
\end{itemize}
In the first case, $T'$ must be in a p-end neighborhood $U'$ of $\tilde E$ so that $U'-U$ covers a compact set in $\orb$. 
Since $\partial T' \subset \Bd \torb$, and $\clo(U') \cap \Bd \torb = \bigcup S(\tilde E)$, 
we obtain $\partial T' \subset \bigcup S(\tilde E)$.
We are finished in this case. 

In the second case, $g$ is freely homotopic to the end fundamental group. 
We can assume that a closed curve freely homotopic to $\alpha_t(I)$ cannot be a subset of another p-end neighborhood 
since otherwise we obtain an essential annulus.  

Using the residual finiteness of the linear group $\pi_1(\torb)$, we  
take a finite index subgroup of $\pi_1(\orb)$ and a power of $g$ so that we can assume that  
$g$ is a generator of $\langle g \rangle$. 
Also, assume that $\pi_1(\orb)$ is torsion-free by taking a finite cover using the Selberg lemma.

Assume that $\alpha_t$ is not 
in an end neighborhood entirely for $t > c$. 
In this case, there is a subsequence $t_i$ so that $\alpha_{t_i}$ converges to a closed curve not 
contained in any end neighborhood.  We assume that $\tilde \alpha_{t_0}$ is a subset of an end neighborhood. 
We can assume that $\bigcup_i \alpha_{t_i}$ has a convex hull containing $L$ 
since the sequence is of bounded diameter ones bounding a region with $\tilde \alpha_{t_0}$ covering an annuli and hence
eventually containing any compact subset of $L$. 


We see that $\alpha_{t_i}$ and $\alpha_{t_j}$ are homotopic for 
$i, j, i> j$ sufficiently large.
Let $\tilde \alpha_{t_i}:\bR \ra \torb$ denote the lift of $\alpha_{t_i}$ in $T^o$ where $g$ acts on. 
The $d_{\torb}$-length of $\alpha_{t_i}$ is uniformly bounded above since in $T^o$ the $d_{\torb}$-lengths are the $d_{T^o}$-lengths
and can be estimated by the action on the projective link of the vertex of $T$. 
 \begin{itemize} 
\item the minimum distance
$d_{\torb}(\tilde \alpha_{t_i}, \tilde \alpha_{t_j}) \geq 2M+1$ for infinitely many pairs $i, j$. 
\item Hence, for every $\eps$, $0 < \eps < 1/2$, there exists infinitely many pair $i, j$ and a deck transformation $h_{i, j}$ for each $i, j$ so that 
for every $s\in \bR$, there exists $s' \in \bR$ so that
$d_{\torb}(h_{i, j}(\tilde \alpha_{t_i}(s')), \tilde \alpha_{t_j}(s) ) \leq \eps$,
and conversely for every $s'\in \bR$, there exists $s \in \bR$ satisfying this equation. 
\end{itemize} 
Since they have bounded lengths, 
$[\alpha_{t_i}]$ and $[\alpha_{t_j}]$ have the same homotopy class for infinitely many pairs $i, j$. 
Thus, $g$ and $h_{i, j}$ commute with each other. 
Since $h_{i, j}$ sends $\tilde \alpha_{t_i}$ to points of minimal distance $\geq 2M$, 
it follows that $h_{i, j}$ is not in $\langle g \rangle$.
By (NA), $h_{i, j}, g \in \pi_1(\tilde E)$ for a p-end $\tilde E$. 

We  take a sufficiently large lens or lens-cone p-end neighborhood $D$ of $\tilde E$ by Lemma \ref{lem:expand}. 
Since $\alpha_{t_i}$ converges to a closed curve as $i \ra \infty$, 
we may assume that $h_{i, j}(\tilde \alpha_{t_i}) \subset D$ for $i > I_0$ where we assume $I_0 > i_0$.
$h_{i, j} \in \pi_1(\tilde E)$ implies that $\tilde \alpha_{t_i} \subset D$ since $D$ is $\pi_1(\tilde E)$-invariant. 
$\partial T$ is a subset of the closure of $\bigcup_{k \in \bZ} h_{i, j}^k (\tilde \alpha_{t_i})$. 
Thus, $\tilde E$ is a radial or totally geodesic p-end of lens-type. 
Since $D$ is a lens-cone or a lens, we obtain 
\[T \subset D \hbox{ and } \partial T \subset D \cap \Bd \torb = \bigcup S(v_{\tilde E}).\]
Also, $\tilde E$ is not totally geodesic since otherwise $T^o \cap \torb = \emp$. In this case, we are done. 

(II)  Now, we suppose that we are in case (iii) or 
there exists no $g \ne \Idd$ so that $g(L) = L$ from now on for each component $L$ of $T' - \tilde U'$. 
Now, we obtain a triangle in the asymptotic limit of $\torb$. 

We will obtain an asymptotic limit of $T$ in an asymptotic limit of $\torb$ and show that 
we cannot have such an object unless $\partial T'$ is in $\bigcup S(v_{\tilde E})$ for an radial p-end $\tilde E$. 





Suppose that $L$ meets infinitely many horospherical p-ends
and the $d_{\torb}$-diameters of $L$ intersected with these are not bounded. Then we can show that $L$ or a leaf $L'$ in 
its closure of $\bigcup_{g \in \pi_1(\mathcal{O})} g(L)$ meet a horoball and its vertex in its closure.
However, in the first case, this gives an arc in $L$ or $L'$ with one horospherical p-endpoint equal to an interior point 
of an edge or a vertex of a triangle containing $L$ or $L'$. Both cases are ruled out by Proposition \ref{prop:affinehoro}.

Thus, $d_{\orb}$-diameters of horospherical p-end neighborhoods intersected with $L$ are bounded above uniformly. 
Therefore, by choosing a horospherical end neighborhood sufficiently far inside each horospherical 
end neighborhood, we may assume that $L$ does not 
meet any horospherical p-end neighborhoods. That is we choose a horoball $V'$ inside a one $V$
so that \[d_{\torb}(V', \partial V) \geq \frac{1}{2} \sup \hbox{$d_{\torb}$-diam}\{ V \cap T' \}_{V \in \mathcal{V}, T'\in \mathcal{T}}\]
where $\mathcal{V}$ is the collection of horospherical neighborhoods that we were given 
in the beginning and $\mathcal{T}$ is the collection of all triangles $T'$ with boundary in $\Bd \torb$ --(*).


We will use the theory of tree-graded spaces and asymptotic cones \cite{DuSa} and \cite{OsSa}. 
We remove neighborhoods of sufficiently small 
horospherical p-end neighborhoods from $\tilde {\mathcal{O}}_1$ and call the result $\tilde {\mathcal{O}}'$. 
We will be using the restricted path-metric $d_{\torb'}$ on $\torb'$ restricted from infinitesimal Finsler metric associated with $d_{\torb}$.
(See \cite{Kobpaper}.) 
Then $\tilde {\mathcal{O}}'$ is quasi-isometric with $\pi_1({\mathcal{O}})$: 
This follows since $\tilde{\mathcal{O}}'$ has a compact fundamental domain
and hence there is a map from it to  $\pi_1({\mathcal{O}})$ decreasing distances up to a positive 
constant. Conversely, there is a map from $\pi_1({\mathcal{O}})$ to  $\tilde{\mathcal{O}}'$
with same property. 




Let us recall definitions in Section 3.1 of Drutu-Sapir \cite{DuSa}. 
An {\em ultrafilter $\omega$} is a finite additive measure on $P(\bN)$ of $\bN$ so that
each subset has either measure $0$ or $1$ and all finite sets have measure $0$. 
If a property $P(n)$ holds for all $n$ from a set with measure $1$, we say that $P(n)$ holds {\em $\omega$-almost surely}. \index{ultrafilter}

Let $(X, d_X)$ be a metric space.
Let $\omega$ be an ultrafilter over the set $\bN$ of natural numbers. 
For a sequence $(x_i)_{i \in \bN}$ of points of $X$, its {\em $\omega$-limit} is $x \in X$ if for every neighborhood $U$ of $x$
the property that $x_i \in U$ holds $\omega$-almost surely. If $X$ is Hausdorff, the limit is unique and if $X$ is compact, 
every sequence has a convergent sequence. 

An {\em ultraproduct} $\prod X_n/\omega$ of a sequence of sets $(X_n)_{n \in \bN}$ is the set of 
the equivalence classes of sequences $(x_n)$ where $(x_n) \sim (y_n)$ if $x_n  = y_n$ holds for $\omega$-almost surely. 

Given a sequence of metric spaces $(X_n, d_n)$, consider the ultraproduct $\prod X_n$ and an observation point $e=(e_n)$. 
Let $D(x, y) = \lim_{\omega} d_n(x_n, y_n)$. Let $\prod_e X_n/\omega$ denote the set of equivalence classes of finite distances from $e$.
The {\em $\omega$-limit $\lim^{\omega} (X_n)_e$} is the metric space obtained from $\prod_e X_n/\omega$ by identifying  \index{asymptotic cone} 
all pair of points $x, y$ with $D(x, y) =0$. 

Given an ultrafilter $\omega$ over the set $\bN$ of natural numbers, an observation point $e=(e_i)^\omega$, and 
sequence of numbers $\delta= (\delta_i)_{i\in \bN}$ satisfying $\lim_\omega \delta_i = \infty$, the $\omega$-limit 
$\lim^\omega (X, d_X/\delta_i)_e$ is called the {\em asymptotic cone} of $X$. (See \cite{Gr1}, \cite{Gr2} and Definitions 
3.3 to 3.8 in \cite{DuSa}.) We denote it by $Con^\omega(X, e, \delta)$. 

For a sequence $(A_n)$ of subsets $A_n$ of $X$, we denote by $\lim^\omega(A_n)$ the subset of $Con^\omega(X, e, \delta)$ 
that consists of all elements $(x_n)$ where $x_n \in A_n$ $\omega$-almost surely. 
The asymptotic cone is always complete and $\lim^\omega(A_n)$ is closed.


We will fix an ultrafilter $\omega$ in $\bN$ and $\delta$ from now on. We set an observation point $e$ to be a constant sequence $e \in \torb'$.
By Theorem 9.1 of Osin and Sapir \cite{OsSa} and Theorem 5.1 of Drutu and Sapir \cite{DuSa} and Theorem \ref{thm:relhyp}, 
$\pi_1(\tilde {\mathcal{O}})$ is asymptotically tree-graded with respect to the p-end fundamental groups $\pi_1(E_i)$, $i=1, \dots, m$. 
Thus $\tilde {\mathcal{O}}'$ is asymptotically tree graded with respect to the p-end neighborhoods. 
Choose an ultrafilter $\omega$, and 
let $\tilde {\mathcal{O}}_\infty$ denote the asymptotic cone of $\tilde {\mathcal{O}}'$ 
with the $\omega$-limit of the p-end neighborhoods as pieces.  
Here a piece is a closed subset satisfying certain properties in \cite{DuSa}. 
Let $\mathcal P$ denote the set of pieces. 

The basic heuristic strategy is to show that a triangle in an asymptotically tree-graded space must be inside one of the p-end neighborhoods. 

But the existence of a triangle $T'$ in $\clo(\torb)$ 
gives us a subspace $T$ isometric with $T^{\prime o}$ in the asymptotic limit $\torb_\infty$. 
We obtain this by considering all sequences $(x_i)_{i \in \bZ_+}$ for $x_i \in T^{\prime o}$. 
The geodesics here are precisely the straight lines since the Hilbert metric $d_T$ on $T^{\prime o}$ scaled by a constant 
$d_T/\delta_i$ is isometric with $d_T$ by an isometry $f_i$. Hence $T^{\prime o}$ with $d_T/\delta_i$ has 
$\omega$-limit $T^{\prime o}$ with $d_T$.  
($d_T$ is called a hex metric \cite{Harpe}. This fact was fist observed by Cooper, Delp and so on, as we understand.) \index{Hilbert metric!hex metric}

Each point of $T'$ has a p-end neighborhood of uniformly bounded $d_{\torb}$-distance from it as $\mathcal O - U$ is 
compact by the action of $\pi_1(\tilde {\mathcal{O}})$.
Hence, each point of $T$ is in an element of a piece. (See Definition 3.9 of \cite{DuSa}.) 




\begin{figure}[h]
\centerline{\includegraphics[height=12cm]{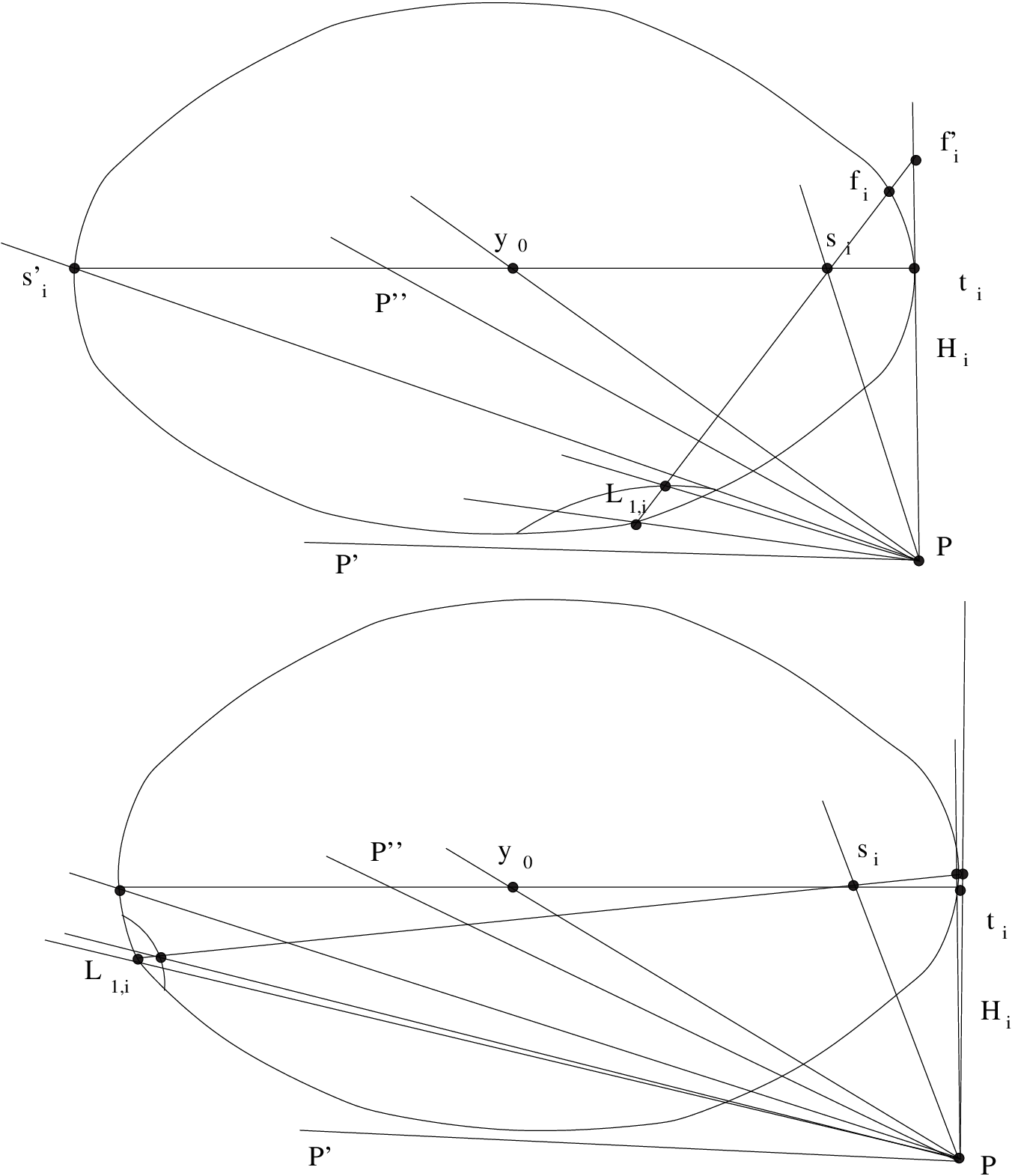}}
\caption{The diagram for Theorem \ref{thm:converse}. }
\label{fig:figmetric}
\end{figure}



(III) We will now show that by the asymptotic tree graded property, 
$\partial T'$ must be contained in the closure of a p-end-neighborhood. 



For $i$ in the index set of p-ends, we define $L_{1, i}$  to be the subset of $\tilde U$
\begin{itemize} 
\item $\clo(U(v_{\tilde E})) \cap \torb_1$ where 
$U(v_{\tilde E})$ is the concave p-end neighborhood of $\tilde E$
when $\tilde E$ is a radial p-end of lens-type, 
\item The closure of the outer lens $L$ of 
$S_{\tilde E}$ if $\tilde E$ is a totally geodesic end of lens type, or 
\item a horoball $U_{\tilde E}$ for a horospherical end $\tilde E$.
\end{itemize} 
Here $i$ denotes the labeling of the end vertices. 
We call it the lower boundary component of a lens of the end $\tilde E$. 
Let us choose $\hat L_{1, j}$ for $j=1, \dots, m$ from each representative in $L_{1, i}$ equivalent under $\pi_1(\torb)$.

By Proposition 7.26 in \cite{DuSa}, each piece is 
a $\omega$-limit of $(g_{i_n} \hat L_{1, j_i})$ 
where $(g_{i_n})$ has the $\omega$-limit in 
the $\omega$-limit $\pi_1(\orb)^\omega$ of $\pi_1(\orb)$ 
and \[\lim_{\omega} \frac{d_{\torb'}(e, g_{n_i} \hat L_{1, j_i})}{\delta_i} < \infty.\]

We can assume that $T'$ does not intersect with horoballs as above for $L$
as in (*).
Hence, $T'\cap L_{1, i}^o = \emp$ since $T$ is obtained as the $\omega$-limit of triangles outside 
the concave p-end neighborhoods. 

Suppose that the asymptotic limit $T$ of $T'$ 
is contained in a piece. 
Since $T$ has a sequence of compact domains $(\hat K_i)$ exhausting it, we obtain 
by taking sequences $(K_{i, j} \subset T')$ of compact sets converging to these and taking a diagonal sequences
the following: 
\begin{itemize} 
\item[(i)] There is a sequence of compact domains $K_i$ of points in $T^{\prime o}$ 
\[d_{\torb'}(y_0, \partial K_i)= \lambda_i \ra \infty \hbox{ for } y_i \in K_i\] 
so that $\lambda_i/\delta_i \ra \infty$ and 
\item[(ii)] Since every point of $T'$ becomes very close to a 
the lower boundary component $L_{1, i}$ of a lens of a lens-shaped p-end neighborhood $U_{i, 1}$
under the normalization by $1/\delta_i$, 
we have  \[\max\{d_{\torb'}(x, L_{1, i})| x \in K_i\} \leq  \mu_i\] for the closure of $L_{1, i}$, 
and $\mu_i/\delta_i \ra 0$.
\end{itemize} 
This implies also
\begin{equation} \label{eqn:iii}
\lambda_i/\mu_i \ra \infty.
\end{equation}
(Here $L_{1, i}$ is of form $g_{n_i} \hat L_{1, {n_i}}$ above for some sequence $g_{n_i} \in \pi_1(\orb)$.)

For any sequence of segments $y_0 \in m_i = m \cap  K_i$ with $\partial m_i \subset \partial K_i$
in $T$ and the distance $\lambda_i$ of $m_i$ from $y_0$ to the end points of $m_i$,
 we have $\lambda_i/\delta_i \ra \infty$. 


We need a hypothesis: 
\begin{description}
\item[(H)] Suppose that the sequence $\{L_{1, i}\}$ is $\bdd$-bounded away from one of the end points 
of $m$ in $\partial T$. 
\end{description}
Let $s_i \in m_i$ be a point of $\partial m_i \subset \partial K_i$.
Let $t_i$ be the end point of the extending line $m_i$ further away from $s_i$ from $y_0$ 
and let $s'_i$ be the other end point of $m_i$.

We take a supporting hyperspace $H_i$ at the point $t_i$.
Then we take a subspace $P_i$ of $H_i$ of codimension $2$ in $\bR P^n$ 
disjoint from $\tilde{\mathcal{O}}$, which exists by the proper convexity of $H_i \cap \clo(\tilde{\mathcal{O}})$. 
Then we take hyperspaces containing $P_i$ and points $s', t_i, s_i, y_0$ 
and take the logarithms of the cross ratios to find the distance $d_{\torb}(y_0, s_i)=\lambda_i$
(See Figure \ref{fig:figmetric}.)

We also take a geodesic $\tilde l_i$ from $s_i$ to $L_{1,i}$ of length $\mu_i$ 
and denote by $f_i$ the end point of the extension of 
$s_i$ not in the boundary of $L_{1,i}$. Then we take the logarithm of the cross ratio of $f_i, s_i$ 
and the two other points in the closure of 
$L_{1, i}$ to obtain the distance $\mu_i$. We can replace $f_i$ with $f'_i$ 
the intersection of the line containing $\tilde l_i$ with 
$H_i$. The corresponding logarithm $\mu'_i$ is smaller than or equal to $\mu_i$.

Here to compute $\mu_i$, we need a point $\hat l_{1, i}$ on $L_{1, i}$ and one $\hat m_i$ 
in $\bigcup S(v_i)$ for the pseudo-convex p-end 
vertex $v_i$ corresponding to $L_{1, i}$ if $L_{1, i}$ from a radial p-end of lens-type. 
We need a point $\hat l_i$ on 
the totally geodesic ideal boundary and one $\hat m_i$ in the boundary of the outer part lens if $L_{1, i}$ 
is of totally geodesic type. 
For horospherical case, $\hat l_i$ is in the boundary of the horosphere and $\hat m_i$ is in $\Bd \torb'$. 


\begin{description}
\item[(H1)] For now, assume that $\orb$ has no horospherical ends. 
\end{description} 
So, now we can just use $d_{\torb}$ instead of $d_{\torb'}$. 

We will fix $P_i$ sufficiently far way from $\torb$. 
Let $A$ be an affine space containing $\torb$ containing $P_i$. 
(We will see things from $P_i$, and the half-hyperspaces for two points for $\clo(L_i)$ are bounded away from 
those of two points $y_0, s'$ uniformly as $L_{1, i}$ is bounded away from $t_i$.)

We have \[\lambda_i = |\log[s', t_i, s_i, y_0]|,  
\mu_i = |\log[\hat m_i, f_i, s_i, \hat l_i]| \geq \mu'_i := |\log[\hat m_i, f'_i, s_i, \hat l_i]|.\]
 We define $P(v)$ to be the unique half-hyperspace in $A$ containing in the boundary $P$ and $v \not\in P$. 
 Let $H'_i$ be the half-hyperspace in $A$ containing $P$ and $t_i$. 
Then we have
\[ \mu'_i := \log[P(\hat m_i), P(f'_i), P(s_i), P(\hat l_i)].\]
There exists a half-hyperplane $P'$ containing $P$ and furthermost among $P(\hat m_i)$  away from $H_i$ from the view point of $P$.
There exists a half-hyperplane $P''$ containing $P$ that $P(\hat m_i)$ can be realized as
closest to $H_i$ since $L_{1, i}$ is bounded away from $t_i$ uniformly. 
Then \[ \mu'_i \geq |\log[P', H_i, P(s_i), P(\hat m_i)]| \geq |\log[P', H_i, P(s_i), P'']|\] 
since we chose the minimal possibility in the right term by taking the half-planes to extreme possibility
to lower values first by $P'$ and then by $P''$ second. 
(Here for sufficiently large $i$, $P(s_i)$ is closer to $H_i$ than $P''$ as $s_i \ra t_i$ 
since $L_{i, 1}$ is bounded away from $t_i$.)
We have \[\lambda_i =\log[P(s'), H_i, P(s_i), P(y_0)].\] 
Now parameterize $s_1$ as a linear function $s_1(t)$ of new variable
so that $s_1(t) \ra t_i$ as $t \ra 1$. 
Now define functions of $t$ as 
\[f_1(t) := \log[P', H_i, P(s_i(t)), P''] \hbox{ and } 
f_2(t) :=\log[P(s'), H_i, P(s_i(t)), P(y_0)].\] 
Here $s'$ and $y_0$ are fixed. 
Then we see easily that only constant 
terms are different as rational functions of $t$ with a pole of same order at $1$.
We obtain $f_2(t) \leq C f_1(t)$ for $1 - \eps < t < 1$. 

Thus, we obtain $\lambda_i \leq C \mu'_i \leq C \mu_i$ for sufficiently large $i$ 
for some fixed positive constant $C$. 
Therefore, considering all directions of $m$, we obtain that $K_i$ has a diameter 
bounded by $Cd_{{\torb}}(x, L_{1, i})$ for a positive constant $C'$.
This is a contradiction to equation \eqref{eqn:iii}. 


Now suppose that we have some horospherical p-ends. That is, we drop the hypothesis (H1). 
We follow the same argument as above and we show that a triangle $T'$ in $\clo(\torb)$, $T^{\prime o} \subset \torb$,  
must satisfy $\partial T' \subset \bigcup S(v_{\tilde E})$ for
a radial end $\tilde E$. 
We obtain a sequence $\{L_{1, i}\}$ as above. Since there are only three types, we assume that each 
$L_{1, i}$ share a common type for each sequence. 

 If $L_{1, i}$ are from still radial or totally geodesic p-ends, then 
in the metric $d_{{\torb}'}$ we can do the same arguments where we obtain lower bound 
in terms of $d_{\torb}$. We have  $d_{\torb} \leq d_{{\torb}'}$.
Since on totally geodesic $T'$ the metrics $d_{{\torb}'}$ and $d_{\torb}$
are the same, we again obtain that $K_i$ has a diameter 
bounded by $Cd_{{\torb'}}(x, L_{1, i})$ for a positive constant $C'$. 

If each $L_{1, i}$ is the boundary of a horosphere, then we use $d_{\torb}$ and 
the larger $d_{{\torb}'}$ to obtain the same proof. 
(In this case, the hypothesis (H) is always true. 
the sequence of the $\bdd$-diameters of $\{\clo(L_{1, i})\}$ goes to zero since the components of the inverse 
images of the end neighborhoods are locally finite. )
Hence, $T$ cannot be in the limit of a sequence $\{L_{1, i}\}$ of horospherical type. 

Now we drop the hypothesis (H). 
Suppose now that the sequence $\{L_{1, i}\}$ is not $\bdd$-bounded away from the both ends of $m$ in $\partial T'$. 

Suppose that there exists a sequence $(L_{1, i_k})$ so that each end point $\delta_j m$, $j=1, 2$, of $m$ has 
a sequence $\{x^j_{i_k}\}$, $j=1, 2$, in $(L_{1, i_k})$ where $\{x^j_{i_k}\} \ra \delta_i m$ as $k \ra \infty$ in the $\bdd$-metric. 
Suppose that $L_{1, i_k}$ is from a radial p-end of lens-type. 
We take the union of the two segments in $\bigcup S(v_{\tilde E_{i_k}})$ to form an arc $s_{i_k}$
with $\partial s_{i_k} =\{x^1_{i_k}, x^2_{i_k}\}$. 

Suppose that $L_{1, i_k}$ is totally geodesic of lens-type.
Then there is an arc $m_{i_k}$ in a lens obtained by taking the intersection of $m$ with the lens neighborhood 
in $L_{1, i_k}$. Then we take a maximal segment $s_{i_k}$ in $\clo(S_{E_{i_k}})$ near $m_{i_k}$ with 
endpoints $x^j_{i, k}$, $j=1, 2$ and $\{x^j_{i, k}\} \ra \delta_j m$, $j = 1, 2$ in the $\bdd$-metric. 

We take a subsequence so that $\{s_{i_k}\}$ geometrically converges to a union $s$ of one or two segments. 
By strictness of the SPC-structure on $\orb$, it follows $s$ is a subset of $\bigcup S(v_{\tilde E})$ or 
$\clo(S_{\tilde E})$. Since the end point of $m$ is in $s$, we have $\delta_1 m, \delta_2 m \in S({v_{\tilde E}})$ for 
a radial end $\tilde E$. ($\tilde E$ cannot be totally geodesic.)





Now, an edge of $T'$ meets $\bigcup S(v_i)$ for the corresponding p-end vertex $v_i$.
By Theorems \ref{thm:lensclass} (ii) and \ref{thm:redtot}, the edge is $S(v_i)$. 
By changing the directions of the maximal segment $m$ in $T'$, it follows that 
$\partial T' \subset S(v_i)$ since the edges where $m$ ends have to be in $S(v_j)$ for the same $j$
by the above reasoning. 
This contradicts the assumption.


Therefore, we conclude that the asymptotic limit triangle $T$ in $\torb_\infty$ 
cannot be contained in one piece in the asymptotic limit. 

It is not possible for exactly two components ${\mathcal P}'$ contain $T$: 
Suppose $T = C_1 \cup C_2$ for closed $C_1, C_2$ and $C_1 \cap C_2$ is a single point. 
However, removing a point cannot separate $T$ into two components. 

As a consequence, let $p_1, p_2$ be two points of the interior of $T$ not in a single piece.
Then there exists a point $p_3$ in general position so that no two of $p_1, p_2, p_3$ are contained 
in a common piece. 

By taking a geodesics between two of $p_1, p_2, p_3$, we obtain a simple triangle $\Delta'$.
This contradicts the definition of tree-graded spaces. 
(See Definition 1.10 of \cite{DuSa}.) 

We conclude that there exists no triangle such as $T'$.

\end{proof}

\begin{remark} 
We think that there is a proof for $n=3$ using trees as Benoist have done in closed $3$-dimensional cases in \cite{Ben4}. 
\end{remark}

We recapitulate the results: 
\begin{corollary}\label{cor:remhyp}
Assume that $\mathcal{O}$ is a noncompact strongly tame SPC-orbifold with generalized admissible ends 
and satisfies {\rm (IE)} and {\rm (NA)}. Let $E_1, \dots, E_n, E_{n+1}, \dots, E_k$ be the ends of $\orb$
where $E_{n+1}, \dots, E_k$ are some or all of the hyperbolic ends. 
Assume $\partial \orb =\emp$. 
Then $\pi_1(\mathcal{O})$ is a relatively hyperbolic group with respect to 
the admissible end groups $\pi_1(E_1), ..., \pi_1(E_n)$ 
if and only if $\mathcal{O}_1$ is strictly SPC with respect to admissible ends $E_1, \dots, E_n$. 
\end{corollary}
\begin{proof} 
If $\pi_1(\mathcal{O})$ is a relatively hyperbolic group with respect to 
the admissible end groups $\pi_1(E_1), ..., \pi_1(E_n)$, 
then $\pi_1(\mathcal{O})$ is a relatively hyperbolic group with respect to 
the admissible end groups $\pi_1(E_1), ..., \pi_1(E_k)$ by Theorem \ref{thm:drutu}. 
By Theorem \ref{thm:converse}, it follows that $\mathcal{O}$ is strictly SPC with 
respect to the ends $E_1,.., E_k$. 
By Proposition \ref{prop:remconch}, we obtain that $\mathcal O_1$ is strictly SPC with respect to 
$E_1, \dots, E_n$. 

For converse, if $\mathcal O_1$ is strictly SPC with respect to $E_1, \dots, E_n$, 
then $\mathcal O_1$ is strictly SPC with respect $E_1, \dots, E_k$. 
By Theorem \ref{thm:relhyp}, $\pi_1(\mathcal{O})$ is a relatively hyperbolic group with respect to 
the admissible end groups $\pi_1(E_1), ..., \pi_1(E_k)$. 
The conclusion follows by Theorem \ref{thm:drutu}. 
\end{proof}


\subsection{Strict SPC-structures deform to strict SPC-structures.}

\begin{theorem}\label{thm:relhyp1}
Let ${\mathcal{O}}$ denote a noncompact strongly tame SPC-orbifold with admissible ends 
and satisfies {\rm (IE)} and {\rm (NA)}.  Assume $\partial \orb =\emp$. 
Let \[E_1, \dots, E_n, E_{n+1}, \dots, E_k\] be the ends of $\orb$
where $E_{n+1}, \dots, E_k$ are some or all of the hyperbolic ends. 
\begin{itemize}
\item Given a deformation through SPC-structures with generalized admissible ends of a strict SPC-orbifold with respect to admissible ends 
$E_1, \dots, E_k$ to an SPC-structure, the SPC-structure remains strictly SPC with respect to $E_1, \dots, E_k$. 
\item Given a deformation through SPC-structures with generalized admissible ends of a strict SPC-orbifold with respect to $E_1, \dots, E_n$ 
to an SPC-structure with generalized admissible end,  the SPC-structure remains strictly SPC 
with respect to admissible ends $E_1, \dots, E_n$. 
\end{itemize}
\end{theorem}
\begin{proof}


For the second item, $\mathcal{O}_1$ being strictly SPC with respect to $E_1, \dots, E_n$ 
implies that $\pi_1(\mathcal{O})$ is relatively hyperbolic 
with respect to the end fundamental groups $\pi_1(E_1), \dots, \pi_1(E_n)$ by Corollary \ref{cor:remhyp}. 
Then the small deformation does not change the group property. Thus, after deformation 
$\mathcal{O}$ is strictly SPC with respect to $E_1, \dots, E_n$ by the fourth item
by Corollary \ref{cor:remhyp}. 

The first item is simpler to show by Theorem \ref{thm:relhyp} and Theorem \ref{thm:converse}.

\end{proof}

\part{The openness and the closedness of the deformations of convex real projective structures}

\chapter{The openness of the convex structures}\label{sec:openness}

In this section also, we will only need $\bR P^n$ versions. 
Given a real projective orbifold with radial ends or totally geodesic ends of lens-type, each end has an orbifold structure of dimension $n-1$ 
and inherits a real projective structure. 

Let $\mathcal U$ and $s_{\mathcal U}: {\mathcal{U}} \ra  (\bR P^n)^{e_1} \times (\bR P^{n \star})^{e_2}$
 be as in Section \ref{subsec:endreal}.
\begin{itemize}
\item We define $\Def^s_{E, \mathcal U, s_{\mathcal U}, ce}(\mathcal{O})$ to be the subspace of 
$\Def_{E,\mathcal U, s_{\mathcal U}}(\mathcal{O})$ 
of real projective structures with generalized admissible ends determined by $s_{\mathcal U}$,
and stable irreducible holonomy homomorphisms in $\mathcal U$. 

\item We define 
$\CDef_{E, s_{\mathcal U}, ce}(\mathcal{O})$ to be the subspace consisting of SPC-structures with generalized admissible ends
 in $\Def_{E, s_{\mathcal U}, ce}(\mathcal{O})$. 

\item 
We define 
$\CDef_{E, u, ce}(\mathcal{O})$ to be the subspace of 
$\Def_{E, u, ce}(\mathcal{O})$ consisting of SPC-structures with generalized admissible ends. 

\item We define 
$\SDef_{E, s_{\mathcal U}, ce}(\mathcal{O})$ to be the subspace of consisting of strict SPC-structures with admissible ends
 in $\Def_{E, s_{\mathcal U}, ce}(\mathcal{O})$. 
\item We define 
$\SDef_{E, u, ce}(\mathcal{O})$ to be the subspace of 
$\Def_{E, u, ce}(\mathcal{O})$ consisting of strict SPC-structures with admissible ends. 
\end{itemize}


\begin{theorem}\label{thm:conv} 
Let $\mathcal{O}$ be a noncompact strongly tame real projective $n$-orbifold with generalized admissible ends 
and satisfies {\rm (IE)} and {\rm (NA)}. Assume $\partial \orb =\emp$. 
In $\Def^s_{E, u, ce}(\mathcal{O})$, the subspace  $\CDef_{E, u, ce}({\mathcal{O}})$ of SPC-structures with generalized admissible ends is open, and 
so is $\SDef_{E, u, ce}({\mathcal{O}})$.
\end{theorem}

\begin{theorem}\label{thm:conv2} 
Let $\mathcal{O}$ be a noncompact strongly tame real projective $n$-orbifold with generalized admissible ends 
and satisfies {\rm (IE)} and {\rm (NA)}. Assume $\partial \orb =\emp$. 
For an open $\PGL(n+1, \bR)$-conjugation invariant \[\mathcal{U} \subset \Hom^s_E(\pi_1(\orb), \PGL(n+1, \bR)),\]
and a $\PGL(n+1, \bR)$-equivariant section 
$s_{\mathcal{U}}: {\mathcal{U}} \ra (\bR P^n)^{e_1} \times (\bR P^{n \star})^{e_2}$, 
$\CDef_{E, s_{\mathcal U}, ce}(\mathcal{O})$ is open 
 in $\Def^s_{E, s_{\mathcal U}, ce}(\mathcal{O})$,
 and so is $\SDef_{E, s_{\mathcal U}, ce}(\mathcal{O})$.
 \end{theorem}

For orbifolds such as these the deformation space of convex structures 
may only be a proper subset of space of the characters. 

By Theorem \ref{thm:conv} and Theorem \ref{thm:A}, we obtain:
\begin{corollary}\label{cor:conv} 
Let $\mathcal{O}$ be a noncompact strongly tame real projective $n$-orbifold with generalized 
admissible ends and satisfies {\rm (IE)} and {\rm (NA)}. Assume $\partial \orb =\emp$. 
Then \[\hol: \CDef_{E, u, ce}(\mathcal{O}) \ra \rep^s_{E, u, ce}(\pi_1(\mathcal{O}), \PGL(n+1, \bR))\] is a local homeomorphism. 
Furthermore, if $\mathcal{O}$ has a strict SPC-structure with admissible ends and and satisfies {\rm (IE)} and {\rm (NA)}, then so is
\[\hol: \SDef_{E, u, ce}(\mathcal{O}) \ra \rep^s_{E, u, ce}(\pi_1(\mathcal{O}), \PGL(n+1, \bR)).\]
\end{corollary}

\begin{corollary} \label{cor:conv2}
Let $\mathcal{O}$ be a noncompact strongly tame real projective $n$-orbifold 
with generalized admissible ends and satisfies {\rm (IE)} and {\rm (NA)}. Assume $\partial \orb =\emp$. 
Let $\mathcal{U}$ and $s_{\mathcal{U}}$ be as above. 
Suppose that $\mathcal{U}$ has its image ${\mathcal{U}}'$ in $\rep^s_{E, u, ce}(\pi_1(\mathcal{O}), \PGL(n+1, \bR))$. 
Then 
\[\hol: \CDef_{E, s_{\mathcal U}, ce}(\orb) \ra \mathcal{U'}\] is a local homeomorphism, and so is
\[\hol: \SDef_{E, s_{\mathcal U}, ce}(\orb) \ra \mathcal{U'}.\]
\end{corollary}

Here, in fact, one needs to prove for every possible continuous section. 

Koszul  \cite{Kos} proved these facts for closed  affine manifolds and expanded by Goldman \cite{Gconv}
for the closed real projective manifolds. See \cite{dgorb} and also Benoist \cite{Ben3}.


\section{The proof of the convexity theorem}


Recall that a {\em convex open cone} $V$ is a convex cone of $\bR^{n+1}$ containing the origin $O$ in the boundary. 
Recall that a {\em properly convex open cone} is a convex cone so that its closure does not contain a pair of $v, -v$ for 
a nonzero vector in $\bR^{n+1}$. Equivalently, it does not contain a complete affine line in its interior.

A dual convex cone $V^*$ to a convex open cone is a subset of $\bR^{n+1 *}$ given by
the condition $\phi \in V^*$ if and only if $\phi(v)> 0$ for all $v \in \clo(V) -\{O\}$.

Recall that $V$ is a properly convex open cone if and only if so is $V^*$ and $(V^*)^* = V$ 
under the identification $(\bR^{n+1 *})^* = \bR^{n+1}$. 
Also, if $V \subset W$ for a properly convex open cone, then $V^* \supset W^*$.

For properly convex open subset $\Omega$ of $\bR P^n$, its dual $\Omega^*$ in $\bR P^{n*}$ 
is given by taking a cone $V$ in $\bR^{n+1}$ corresponding to $\Omega$
and taking the dual $V^*$ and projecting it to $\bR P^{n *}$. The dual $\Omega^*$ is a properly convex open domain if so was $\Omega$.

Recall the Koszul-Vinberg function for a properly convex cone $V$ and the dual properly convex cone $V^*$ 
\begin{equation}\label{eqn:kv}
f_{V^*}: V \ra \bR_+ \hbox{ defined by }  x \in V \mapsto f_{V^*}(x)= \int_{V^*} e^{-\phi(x)} d\phi
\end{equation}
where the integral is over the euclidean measure in $\bR^{n+1 *}$. 
This function is strictly convex if $V$ is properly convex. $f$ is homogeneous of degree $-(n+1)$. 
Writing $D$ as the affine connection, we will write the Hessian $D d\log(f)$. 
The hessian is positive definite and norms of unit vectors are 
strictly bounded below in a compact subset $K$ of $V - \{O\}$.
(See Chapter 6 of \cite{wmgnote}.) 
The metric $D d\log(f)$ is invariant under the group $\Aff(V)$ of affine transformation acting on $V$. 
(See Theorem 6.4 of \cite{wmgnote}.)
In particular, it is invariant under dilatation maps $x \mapsto s x$ for $s > 0$.





A {\em Hessian metric} on an open subset $V$ of an affine space 
is a metric of form $\partial^2 f /\partial x_i \partial x_j$ for affine coordinates $x_i$ \index{Hessian metric}
and a function $f: V \ra \bR$ with a positive definite Hessian defined on $V$. 
A Riemannian metric on an affine manifold is a {\em Hessian metric} if the manifold is affinely covered by a cone 
and the metric lifts to a Hessian metric of the cone. 

Let $\mathcal{O}$ have an SPC-structure $\mu$ with admissible ends. Clearly $\tilde{\mathcal{O}}$ is a properly convex open domain. 
Then an affine suspension of $\mathcal{O}$ has an affine Hessian metric defined by $D d\phi$ for a function $\phi$ defined on 
the cone in $\bR^{n+1}$ corresponding to $\tilde{\mathcal{O}}$ by above. 

For a subset $K$ of $\SI^n$ or $\bR P^n$, we denote by $C(K)$ the cone in $\bR^{n+1}-\{O\}$ the inverse image of $K$ under
the projection. 
Recall that a {\em parameter of real projective structures} $\mu_t, t\in [0, 1]$ on a strongly tame orbifold $\orb$.
is a collection so that the restriction $\mu_t|K$ to each compact suborbifold $K$ is continuous parameter; 
In other words, the associated developing map $\dev_t|\hat K$ for every compact subset $\hat K$ of $\torb$ 
is a family in the $C^r$-topology continuous for the variable $t$. (See \cite{dgorb} and Canary \cite{Canary}.)


\begin{proposition}\label{prop:openess}
Let $\mathcal{O}$ be a strongly tame orbifold with ends and satisfies {\rm (IE)} and {\rm (NA)}.
Suppose that $\mathcal{O}$ has an SPC structures $\mu_0$ with generalized admissible ends
and the suspension of $\mathcal{O}$ with $\mu_0$ has a Hessian metric. 
The ends of $\mathcal{O}$ are given $\cR$-type or $\cT$-types. 
If 
\begin{itemize}
\item $\mu_0$ is SPC, and a parameter of real projective structure $\mu_{0, t}$, $t\in [0, 1]$, 
with generalized admissible ends  and $\mu_{0,0} =\mu_0$ 
where the $\cR$-types or $\cT$-types of ends are preserved,  
\end{itemize}
then for sufficiently small $t$, the affine suspension
$C(\torb)$ for $\torb$ with $\mu_t$ also has a Hessian metric invariant under dilations and the 
affine suspensions of the holonomy homomorphism for $\mu_{0, t}$. 
\end{proposition} 
\begin{proof}
We will keep $\torb$ fixed and only change the structures on it. 
Note that the subsets here remain fixed and the only changes are on the real projective structures, 
i.e., the atlas of charts to $\bR P^n$. 

Let $\tilde{\mathcal{O}}$ in $\bR P^n$ denote the universal covering domain corresponding to $\mu_0$. 
Again $\dev_0$ being an embedding identifies the first with subsets of $\bR P^n$
but $\dev_t$ is not known to be so. We shall prove this below. 


We will make a simplifying assumption: 
\begin{itemize} 
\item[(H)] For $\mu_0$, every radial end of generalized lens-type is an end of radial type. 
\end{itemize}

(A) The first step is to understand the deformations of the end-neighborhoods: 

Let $\tilde E'$ be a p-end of $\torb$ and it corresponds to a p-end of $\torb'$ as well. 
There exists a $C^r$-parameter of real projective structures $\mu_{0, t}$ 
with radial or totally geodesic ends of lens-type so that $\mu_{0, 0} = \mu_0$. 
We can also find a parameter of developing maps $\dev_t$ associated with $\mu_{0, t}$
where $\dev_t|K$ is a continuous with respect to $t$ for each compact $K \subset \torb$. 
To begin with, we assume that $\tilde E'$ keeps being of a radial p-end of lens type or horospherical type.

Here, we do not allow $\cR$-type ends to change to $\cT$-type ends and vice versa
as this will make us to violated the local injectivity property from the deformation space to 
a space of characters. (See Theorem \ref{thm:A}.)
Thus, we need to consider only three cases to prove openness: 
\begin{itemize} 
\item[(I)] A radial p-end changes to a radial p-end in the cases: 
\begin{itemize} 
\item A radial p-end of lens-type becoming a radial p-end of generalized lens-type.
\item A horospherical p-end becoming a radial p-end of generalized lens-type or horospherical type. 
\end{itemize} 
\item[(II)] A totally geodesic p-end of lens type deforms to a totally geodesic p-end of lens type. 
\item[(III)] A horospherical p-end deforms to a horospherical p-end or to a totally geodesic p-end.
\end{itemize}
Here, the premise assumes that these hold for the corresponding 
holonomy homomorphisms of the fundamental groups of ends.
We will show that the above happens in actuality as well.

We will now work on one end at a time: 
Let us fix a p-end $\tilde E$ of R-type of $\tilde{\mathcal{O}}$. 
Let $v$ be the p-end vertex of $\tilde E$ for $\mu_0$ and $v'$ that for $\mu_1$. 
We denote by $v = v_0$ and $v'= v_1$. 
Assume that $v_t$ is the p-end vertex of $\tilde E$ for $\mu_t$. 
Let $\dev_t$ and $h_t$ denote the developing map and the holonomy homomorphism of $\mu_t$. 
Assume first that the corresponding p-end for $\mu$ is of radial or horospherical type. 
By post-composing the developing map by a transformation near the identity,
we assume that the perturbed vertex $v_t$ of the corresponding p-end $\tilde E$
is mapped to $v_0$, i.e., $v = \dev_t(v_t)$.

(I) 
If $\tilde E$ is of radial p-end of horospherical or lens-type for $\mu_0$, then $\tilde E$ is always a radial p-end of 
horospherical a generalized lens-type for $\mu_t$.





Let $\Lambda$ denote the limit set in the tube of the radial p-end $\tilde E$ for $\torb$ if $\tilde E$ is of lens-type radial p-end, 
or $\{v_{\tilde E}=v\}$ if $\tilde E$ is a horospherical type for $\mu_0$. 

\begin{itemize}
\item Let $R_{v, t}(\torb)$ denote the space of rays in $\torb$ mapping to ones from $v$ under $\dev_t$,
\item $r_{v, t}(A_t)$ denote the union of segments of $\Omega_{s_0, t}$ in $R_{v, t}(\torb)$ passing through the set $A_t \subset \torb$
mapping to ones from the p-end vertex $v$ under $\dev_t$.  
\end{itemize}

Then for $\mu_0$, 
a smooth and strictly convex hypersurfaces $\partial \Omega_{s_0} \subset \torb$, $s_0 \in \bR_+$, 
as obtained by Lemma \ref{lem:expand} with 
\[ \clo(\partial \Omega_{s_0}) - \partial \Omega_{s_0}  \subset \Lambda.\]
Also, each radial geodesic is transversal to $\partial \Omega_{s_0}$. 
$\partial \Omega_{s_0}$ bounds a properly convex domain $\Omega_{s_0}$. 
Here $\bigcup_{s_0\in S} \Omega_{s_0} = \torb$ where $S$ is an infinite index set.


For a sufficiently small $t$ in $\mu_{0, t}$, we obtain a domain 
$U_t \subset \torb$ that is a concave neighborhood or a horospherical one 
with $U_0 \subset \Omega_{s_0}$. 
Now $U_t$ can be compactified to $\hat U_t $ so that $\dev_t| U_t$ for the developing map 
$\dev_t$ extends to an imbedding $\widehat{\dev_t}| \hat U_t$ to a concave end neighborhood 
or a horoball. 
In the first two cases, there exists a point $v' \in \hat U_t$.   
Let $S(v)_t$ denote the set of segments from $v$ in $\hat U_t $ in the corresponding to the boundary of 
$S_{\tilde E}$ of $\mu_t$. 
Again $\Bd U_t \cap \torb$ is assumed to be strictly concave for all sufficiently small $t$ if $U_0$ was a concave p-end neighborhood. 

Let the tube $B_t$ be determined by $\dev_t(U_t)$; i.e., $B_t $ is the union of great segments with end points in $v, v_-$ in the direction of 
$\dev_t(U_t)$. 

Define $\Lambda_t$ be the limit set in $\bigcup S(v)_t$ for generalized radial p-end cases and 
$\Lambda_t =\{v\}$ for the horospherical case. 

We will denote by $S_{\tilde E, t}$ the universal cover of the end orbifold corresponding to $\tilde E$ for $\mu_{0, t}$. 
Since $\tilde E$ is a radial p-end and $S_{\tilde E}$ is properly convex or complete affine 
for $\mu_0$, the admissible holonomy condition on $\mu_t$ implies that $\tilde E$ is a generalized lens-type end
or $S_{\tilde E, t}$ is a complete affine space. (See \cite{Ben3} for properly convex cases.) 
The surface $S_{\tilde E, t}$ is always a convex real projective $(n-1)$-orbifold. 

We assume that the $C^r$-change $r \geq 2$ of $\mu_{0, t}$ from $\mu_0$ be sufficiently small so that 
we obtain a region $\Omega_{s_0, t} \subset \torb$ with $\partial \Omega_{s_0, t}$ strictly convex
and transversal to radial rays under $\dev_t$. Here, $\Omega_{s_0, 0} = \Omega_{s_0}$. 
(The strict convexity follows since the change of affine connections are small as the argument of Koszul \cite{Kos}.)
Choose a compact domain $F$ in $\partial \Omega_{s_0}$. 
Let $F_t$ denote the corresponding deformed set in $\partial \Omega_{s_0, t}$. 
For sufficiently small $t$, $0 < t <1$, $\dev_t(F_t)$ is a subset of the tube $B_t$ determined by $\dev_t(U_t)$
since $B_t$ and $\dev_t(F_t)$ depend continuously on $t$. 
By transversality to the segments mapping to ones from $v$ under $\dev_t$, it follows that 
$\dev_t|\partial \Omega_{s_0, t}$ gives us a smooth immersion to a convex domain $S_{\tilde E, t}$ 
that equals the space of maximal segments $B_t$ with vertices $v$ and $v_-$. 
In this case, $\dev_t| \partial \Omega_{s_0, t}$ is a diffeomorphism to $S_{\tilde E, t}$ by \cite{Kobpaper} if 
$S_{\tilde E, t}$ is properly convex and by Proposition \ref{prop:affinehoro}(i) if $h_t(\pi_1(\tilde E))$ is horospherical. 
Since $\pi_1(\tilde E)$ is of generalized lens type or horospherical type, it follows that 
\[\dev_t(\clo(\partial \Omega_{s_0, t}) - \partial \Omega_{s_0, t}) \subset \dev_t(\Lambda_t)\]
by Corollary 8.5 of \cite{endclass}.

Suppose that $\tilde E$ is of radial lens-type.
Since $\partial \Omega_{s_0, t}$ is convex, each point of $\partial \Omega_{s_0, t} \cup S(v)_t$ 
has a neighborhood that maps under the completion $\widehat{\dev}_t$ to a convex open ball. 
Thus, $\partial \Omega_{s_0, t} \cup S(v)_t$ 
bounds a compact ball $\Omega_{s_0, t} \cup \bigcup S(v)_t$ by Lemma \ref{lem:locconv}. 

Suppose that $\tilde E'$ is horospherical type. 
$\partial \Omega_{s_0, t} \cup \{v\}$ bounds a convex domain $\Omega_{s_0, t}$
by the local convexity of the boundary set $\partial \Omega_{s_0, t} \cup \{v\}$ and Lemma \ref{lem:locconv}. 

We showed that the $\tilde E$ is still a radial p-end of lens-type or horospherical here for $\mu_1$. 

Now, we will show how these regions change. 
\begin{itemize}
\item Let $K$ be a compact convex subset of  $\Omega_{s_0, 0}$ with smooth boundary,
which we can choose to be sufficiently large, and $K_t$ the perturbed one in  $\Omega_{s_0, t}$
and $\tilde E$ be the corresponding p-end. We can form a compact set inside $\Omega_{s_0, t}$ consisting 
of segments from the p-end vertex to $K$ in the set of radial segments. 
For $\mu_{0, t}$ from $\mu_0$ changed by a sufficiently small manner, 
a compact subset $R_v(K) \subset R_v(\torb)$ is changed to a compact convex domain 
$R_{v, t}(K_t) \subset R_{v, t}(\torb)$. 
\end{itemize}

The p-end $\tilde E$ has either a concave p-end neighborhood or a horospherical p-end neighborhood. 
If $\tilde E$ has a concave p-end neighborhood, then since $\Omega'_{s_0, t}$ is strictly convex, 
we can obtain a lens. Thus, $\tilde E$ is admissible.

Let $K_t$ be a large compact convex domain in $\torb$ with $\mu_t$. For sufficiently small $t$, 
$\Omega_{s_0, t} \cap r_{v}(K_t)$ is a convex domain since $\partial \Omega_{s_0, t}$ is strictly convex and 
transversal to segments from $v$ and hence embeds to a convex domain under $\dev_t$. 
We may assume that $\dev_t(\Omega_{s_0, t} \cap r_{v}(K_t))$ is sufficiently close to $\dev_0(\Omega_{s_0} \cap r_v(K))$ 
as we changed the real projective structures sufficiently small in the $C^2$-sense and hence the holonomy 
of the generators of the p-end fundamental group $\pi_1(\tilde E)$ is changed by a small amount
if the change from $\mu_0$ to $\mu_{0, t}$ is sufficiently small. 

An {\em $\eps$-thin} space is a space which is an $\eps$-neighborhood of its boundary for small $\eps> 0$.
By Corollary \ref{cor:smvar}, we may assume that $\clo(R_v(\torb))$ and $\clo(R_{v, t}(\torb))$ as subsets of $\SI^{n-1}_v$
are $\eps$-$\bdd$-close convex domains in the Hausdorff sense for sufficiently small $t$.
Thus, $\dev_t(r_v(\torb)-r_v(K_t))$ is an $\eps$-thin space and so is $\dev_0(r_v(\torb)-r_v(K))$ for sufficiently small changes of $\torb$ and $K$. 
Given an $\eps > 0$, we can choose $K$ and $K'_t$ and small 
deformation of the real projective structures so that 
\begin{align} 
\clo(\dev_t( \Omega_{s_0} \cap (r_v(\torb)-r_v(K)))) \subset N_\eps(\clo( \dev_0(\Omega_{s_0} \cap r_v(K))) \nonumber \\ 
\clo(\dev_t(\Omega_{s_0, t} \cap (r_v(\torb)-r_v(K_t))))\subset N_\eps(\clo(\dev_0(\Omega_{s_0, t} \cap r_v(K_t)))). 
\end{align} 
The reason is that the supporting hyperplanes of $\clo(\Omega_{s_0})$ at points of $\partial r_v(K) \cap \clo(\Omega_{s_0})$
are in arbitrarily small acute angles from geodesics from $v$ and similarly for those of $\clo(\Omega_{s_0, t})$ for sufficiently small $t$. 
Therefore the Hausdorff distance between $\clo(\dev_0(\Omega_{s_0}))$ and $\clo(\dev_t(\Omega_{s_0, t}))$ can be made
as small as desired as long as we choose $\mu_{0, t}$ sufficiently close to $\mu_0$.


(II) Now suppose that $\tilde E$ is totally geodesics p-end, and we suppose that $\tilde E$ is totally geodesic for $\mu_t$.
We can take dual domains of corresponding p-end neighborhoods and obtain a radial lens-type p-end or a horospherical p-end
by Theorem \ref{thm:duality}. 
We take $\Omega_{s_0}$ be the convex domain obtained as in Lemma \ref{lem:expand}.
Then $\partial \Omega_{s_0}/\pi_1(\tilde E)$ is a strictly convex compact $(n-1)$-orbifold. 
Suppose that $\mu_{0, t}$ is sufficiently close to $\mu_0$. Then $\partial \Omega_{s_0, t}$ is also
cocompact under the $\pi_1(\tilde E)$-action associated with $\mu_1$ and strictly convex. 
We have $\partial \clo(\partial \Omega_{s_0, t}) = \partial \clo(S_{\tilde E, t})$ for a totally geodesic ideal boundary 
component $\tilde S_{\tilde E, t}$ by Theorem 8.2 of \cite{endclass} since $h_t(\tilde E)$ is of lens-type and hence 
satisfies the uniform middle eigenvalue condition. 
Therefore the union of $\partial \Omega_{s_0, t}$ and 
$\clo(\tilde S_{\tilde E, t})$ bounds a properly convex compact $n$-ball in $\bR P^n$.
Hence, we obtain a lens-type end or horospherical end for $\tilde E$ and $\mu_t$ by Lemma \ref{lem:locconv}. 

Moreover, we may assume without loss of generality that
$\bigcup_{s_0 \in I} \Omega_{s_0, 0}= \torb$ for some infinite index $I$.



Let us choose a sufficiently small $\eps > 0$. 
Let $B$ be a compact $(n-1)$-ball in $\partial \Omega_{s_0}$ so that 
\[ \bdd^H(\dev_0(\partial \Omega_{s_0} - B), \dev_0(\partial \clo(S_{\tilde E}))) < \eps. \]
Given a supporting hyperplane $W_x$ of a point $x$ of $\dev_0(\partial \Omega_t)$, there exists 
a supporting closed half-sphere $H_x$
containing it as the the boundary. Let $V_{\tilde E}$ as the hyperplane containing $\dev_0(S_{\tilde E})$. 
We define the {\em shadow} $S$ of $\partial B$ as the set 
\[ \bigcap_{x \in \partial B} H_x \cap V_{\tilde E}. \]
Then we can choose sufficiently large $B$ so that $\bdd^H(S, \dev_0(\clo(S_{\tilde E}))) \leq \eps$. 
We can also assure that $W_x$ meets $V_{\tilde E}$ in angles in $(\delta, \pi-\delta)$ for some $\delta > 0$
by compactness of $\partial B$ and the continuity of map $x \mapsto W_x$. 

Suppose that we change the structure from $\mu_0$ to $\mu_t$ with a small $C^2$-distance. 
Then $B$ will change to $B'_t$ with $W_x$ change by small amount. The new shadow $S'_t$ will 
have the property $\bdd^H(S'_t, \clo(S_{\tilde E, 1})) \leq \eps$ for a sufficiently small $C^2$-change of $\mu_t$ from $\mu_0$. 
Hence, we obtain \[ \bdd^H(\dev_t(\partial \Omega_{s_0, t} - B'_t), \dev_t(\partial \clo(S_{\tilde E, t}))) < \eps \] 
for a sufficiently small $C^2$-change of $\mu_t$ from $\mu_0$. 
Therefore by Corollary \ref{cor:smvar} the Hausdorff distance between $\clo(\Omega_{s_0})$ and $\clo(\Omega_{s_0, t})$ can be made
as small as desired as long as we choose $\mu_{0, t}$ sufficiently close to $\mu_0$.
(Note that we can have a change to a horospherical end here.)

Here, the admissibility of the ends of orbifolds follows since by construction we obtain a lens shaped one-sided 
neighborhood for the totally geodesic p-end $\tilde E$. 

(III) Lemma \ref{lem:horob} studies this case. Here, similarly to the case (II), we can
obtain that $\clo(\Omega_{s_0})$ and $\clo(\Omega'_{s_0, t})$ can be made as small as one desires.



(B) We change the Hessian function on the cone associated with 
the universal covers. We need to obtain one for the deformed end neighborhoods and another one the outside 
of the union of end neighborhoods and patch the two together.  

With $\torb$ with $\mu_t$, we obtain a special affine suspension on $\orb \times \SI^1$ with the affine structure $\hat \mu_t$. 
Let $C(\torb)$ be the cone over $\torb$. Then this covers the special affine suspension. 
Let $\tilde \mu_t$ denote the affine structure on $C(\torb)$ corresponding to $\hat \mu_t$. 
For each $\mu_t$, it has an affine structure $\tilde \mu_t$, different from the induced one from $\bR^{n+1}$ as for $t = 0$. 
We require that scalar multiplication \[s\cdot v = sv, v \in C(\torb), s \in \bR\] for any affine structure $\tilde \mu_t$. 
Also, given a subset $K$ of $\torb$, we denote by $C(K)$ the corresponding set in $C(\torb)$. 
This set is independent of $\tilde \mu_t$ but will have different affine structures nearby. 

For $\mu_0$, $\dev_0(\torb)=\torb$ is a domain in $\SI^n$. 
Recall the Koszul-Vinberg function $f: C(\torb) \ra \bR_+$ homogeneous of degree $-n-1$. (See Lemma \ref{lem:KV}.) \index{Koszul-Vinberg function} 
By Lemma \ref{lem:expand}, 
the Hausdorff distance between $\clo(\Omega_{s_0})$ and $\tilde{\mathcal{O}}$ can be made as small as 
desired given $s_0$. 
By the third item of Lemma \ref{lem:convHaus},
the Hessian functions $f'_t$ defined by equation \eqref{eqn:kv} 
on $C(\Omega_{s_0, t})^o$
is very close to the original Hessian function $f$ 
in compact subsets of $C(\Omega_{s_0}^o)$ in the $C^2$ topology
by Lemma \ref{lem:KV}. By construction, $f'_t$ is homogeneous of degree $-n-1$. 

The holonomy groups $h(\pi_1(\mathcal{O}))$ and $h(\pi_1(\tilde E))$ being in $\GL(n+1, \bR)$ preserve $f$ and $f'$ under 
deck transformations respectively.

Now do this for all p-ends and we obtain functions $f'_t$ on $C(U_t)$ of the 
$\pi_1(\orb)$-invariant mutually disjoint union $U_t$ of p-end neighborhoods of p-ends of 
$\torb$ 
for $\mu_t$ and sufficiently small $t_0$.

Let $U$ be the corresponding $\pi_1(\orb)$-invariant union of  proper p-end neighborhoods of $\torb$ for $\mu_0$. 
For each component $U_i$ of $U$, we construct $f'_t$ on $C(U_i)$ using $\Omega_{s_0}$ so that $f'_t$ satisfies the above properties. 
We call $f'_t$ the union of these functions. 

Let $V$ be a $\pi_1(\orb)$-invariant neighborhood of the complement of $U$ in $\torb$. 
Given $\eps > 0$, there exists $\delta >0$ so that 
we have an $\eps$-$C^2$-map close to the identity map on a compact fundamental domain of the set $V$ 
to $V := \torb - U$ since a developing map of $\mu_t$ is $\delta$-close to that of $\mu_0$ in the $C^2$-sense. 
We obtain a diffeomorphism $k_t: C(V) \ra C(V)$ close to the identity on a compact set in the $C^2$-sense
so that $k_t(sv) = sk_t(v)$ for $s > 0$ and $v \in V$. 
We transfer $f$ to $C(V)$ by this map. 
Denote the result by $f''_t := f \circ k_t$ where $f''_t(sv) = s^{-n-1}f''_t(v)$ for $s > 0$ and $v \in V'$.
(For example, this can be done by deforming the function $f$ only on a section of the radial flow 
and extending.)

\begin{itemize}
\item Let $\partial_s V$ be a copy of $\partial V\times \{s\}$ inside the regular neighborhood of $\partial V$ in $U_t$
parameterized as $\partial V \times [-1, 1]$ for $s \in [-1, 1]$.
\item We assign $\partial V= \partial_0 V$. 
\item Let $\partial_{[s_1, s_2]} V$ be the image of $\partial V_t\times \{[s_1, s_2]\}$ inside 
the regular neighborhood of $\partial V$ in $V\cap U'$ for a neighborhood $U'$ of $\clo(U) \cap \torb$.  
\end{itemize} 
We find a $C^\infty$ map $\phi_t: C(U')\cap C(V) \ra \bR_+$ so that $\phi_t(sv)=\phi_t(v)$ for every $s >0$
and $f'_t(v)=\phi_t(v) f''_t(v)$ and $\phi_t$ is very close to the constant value $1$ function.
By making $f'_t/f''_t$ near $1$ and the derivatives of $f'_t/f''_t$ up to two near $0$ as possible, 
we obtain $\phi_t$ that has derivatives up to order to two as close to $0$ in a compact subset as we wish:
This is accomplished by taking a partition of unity functions $p_{1, t}, p_{2, t}$ 
so that 
\begin{itemize}
\item $p_{1, t}=1$ on $C(W)$ for 
\[W:= \partial_{[0, s_1]} V \cup (U'-V)  \hbox{ for } s_1 < 1,\]
\item $p_{1, t} = 0$ on $C(\torb -N)$ for a neighborhood $N$ of $W$ 
in $\partial_{(-1, 1)} V_t \cup (U'-V)$, and 
\item $p_{1, t}+p_{2, t}=1$ identically.
\end{itemize}  
We assume that \[1-\eps < f'_t/f''_t < 1+\eps \hbox{ in } C(U' \cap V),\]
and $f'_t/f''_t$ has derivatives up to order two sufficiently close to $0$
by taking $f'_t$ and $f''_t$ sufficiently close in $C(U') \cap C(V)$ by taking sufficiently small $t$. 
We define \[\phi_t = (f'_t/f''_t-(1-\eps))p_{1, t}+ \eps p_{2, t} + (1-\eps)\] 
as $f'_t$ and $f''_t$ are homogeneous of degree $-n-1$. 
Then $1-\eps < \phi_t < 1+\eps$ and derivatives of $\phi_t$ up to order two are sufficiently close to $0$
as we can see easily from computations. 
Thus, using $\phi_t$ we obtain a Hessian function $f'''_t$ obtained from $f'_t$ and $\phi_t f''_t$ on $C(W)$ 
and extending them smoothly.
We can check the welded function from $f'_t$ and $\phi_t f''_t$ has the desired Hessian properties
since the derivatives of $\phi_t$ up to order two can be made sufficiently close to zero. 
Now we do this for every p-end of $\tilde{\mathcal{O}}$.

Finally, suppose that we only required the radial p-ends to be of generalized lens type. 
Then the arguments above changes for only the case (I) of a generalized radial p-end
where a radial p-end of generalized lens-type become a radial p-end of generalized lens-type. 
Then at $\mu_0$, we obtain $\Omega_{s_0}$ 
that contains $\torb$ and $\Omega_{s_0}$ in an $\eps$-$\bdd$-neighborhood of $\torb$.
(Here we do not need $\Omega_{s_0}$ to be a subset of $\torb$. ) 
We can obtain such a neighborhood by the methods of Section 7.2 (cf. Lemma 7.8) of \cite{endclass}.
As above, by the third item of Lemma \ref{lem:convHaus},
the Hessian functions $f'_t$ defined by equation \eqref{eqn:kv} 
on $C(\Omega_{s_0, t})$ deformed from $C(\Omega_{s_0, t})$ using the above methods in (I)
is very close to the original Hessian function $f_t$ 
in compact subsets of $C(\Omega_{s_0, t} \cap \torb)$ in the $C^2$ topology
by Lemma \ref{lem:KV}. Now we operate as above using possibly not necessarily convex neighborhoods of the radial p-ends
to obtain the Hessian metric for $C(\torb)$. 

The $-(n+1)$-homogeneity gives us the invariance of the Hessian metric under the dilatations and the affine lifts of the holonomy groups. 
(See Chapter 6 of \cite{wmgnote}.)
\end{proof}

\begin{figure}[h]
\centerline{\includegraphics[height=5cm]{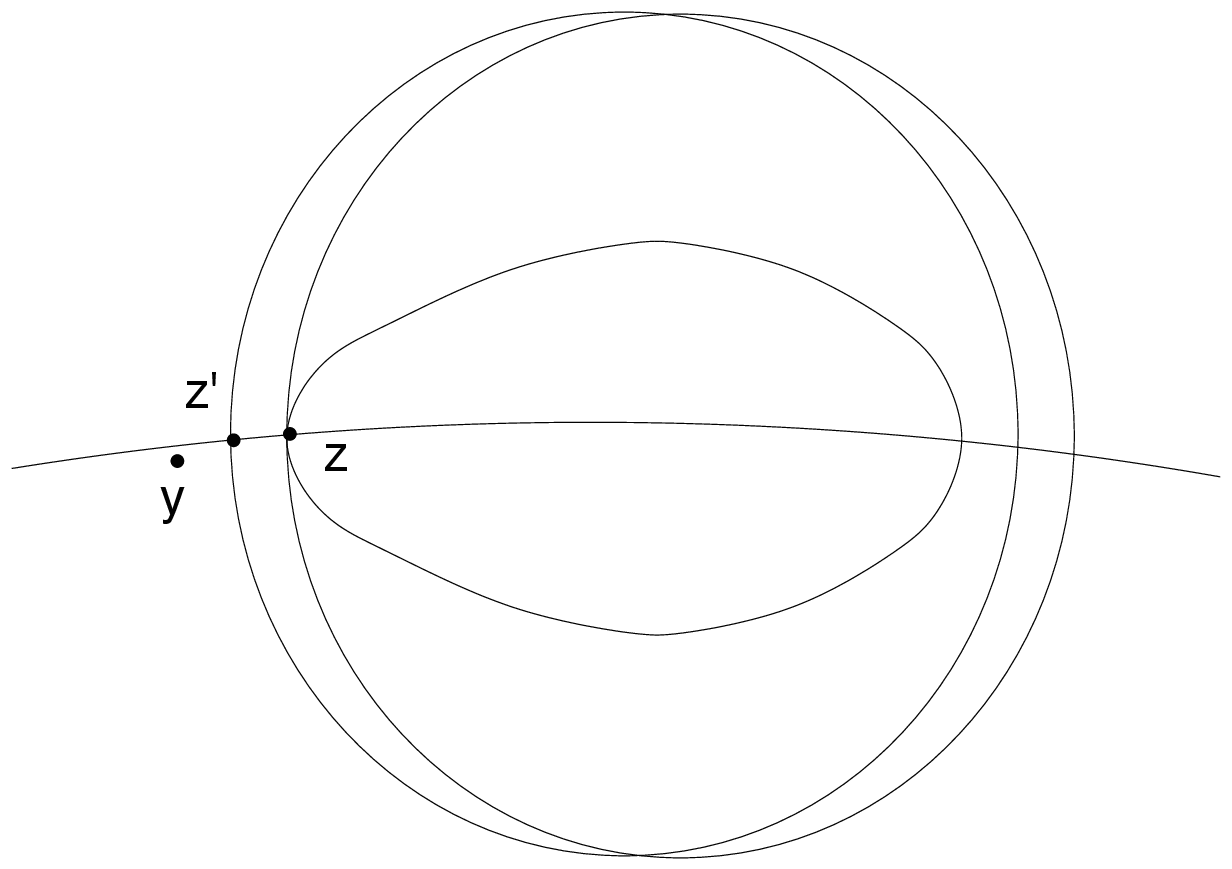}}
\caption{The diagram for Lemma \ref{lem:convHaus}. }
\label{fig:figdual}
\end{figure}

\begin{lemma}\label{lem:KV} 
Let $V$ be a properly convex cone and let $V^*$ be a dual cone. 
Suppose that a Koszul-Vinberg function 
$f_{V^*}(x)$ is defined on a compact neighborhood $B$ of $x$ contained in a convex cone $V$. 
Let $V_1$ be another properly convex cone
containing the same neighborhood. 
Let $V^*$ has projectivization $\Omega$ and the dual $V_1^*$ of $V_1$ has a projectivization $\Omega_1$. 
For given any integer $s \geq 1$ and $\eps > 0$, there exists $\delta >0 $ so that 
if  the Hausdorff distance between $\Omega$ and $\Omega_1$ is $\delta$-close, 
then $f_{V^*}(x)$ and $f_{V_1^*}(x)$ are $\eps$-close in $B$ 
in the $C^s$-topology.
\end{lemma}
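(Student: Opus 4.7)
The plan is to exploit the rapid decay of the integrand $e^{-\phi(x)}$ in the defining integral
\[ f_{V^*}(x) = \int_{V^*} e^{-\phi(x)}\, d\phi \]
to reduce the unbounded integrals to essentially bounded ones, and then to use the Hausdorff closeness of $\Omega$ and $\Omega_1$ in $\bR P^{n*}$ to compare the symmetric differences of the cones on a bounded region. The $C^i$-estimates will follow from differentiation under the integral sign, since derivatives in $x$ bring down polynomial factors in $\phi$ that remain dominated by the exponential decay.

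First, I would fix a small compact neighborhood $N$ of $x$ contained in $V \cap V_1$ (this is possible by hypothesis). Since $N$ lies strictly in the interior of both properly convex cones, there is a uniform constant $c = c(N) > 0$ with $\phi(y) \geq c\|\phi\|$ for every $y \in N$ and every $\phi \in \clo(V^*) \cup \clo(V_1^*)$, where the norm is any fixed Euclidean norm on $\bR^{n+1\,*}$. Consequently $e^{-\phi(y)} \leq e^{-c\|\phi\|}$ uniformly on $N$, and all partial derivatives $\partial^\alpha_y e^{-\phi(y)}$ of order $|\alpha| \leq i$ are bounded in absolute value by $\|\phi\|^i e^{-c\|\phi\|}$, which is integrable on all of $\bR^{n+1\,*}$.

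Second, given $\eps>0$, choose $R > 0$ so large that
\[ \int_{\|\phi\| \geq R} \|\phi\|^i e^{-c\|\phi\|}\, d\phi < \eps/3. \]
This bounds the tails of $f_{V^*}$, $f_{V_1^*}$, and all their partial derivatives up to order $i$ uniformly on $N$. It remains to compare the integrals over the bounded pieces $V^* \cap B_R$ and $V_1^* \cap B_R$. The Hausdorff closeness of $\Omega$ and $\Omega_1$ in $\bR P^{n*}$ translates into Hausdorff closeness of the radial bases of $V^*$ and $V_1^*$, hence by homogeneity into the statement that the symmetric difference $(V^* \triangle V_1^*) \cap B_R$ has Lebesgue measure going to zero as $\delta \to 0$. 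Since the integrands $\phi^\alpha e^{-\phi(y)}$ for $|\alpha|\leq i$ are bounded on $N \times B_R$ by an absolute constant $M = M(R,i,N)$, this symmetric-difference estimate yields
\[ \bigl| \partial^\alpha f_{V^*}(y) - \partial^\alpha f_{V_1^*}(y) \bigr| \leq M \cdot \mathrm{vol}\bigl((V^* \triangle V_1^*)\cap B_R\bigr) + 2\eps/3 \]
for $y \in N$, which is less than $\eps$ provided $\delta$ is chosen small enough.

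The main obstacle, if any, is bookkeeping: one needs the constant $c$, the radius $R$, and the symmetric-difference bound to be chosen uniformly over the entire neighborhood $N$ and over all multi-indices $\alpha$ with $|\alpha|\leq i$ simultaneously. This uniformity is forced by compactness of $N$ and the strict interiority of $N$ in both cones; once it is in hand, the argument is a direct application of the dominated convergence theorem together with differentiation under the integral sign. I would not expect genuine analytic difficulties, and there is no need to invoke the projective structure further after passing through the initial reduction from $\Omega$ to $V^*$.
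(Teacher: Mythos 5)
Your proof is correct, but it follows a genuinely different route from the paper's. The paper sandwiches the two dual domains using Lemma \ref{lem:convHaus} and then, following Theorem 6.4 of \cite{wmgnote}, reduces the Koszul--Vinberg integral by homogeneity to an integral over an affine hyperplane meeting $V^*$ and $V_1^*$ in bounded precompact convex cross-sections with uniformly bounded integrand; Hausdorff closeness of those cross-sections then gives closeness of the functions and their derivatives. You instead work on the full unbounded cones: the $x$-derivatives of the integrand are, up to sign, $\phi^\alpha e^{-\phi(x)}$, dominated by $\|\phi\|^i e^{-c\|\phi\|}$, so a large ball $B_R$ absorbs the tails, and on $B_R$ the difference of the two integrals is bounded by a constant times the Lebesgue measure of $(V^*\triangle V_1^*)\cap B_R$, which is small because the symmetric difference lies in a thin neighborhood of the cone over $\partial \Omega$. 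This is more self-contained (no cross-section reduction and no explicit use of the dual containments) and makes the dependence of $\eps$ on $\delta$ and the $C^i$-uniformity transparent. The one point you must tighten is the uniformity of your constant $c$ (and hence of $R$ and $M$) over all admissible $V_1$: since $\delta$ has to be chosen before $V_1$ is given, compactness of $N$ and interiority in a single fixed $V\cap V_1$ are not by themselves enough. You should note that once $d_H(\Omega,\Omega_1)<\delta_0$ for a preliminary $\delta_0$, the unit-norm covectors in $\clo(V_1^*)$ lie within angular distance $\delta_0$ of those in $\clo(V^*)$, so $\phi(y)\geq (c/2)\|\phi\|$ holds for all $y\in N$ and $\phi\in\clo(V_1^*)$ uniformly --- this is exactly the role the containments of Lemma \ref{lem:convHaus} play in the paper's version. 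With that one line added, your argument is complete.
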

\begin{proof}
By Lemma \ref{lem:convHaus}, we have 
\begin{align} 
\Omega^* \subset N_\delta(\Omega_1^*), \Omega_1^* \subset N_\delta(\Omega^*), \nonumber \\
(\Omega - N_\delta(\partial \Omega))^* \subset \Omega_1^*, \hbox{ and } \nonumber \\ 
(\Omega_1 - N_\delta(\partial \Omega_1))^* \subset \Omega^*.
\end{align} 
We choose sufficiently small $\delta > 0$ so that 
\[ B \subset \Omega - N_\delta(\partial \Omega), \Omega_1 - N_\delta(\partial \Omega_1).\]
Since the integral is computable from an affine hyperspace meeting $V^*$ and $V_1^*$ in bounded precompact 
convex sets and $e^{-\phi(x)}$ and its derivatives for $\phi$ in the domains are uniformly bounded, 
the integrals and their derivatives are estimable from each other, the result follows by
taking the Hausdorff distances of $\Omega^*$ and $\Omega_1^*$ sufficiently small. 
(See the proof of Theorem 6.4 of \cite{wmgnote}.) 
\end{proof}

Recall the standard elliptic metric $\bdd$ of $\bR P^n$. We also have the elliptic metric $\bdd$ on $\bR P^{n *}$, \index{elliptic metric} 
denoted by the same letter. 
Define the {\em thickness} of a properly convex domain $\Delta$ is given as 
\[\min\{ \max\{\bdd(x, \Bd \Delta)|x \in \Delta\}, \max\{d(y, \Bd \Delta^*)|y \in \Delta^*\}\} \]
for the dual $\Delta^*$ of $\Delta$. 

\begin{lemma}\label{lem:convHaus} 
Let $\Delta$ be a properly convex open domain in $\bR P^n$ and $\Delta^*$ its dual in $\bR P^{n *}$. 
Let $\eps$ be a positive number less than the thickness of $\Delta$. Then the following hold: 
\begin{itemize}
\item $N_\eps(\Delta) \subset (\Delta^* - \clo(N_\eps(\Bd \Delta^*))^*$.
\item If two properly convex open domains $\Delta_1$ and $\Delta_2$ are of Hausdorff distance $< \eps$ for 
$\eps$ less than the thickness of each $\Delta_1$ and $\Delta_2$, 
then $\Delta^*_1$ and $\Delta_2^*$ are of Hausdorff distance $< \eps$.
\item Furthermore, if 
$\Delta_2 \subset N_{\eps'}(\Delta_1)$ and $\Delta_1 \subset N_{\eps'}(\Delta_2)$ for $0< \eps'< \eps$, 
then we have $\Delta_2^* \supset \Delta_1^* - \clo(N_{\eps'}(\Bd \Delta_1^*))$
and $\Delta_1^* \supset \Delta_2^* - \clo(N_{\eps'}(\Bd \Delta_2^*))$.
\end{itemize} 
\end{lemma}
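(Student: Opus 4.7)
The plan is to reduce all three items to the reflexivity $(X^*)^* = X$ of polarity on properly convex open domains together with one angular estimate relating elliptic distance from $\Bd \Delta^*$ in $\bR P^{n*}$ to pairing values $\langle \phi, v\rangle$ with points of $\clo(\Delta)$. Item (1) is the main estimate; item (3) follows from (1) applied twice (once to each domain) followed by dualising; item (2) is simply the Hausdorff restatement of (3).

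For (1), I would set up the following dictionary. For a unit representative of $[v] \in \bR P^n$, let $H_v \subset \bR P^{n*}$ be the polar hyperplane $\{[\phi] : \phi(v) = 0\}$. With unit lifts one has $d([\phi], H_v) = \arcsin|\langle \phi, v\rangle|$, and every point of $\Bd \Delta^*$ is a supporting hyperplane of $\Delta$ at some $v_0 \in \Bd \Delta$, hence lies on $H_{v_0}$. This yields the key quantitative comparison
\[
d([\phi], \Bd \Delta^*) \;\leq\; \arcsin \langle \phi, v\rangle \qquad \text{for every unit } v \in \clo(\Delta),
\]
so that $d([\phi], \Bd \Delta^*) \geq \eps$ forces $\langle \phi, v\rangle \geq \sin\eps$ for every unit $v \in \clo(\Delta)$. (The reverse comparison, needed only for sharpness and absorbed by the injectivity-radius hypothesis, uses that the foot of the perpendicular from $[\phi]$ onto $H_{v_0}$ at a minimiser $v_0$ lies in $\clo(\Delta^*)$.)

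Given this, take $[x] \in N_\eps(\Delta)$ with a companion $[x_0] \in \Delta$ at angular distance $\theta < \eps$, and $[\phi] \in \Delta^* - \clo(N_\eps(\Bd \Delta^*))$. Writing a unit lift $x = \cos\theta \, x_0 + \sin\theta \, y$ with $y$ a unit vector perpendicular to $x_0$ gives
\[
\langle \phi, x\rangle \;\geq\; \sin\eps\cos\theta - \sin\theta,
\]
which is positive whenever $\theta < \arctan(\sin\eps)$. The injectivity-radius hypothesis is used exactly to absorb the mild gap between $\eps$ and $\arctan(\sin\eps)$, and one concludes $[x] \in (\Delta^* - \clo(N_\eps(\Bd \Delta^*)))^*$, proving (1).

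For (3), I would apply (1) with $\Delta = \Delta_1$ and $\eps = \eps'$: the hypothesis $\Delta_2 \subset N_{\eps'}(\Delta_1)$ gives $\Delta_2 \subset (\Delta_1^* - \clo(N_{\eps'}(\Bd \Delta_1^*)))^*$, and dualising via $(A^*)^* = A$ for properly convex open $A$ yields $\Delta_2^* \supset \Delta_1^* - \clo(N_{\eps'}(\Bd \Delta_1^*))$. Swapping the roles of $\Delta_1$ and $\Delta_2$ produces the symmetric inclusion, and item (2) is the Hausdorff-metric rewriting of (3). The one obstacle I expect is the bookkeeping between $\eps$ on the primal and dual sides: the estimate $\sin\eps\cos\theta - \sin\theta > 0$ holds for $\theta < \eps$ only after using the injectivity-radius assumption to shrink $\eps$ by a uniformly controlled factor, and carefully tracking this slack is the one step that requires attention.
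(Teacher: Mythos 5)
Your overall route matches the paper's for the second and third items: both you and the paper obtain them by applying the first item (with $\eps'$ in place of $\eps$) and dualizing via $(A^*)^*=A$, so nothing to flag there beyond noting that you should justify that $A=\Delta_1^* - \clo(N_{\eps'}(\Bd \Delta_1^*))$ is itself a nonempty properly convex open set (convexity because it is the intersection over supporting hyperplanes of the radius-$(\pi/2-\eps')$ balls on the $\Delta_1^*$-side; nonemptiness is what the injectivity-radius hypothesis actually provides). For the first item the paper argues by contradiction, rotating the hemisphere $\{\phi>0\}$ about a great circle until it becomes a supporting hemisphere of $\Delta$ and showing the rotation angle is less than $\eps$, whereas you attempt a direct quantitative estimate; that is a legitimate alternative in principle, but as written it has a genuine gap.

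The gap is in the closing inequality. Your bound $\langle \phi, x\rangle \geq \sin\eps\cos\theta - \sin\theta$ is positive only for $\theta < \arctan(\sin\eps)$, which is strictly smaller than $\eps$ for every $\eps>0$; at $\theta$ near $\eps$ the bound tends to $\sin\eps(\cos\eps-1)<0$, so points of $N_\eps(\Delta)$ with $\arctan(\sin\eps)\le\theta<\eps$ are simply not covered. The claim that the injectivity-radius hypothesis ``absorbs'' this slack is unfounded: that hypothesis constrains $\eps$ relative to $\Delta$ and $\Delta^*$ (in particular it makes $\Delta^* - \clo(N_\eps(\Bd\Delta^*))$ nonempty), but it says nothing about how close a point of $N_\eps(\Delta)$ may be to the full distance $\eps$ from $\Delta$, so no choice of $\eps$ below the injectivity radius repairs the estimate. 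The fix is easy and makes the slack disappear entirely: your key inequality $\langle\phi,v\rangle\geq\sin\eps$ for all unit $v\in\clo(\Delta)$ says exactly that, on $\SI^n$, $\clo(\Delta)$ lies in the closed cap of radius $\pi/2-\eps$ centered at the unit vector $\hat\phi$ representing $\phi$; for $[x]\in N_\eps(\Delta)$ the spherical triangle inequality then gives $d(\hat\phi,x)< (\pi/2-\eps)+\eps=\pi/2$, hence $\langle\phi,x\rangle>0$. Equivalently, in your expansion the worst case $\langle\phi,y\rangle=-1$ forces $x_0\perp\hat\phi$, contradicting $\langle\phi,x_0\rangle\geq\sin\eps$, so the crude bound $|\langle\phi,y\rangle|\leq 1$ is exactly what loses the needed margin. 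With that replacement your argument is correct and, in fact, more explicitly quantitative than the paper's rotation argument.
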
 
\begin{proof} 
Using the double covering map $\SI^n \ra \bR P^n$ and $\SI^{n *} \ra \bR P^{n *}$ of unit spheres in 
$\bR^{n+1}$ and $\bR^{n+1 *}$,
we take components of $\Delta$ and $\Delta^*$. It is easy to show that the result for properly convex open 
domains in $\SI^n$ and $\SI^{n*}$ is sufficient. 

For elements $\phi \in \SI^{n*}$, and $x \in \SI^{n}, \SI^n$, 
we say $\phi(x) < 0$ if $f(v) < 0$ for $\phi =[f], x =[v]$ for $f \in \bR^{n+1 *}, v \in \bR^{n+1}$. 

For the first item, let $y \in N_\eps(\Delta)$. Suppose that $\phi(y) < 0$ for 
\[\phi \in \clo((\Delta^* - \clo(N_\eps(\Bd \Delta^*))) \ne \emp.\] 
Since $\phi \in \Delta^*$, the set of positive valued points of $\SI^n$ under $\phi$ is an open hemisphere $H$
containing $\Delta$ but not containing $y$. 
The boundary $\Bd H$ of $H$ has a closest point $z \in \Bd \Delta$ 
of distance $\leq \eps$. The closest point $z'$ on $\Bd H$ is in $N_\eps(\Delta)$ 
since $y$ is in $N_\eps(\Delta) - H$ and $z'$ is closest to $\Bd \Delta$.
The great circle $\SI^1$ containing $z$ and $z'$ are perpendicular to $\Bd H$
since $\ovl{zz'}$ is minimizing lengths.  
Hence  $\SI^1$ passes the center of the hemisphere.
One can push the center of the hemisphere on $\SI^1$
until it becomes a supporting hemisphere to $\Delta$. The corresponding $\phi'$ is in $\Bd \Delta^*$ and 
the distance between $\phi$ and $\phi'$ is less than $\eps$. This is a contradiction. 
Thus, the first item holds (See Figure \ref{fig:figdual}.)



For the final item, we have that 
\[\Delta_2 \subset N_{\eps'}(\Delta_1),  \Delta_1 \subset N_{\eps'}(\Delta_2) \hbox{ for } 0< \eps' <  \eps.\] 
Hence, $\Delta_2 \subset (\Delta_1^* - \clo(N_{\eps'}(\Bd \Delta_1^*))^*$:
Thus, $\Delta_2^* \supset \Delta_1^* - \clo(N_{\eps'}(\Bd \Delta_1^*)$,
which proves the third item, 
and so $N_\eps(\Delta_2^*) \supset \Delta_1^*$ and conversely.
The second item follows.
\end{proof}


\begin{proof}{[The proof of Theorem \ref{thm:conv}] }
Suppose that $\mathcal{O}$ has an SPC-structure $\mu$ with generalized admissible ends. We will show that a sufficiently 
close structure $\mu_s$ that has generalized admissible ends is also SPC. 
Let $h': \pi_1(\orb) \ra \SL_\pm(n+1, \bR)$ be the lift of the holonomy homomorphism corresponding to $\mu_s$. 

Let $\mathcal{O}:= C(\torb)/h_s(\pi_1(\orb))$ with $C(\torb)$ as the universal cover. 
Let $\torb_s$ denote $\torb$ with $\mu_s$. One applies special affine suspension to obtain 
an affine orbifold $\orb \times \SI^1$. (See Section \ref{sub:asusp}.)
The universal cover is still $C(\torb)$ and has a corresponding affine structure $\tilde \mu_s$. 
We denote $C(\torb)$ with the lifted affine structure of $\tilde \mu_s$ by $C(\torb)_s$. 
Recall the projective completion $\hat{C}(\torb)_s$ of $C(\torb)$. This is a completion of $C(\torb)$
the path metric induced from the pull-back of the standard Riemannian metric on $\bR^{n+1}$ by the developing map 
$D_s$ of $\tilde \mu_s$.
The developing maps always extend to one on $\hat(C)(\torb)_s$ which we denote by $D_s$ again. 
(See \cite{cdcr1} and \cite{psconv} for details.) 

By Proposition \ref{prop:openess}, an affine 
suspension $\tilde \mu_s$ of $\mu_s$ also have a Hessian function $\phi$. 
The Hessian metric $D d\phi$ is invariant under affine automorphism groups of $C(\torb)$ 
by construction. 
We prove that $\tilde \mu_s$ is properly convex, which will show $\mu_s$ is properly convex:

Suppose that $\tilde \mu_s$ is not convex. Then there exists a triangle imbedded in $\hat{C}(\torb)_s$
with points in the interior of an edge in the limit set $\Lambda_s:=\hat{C}(\torb)_s - C(\torb)_s$.
We can move the triangle so that the interior of an edge $l$ has a point $x_\infty$ in $\Lambda_s$
and $D_s(l)$ does not pass the origin.
We form a parameter of geodesics $l_t$, $t \in [0, \eps]$ in the triangle so that $l_0=l$ and $l_t \subset C(\torb)$ 
is close to $l$ in the triangle. 
 (See Theorem A.2 of \cite{psconv} for details.)



Let $p, q$ be the endpoints of $l$. Then the Hessian metric is $D^s d \phi$ for a function $\phi$ defined on 
$C(\torb)_s$. And $d \phi |p$ and $d \phi |q$ are bounded, where $D^s$ is the affine connection of $\mu_s$.
This should be true for 
$p_t$ and $q_t$ for sufficiently small $t$ uniformly. 
Let $u$, $u \in [0, 1]$, be the affine parameter of $l_t$, i.e., $l_t(s)$ is a constant speed line in $\bR^{n+1}$ when developed. 
We assume that $u \in (\eps_t, 1-\eps_t)$ parameterize $l_t$ for sufficiently small $t$
where $\eps_t \ra 0$ as $t \ra 0$
and $d l_t/ds = v$ for a parallel vector $v$.
The function $D^s_vd_v \phi( l_t(u))$ is uniformly bounded since its integral $d_v \phi( l_t(u))$ is strictly 
increasing by the strict convexity and converges to certain values as $u \ra \eps_t, 1-\eps_t$. 

Since  \[ \int_{\eps_t}^{1-\eps_t} D^s_vd_v \phi( l_t(s))du = d\phi(p_t)(v) - d\phi(q_t)(v), \]
the function $\sqrt{D^s_vd_v \phi( l_t(u))}$ is also integrable and have a bounded integral by Jensen's inequality. 
This means that the length of $l_t$ is bounded.  

Let $U$ be a union of disjoint end-neighborhoods of $\orb$; 
$U$ correspond to an inverse image $\tilde U$ in $\torb$ and 
to $C(\tilde U)_s$ the inverse image in $C(\torb)_s$. 
The minimum distance 
between components of $U$ is bounded below since the metric is invariant under dilatations
$x \mapsto tx$ in $C(\torb)_s$. 
If $l$ meets infinitely many components of $C(\tilde U)_s$, then the length is infinite because of this. 

As $t \ra 0$, the number is thus bounded, 
$l$ can be divided into finite subsections, each of which meets 
one component of $C(\tilde U)_s$. 
Any subarc of each with end points in the boundary of a component $C_{1, s}$ 
of $C(\tilde U)_s$ is homotopic into a component $C_{1, s}$
with endpoints fixed. 

Let $\hat l$ be the subsegment of $l$ in $C(\torb)_s$ containing $x_\infty$ in 
the ideal boundary and meeting only one component $C(U_1)_s$ of $C(\tilde U)_s$
with $\Bd s \in \Bd C(U_1)_s$. Let $\hat l_t$ be the subsegment of $l_t$ so that 
the endpoints of $\hat l_t$ converges to those of $\hat l$ as $t \ra 0$. 
Let $p'$ and $q'$ be the endpoint of $\hat l$. 

Suppose that $C(U_1)_s \subset C(\torb)_s$ corresponds to a p-end neighborhood $U'_1$ of radial of lens-type 
or horospherical type in $\torb_s$.
and $x_\infty$ is on a line corresponding to the p-end vertex of the radial lines of
$U'_1$. 
We project to $\SI^n$ from by the projection $\Pi:\bR^{n+1} -\{O\} \ra \bR P^n$. 
Now suppose that $\Pi(x_\infty)$ is in the middle of the radial line. Then the interior of the triangle 
is transversal to the radial lines. Since our end orbifold is convex, there cannot be such a line 
with a single interior point in the ideal set. 

If $C(U_1)_s$ is the inverse image in $C(\torb)_s$ of 
a p-end neighborhood $U_1$ in $\torb_s$ of totally geodesic lens-type, then clearly there is no such a segment $l$ similarly. 

This is again a contradiction. Therefore, $\tilde{\mathcal{O}}$ is convex. 

Finally,  for sufficiently small deformations, the convex real projective structures are properly convex. 
If not, then there are sufficiently small deformed convex real projective structures which are not properly convex and 
hence their holonomy homomorphism is reducible. By taking limits, the original one has to be reducible as well. 
However, we assumed that it was  irreducible. Since the subspace of reducible representation is closed, 
we see that there is an open set of irreducible properly convex projective structures near the initial one $\mu$.

Suppose now that $\mathcal{O}$ with $\mu$ is strictly SPC with admissible ends. 
The relative hyperbolicity of $\tilde{\mathcal{O}}$ with respect to the p-ends is stable under small deformations
since it is a metric property invariant under quasi-isometries by Theorem \ref{thm:relhyp1}.

The irreducibility and the stability follow since these are open conditions. 
Also, the ends are admissible. 

\end{proof}

Theorem \ref{thm:conv2} also follows similarly. 
Hence Corollary \ref{cor:conv} and \ref{cor:conv2} follow by Theorem \ref{thm:A}.


\chapter{The closedness of convex real projective structures}\label{sec:closed}

We recall  $\rep_E^s(\pi_1(\mathcal{O}), \PGL(n+1, \bR))$ the subspace of stable irreducible characters
of $\rep_E(\pi_1(\mathcal{O}), \PGL(n+1, \bR))$
which is shown to be an open subset of a semialgebraic set in Section \ref{sub:semialg}, 
and denote by $\rep_{E, u, ce}^s(\pi_1(\mathcal{O}), \PGL(n+1, \bR))$ the subspace of stable irreducible characters
of $\rep_{E, u, ce}(\pi_1(\mathcal{O}), \PGL(n+1, \bR))$, an open subset of a semialgebraic set. 

In this section, we will need to discuss $\SI^n$ but only inside a proof.



\begin{theorem} \label{thm:closed1} 
Let $\mathcal{O}$ be a noncompact strongly tame SPC $n$-orbifold with 
generalized admissible ends and satisfies {\rm (IE)} and {\rm (NA)}. Assume $\partial \orb =\emp$. 
Assume that every finite index subgroup of $\pi_1(\mathcal{O})$ has no nontrivial nilpotent normal subgroup.
Then the following hold\,{\rm :} 
\begin{itemize}
\item The deformation space $\CDef_{E, u, ce}(\mathcal{O})$ of SPC-structures on $\mathcal{O}$ with generalized 
admissible ends maps under $\hol$ 
homeomorphically to a union of components of $\rep_{E, u, ce}^s(\pi_1(\mathcal{O}), \PGL(n+1, \bR))$.
\item 
The deformation space $\SDef_{E, u, ce}(\mathcal{O})$ of SPC-structures on $\mathcal{O}$ with admissible ends maps under $\hol$
homeomorphically to the union of components of  $\rep_{E, u, ce}^s(\pi_1(\mathcal{O}), \PGL(n+1, \bR))$.
\end{itemize}
\end{theorem}
\begin{proof} 
Define $\widetilde{\CDef}_{E, u, ce}(\mathcal{O})$ to be the inverse image of $\CDef_{E, u, ce}(\mathcal{O})$ in 
$\widetilde{\Def}_{E, u, ce}(\mathcal{O})$. 
We show that \[\hol: \widetilde{\CDef}_{E, u, ce}(\mathcal{O}) \ra \Hom_{E, u, ce}^s(\pi_1(\mathcal{O}), \PGL(n+1, \bR))\]
is a homeomorphism onto a union of components. This will imply the results. 

Suppose that the map is not injective. Then there exists 
a homomorphism $h: \pi_1(\mathcal{O}) \ra \PGL(n+1, \bR)$ and properly convex
open domains $\Omega_1$ and $\Omega_2$ where $h(\pi_1(\mathcal{O}))$ acts 
properly on so that $\Omega_1/h(\pi_1(\mathcal{O}))$ and $\Omega_2/h(\pi_1(\mathcal{O}))$ are both diffeomorphic to $\mathcal{O}$
by diffeomorphisms inducing $h$. 

Suppose that $\Omega_1$ and $\Omega_2$ are distinct in $\bR P^n$. 
We claim that $\Omega_1$ and $\Omega_2$ are disjoint:
Suppose not. Then let $\Omega'$ be the intersection $\Omega_1 \cap \Omega_2$ 
where $\Gamma:=h(\pi_1(\mathcal{O}))$ acts. 
Each p-end fundamental group also acts on $\Omega'$ also. We can form a topological space 
$\Omega'/\Gamma$ with end neighborhood system. 
Since $\Omega_1, \Omega_2,$ and $\Omega'$ are all $n$-cells, 
the set of p-ends of $\Omega_1$, the set of those of $\Omega_2$, and the set of those of $\Omega'$ are 
in one-to-one correspondences since the end groups uniquely determines the end vertex and ideal totally geodesic boundary 
inside $\Omega_1, \Omega_2,$ and $\Omega'$ respectively by the uniqueness condition for each end holonomy group. 
(We need to see the orbits of points of $\Omega'$ under the end fundamental group.)
Also, using concave p-end neighborhoods for radial p-ends, lens p-end neighborhoods for totally geodesic p-ends, and horoball p-end 
neighborhoods of p-ends, 
we verify easily that a p-end neighborhood of $\Omega_1$ exists if and only if a p-end neighborhood of $\Omega_2$ 
exists and their intersection is a p-end neighborhood of $\Omega'$. 
By taking a torsion-free finite-index subgroup $\Gamma'$ of $\Gamma$ using Selberg's Lemma, 
$\Omega'/\Gamma'$ is a closed submanifold in $\Omega_1/\Gamma'$ and in $\Omega_2/\Gamma'$. 
Thus, $\Omega_1/\Gamma', \Omega_2/\Gamma',$ and $\Omega'/\Gamma'$  are all homotopy 
equivalent relative to the union of disjoint end-neighborhoods. 
 The map has to be onto in order for the map to be a homotopy equivalence as we can show using 
 relative homology theories, 
 and hence, $\Omega'=\Omega_1=\Omega_2$. 

Suppose now that $\Omega_1$ and $\Omega_2$ are disjoint. 
Each corresponding pair of the p-end neighborhoods share p-end vertices or have antipodal p-end vertices. 
Since $\Omega_1$ and $\Omega_2$ are disjoint but each pair of the p-ends have same p-end holonomy groups.  
Now $\clo(\Omega_1)\cap \clo(\Omega_2)$ or $\clo(\Omega_1) \cap {\mathcal{A}}(\clo(\Omega_2)$  
is a compact properly convex subset $K$ of dimension $< n$ 
and is not empty since the fixed points of the p-ends are in it. 
 The minimal hyperspace containing $K$ is a proper subspace and is invariant under $h(\pi_1(\mathcal{O}))$. 
 This contradicts the irreducibility. 
 
 Hence, this proves that $\hol$ is injective. 
 $\hol$ is an open map by Theorem \ref{thm:conv} and Theorem A. 
 Actually, this show that a strongly tame SPC-orbifold of given end types is uniquely determined by each holonomy group.
 
 
 To show that the image is of $\hol$ is closed, the subset of 
\[ \Hom_{E, u, ce}^s(\pi_1(\mathcal{O}),\PGL(n+1,\bR))\] corresponding to 
elements in $\widetilde{\CDef}_{E, u, ce}(\mathcal{O})$ is closed. 
 Let $(\dev, h_i)$ be a sequence of development pairs
 so that we have $h_i \ra h$ algebraically. 
 Let $\Omega_i =\dev(\torb)$ denote the properly convex domains. 
 The limit $h$ is a discrete representation 
 by Lemma 1.1 of Goldman-Millson \cite{GM}.
 The sequence $\clo(\Omega_i)$ also converges to a compact convex set
 $\Omega$ up to choosing a subsequence where $h(\pi_1(\mathcal{O}))$ acts on
 as in \cite{CG}. 
 If $\Omega$ is not properly convex or have the empty interior, 
 $h$ is reducible. 
 Thus, $\Omega^o$ is not empty and is properly convex. 
 As in \cite{CG}, since $\Omega^o$ has a Hilbert metric, 
 $h(\pi_1(\mathcal{O}))$ acts on $\Omega^o$ properly discontinuously. 
  
 The condition of the generalized lens-shapedness is a closed condition in the 
 \[ \Hom_{E, u, ce}^s(\pi_1(\mathcal{O}),\PGL(n+1,\bR))\]
 as we defined above.  
 The ends of the orbifold $\orb':=\Omega^o/h(\pi_1(\orb))$ are 
 generalized admissible since the holonomy conditions of the ends insure this 
 by Theorems 7.9 and 7.11 of \cite{endclass}. 
 We can deform $\orb'$ using the openness of $\hol$ by Theorem \ref{thm:conv}. 
 We can find a deformed orbifold 
 $\orb''$ that has a holonomy $h_i$ for some large $i$. 
 Hence, $\orb''$ is diffeomorphic to $\orb$ since they share the same open domain as universal cover 
 by the uniqueness above for each holonomy group. 
 By openness of $\hol$ for $\orb'$, $\orb''$ is diffeomorphic to $\orb'$. 
 Hence, $\orb'$ is diffeomorphic to $\orb$.  
 
 Therefore, we conclude that $\widetilde{\CDef}_{E, u, ce}(\mathcal{O})$
goes to a closed subset of \[\Hom_{E, u, ce}^s(\pi_1(\mathcal{O}), \PGL(n+1, \bR)).\]
These imply the first item. 
 
 Now, we go to the second item. 
 Define $\widetilde{\SDef}_{E, ce}(\mathcal{O})$ to be the inverse image of $\SDef_{E, ce}(\mathcal{O})$ in 
$\widetilde{\Def}_{E, ce}(\mathcal{O})$. 
We show that \[\hol: \widetilde{\SDef}_{E, ce}(\mathcal{O}) \ra \Hom_{E, u, ce}^s(\pi_1(\mathcal{O}), \PGL(n+1, \bR))\]
is a homeomorphism onto a union of components. 
 Theorem \ref{thm:conv} shows that $\hol$ is a local homeomorphism 
 to an open set. The injectivity of $\hol$ follows the same way as in the above item. 
 
 We now show the closedness. 
By Theorem \ref{thm:relhyp}, $\pi_1(\orb)$ is relatively hyperbolic with respect to the admissible end fundamental groups. 
Let $h$ be the limit of a sequence of holonomy representations $h_i:\pi_1(\orb) \ra \PGL(n+1, \bR)$. 
As above we obtain $\Omega$ as the limit of $\clo(\Omega_i)$ where $\Omega_i$ is the image of 
the developing map associated with $h_i$. $\Omega$ is properly convex and $\Omega^o$ is not empty.  
 Since $h$ is irreducible and acts on $\Omega^o$ properly discontinuously, it follows that 
 $\Omega^o/h(\pi_1(\mathcal{O}))$ is an orbifold $\mathcal{O}'$ homotopy equivalent to $\mathcal{O}$
 and with generalized admissible ends as above. 
 By Theorem \ref{thm:converse}, $\mathcal{O}'$ is a strict SPC-orbifold with admissible ends. 
 The rest is the same as above. 
\end{proof}

\begin{remark}[Thurston's example]
We remark that without the end controls we have, there might be counter-examples as we can 
see from the examples of geometric limits differing from algebraic limits
for sequences of hyperbolic $3$-manifolds. (See Anderson-Canary \cite{AC}.) 
\end{remark}

Recall that an affine subspace of $\SI^n$ is an open subspace mapping to an affine subspace of $\bR P^n$. 
Let us choose an affine subspace $A$ and give a coordinate system on $A$. 
Let $B_R$ denote the ball of radius $R$ with the center at the origin. 

Recall the dual sphere $\SI^{n \ast}$ as the space $\bR^{n+1 \ast} -\{O\}/\sim$ where 
two vectors are equivalent iff they are positive scalar multiples of each other. 
Given a properly convex open domain $\Omega \subset \SI^n$, 
the {\em dual domain} $\Omega^*$ in $\SI^{n \ast}$ is the set 
\[\{[f] | f\in \bR^{n+1, \ast}, f(x) > 0 \hbox{ for all } x \in \clo(C_\Omega) - \{O\} \}.\]
$\Omega^*$ is properly convex and open also.


We can drop the superscript $s$ from the above space. 
Hence, the components consist of stable irreducible characters. This is a stronger result. 

\begin{corollary} \label{cor:closed1} 
Let $\mathcal{O}$ be a noncompact strongly tame SPC $n$-dimensional orbifold with admissible ends and satisfies {\rm (IE)} and {\rm (NA)}. 
Assume $\partial \orb =\emp$. 
Assume that no finite-index subgroups $\pi_1(\mathcal{O})$ has a nontrivial nilpotent normal subgroup.
Then $\hol$ maps the deformation space $\CDef_{E, u, ce}(\mathcal{O})$ of SPC-structures on $\mathcal{O}$ homeomorphic to 
a union of components of 
\[\rep_{E, u, ce}(\pi_1(\mathcal{O}), \PGL(n+1, \bR)).\]  
The same can be said for $\SDef_{E, u, ce}(\mathcal{O})$.
\end{corollary}
\begin{proof} 
We will show that the image of $\widetilde{\CDef}_{E, u, ce}(\mathcal{O})$ under $\hol$ in 
\[\Hom^s_{E, u, ce}(\pi_1(\mathcal{O}), \PGL(n+1, \bR))\]
is closed and consists of stable irreducible characters. 

It is sufficient to show the closedness of the subspace of \[\Hom_{E, u, ce}(\pi_1(\mathcal{O}), \PGL(n+1, \bR))\] of holonomy 
homomorphisms of elements of $\CDef_{E, u, ce}(\mathcal{O})$. 
This again follows from the closedness of the subspace of \[\Hom_{E, u, ce}(\pi_1(\mathcal{O}), \SLpm)\] of lifted holonomy 
homomorphisms of elements of $\CDef_{E, u, ce}(\mathcal{O})$. 

Using Theorem \ref{thm:lifting}, 
let $h_i : \pi_1(\orb) \ra \SL_\pm(n+1, \bR)$ be a sequence of holonomy homomorphisms of real projective structures
corresponding to liftings of elements of $\CDef_{E, u, ce}(\mathcal{O})$.
Let $\Omega_i$ be the sequence of associated properly convex domains in $\SI^n$ and $\Omega_i/h_i(\pi_1(\orb))$ is 
diffeomorphic to $\orb$ and has the structure that lifts an element of $\CDef_{E, u, ce}(\mathcal{O})$. 
We assume that $h_i \ra h$ algebraically, i.e., for a fixed set of generators $g_1, \dots, g_m$ of $\pi_1(\orb)$, 
$h_i(g_j) \ra h(g_j) \in \SLpm$ as $i \ra \infty$. We will show that $h$ is a lifted holonomy homomorphism of 
an element of $\CDef_{E, u, ce}(\mathcal{O})$, and hence $h$ is stable and strongly irreducible. 

We take a dual domain $\Omega_i^* \subset \SI^{n \ast}$. Then the sequence $\{\clo(\Omega_i^*)\}$ also converges geometrically 
to convex compact set $K^*$.
If $K$ has an empty interior and properly convex, then we can show easily that $K^*$ has a nonempty interior since for any 
$1$-form $\alpha$ positive on the cone $C_K$, any sufficiently close $1$-form is still positive on $C_K$. 
Also, if $K^*$ has an empty interior and properly convex, $K$ has a nonempty interior. 

(I) The first step is to show that at least one of $K, K^*$ has nonempty interior.

Suppose that both $K$ and $K^*$ are not properly convex and have empty interior.
If there exists a radial p-end for $\Omega_i$ and the type does not become horospherical, 
then the mc-p-end neighborhood must be in $K$ since this is true for 
all structures in $\Omega_i$ and holonomy homomorphisms  in 
\[\Hom_{E, u, ce}(\pi_1(\orb), \SLpm)\] 
where for each p-end $\tilde E$, there exists a distanced compact set $L$ away from $v_{\tilde E}$, and the lens-cone
$v_{\tilde E} \ast L -\{v_{\tilde E}\} \subset K$ has a nonempty interior. 
If there is a totally geodesic end of lens-type for $\orb$ and the type does not 
become horospherical, then the dual 
$\Omega_i^*$ and $K^*$ have nonempty interiors. These are contradictions. 

Suppose now that $\Omega_i$ has only horospherical ends or the sequence of the end holonomy groups 
converge to horospherical ones. Put by choosing $\Omega_i^*$ if necessary, 
we can assume that there is a p-end vertex $v_i \in \Bd \Omega_i$ where 
$\pi_1(\tilde E)$ is the fixed associated p-end fundamental group. 
Moreover $v_i = v$ for a fixed vertex $v$ by conjugating by a bounded sequence of projective automorphisms. 
The generators $g_{j} \in \pi_1(\tilde E)$ for $j=1, \dots, m$ for some $m$ are convergent. 
There exists a fundamental domain $F_i$ in $S_{\tilde E_i}=R_{v_i}(\Omega_i)$. 
Since $\{h_i(g_j)\}$ is convergent for each $j$, we can choose $F_i$ so that $\{F_i \subset \SI^{n-1}_v\}$ is geometrically convergent. 
We choose a great segment $s_i$ with vertex $v_i$  in a direction of $F_i$. 
\begin{equation} \label{eqn:li0}
\hbox{For } l_i:= \Omega_i \cap s_i, \, \bdd\hbox{-length}(l_i) \ra 0:
\end{equation}
If not, then the convex hull $C(h_i(\pi_1(\tilde E_i))(s_i)) \subset \Omega_i$ 
contains balls of fixed radius since $h_i(g_{j})$ are convergent to an element of 
a parabolic group in a copy $P$ of $\PO(n, 1)$
for each $j$ as $i \ra \infty$. 

Let $h(\pi_1(\tilde E))$ be the algebraic limit $h_i(\pi_1(\tilde E))$. 
Then $P \cap h(\pi_1(\tilde E))$ is a lattice in $P$. 


\begin{lemma} \label{lem:horob2} 
Let $v$ be a fixed point of $P$ and let $L$ be a lattice in $P$.
Let $H$ be a $P$-invariant hemisphere with $v$ in the boundary, and let $l$ be the maximal perpendicular line 
with endpoints $v$ and $v_-$. 
Then 
there exists a finite subset $F$ of $L$ 
so that for any point $x \in l$ and a $\bdd$-perpendicular hyperspace at $x$ bounding a closed hemisphere $H_1$  
$I_x :=\bigcap_{g\in F} g(H_x) \cap H$ is a properly convex domain, and as $x \ra v$, 
$I_x$ geometrically converges to $\{v\}$. 
\end{lemma}
\begin{proof} 
If $F$ is large enough, then $\{g(H_x), H\}$ is in a general position. 
The last fact follows by considering the set of outer normal vectors of $\{g(\partial H_x)\}$ in an affine space where $H$ is a half-space. 
\end{proof}


Let $H$ denote the $P$-invariant hemisphere containing $K$. We assume that $\Omega_i \subset H$ 
and recall that radial p-end vertices are fixed to be $v$. 
We assume that the direction of $l$ for $h(\pi_1(\tilde E))$ is in $F_i$ always. 

Let $\eps_i$ be the maximum $\bdd$-length of 
a maximal segment $s'_i$ in $\Omega_i$ from $v_i$ in direction of $F_i$ for $i \geq I$. 
Let $F'_i$ denote the set of endpoints of the maximal segments in $\Omega_i$ in direction of $F_i$. 
Then $\eps_i \ra 0$ by the above argument. 
A hyperplane perpendicular to $l$ at $x'_i\in l$ bounds a closed hemisphere $H'_i$ containing $F'_i$. 
$\bdd(v, x_i) = \delta_i$ satisfy $\{\delta_i \} \ra 0$
since otherwise equation \eqref{eqn:li0} does not hold. 
By Lemma \ref{lem:horob2}, there is a finite set $F \subset \pi_1(\tilde E)$
so that $\hat K_i := \bigcap_{g\in F} h_j(g)(H'_i) \cap H$ is properly convex for sufficiently large $j$ 
since $h_j(g) \ra h(g), g\in F$. 
This set contains $\clo(\Omega_i)$ since $H'_i \supset \clo(\Omega_i)$. 
As $x'_i \ra 0$, it follows that $\hat K_i \ra \{v\}$ since $x'_i \ra v$. 
(We just need to show that the normal vectors of $h_i(g)(\partial H'_i), g \in F$ being 
sufficiently larger different from that of $H$. Since $h(g)(\partial H'_i), g \in F$ is very close 
to these for sufficiently large $i$, we are done.)

Therefore, we conclude that $K$ is a singleton. 

In case, $K$ is a singleton, $K^*$ must be a hemisphere by duality of $\Omega_i$ and $\Omega_i^*$. 
We now conclude that $K$ or its dual $K^*$ has a nonempty interior. 


Thus, by choosing $h_i^*$ and $h^*$ if necessary, we may assume without loss of generality that $K$ has nonempty interior. 
We will show that $K$ is a properly convex domain and this implies that so is $K^*$. 

(II) The second step is to show $K$ is properly convex.

Assume that $h(\pi_1(\orb))$ acts on a convex open domain $K^o$.
We may assume that $K^o \subset \mathbb{A}$ for an affine subspace $\mathbb{A}$
and $\Omega_i \subset \mathbb{A}$ as well by acting by an orthogonal $\kappa_i \in \SL_\pm(n+1, \bR)$ converging to $\Idd$. 
We can accomplish this by moving $\Omega_i$ into $\mathbb{A}$. 
Also, we may assume that $\mathbb{A}$ contains a unit ball $B_1 \subset \Omega_i$ for all $i$. 
Choose $x_0 \in B_1$ as the origin in the affine coordinates. 

Let $g_1, \dots, g_m$ denote the set of generator of $\pi_1(\orb)$. 
Then by extracting subsequences, we may assume without loss of generality that 
$h_i(g_j) $ converges to $h(g_j)$ for each $j=1, \dots, m$. 

First, 
\begin{equation}\label{eqn:gjC0} 
\bdd(h_i(g_j)(x_0), \Bd \Omega_i) \geq C_0 \hbox{ for a uniform constant } C_0:
\end{equation}  
If not, then there is a sequence of a $\bdd$-length constant segment $s_i$ with an origin $x_0$ 
is sent to the segment $h_i(g_j)(s_i)$ in $\Omega_i$ with 
end point $h_i(g_j)(x_0)$ and lying on the shortest $\bdd$-length segment from $h_i(g_j)(x_0)$ to $\Bd \Omega_i$. 
Thus, the sequence of the $\bdd$-segment $h_i(g_j)(s_i)$ is going to zero. 
This implies that $h_i(g_j)$ is not in a compact subset of $\SL_{\pm}(n+1, \bR)$, a contradiction. 

By estimation from equation \eqref{eqn:gjC0}, 
a uniform constant $C$ satisfies
\begin{equation}\label{eqn:bdbel}
d_{\torb_i}(x_0, h_i(g_j)(x_0)) < C.
\end{equation}
By Benz\'ecri (see C.24 of Goldman \cite{wmgnote}), there exists 
a constant $R_B > 1$ and 
$\tau_i \in  \SL_\pm(n+1, \bR)$ 
so that 
\[ B_1 \subset \tau_i(\Omega_i) \subset B_R. \] 
Now, $\tau_i h_i(\pi_1(\orb)) \tau_i^{-1}$ acts on $\tau_i(\Omega_i)$. 
Then as in the proof of Theorem 7.1 of Cooper-Long-Tillman \cite{CLT3}, 
we obtain that $\tau_ih_i(g_j)\tau_i^{-1}$ for $j=1, \dots, n$ 
are in a compact subset of $\SL_{\pm}(n+1, \bR)$ independent of $i$. 
(Theorem 7.1 of \cite{CLT3} is not enough but their proof is sufficient to show this.)

Therefore, up to choosing subsequences,  we 
have $\tau_i(\Omega_i)$ geometrically converges to a properly convex domain $\hat K$ in $B_R$ containing $B_1$ 
and $\tau_i h(g_j) \tau_i^{-1}$ converges to a holonomy homomorphism $h': \pi_1(\orb) \ra \SL_\pm(n+1, \bR)$. 
And the image of $h'$ acts on a properly convex domain $\hat K$. 

Suppose that the sequence $\{\tau_i\}$ is not bounded. Then $\tau_i = k_i d_i k'_i$ where $d_i$ is diagonal with respect to a standard 
basis of $\bR^{n+1}$ and $k_i, k'_i \in O(n+1, \bR)$ by the KTK-decomposition of $\SLpm$. 
Then the sequence of the maximum modulus of the eigenvalues of $d_i$ are not bounded above. 
We assume without loss of generality $k_i \ra k$ and $k'_i \ra k'$ in $O(n+1, \bR)$. Thus, 
$k'_i h_i(g_j) k^{\prime -1}_i$ converges to $k' h(g_j) k^{\prime -1}$ for $k' \in O(n+1, \bR)$. 
Since $k_i d_i k'_i h_i(g_j) k^{\prime -1} d_i^{-1} k^{-1}_i$ is convergent, and the sequence of 
the norms of $d_i$ is divergent, $\{ d_i k'_i h(\pi_1(\orb) k^{\prime -1}_i d_i^{-1}\}$ converges to a  reducible group
acting on $k^{-1}(\hat K)$. (Because the sequences of some of the entries must become zero under the conjugation by 
a sequence of unbounded diagonal matrices.)
By following Lemma \ref{lem:endspres}, and Theorem \ref{thm:sSPC}, the limit
of $\{ d_i k'_i h(\pi_1(\orb) k^{\prime -1}_i d_i^{-1}\}$
cannot be reducible. 
Therefore the sequence of the norms of $d_i$ is uniformly bounded.

Thus the norm of $d_i$ is uniformly bounded. We may assume without loss of 
generality that $\tau_i$ converges to an element $\tau \in \SLpm$.
We assumed above that $h_i \ra h$. By the above, 
$\clo(\Omega_i) \ra \tau(\hat K)$ and 
$h(\pi_1(\orb))$ acts on $\tau(\hat K)$. 

By the following Lemma \ref{lem:endspres}, we obtain that $\tau(\hat K^o)/h(\pi_1(\orb))$ is a strongly tame  SPC-orbifold
with generalized admissible ends. This completes the proof for $\CDef_{E, u, ce}(\mathcal{O})$.


By the condition on admissibility of the ends, we see that 
\begin{lemma} \label{lem:endspres} 
Assume that no finite index subgroup of $\pi_1(\orb)$ contains a normal infinite nilpotent subgroup. 
Let $h_i \in \Hom(\pi_1(\orb)), \SLpm)$, $\Omega_i$ a properly convex open domain in $\SI^n$, and
let $\Omega_i/h_i(\pi_1(\orb))$ be an $n$-dimensional noncompact strongly tame 
SPC-orbifold with admissible ends and satisfies {\rm (IE)} and {\rm (NA)} for each $i$. 
Assume that the end fundamental groups of $h_i(\pi_1(\orb))$ have fixed types. 
Suppose that $h_i \ra h$ algebraically and $\clo(\Omega_i) \ra K$ for a compact properly convex domain 
$K \subset \SI^n$, $K^o \ne \emp$. 
Then
\begin{itemize}
\item $K^o/h(\pi_1(\orb))$ is an SPC-orbifold  with generalized admissible ends 
to be denoted $\orb_h$ diffeomorphic to $\orb$. 
\item For each p-end $\tilde E$ of the universal cover $\torb_h$ of $\orb_h$, 
$K^o$ has a subgroup $h(\pi_1(\tilde E))$ and $h(\pi_1(\tilde E))$-invariant open set $U_{\tilde E}$
corresponding fixed vertex $v_{\tilde E}$ or a totally geodesic domain $S_{\tilde E}$.  
\item $U_{\tilde E}/h(\pi_1(\tilde E))$ is real projectively diffeomorphic to an end neighborhood, 
is either horospherical or of lens type totally geodesic end neighborhood, or else 
is projectively isomorphic to a concave end neighborhood. 
\item If $\pi_1(\orb)$ is relatively hyperbolic, then $K^o/h(\pi_1(\orb))$ is a strongly tame strict SPC-orbifold with admissible ends. 
\item Also, the $\cR$- or $\cT$-types of ends are preserved. \end{itemize} 
\end{lemma} 
\begin{proof} 
By Goldman-Millson \cite{GM},  $h(\pi_1(\orb))$ is discrete since each finite index subgroup of it has no normal infinite nilpotent subgroup. 
Hence, $K^o/h(\pi_1(\orb))$ is an orbifold  to be denoted $\orb_h$. 

For $h$, let $[h]$ denote the corresponding homomorphism to $\PGL(n+1, \bR)$. 
Since $[h] \in \rep_{E, u, ce}(\pi_1(\mathcal{O}), \PGL(n+1, \bR))$ holds, 
each p-end fundamental group $h(\pi_1(\tilde E))$ acts on a horoball $H \subset \SI^n$, 
a generalized lens-cone, or a totally geodesic hypersurface $S_{\tilde E}$ with a lens-neighborhood $L$.
In the first case, we can choose a sufficiently small horoball inside $K^o$ and in $H$ since the supporting hyperplanes at the vertex of $H$
 must coincide by the invariance under $h(\pi_1(\tilde E))$. 
In the third case, we can find a one-sided lens-neighborhood of $S_{\tilde E}$ in $K^o$ 
since $S_{\tilde E} \subset \Bd K$ as 
the corresponding sets are always in $\Bd K_i$ for each $i$. 

The types do not change for $h_i$. The limiting types are not changed. 

Suppose that $h(\pi_1(\tilde E))$ acts on 
a generalized lens-cone $L$. Then $h(\pi_1(\tilde E))$ has a unique fixed point $v$ in $K$
since $h \in \rep_{E, u, ce}(\pi_1(\mathcal{O}), \PGL(n+1, \bR))$.
Then $v \not \in K^o $ since otherwise the elements fixing it has to be elliptic.
Then it satisfies the uniform 
middle eigenvalue condition. By Theorems 7.9 and 7.10 of \cite{endclass}, the action is distanced and 
we can find a concave p-end neighborhood. 
By Theorem \ref{thm:sSPC},
$\orb_h$ is a noncompact strongly tame SPC-orbifold with generalized admissible ends. 

Near $[h]$ there is an open neighborhood in $\rep_{E, u, ce}(\pi_1(\mathcal{O}), \PGL(n+1, \bR))$ 
where $h_i$ in it is realized by a strongly tame SPC-orbifold with admissible ends diffeomorphic to $\orb_h$
by Theorem \ref{thm:conv} and lifting by Theorem \ref{thm:lifting}. 
Hence, $\Omega_i/h_i(\pi_1(\orb))$ is diffeomorphic to $\orb_h$ for sufficiently large $i$. 
Hence, $\orb_h$ is diffeomorphic to $\orb$. 

By Theorem \ref{thm:sSPC}, $h$ is stable and strongly irreducible. 

The final item follows by Theorem \ref{thm:converse}. 

\end{proof}


To prove for $\SDef_{E, u, ce}(\mathcal{O})$, we need additionally the second last item of Lemma \ref{lem:endspres}. 
This completes the proof of Corollary \ref{cor:closed1}.  
\end{proof}





\chapter{Examples}\label{sec:examples}




For dimension $2$, the surfaces with principal boundary component can be made into 
surfaces with ends since we can select the fixed points for each boundary components 
and produce radial ends. These are rather trivial examples for our theory.   

The complete hyperbolic structure on an orbifold gives  a strict SPC-structure on the orbifold with horospherical ends. 
Sometimes these deform to ones not from the complete hyperbolic ones and with different types of ends. 

Let $P$ be a properly convex polytope in an affine subspace of $\bR P^n$ with faces $F_i$ where 
each side $F_i \cap F_j$ of codimension two is given an integer $n_{ij} \geq 2$.
A {\em reflection orbifold or Coxeter orbifold based on $P$} is given by a group of reflections $R_i$ associated with faces satisfying relations 
$(R_i R_j)^{n_{ij}} = I$ for every pair $i, j$ with $F_i \cap F_j$ is a side of codimension $2$. 
Vinberg showed that $R_i$ acts on a convex domain in $\bR P^n$ with a fundamental domain $P$ 
and thus $P$ with a number of vertices removed 
can be given a structure of an orbifold with interior of faces $F_i$ silvered and the interior of edge 
is given dihedral group structure and so on. 

These orbifolds have radial end always. If $P$ is a hyperbolic polytope, the orbifold is relatively
hyperbolic with respect to ends and the ends are virtually reducible having virtually free abelian fundamental group. 

Then the theories of this paper are applicable to such orbifolds
and the deformation theorems obviously hold. The proper convexity also holds during deformations 
according to Vinberg's work also (see \cite{dgorb}).
When $P$ is a cube with all edge orders $3$, then we obtain a complete hyperbolic orbifold with horospherical ends.
Computations done by G. Lee show that there are nontrivial deformations from the hyperbolic structure to real projective structures  
where ends deform from horospherical ends to totally geodesic radial ends. 
(See also \cite{CHL}.)

The example of S. Tillmann is an orbifold on a $3$-sphere with singularity consisting of \index{handcuff orbifold} 
two unknotted circles linking each other only once under a projection to a $2$-plane 
and a segment connecting the circles (looking like a linked handcuff) with vertices removed and all arcs 
given as local groups the cyclic groups of order three. (See Figure \ref{fig:cubs}.)
This is one of the simplest hyperbolic orbifolds in 
Heard, Hodgson, Martelli, and Petronio \cite{heard} labelled $2h\underbar{\,}1\underbar{\,}1$. 
The orbifold admits a complete hyperbolic structure since we can start from a complete hyperbolic 
tetrahedron with four dihedral angles equal to $\pi/6$ and two equal to $2\pi/3$ at a pair of opposite edge $e_1$ and $e_2$.
Then we glue two faces adjacent to $e_i$ by an isometry fixing $e_i$ for $i= 1, 2$. 
The end orbifolds are two $2$-spheres with three cone points of orders equal to $3$ respectively. 
These end orbifolds always have induced convex real projective structures in dimension $2$, 
and real projective structures on them have to be convex. Each of these is either the quotient of a properly convex 
open triangle or a complete affine plane as we saw in Proposition 3.3 of \cite{endclass}.

Tillman found a one-dimensional solution set from the complete hyperbolic structure by explicit computations. 
His main questions are the preservation of convexity and realizability as convex real projective structures on the orbifold. 

The another main example can be obtained by doubling a complete hyperbolic Coxeter orbifold based on a convex polytopes. 
We take a double $D_T$ of the reflection orbifold based on 
a convex tetrahedron with orders all equal to $3$. This also admits a complete hyperbolic 
structure since we can take the two tetrahedra to be the regular complete hyperbolic tetrahedra
and glue them by hyperbolic isometries. The end orbifolds are four $2$-spheres with three singular points of orders $3$. 
Topologically, this is a $3$-sphere with four points removed and six edges connecting them 
all given order $3$ cyclic groups as local groups.

\begin{theorem}\label{thm:tetrbounded} 
Let $\mathcal{O}$ denote the hyperbolic $3$-orbifolds $2h\underbar{\,}1\underbar{\,}1$ or $D_T$. 
We assign the $\cR$-type to each end. 
Then  $\SDef_{E}({\mathcal{O}})$ equals $\SDef_{E, u, ce}(\orb)$ 
and $\hol$ maps $\SDef_{E}({\mathcal{O}})$
as an onto-map to a component of characters \[\rep_{E}(\pi_1({\mathcal{O}}), \PGL(4, \bR))\] containing 
a hyperbolic representation which 
is also a component of 
\[\rep_{E, u, ce}(\pi_1({\mathcal{O}}), \PGL(4, \bR)).\] 
In this case, the components are cells of dimension $1$ and dimension $4$ respectively 
for $2h\underbar{\,}1\underbar{\,}1$ and the double $D_T$.
\end{theorem}
\begin{proof} 

The end orbifolds have Euler characteristics equal to zero and all the singularities are of order $3$.
Since the singularity is in the end neighborhood, it follows that
$\SDef_{E}({\mathcal{O}})$ equals $\SDef_{E, u}(\orb)$.  
Each of the ends has to be either horospherical or radial of lens and totally geodesic type by Proposition 3.3 of \cite{endclass}. 
Let $\partial \orb$ denote the union of end orbifolds of $\orb$. 

In \cite{jkms}, we showed that the real projective structures on the ends determined the real projective structure on $\mathcal{O}$. 
First, there is a map $\SDef_{E}({\mathcal{O}}) \ra \CDef(\partial {\mathcal{O}})$ 
given by sending the real projective structures on $\mathcal{O}$ 
to the real projective structures of the ends. (Here if $\partial {\mathcal{O}}$ has many components, then
$\CDef(\partial {\mathcal{O}})$ is the product space of the deformation space of all components.)
Let $J$ be the image. 

Let $\mu$ be an element of $\SDef_{E}({\mathcal{O}})$. The universal cover $\torb$ is a properly convex domain in $\SI^3$. 
The singular geodesic arcs in $\torb$ connect one p-end vertex to the other. The developing image of $\torb$ is a convex open domain
and the developing map is a diffeomorphism. 
Their developing images form geodesics meeting at vertices transversally. 
A choice of six edges will map to a $1$-skeleton of a convex tetrahedron in $\SI^n$. 
The geometry of the situation 
forces us that in the universal cover $\torb$, there exists two convex tetrahedra $T_1'$ and $T_2'$ with vertices removed 
in $\torb$. They are adjacent and their images under $\pi_1(\orb)$ tessellate $\torb$. 


The end orbifold is so that if given an element of the deformation space, then the geodesic triangulation is uniquely obtained.
Hence, there is a proper map from $\SDef_{E}({\mathcal{O}})$ to the space of invariants of the triangulations 
as in \cite{jkms}, i.e, the product space of cross-ratios and Goldman-invariant spaces.
(The projective structures are bounded if and only if the projective invariants are bounded.)
Thus by the result of \cite{jkms}, there is an inverse to the above map
$s: J \ra \SDef_{E}({\mathcal{O}})$ that is a homeomorphism. 

 For $2h\underbar{\,}1\underbar{\,}1$, $J$ is connected by Tillmann's computations. 

Now consider when $\mathcal{O}$ is the orbifold obtained from doubling a tetrahedron with edge orders $3, 3, 3$.
We consider an element of $\SDef_{E}(\orb)$. Since it is convex, 
we triangulate $\mathcal{O}$ into two tetrahedra and this gives a triangulation for 
each end orbifold diffeomorphic to $S_{3,3,3}$, each of which gives us triangulations into two triangles.
We can derive from the result of Goldman \cite{Gconv} and Choi-Goldman \cite{CG} that given projective invariants 
$\rho_1, \rho_2, \rho_2, \sigma_1, \sigma_2$ for each of the two triangles satisfying $\rho_1 \rho_2 \rho_3 = \sigma_1 \sigma_2$,
we can determine the structure on $S_{3, 3, 3}$ completely. 

For $S_{3,3,3}$ with a convex real projective structure and divided into two geodesic triangles, 
we compute these invariants $\rho_1, \rho_2, \rho_2, \sigma_1, \sigma_2$ for one of the triangles 
\begin{eqnarray} 
 & s^2+s \tau_1+1,s^2+s \tau_2+1,   s^2+s \tau_3+1,  &\nonumber \\ 
&   t \left(s^2+s \tau_2+1\right),  \frac{1}{t}\left(s^2+s \tau_1+1\right)  \left(s^2+s \tau_3+1\right)   & \label{eqn:inv1}
   \end{eqnarray} 
and for the other triangle the corresponding invariants are 
\begin{eqnarray}
 & \frac{1}{s^2}(s^2+s \tau_1+1),  & \frac{1}{s^2}(s^2+s \tau_2+1), \frac{1}{s^2}(s^2+s \tau_3+1), \nonumber \\
  & \frac{t}{s^3}\left(s^2+s \tau_2+1\right), & \frac{1}{s^3 t}\left(s^2+s \tau_1+1\right) \left(s^2+s \tau_3+1\right) \label{eqn:inv2}
   \end{eqnarray}
   where $s, t$ are Goldman parameters and $\tau_i = 2 \cos 2\pi/p_i$ for the order $p_i$, $p_i = 3$. \index{projective invariant}

The set $J$ is given by projective invariants of the $(3,3,3)$ boundary orbifolds 
satisfying some equations that the cross ratio of an edge are same from one boundary orbifold 
to the other and that the products of Goldman $\sigma$-invariants equal $1$ for some 
quadruples of Goldman $\sigma$-invariants. 
By the method of \cite{jkms} developed by the author, we obtain the equations that $J$ satisfies.  
The equation is solvable: 
\[s_1 = s_2 = s_3 = s_4=s, t_1 t_2 t_3 t_4 = C(s) \hbox{ for a constant } C(s)> 0 \hbox{ depending on} s.\]
(See the Mathematica file \cite{schoimath}.)
Thus $J$ is homeomorphic to a $4$-dimensional cell. (The dimension is one higher than 
that of the deformation space of the reflection $3$-orbifold based on the tetrahedron. Thus 
we have examples not arising from reflection ones here as well.)

Conversely, we can assign invariants at each edge of the tetrahedron and the Goldman $\sigma$-invariants at the vertices
if the invariants satisfy the equations.
This is given by starting from the first convex tetrahedron and gluing one by one 
using the projective invariants (see \cite{jkms} and \cite{schoimel}):
Let the first one by always be  the standard tetrahedron with vertices 
\[[1,0,0,0],[0,1,0,0],[0,0,1,0], \hbox{ and } [0,0,0,1]\] and we let $T_2$ a fixed adjacent tetrahedron with vertices 
\[[1,0,0,0],[0,1,0,0],[0,0,1,0] \hbox{ and } [2,2,2,-1].\] Then projective invariants 
will determine all other tetrahedron triangulating $\tilde{\mathcal{O}}$. 
Given any deck transformation $\gamma$, $T_1$ and $\gamma(T_1)$ will be connected by 
a sequence of tetrahedrons related by adjacency and their pasting maps are completely
determined by the projective invariants, where cross-ratios do not equal $0$. 
Therefore, 
as long as the projective invariants are bounded, the holonomy transformations of the generators are bounded. 
(This method was spoken about in 
our talk in Melbourne, May 18, 2009 \cite{schoimel}.)

From this, we see that $\SDef_{E}({\mathcal{O}})$ is connected. 
The results now follow from Theorem \ref{thm:conv} and Corollary \ref{cor:conv}. 

\end{proof}

We remark that the above theorem can be generalized to orders $\geq 3$ with hyperideal ends
with similar computations. 
See \cite{schoimath} for examples to modify orders and so on. 

\begin{figure}
\centerline{\includegraphics[height=8cm]{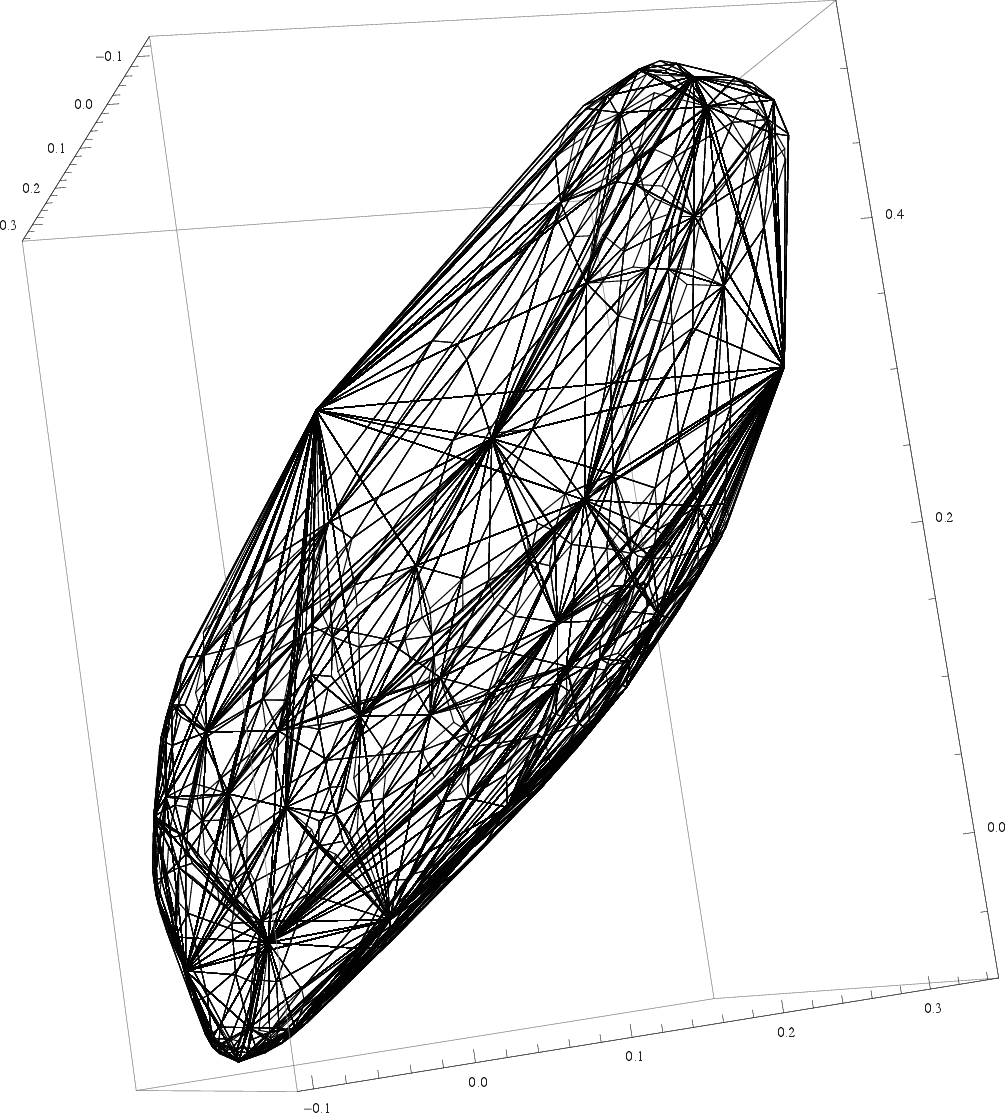}}
\caption{A convex developing image example of a tetrahedral orbifold of orders $3,3,3,3,3,3$.}
\label{fig:good}
\end{figure}




\appendix

\chapter{Projective abelian group actions on convex domains}

\begin{lemma} \label{lem:gconv} 
Let $t_0 \in I$ for an interval $I$. 
Suppose that we have a parameter of compact convex domains $\tri_t \subset \SI^{n-1}$ for $t < t_0$, $t \in I$, and a compact 
convex domain $\tri_{t_0}$ in $\SI^{n-1}$ and a transitive group action $\Phi_t:L \times \tri^o_t \ra \tri^o_t, t \in I$ by 
a connected abelian group $L$ of rank $n$ for each $t \in I$. 
Suppose that $\Phi_t$ depends continuously on $t$ and $\Phi_t$ is given by a continuous parameter of 
homomorphisms $h_t: L \ra \SL_{\pm}(n, \bR)$. 
Then $\tri_t \ra \tri_{t_0}$ geometrically.
\end{lemma} 
\begin{proof} 
We may assume without loss of generality that $\bigcap_{t\in I} \tri^o_t \ne \emp$ by taking a smaller $I$.
Choose a generic point $x_0 \in \bigcap_{t\in I} \tri^o_t$. 
Any point $x \in \tri^o_{t_0}$ equals $\Phi_{t_0}(g, x_0)$ for $g \in L$.  
Therefore, $\Phi_t(g, x_0) \in \tri^o_t \ra \Phi_{t_0}(g, x_0)$ as $t \ra t_0$. 
Hence, every point of $\tri_{t_0}$ is the limit of a path $\gamma(t) \in \tri^o_t$ for $t < t_0$. 

Conversely, given any parameter of points $x(t) \in \tri^o_t$ for $t\in I$, we obtain 
that $x(t) = \Phi_t(g_t, x_0)$ for $g_t \in L$. 
Let $L \cong \bR^{n-1}$ have coordinates $(a_1, \dots, a_{n-1})$. 
We define $x_t := \Phi_{t_0}(g_t, x_0)$. 
$\Phi_t(g, \cdot): \SI^{n-1} \ra \SI^{n-1}$ is represented 
as a matrix \[h_t(g) = \exp ( H_t( \sum_{i=1}^{n-1} a_i(g) e_i))\]
where $\{H_t: \bR^{n-1} \ra \mathfrak{sl}(n, \bR)\}$ is 
a uniformly bounded collection of linear maps 

We claim that $\Phi_t$ is an equicontinuous family of functions: 
Let $v$ be a generic vector $\bR^{n-1}$ of norm $1$. 
By dividing by the largest norm matrix entries we obtain 
a matrix $m_t(g)$ with entries $\leq 1$. Since $\bR P^{n-1}$ is bounded, 
a computation shows that the family of functions $\{m_t| t \in I\}:\bR^{n-1} \times \bR^n \ra \bR^{n}$ have derivatives 
uniformly bounded above as the entries are rational functions of exponential functions with bounded coefficients and 
these rational functions are bounded above by $1$. 

Hence $x(t)$ and $x_t$ have the same set of limit points as $t \ra t_0$. 
Since $x_t \in \tri_{t_0}$, $x(t)$ has limit points in $\tri_{t_0}$ only. 

The Hausdorff convergence topology 
and these two facts give us that $\tri_t \ra \tri_{t_0}$ geometrically, which is an elementary fact.

\end{proof} 

For a matrix $A$, we denote by $|A|$ the maximum of the norms of entries of $A$. 

\begin{lemma}\label{lem:dense} 
Let $h:\bZ^l \ra \SL_\pm(n, \bR)$ be a representation to unipotent elements. 
Let $g_1, \dots, g_l$ denote the generators. 
Then given $\eps> 0$
there exists a positive diagonalizable representation $h': \bZ^l \ra \SL_\pm(n, \bR)$ with matrices 
satisfying  $|h'(g_i) - h(g_i)| < \eps, i=1, \dots, l$. 
\end{lemma} 
\begin{proof} 
First assume that every $h(g_i)$, $i=1, \dots, l$, has matrices that upper triangular matrices with diagonal elements equal to $1$
since the Zariski closure is in a nilpotent Lie group. 

Let $\eps > 0$ be given. 
We will inductively prove 
that we can find $h'$ as above with eigenvalues of $h'(g_1)$ are all positive and mutually distinct. 
For $n=2$, we can simply change the diagonal elements to positive numbers not equal to $1$. 
Then the group imbeds in $\Aff(\bR^1)$. We choose positive constant $a_i$ so that $|a_i - 1| < \eps$. 
Let $g_i$ be given as $x \mapsto a_i x + b_i$. 
The commutativity reduces to equations $a_j b_i = a_i b_j$ for all $i, j$. 
Then the solution are given by $b_i = a_1^{-1} a_i b_1$. 

Suppose that the conclusion
is true for dimension $k-1$. We will now consider a unipotent homomorphism $h: \bZ^l \ra \SL_\pm(k, \bR)$.
Since $h(g_i)$ is upper triangular, let $h_1(g_i)$ denote the upper-left $(k-1)\times (k-1)$-matrix. 
We find a homomorphism $h'_1: \bZ^l \ra \SL_\pm(k-1, \bR)$ a positive diagonalizable representation
and the eigenvalues of $h'_1(g)$ are positive and mutually distinct. 
Also assume $|h'_1(g_i) - h_1(g_i)| < \eps/2$ for $i=1, \dots, l$.  
We change 
\[h(g_i) =  \left( 
\begin{array}{cc}
h_1(g_i)  & b(g_i)   \\
  0   & 1
\end{array}
\right) 
\hbox{ to } 
h'(g_i) = \left( 
\begin{array}{cc}
\frac{1}{\lambda(g_i)^{\frac{1}{k-1}}} h'_1(g_i)  & b'(g_i)   \\
  0   & \lambda'(g_i)   
\end{array}
\right)
\]
for some choice of $h_1'(g), b'(g), \lambda'(g_i) > 0$ for $i=1, \dots, l$. 
For commutativity, we need to solve for $b'(g_i)$, $i=1, \dots, l$, 
\[\frac{1}{\lambda'(g_i)^{\frac{1}{k-1}}} h'_1(g_i) b'(g_j) = \frac{1}{\lambda'(g_j)^{\frac{1}{k-1}}} h'_1(g_j) b'(g_i), 
1 \leq i, j \leq l. \]
The solution is given by 
\[b'(g_i) = \frac{\lambda(g_1)^{\frac{1}{k-1}}}{\lambda(g_i)^{\frac{1}{k-1}}} h'_1(g_1)^{-1} h'_1(g_i) b'(g_1), i=2, \dots, l\]
We choose $b'(g_1)$ arbitrarily near $b(g_1)$. 
Here, $\lambda(g_1)$ has to be chosen generically to make all the eigenvalues distinct and sufficiently near $1$ 
so that $|h'(g_i) - h(g_i)|< \eps$, $i=1, \dots, l$.  
We can check the solution by the commutativity. 
Hence, we complete the induction steps. 
\end{proof} 

Given a connected abelian group $A$ with positive real eigenvalues only, 
we can form for each $g \in A$, the Jordan block decomposition of $\bR^{n+1}$ into
subspaces with the same real eigenvalues. 
We direct-sum all the elementary Jordan blocks that have the same eigenvalue under every $g \in A$:
For each $g$, we let $\lambda_i(g)$ denote the eigenvalue of $g$ associated with $V_i$, $i=1, \dots, l$, 
i.e., $g- \lambda_i(g)\Idd_{V_i} | V_i$ is nilpotent. 
Two elementary Jordan block subspaces $V_i$ and $V_j$ are {\em equivalent} if $\lambda_i(g) = \lambda_j(g)$ for all $g \in A$. 
We direct sum the Jordan block subspaces in a Jordan decomposition that are equivalent to one $V_i$. We call this subspace
a {\em common Jordan block space}. 

Since $h$ can be connected to the identity, $h \in A$ cannot switch 
common Jordan blocks of $g$.
A {\em scalar group} is a group acting by $s\Idd$ for $s \in \bR$ and $s> 0$. 
A {\em scalar unipotent group} is a subgroup of the product of a scalar group with a unipotent group. 

A {\em join} of two convex real projective $m$-dimensional and $l$-dimensional orbifolds 
$S_1$ and $S_2$ is obtained as follows: 
we take the convex open domains $\Omega_1$ and $\Omega_2$ covering $S_1$ and $S_2$ 
respectively. We projectively embed the two 
in two affine subspaces of the complementary subspaces $U_1$ and $U_2$ of $\SI^{n-1}$ (resp in $\bR P^{n-1}$), 
for $n=m+l+1$
and obtain the interior of the join $\Omega_1 \ast \Omega_2$.
We take the quotient space of $(\Omega_1 \ast \Omega_2)^o$ 
by direct summing two holonomy groups and 
adding a diagonalizable projective automorphism fixing all points of $U_1$ and $U_2$. 
We can of course generalize this to join of $k$, $k \geq 2$, real projective orbifolds. 
In this case, we obtain a center a free abelian group of rank $k-1$.

One can think of the following lemma as a classification of convex real projective orbifolds
with abelian fundamental groups. Benoist \cite{BenNil} investigated thoroughly in this topic also. 
\begin{lemma} \label{lem:realeign} 
Let $\Gamma$ be an abelian group acting on a convex domain $\Omega$ of $\SI^{n-1}$ \rlp resp. 
$\bR P^{n-1}$ \rrp\,  cocompactly and properly discontinuously.
Then the Zariski closure $L$ of a finite index subgroup $\Gamma'$ of $\Gamma$ is 
so that $L/\Gamma'$ is compact and with positive eigenvalues.
Furthermore, $\Omega$ is an orbit of the abelian Lie group $L$. 
$\Omega/\Gamma'$ is a join of a closed properly convex manifold and a finite number of closed complete affine manifolds. 
\end{lemma}
\begin{proof}
We will prove for the case $\Omega \subset \SI^{n-1}$. The other case is implied by this. 
If $\Omega$ is properly convex, then Proposition \ref{prop:Ben2} gives us a diagonal matrix group $L$ 
acting on a simplex. 

Assume that $\Omega$ is not properly convex. 
We assume that $\Gamma$ is torsion-free using Selberg's Lemma. 
Since $\Gamma$ is abelian, the syndetic closure $L'$ of $\Gamma$ is an abelian Lie group and $L'/\Gamma$ is compact. 
We take a connected component $L$ of $L'$ and let $\Gamma' = L \cap \Gamma$. 
$L/\Gamma'$ is a manifold and $\Omega/\Gamma'$ is a closed manifold. 
Since they are both $K(\Gamma', 1)$-spaces, it follows that $L$ and $\Omega$ have the same dimension 
and $L/\Gamma'$ is compact also. (see \cite{DW}.)

If $\Omega$ is a complete affine space, then Proposition T of \cite{HG} proves our result. 
$L$ is a unipotent radical there. So the eigenvalues are all $1$. 

Suppose that  $\Omega$ is not complete affine but not properly convex. Then there exists a great sphere 
$\SI^{i-1}$ in the boundary of $\Omega$ where $L$ acts on 
and is the common boundary of $i$-dimensional affine spaces foliating $\Omega$. 
(see \cite{ChCh}.) There is a projective projection $\Pi_{SI^{i-1}}: \SI^{n-1} - \SI^{i-1} \ra \SI^{n-i-1}$. 
Then the image of $\Omega_1$ of $\Omega$ is properly convex. Since $L$ acts on it, 
$\Omega_1$ is an $(n-i-1)$-dimensional simplex by Proposition \ref{prop:Ben2}. 
Thus, $\Omega$ is the inverse image $\Pi_{\SI^{i-1}}^{-1}(\Omega_1)$. 
Since $\Gamma$ acts on $\Omega$, it follows that $L$ acts on $\Omega$. 
Since $\dim L =\dim \Omega$ and $\Gamma$ acts properly with compact fundamental domain, 
$L$ acts properly on $\Omega$. (See Section 3.5 of \cite{Thbook}.)

Let $N$ denote the kernel of $L$ going to a Lie group $L_1$ acting on $\Omega_1$.
\[ 1 \ra N \ra L \ra L_1 \ra 1.\]
Since $L_1$ is diagonalizable by Proposition \ref{prop:Ben2}, $L_1$ acts simply transitively on $\Omega_1$. 
$\dim L_1 = n-i$. Thus, $\dim N = i$ and the abelian group $N$ acts on each $i$-dimensional 
affine space $A_l$ that is a leaf. Since the action of $N$ is proper, $N$ acts on $A_l$ simply transitively and 
$N$ is also unipotent by Proposition T of Hirsch-Goldman \cite{HG}. 
Hence, we deduce that each element of $L$ has only positive eigenvalues and so do $\Gamma'$. 
$L$ is also the Zariski closure of $\Gamma'$ by Saito \cite{Saito}. (See \cite{DW}.)


By finding the common Jordan block subspaces, 
we decompose $L$ into subspaces $V_1 \oplus V'_1 \oplus \cdots \oplus V'_k$ where $L$ acts on $V_1$ as a diagonalizable linear group and 
$L$ acts on $V'_j$ as elements of an abelian scalar unipotent group for $j=1, \dots, k$. 
That is, $g\in L$ is a direct sum of matrices of form 
\[  \left( \begin{array}{cccc}  
\lambda_j(g) & 0 & 0 & 0 \\ 
*     & \lambda_j(g) & 0 & 0  \\ 
* & * & \lambda_j(g) & 0 \\ 
* & * & * & \lambda_j(g) 
\end{array} \right) \]
where $\{\lambda_1, \dots, \lambda_k\}$ is a set of mutually distinct homomorphisms $L \ra \bR_+$
by the Lie-Kolchin theorem.

A scalar unipotent linear group of a vector space always acts on a half-space of a vector space. 
The $L$-action on $S(V'_j)$ can be seen as direct sum of affine groups on affine subspaces in some affine coordinates of an affine subspace of $S(V'_j)$.  
It is a unipotent action since $\lambda$ becomes $1$ in the affine coordinate form. Thus, $L$ is in a nilpotent group. 
Also, the $L$-action on invariant subsets of $S(V_1)$ can be conjugated to an affine translation group using exponential maps.



Since $S(V_1)$ and $S(V'_j)$ are $\Gamma'$-invariant subspaces of $\SI^{n-1}$, 
we can show that $\Omega$ is contained in the strict join of the interiors of convex domains $K_1 \subset S(V_1)$ and $K'_j \subset S(V'_j)$
that are images of the projections of $\clo(\Omega)$. (Note $\dim K'_j = \dim S(V'_j) \geq 1$ since $\Omega$ is open.)
By the description of the $L$-action and $\Gamma$-action on $S(V_i)$, it follows that $L$ acts on $K_1$ and $K'_j$ also. 
The induced action of $\Gamma'$ on each of $K_1^o$ and $K^{\prime o}_j$ is cocompact since otherwise $\Omega/\Gamma'$ cannot be compact
by the join-coordinate description of the join. 
$L$ acts as a unipotent group on $S(V'_j)$ for each $j$ and hence $L$ acts on an affine subspace $A$ meeting $K^{\prime o}$ 
as a group of affine transformations. Then $K^{\prime o} \subset A$ since there is a  
a homotopy equivalence $K^{\prime o}/\Gamma'$ and $K^{\prime o} \cap A/\Gamma'$. 
In fact each $K^{\prime o}_j$ is an open hemisphere in $S(V'_j)$ and is 
an orbit of $L$ by Theorem 4.1 of \cite{GH} since  $\Gamma'$ acts cocompactly.

Since $L$ is diagonal on $K_1$, it acts transitively on $K_1^o$ also. 
$K_1^o$ is an open properly convex simplex since $\Gamma'$ acts as a diagonalizable group. 
Thus, the orbit of $L$ on each of $K_1$  equals $K_1^o$ since otherwise we can check that $K_1^o/L$ is not compact
by affine coordinate description above. 

We can introduce a coordinate system on $K_1 \ast K'_1 \ast \cdots \ast K'_k$ so that $L$ acts as unipotent affine transformation group
since we have such coordinates for $K_1$ and $K'_j$ and we can take logarithms of the join coordinates. 
Again, by Theorem 4.1 of \cite{GH}, $L$ acts transitively on the interior of the strict join $K_1 \ast K'_1 \ast \cdots \ast K'_k$. 
Thus, it follows that $L$ has an open orbit in $\Omega$ and $\Omega$ equals the interior of $K_1 \ast (K'_1 \ast \cdots \ast K'_k)$. 
This show that $\Omega$ is an orbit of $L$ as well.  
(See also \cite{BenNil}.)
Finally, $\Omega/\Gamma'$ is a join of $K_1/\Gamma'$ and $K'_i/\Gamma'$ for $i=1, \dots, k$. 
\end{proof}


A convex real projective structure $\mu_0$ on an orbifold $\Sigma$ is {\em virtually immediately deformable to} a properly convex real structure
if there exists a parameter $\mu_t$ of real projective structures on a finite cover $\hat \Sigma$ 
of $\Sigma$ so that $\hat \Sigma$ with induced structures $\hat \mu_t$ is 
properly convex for $t > 0$. 

\begin{proposition}\label{prop:vip} 
Assume that $M$ is covered by a cell. 
Then a convex real projective structure on closed $(n-1)$-orbifold $M$ 
with infinite free abelian holonomy is always virtually  immediately deformable to 
a properly convex real projective. 
Furthermore, any join of such orbifolds with properly convex orbifolds are also virtually immediately deformable to 
a properly convex real projective orbifold. 
\end{proposition} 
\begin{proof} 
Let $\bZ^l$ denote the fundamental group of a finite cover $M'$ of $M$. 
Let $h \in \Hom(\bZ^l, \SL_\pm(n, \bR))$ be the restriction of the holonomy homomorphism to $\bZ^l$. 
Nearby every $h$, there exists a positively 
diagonalizable holonomy $h': \bZ^l \ra \SL_{\pm}(n, \bR)$ by Lemmas \ref{lem:dense} and \ref{lem:realeign}.
By the deformation theory of \cite{dgorb}, $h''$ is realized as a holonomy of 
a real projective manifold $M''$ diffeomorphic to $M'$. Also, the universal cover of $M'$ is an orbit of an abelian Lie group $L$ 
where $h'(\bZ^l)$ is a lattice by Lemma \ref{lem:realeign}. 
Now $h''(\bZ^l)$ is a lattice in a positively diagonalizable abelian Lie group $L'$. 
Since $L/h'(\bZ^l)$ is diffeomorphic to $L'/h''(\bZ^l)$ and they have real projective structures by \cite{BenNil}. 
$L'/h''(\bZ^l)$ with the real projective structure is projectively diffeomorphic to $M''$.
From this, we deduce that the universal cover $M''$ is 
an orbit of $L'$. Hence, $M''$ is properly convex since $L'$ is a positive real diagonal group. 

The final part just follows by deforming the first factor orbifold of the join. 
\end{proof} 


\begin{proposition}\label{prop:permconv} 
Let $\Sigma$ be a closed $(n-1)$-dimensional convex projective orbifold 
with the real projective structure $\mu$,
and let $\Omega$ in $\bR P^{n-1}$ \rlp resp. in $\SI^{n-1}$ \rrp\, denote a universal cover of $\Sigma$. 
Let $\mu_t, t \in [0, 1]$ be a parameter of convex real projective structures on $\Sigma$ where $\mu_0$ is properly convex or complete affine
and $\mu = \mu_1$. 
Let $\pi_1(\Sigma)$ be isomorphic to a finite extension of $\bZ^l \times \Gamma_1 \times \cdots \times \Gamma_k$ 
for a hyperbolic group $\Gamma_i$ for each $i$ and $1 \leq k -1 \leq l$. 
Let $h$ denote the holonomy homomorphism 
of $\pi_1(\Sigma)$ to $\PGL(n, \bR)$ \rlp resp. $\SL_{\pm}(n, \bR)$\rrp\, corresponding to the original real projective structure. 
Then 
\begin{itemize} 
\item[(i)] If $k \geq 1$ and $k-1 \leq l \leq k$, then $\Omega$ is  a properly convex domain. 
\item[(ii)] If $k = 0, l \geq 1$, then $\Omega$ is 
a convex domain $d(\Delta)$ that is an orbit of a connected abelian Lie group $\Delta$ in 
the Zariski closure of a finite index abelian subgroup of $h(\pi_1(\Sigma))$.
\item[(iii)] 
If $l \geq k, k \geq 1$, then 
$\Omega$ is real projectively diffeomorphic to the interior of the  strict join of 
\begin{itemize} 
\item an orbit $d(\Delta)$ for a connected abelian Lie group $\Delta$ that is in the Zariski closure of 
the center of a finite-index subgroup of $h(\pi_1(\Sigma))$ and 
\item a strict join $K_1 \ast \dots \ast K_k$ where a hyperbolic factor $\Gamma_i$ of $\Gamma$ acts divisibly on a properly convex domain  $K_i^o$.
\end{itemize} 
\item[(iv)] 
Suppose that $h(\pi_1(\Sigma))$ acts on a properly convex domain in $\bR P^{n-1}$ \rlp resp. in $\SI^{n-1}$ \rrp.  Then $\Omega$ is properly convex. 
\end{itemize}
\end{proposition} 
\begin{proof} 
We will prove for the case $\Omega \subset \SI^{n-1}$. The $\bR P^{n-1}$-version follows from this. 
If $\mu_0$ is complete affine, then we deform for $t < 0$ and assume $\mu_t$ is properly convex 
by Proposition \ref{prop:vip} and shift our parameter. 



We can choose a developing map $D_t:\tilde \Sigma  \ra \SI^{n-1}$ and 
a holonomy homomorphism $h_t$ as above for $\mu_t$
and $h_t$ is a continuous family of representations $\Gamma \ra \SL_{\pm}(n, \bR)$.  
This follows for each element $g \in \Gamma$, $h_t(g)$ is determined by developing maps 
in the space of $C^r$-topology. (See Chapter 6 of \cite{msj}.) 
Let $\Omega_t$ denote the image of $D_t$, a convex open domain. 

(i) Let $\Gamma$ denote the torsion-free subgroup of $\pi_1(\Sigma)$ isomorphic to $\bZ^l \times \Gamma_1 \times \cdots \times \Gamma_k$. 
We will denote the corresponding subgroups of $\Gamma$ by the same notations in  
$\bZ^l \times \Gamma_1 \times \cdots \times \Gamma_k$. 

By Koszul \cite{Kos}, the set $A_C$ of $t \in [0, 1]$ where $\Omega_t$ is properly convex is an open subset of $I$.
We will now show that the set is closed. Let $t_0$ be the supremum of a component of $I$ containing $0$. 
Suppose that there exists a parameter of holonomy representations 
$h_t$, $t \in [0, 1]$ acting on a convex domain $\Omega_t$ so that $\Omega_t$ is properly convex for $t < t_0$. 
Then for $t < t_0$, $h_t(\Gamma_i)$ acts on a properly convex domain $K_{i, t}$ for $i=1,\dots, k$ in $\clo(\Omega_t)$ 
and there are $l-k+1$ number of discrete points $k_{j, t}$ in $\clo(\Omega_t)$, for $j = 1, \dots, l-k+1$, 
by Proposition \ref{prop:Ben2} fixed by $\Gamma$ (up to choosing $\Gamma$ even smaller). 

Choose a subsequence $t_l \in A_C$, $t_l \ra t_0, t_l < t_0$ as $l \ra \infty$. 
By choosing a subsequence of $t_k$, we may assume without loss of generality that 
$K_{i, t_l} \ra K_{i, t_0}$ for $i = 1, \dots, k$ and $k_{j, t_l} \ra k_{j, t_0}$ for $j =1, \dots, l- k+1$ 
as $l \ra \infty$, to be called {\em vertices} still,
where $K_{i, t_0}$ is a properly convex compact domain where $\Gamma_i$ acts on and $k_{j, t_0}$ is a point fixed by $\Gamma$. 

\begin{itemize}
\item Let $k_{L, t_l}$ denote the convex hull of $\{k_{j, t_l}| j \in L\}$ for any subcollection $L$ of $\{1, \dots, l-k+1\}$
and 
\item $K_{M, t_l}$ denote the convex hull of $\{ K_{j, t_l}| j \in M\}$ for a subcollection $M$ of $\{1, \dots, k\}$. ($t_l < 1$.)
\end{itemize} 
By Lemma \ref{lem:gconv} and \cite{Ben3}, 
we may assume without loss of generality 
 that $k_{L, t_l} \ra k_{L, t_0}$ as $l \ra \infty$ and 
$K_{i, t_l} \ra K_{i, t_0}$
 geometrically as $l \ra \infty$ for a convex compact sets $k_{L, t_0}$ and $K_{i, t_0}$. 

By Corollary 1.2 of \cite{Ben3}, $\Gamma_i$ divides the properly convex $K_{i, t_0}^o$ irreducibly for each $i = 1, \dots, k$. 

Let $P_{M, t_0}$ denote the subspace spanned by $K_{M, t_0}$, and let $Q_{L, 1}$ the subspace spanned by $k_{L, t_0}$. 

If $l=0$ and $k=1$, then (i) follows from Corollary 1.2 of \cite{Ben3}. 
Now suppose $l = 1$ and $k= 2$. Then the subspaces $P_{1, t_0}$ and $P_{2, t_0}$ are disjoint. 
Otherwise, $P_{1, t_0}=P_{2, t_0}$ by the irreducibility of $\Gamma_1$ and $\Gamma_2$. 
Since $\Gamma_1$ acts trivially on $P_{2, t_0}$ by the limit argument, this is a contradiction. 
Now suppose $l= k-1$ and $k \geq 2$. Suppose that the subspaces $P_{M, t_0}$ and $P_{M', t_0}$ 
containing $K_{M, t_0}$ and $K_{M', t_0}$ meet and $M$ and $M'$ are minimal disjoint pair of such sets. 
We may assume that the collection of subspaces $\{P_{i, t_0}|i \in M\}$ in $P_{M, t_0}$ are independent 
and so are $\{P_{j, t_0}|j \in M'\}$ in $P_{M', t_0}$. 
Since $P_{M, t_0} \cap P_{M', t_0} \ne \emp$, $\Gamma$ acts reducibly on $P_{M, t_0}$. 
Since for the irreducible factors $P_{i, t_0} \cap P_{j, t_0}=\emp$ for $i\ne j$, 
and $P_{i, t_0}$ in $P_{M, t_0}$ are the only irreducible subspaces, 
it follows that $P_{M, t_0} \cap P_{M', t_0}$ is a join of a number of them. 

This contradicts the minimality unless $P_{M, t_0} = P'_{M', t_0}$. 
This is a contradiction as above. 
Therefore, we showed that $\{P_1, \dots, P_k\}$ are independent subspaces of $\SI^{n-1}$. 

Since $\clo(\Omega_{t_0})  = K_{1, t_0} \ast \cdots \ast K_{k, t_0}$, we obtain 
that $\Omega_0$ is properly convex. 

Now suppose that $l \geq 1$ and $k \geq  1$ and $k-1 \leq l \leq k$. 
Suppose that $l=k$.
Then there exists at most one vertex $k_{1,t_l}$ for each $i$. 
Let $k_{1, t_0}$ denote the limit.  Then again we show as above $\{Q_{1, t_0}, P_{1, t_0}, \dots, P_{k, t_0}\}$ are independent. 
As above, $\Omega_{t_0}$ is properly convex. 
This completes the proof of the closedness of the set of $t$ where $\Omega_t$ is properly convex. 
Thus, we showed that $\Omega_t$ is always properly convex for $t \in [0, 1]$. 

\begin{lemma} \label{lem:disj} Let $t\in [0, 1]$. 
We let $\mu_t$ be convex real projective 
structures on $\Sigma$ virtually immediately deformable to properly convex ones. 
Suppose that $M \ne \emp$. 
We assume that the following $(\ast)$ is true for $t=0$. 
\begin{description} 
\item[$(\ast)$] The subspace $Q_{L, t}$ spanned by $k_{L, t}$ is disjoint from the subspace $P_{M, t}$ spanned by $K_{M, t}$. 
\end{description} 
Then the above $(\ast)$ is true for all $t \in [0, 1]$. 
\end{lemma}
\begin{proof} 
The set $I$ of $t \in [0, 1]$ where $Q_{L, t} \cap P_{M, t} = \emp$ is open with $0 \in I$. 

Now let $I$ be a set so that $Q_{L, t}$ is disjoint from $P_{M, t}$. Let $t_0$ be a supremum of a component of $I$ containing $0$.
First, we need to only consider a fixed finite index subgroup by the compactness of intervals. 
$K_{M, t}$ is properly convex as shown in the proof above. By Lemma \ref{lem:gconv} 
$k^o_{L, t}$ is an orbit of a Lie group $A$ containing the center of holonomy of $\Gamma$. 
If $P_{M, t_0} \cap Q_{L, t_0} \ne \emp$, 
$P_{M, t_0} \cap Q_{L, t_0}$ equals the join of partial sets $P_{M', t_0} = Q_{L', t_0}, M' \subset M, L' \subset L$ 
by the reducibility of $\Gamma$. 
Then $P_{i, t_0}$ for $i \in M'$ meets $Q_{L, t_0}$. Again, this violates the irreducibilty of $\Gamma_i$.
The open and closedness argument proves the result. 
\end{proof}

(ii) Here, $\Gamma$ is an abelian group. $h_t(\Gamma)$ are diagonalizable with positive eigenvalues for $t < 1$. Thus, 
the eigenvalues of $h_t(\Gamma)$ are real positive for all $t$.  Lemma \ref{lem:realeign} implies the result.

(iii) Let $B_C$ denote the set of $t\in [0, 1]$ where the conclusion (iii) holds.
We show that $B_C$ is an open set:
Let $t \in B_C$. We have $\clo(\Omega_{t}) = d(L)_t \ast K_{1, t} \ast \cdots \ast K_{k, t}$
Here, $d(L)$ is characterized as an orbit of $L$ and a $L$-invariant complement to a subspace $P_{t}$ containing 
$K_{1, t} \ast \cdots \ast K_{k, t}$. Since $L$ acts semisimply on $P_{t}$, 
there exists $\eps > 0$ such that for $|t'-t| < \eps$, there exists a complement $S_{t'}$ to $P_{t'}$. 
The subspace $S_{t'}$ and $T_{t'}$ containing $K_{1, t'} \ast \cdots \ast K_{k, t'}$
are disjoint. 
Then considering the accumulation points of orbits of points under $\Gamma$
that includes $K_{i, t}$ and $d(L)_t$ shows that 
$\Omega_t$ contains the interior of $d(L)_t \ast K_{1, t} \ast \cdots \ast K_{k, t}$. 
$\Omega_t$ is a subset of the join since we can project the fundamental to the factors and act by $\Gamma$. 
By the dimension consideration, $S_{t'}$ is spanned by an orbit $d(L)_{t'}$ by Lemma \ref{lem:realeign}. 

Now we show the closedness of $B_C$. 
Suppose that there exists a parameter of holonomy representations 
$h_t$, $t \in I$ acting on convex domain $\Omega_t$. Let $t_0$ be the supremum of the connected component of $I$ containing $0$. 
and $\Omega_{t_0} = \Omega$.  We use the notation of (i). 

As in (i), there can only be a pair of antipodal points in $k_{L, t_0}$ for a subcollection $L = \{1, \dots, l-k+1\}$. 
Then $K_{M, t_0}$ is properly convex by (i).  
Let $S_1$ denote the minimal great sphere containing $k_{L, t_0}$ and $S_2$ the one containing $K_{M, t_0}$ for $M = \{1, \dots, k\}$. 
Now $S_1$ and $S_2$ does not intersect by Lemma \ref{lem:disj}.
$\clo(\Omega) $ is a subset of a strict join of $k_{L, t_0}$ and $K_{M, t_0}$ again using limit sets and the fundamental domain as above. 
Thus, 
$\dim k_{L, t_0} + \dim K_{M, t_0} + 1= n$ since otherwise $\clo(\Omega)$ would not be a domain in $\SI^{n-1}$.
Since $S_1$ and $S_2$ are disjoint and are complimentary,  there exists a natural projection $\SI^{n-1} - S_1 \ra S_2$. 
Then restricting $\Gamma'$ to  $k_{L, t_0}^o$, we obtain a cocompact action since $k_{L, t_0}^o$ is the image of the projection
of $\Omega$ and $\Omega/\Gamma'$ is compact. 
For each $l$, the interior of $k_{L, t_l}^o$ is an orbit of a connected abelian group 
that is the Zariski closure of the subgroup corresponding to $\bZ^l$. 
Hence, $k_{L, t_0}^o$ is an orbit of a connected abelian Lie group and is convex as in (ii). 
Thus $\clo(\Omega)$ is the strict join of these two sets. 

(iv) Suppose that $\pi_1(\Sigma)$ acts on a properly convex domain in $\SI^{n-1}$. 
$d(\Delta)$ acts on the domain. 
Therefore, the previous three items imply the result. 

\end{proof} 

An immediate corollary is:
\begin{corollary} \label{cor:convhol} 
Suppose that a real projective orbifold $\Sigma$  is a closed $(n-1)$-orbifold that is convex with admissible fundamental group and 
with the structure deformable to properly convex structures. 
Suppose that the holonomy group 
$h'(\pi_1(\Sigma))$ in $\PGL(n, \bR)$ \rlp resp. $\SL_{\pm}(n, \bR)$\rrp\, 
acts on a properly convex domain or a complete affine space $D$ and the associated developing map  maps into $D$.
Then $\Sigma$ is also properly convex or complete affine with developing map that is a diffeomorphism to $D$.
\end{corollary}

\chapter{A topological result} 

\begin{proposition}\label{prop:metriz}
Let $X$ be a compact metrizable space. Let ${\mathcal C}_X$ be a countable collection of compact sets. 
The collection has the property that 
$C_K := \bigcup_{C \in {\mathcal C}_X, C\cap K \ne \emp} C$ is closed for any closed set $K$.
We define the quotient space $X/\sim$ as follows $x \sim y$ iff $x, y \in C$ for an element $C \in {\mathcal C}_X$. 
Then $X/\sim$ is metrizable. 
\end{proposition} 
\begin{proof} 
We show that $X/\sim$ is Hausdorff, $2$-nd countable, and regular and use the Urysohn metrization theorem. 
We define a countable collection $\mathcal{B}$ of open sets of $X$ as follows:
We take an open subset $L$ of $X$ that is an $\eps$-neighborhood of an element of ${\mathcal C}_X$ or a point of 
a dense countable set $Y$ in $X - \bigcup {\mathcal C}_X$ 
for $\eps \in \bQ, \eps > 0$. 
We form $L - \bigcup_{C \cap \Bd L \ne \emp, C \in {\mathcal C}_X} C$ 
for all such $L$ containing an element of ${\mathcal C}_X$
or $Y$. This is an open set since $\Bd L$ is closed and by the premise, 
and are neighborhoods of elements of ${\mathcal C}_X$ and $Y$.
Furthermore, each element of $\mathcal{B}$ is a saturated open set under the quotient map. 
Now, the rest is straightforward. 
\end{proof}




\backmatter
\bibliographystyle{amsalpha}


\bibliographystyle{plain}

\printindex

\end{document}